\setlist[enumerate,1]{label={(\roman*)}}
\numberwithin{equation}{section}
\theoremstyle{plain}
\newtheorem{thm}{\protect\theoremname}[section]
  \theoremstyle{definition}
  \newtheorem{example}[thm]{\protect\examplename}
  \theoremstyle{definition}
  \newtheorem{defn}[thm]{\protect\definitionname}
  \theoremstyle{plain}
  \newtheorem{prop}[thm]{\protect\propositionname}
  \theoremstyle{remark}
  \newtheorem{rem}[thm]{\protect\remarkname}
  \theoremstyle{plain}
  \newtheorem{lem}[thm]{\protect\lemmaname}
\newcommand{\myarr}[2]{\ar[#1]^-{#2}}
  \providecommand{\definitionname}{Definition}
  \providecommand{\examplename}{Example}
  \providecommand{\lemmaname}{Lemma}
  \providecommand{\propositionname}{Proposition}
  \providecommand{\remarkname}{Remark}
\providecommand{\theoremname}{Theorem}
\newcommand\shorttitle{ABELIAN GROUPS ARE POLYNOMIALLY STABLE}
\newcommand\authors{OREN BECKER AND JONATHAN MOSHEIFF}
	\ifodd\value{page}
\authors
\shorttitle
\begin{document}

\global\long\def\NN{\mathbb{{N}}}
\global\long\def\ZZ{\mathbb{{Z}}}
\global\long\def\QQ{\mathbb{{Q}}}
\global\long\def\RR{\mathbb{{R}}}
\global\long\def\FF{\mathbb{{F}}}
\global\long\def\GL{\mathbb{\text{\mathrm{{GL}}}}}
\global\long\def\frakg{\mathfrak{g}}
\global\long\def\id{\mathrm{{id}}}
\global\long\def\Sym{\mathrm{{Sym}}}
\global\long\def\BS{\mathrm{{BS}}}
\global\long\def\Sub{\mathrm{{Sub}}}
\global\long\def\IRS{\mathrm{{IRS}}}
\global\long\def\RS{\mathrm{{RS}}}
\global\long\def\calX{\mathcal{{X}}}
\global\long\def\calG{\mathcal{{G}}}
\global\long\def\calC{\mathcal{{C}}}
\global\long\def\calD{\mathcal{{D}}}
\global\long\def\calM{\mathcal{{M}}}
\global\long\def\calL{\mathcal{{L}}}
\global\long\def\calB{\mathcal{{B}}}
\global\long\def\calE{\mathcal{{E}}}
\global\long\def\calT{\mathcal{{T}}}
\global\long\def\gen{\mathrm{{gen}}}
\global\long\def\ball{\mathrm{{ball}}}
\global\long\def\stat{\mathrm{{stat}}}
\global\long\def\Prob{\mathrm{{Prob}}}
\global\long\def\Stab{\mathrm{{Stab}}}
\global\long\def\Ker{\mathrm{{Ker}}}
\global\long\def\prodname{\mathrm{{prod}}}
\global\long\def\std{\mathrm{{std}}}
\global\long\def\BAD{\mathrm{{BAD}}}
\global\long\def\Tor{\mathrm{{Tor}}}
\global\long\def\rank{\mathrm{{rank}}}
\global\long\def\Box{\mathrm{{Box}}}
\global\long\def\cost{\mathrm{{cost}}}
\global\long\def\calR{{\mathcal{R}}}
\global\long\def\SNS{\mathrm{{SNS}}}
\global\long\def\eff{\mathrm{{eff}}}
\global\long\def\scar{\mathrm{{scar}}}
\global\long\def\lla{{\langle\!\langle}}
\global\long\def\rra{{\rangle\!\rangle}}
\global\long\def\Tietze{\mathrm{{Tietze}}}
\global\long\def\spn{\mathrm{{span}}}
\global\long\def\SR{\mathrm{{SR}}}
\global\long\def\Eq{\mathrm{{Eq}}}
\global\long\def\Ima{\mathrm{{Im}}}
\global\long\def\domain{\mathrm{{domain}}}
\global\long\def\defname{\mathrm{{def}}}

\title{Abelian groups are polynomially stable}
\author{
	Oren Becker
	\thanks{Department of Mathematics, Hebrew University, Jerusalem, Israel. e-mail: oren.becker@mail.huji.ac.il. Supported by the ERC.}
	\and
	Jonathan Mosheiff
	\thanks{Department of Computer Science, Hebrew University, Jerusalem, Israel. e-mail: yonatanm@cs.huji.ac.il. Supported by the Adams Fellowship Program of the Israel Academy of Sciences and Humanities.}}
\date{}

\maketitle
\begin{abstract}
In recent years, there has been a considerable amount of interest in stability of equations and their corresponding groups. Here, we initiate the systematic study of the quantitative aspect of this theory. We develop a novel method, inspired by the Ornstein-Weiss quasi-tiling technique, to prove that abelian groups are polynomially stable with respect to permutations, under the normalized Hamming metrics on the groups $\Sym(n)$. In particular, this means that there exists $D\geq1$ such that for $A,B\in \Sym(n)$, if $AB$ is $\delta$-close to $BA$, then $A$ and $B$ are $\epsilon$-close to a commuting pair of permutations, where $\epsilon\leq O\left(\delta^{1/D}\right)$. We also observe a property-testing reformulation of this result, yielding efficient testers for certain permutation properties.
\end{abstract}

\section{Introduction}

We begin with an informal presentation of a general framework that
originated in \cite{GR09}. Fix the normalized
Hamming metric over $\Sym\left(n\right)$ as a measure of proximity
between permutations. Let $\left(\sigma_{1},\dotsc,\sigma_{s}\right)$
be permutations in $\Sym\left(n\right)$. Suppose that $\left(\sigma_{1},\dotsc,\sigma_{s}\right)$
``almost'' satisfy a given finite system of equations $E$. Is it
necessarily the case that we can ``slightly'' modify each $\sigma_{i}$
to some $\tau_{i}\in\Sym\left(n\right)$, so that $\left(\tau_{1},\dotsc,\tau_{s}\right)$
satisfy $E$ \emph{exactly}? The answer depends on the system $E$.
In the case where $E$ is comprised of the single equation $XY=YX$,
it was shown in \cite{AP15} that the answer is positive. Simply put,
``almost commuting permutations are close to commuting permutations'',
where $\sigma_{1},\sigma_{2}$ are said to ``almost commute'' if
the permutations $\sigma_{1}\sigma_{2}$ and $\sigma_{2}\sigma_{1}$
are close. On the other hand, for $E$ that is the single equation
$XY^{2}=Y^{3}X$ the answer is negative \cite{GR09}.
In the language of \cite{GR09}, ``$XY=YX$ is stable, whereas
$XY^{2}=Y^{3}X$ is not''. 

The main novelty of this paper is that we provide the first results
in this area which are \emph{quantitative} and \emph{algorithmic}.
Let us illustrate this for the system $E=\left\{ XY=YX\right\} $.
We seek statements of the form ``if the distance between $\sigma_{1}\sigma_{2}$
and $\sigma_{2}\sigma_{1}$ is $\delta$, then there are two commuting
permutations $\tau_{1},\tau_{2}$ such that $\tau_{i}$ is at most
$f(\delta)$ away from $\sigma_{i}$ for $i=1,2$''. We stress that
$f=f_{E}$, called the \emph{stability rate} of $E$, depends solely
on $\delta$ but not $\sigma_{1},\sigma_{2}$ or $n$. Previously,
it was only shown that $\lim_{\delta\rightarrow0}f\left(\delta\right)=0$,
but nothing was said about the rate at which $f$ tends to zero with
$\delta$. This is precisely the kind of results that we achieve here.
By stating that our results are algorithmic, we mean that they provide
an explicit transformation of $\sigma_{1},\sigma_{2}$ into $\tau_{1},\tau_{2}$. 

More generally, one may consider the set of equations 
\[
E_{\text{comm}}^{d}=\left\{ X_{i}X_{j}=X_{j}X_{i}\mid1\leq i<j\leq d\right\} 
\]
and its stability rate function $f_{d}$. The paper \cite{AP15} shows
that $E_{\text{comm}}^{d}$ is stable, or, in terms of the framework of the present paper, that $\lim_{\delta\to0}f_{d}\left(\delta\right)=0$
for all $d\in\NN$. We give a stronger result by proving, in an algorithmic
manner, that 
\[
f_{d}\left(\delta\right)\le O\left(\delta^{\frac{1}{D}}\right)\,\,\text{,}
\]
where $D=D(d)\ge d$ is an explicitly given constant. We say that
``$E_{\text{comm}}^{d}$ is polynomially stable (with degree at most
$D$)''. Our proof employs novel elementary methods, and does not
rely on previous results concerning stability. Furthermore, we also
prove that 
\[
f_{d}\left(\delta\right)\ge\Omega\left(\delta^{\frac{1}{d}}\right)\,\,\text{.}
\]
It remains an open problem to close the gap between the bounds on
$f_{d}\left(\delta\right)$. 

Following \cite{GR09}, the basic observation
is that the stability of a set $E$ of equations is best studied in
terms of the group presented by taking $E$ as relators, and that
stability is a \emph{group invariant} \cite{AP15}. We show that the stability
rate is a group invariant as well, and then use properties of the
group $\ZZ^{d}$ to study the quantitative stability of the equations
$E_{\text{comm}}^{d}$. More specifically, our proof of the aforementioned
upper bound is based on a tiling procedure, in the spirit of Ornstein-Weiss
quasi-tiling for amenable groups \cite{ow} (in the setting of group actions which are not necessarily free). The fact that we work with abelian groups (rather than more general amenable groups) enables
a highly efficient tiling procedure via an original application of \emph{reduction
theory} of lattices in $\ZZ^{d}$. Our use of reduction theory is made through a theorem of Lagarias, Lenstra and Schnorr \cite{LLS90} regarding Korkin-Zolotarev bases \cite{Lag95}.

Finally, we examine a connection to the topic of \emph{property testing}
in computer science by rephrasing some of the above notions in terms of a certain {\em canonical testing algorithm} (this connection, in the non-quantitative setting, is the subject of \cite{BLtesting}). In particular, we show that our result yields an {\em efficient} algorithm to test whether a given tuple of permutations in $\Sym\left(n\right)$ satisfies the equations $E_\text{comm}^d$.

The rest of the introduction is organized as follows. Section \ref{subsec:ReviewOfStability}	
paraphrases definitions and results from the theory of stability in
permutations. In Section \ref{subsec:QuantitativeStabilityIntro},
we define the new, more delicate, notion of quantitative stability\emph{,
}and state our main theorems. Section \ref{subsec:PropertyTesting}
explores the connection between stability and property testing\emph{,}
and Section \ref{subsec:PreviousWork} discusses previous work\emph{.} 

\subsection{\label{subsec:ReviewOfStability}A review of stability in permutations}

For a set $S$, we write $\mathbb{F}_{S}$ for the free group based
on $S$. We also write $S^{-1}\subseteq\FF_{S}$ for the set of inverses
of elements of $S$ and $S^{\pm}=S\cup S^{-1}$. Fix a finite\emph{
set of variables} $S=\left\{ s_{1},\dotsc,s_{m}\right\} $. An \emph{$S$-assignment
}is a function $\Phi:S\to\Sym(n)$ for some positive integer $n$,
i.e., we assign permutations to the variables, all of which are in
$\Sym(n)$ for the same $n$. We naturally extend $\Phi$ to the domain
$\mathbb{F}_{S}$ via $\Phi(s^{-1})=\Phi(s)^{-1}$ if $s\in S$, and
$\Phi(w_{1}\cdots w_{t})=\Phi(w_{1})\cdots\Phi(w_{t})$ if $w_{1},\ldots,w_{t}\in S^{\pm}$.
In other words, an $S$-assignment $\Phi$ is also regarded as a homomorphism
from $\mathbb{F}_{S}$ to a finite symmetric group. Hence, $\Phi$
naturally describes a group action of $\mathbb{F}_{S}$ on the finite
set $[n]=\left\{1,\ldots,n\right\}$.

An $S$-\emph{equation-set} (or equation-set over the set $S$ of
variables) is a finite\footnote{See \cite{BLT18} for an extension of these definitions to infinite
equation-sets.} set $E\subseteq\FF_{S}$. An assignment $\Phi$ is said to be an
\emph{$E$-solution} if $\Phi(w)$ is the identity permutation $1=1_{\Sym(n)}$
for every $w\in E$. Equivalently, $\Phi$ is an $E$-solution if,
when regarded as a homomorphism $\mathbb{F}_{S}\to\Sym(n)$, it factors
through the quotient map $\pi:\mathbb{F}_{S}\to\mathbb{F}_{S}/\lla E\rra$.
Namely, if $\Phi=g\circ\pi$ for some homomorphism $g:\mathbb{F}_{S}/\lla E\rra\to\Sym(n)$.
Here and throughout, $\lla E\rra$ denotes the normal closure of a
subset $E$ of a group (the containing group will always be clear
from context).
\begin{example}
To relate these definitions to the canonical example of almost commuting
pairs of permutation, let $S=\{s_{1},s_{2}\}$ and $E=\{s_{1}s_{2}s_{1}^{-1}s_{2}^{-1}\}$,
and consider an $S$-assignment $\Phi$. Note that $\Phi(s_{1})$
and $\Phi(s_{2})$ commute if and only if $\Phi$ is an $E$-solution.
It may be helpful to keep this example in mind when reading up to
the end of Section \ref{subsec:QuantitativeStabilityIntro}. 
\end{example}
\begin{defn}
For $n\in\NN$, the \emph{normalized Hamming distance} $d_{n}$ on
$\Sym\left(n\right)$ is defined by $d_{n}\left(\sigma_{1},\sigma_{2}\right)=\frac{1}{n}\left|\left\{ x\in\left[n\right]\mid\sigma_{1}\left(x\right)\neq\sigma_{2}\left(x\right)\right\} \right|$
for $\sigma_{1},\sigma_{2}\in\Sym\left(n\right)$. Also, if $\Phi,\Psi$
are assignments over some finite set of variables $S$, we define
$d_{n}(\Phi,\Psi)=\sum_{s\in S}d_{n}(\Phi(s),\Psi(s))$. 
\end{defn}
Note that $d_{n}$ is bi-invariant, namely, $d_{n}(\tau\cdot\sigma_{1}\cdot\upsilon,\tau\cdot\sigma_{2}\cdot\upsilon)=d_{n}(\sigma_{1},\sigma_{2})$
for every $\tau,\upsilon\in\Sym(n)$. In particular, $d_{n}(\sigma_{1},\sigma_{2})=d_{n}(\sigma_{1}\cdot\sigma_{2}^{-1},1)$.

Given an $S$-equation-set $E$ and an $S$-assignment $\Phi$, one
may ask how close $\Phi$ is to being an $E$-solution. This notion
can be interpreted in two different ways. Locally, one can count the
points in $[n]=\{1,\ldots,n\}$ on which $\Phi$ violates the equation-set
$E$. Taking a global view, one measures how much work, that is, change
of permutation entries, is needed to ``correct'' $\Phi$ into an
$E$-solution. We proceed to define the \emph{local defect} and the
\emph{global defect} of an assignment.
\begin{defn}
Fix a finite set of variables $S$
and an $S$-equation-set $E$, and let $\Phi:S\to\Sym(n)$ be an $S$-assignment.
\begin{enumerate}
\item The \emph{local defect} of $\Phi$ with respect to $E$ is 
\[
L_{E}(\Phi)=\sum_{w\in E}d_{n}\left(\Phi(w),1\right)=\frac{1}{n}\sum_{w\in E}\left|\{x\in[n]\mid\Phi(w)(x)\ne x\}\right|.
\]
\item The \emph{global defect} of $\Phi$ with respect to $E$ is 
\[
G_{E}(\Phi)=\min\left|\left\{ d_{n}(\Phi,\Psi)\mid\Psi:S\to\Sym(n)\text{ is an \ensuremath{E}-solution}\right\} \right|.
\]
\end{enumerate}
\end{defn}
Note that $L_{E}(\Phi)$ and $G_{E}(\Phi)$ are in the ranges $[0,\left|E\right|]$
and $[0,|S|],$ respectively. Also, the conditions (i) $\Phi$ is
an $E$-solution, (ii) $L_{E}(\Phi)=0$ and (iii) $G_{E}(\Phi)=0$,
are all equivalent.

We wish to study the possible divergence between these two types of
defect. It is not hard to see (Lemma \ref{lem:local-defect-of-close-actions}) that $G_{E}(\Phi)\ge\Omega(L_{E}(\Phi))$.
Indeed, changing a single entry in one permutation $\Phi\left(s\right)$,
resolves, at best, the violation of $E$ at a constant number of points
$x\in\left[n\right]$. In this work we seek a reverse inequality,
i.e., an upper bound on the global defect in terms of the local defect.
\begin{defn}
Let $E$ be an $S$-equation-set. We define its \emph{stability rate
}$\SR_{E}:(0,\left|E\right|]\to[0,\infty)$ by \label{def:EquationSetStabilityRate}
\begin{align*}
\SR_{E}(\delta)=\sup\{G_{E}(\Phi)\mid&\Phi:S\to\Sym(n),\text{ where }n\in\NN \text{ and }L_{E}(\Phi)\le\delta\}\,\,\text{.}
\end{align*}
\end{defn}
Note that $\SR_{E}(\delta)$ is a monotone nondecreasing function. 
\begin{defn}\cite{GR09} If $\lim_{\delta\to0}\SR_{E}(\delta)=0$ then $E$
is said to be \emph{stable (in permutations)}. 
\end{defn}
The theory of equation-sets in permutations is related to group theory
via a key observation, given below as Proposition \ref{prop:StabilityIsGroupInvariant}.
\begin{defn}
Let $E_{1}$ and $E_{2}$ be equation-sets over the respective finite
sets of variables $S_{1}$ and $S_{2}$. If the groups $\mathbb{F}_{S_{1}}/\lla E_{1}\rra$
and $\mathbb{F}_{S_{2}}/\lla E_{2}\rra$ are isomorphic then $E_{1}$
and $E_{2}$ are called \emph{equivalent}.
\end{defn}
\begin{prop}{\cite{AP15}}
Let $E_{1}$ and $E_{2}$ be equivalent equation-sets.
Then $E_{1}$ is stable if and only if $E_{2}$ is stable. \label{prop:StabilityIsGroupInvariant}
\end{prop}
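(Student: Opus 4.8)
The plan is to show that stability is preserved under the equivalence relation on equation-sets, by reducing to a concrete translation of assignments between the two variable sets. Since $\mathbb{F}_{S_1}/\lla E_1\rra \cong \mathbb{F}_{S_2}/\lla E_2\rra =: G$, fix an isomorphism and lift it: for each generator $s \in S_1$, choose a word $u_s \in \mathbb{F}_{S_2}$ whose image in $G$ equals the image of $s$; conversely, for each $t \in S_2$, choose a word $v_t \in \mathbb{F}_{S_1}$ representing the preimage of $t$'s image. These finitely many words are fixed once and for all, so their lengths are bounded by some constant $C$ depending only on $E_1$ and $E_2$. Given an $S_1$-assignment $\Phi$, define an $S_2$-assignment $\Phi'$ by $\Phi'(t) = \Phi(v_t)$ (composition of at most $C$ permutations and their inverses). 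The key points are (a) if $\Phi$ is an $E_1$-solution then $\Phi'$ is an $E_2$-solution, because $\Phi$ then factors through $G$ and the $v_t$ were chosen to respect the isomorphism, and (b) the assignment $\Phi'$ is ``close'' to $\Phi$ in the sense that any modification of $\Phi'$ into an $E_2$-solution $\Psi'$ can be pulled back, via the reverse words $u_s$, into a modification of $\Phi$ into an $E_1$-solution $\Psi$, at a cost inflated by at most a constant factor.

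Concretely, I would prove the following \textbf{Distortion Lemma}: there is a constant $C = C(E_1, E_2)$ such that for every $S_1$-assignment $\Phi$ on $\Sym(n)$, one has $L_{E_2}(\Phi') \le C \cdot L_{E_1}(\Phi)$ and $G_{E_1}(\Phi) \le C \cdot G_{E_2}(\Phi')$ (and symmetrically with the roles of $1$ and $2$ swapped). For the first inequality: each relator $w \in E_2$, when we substitute $t \mapsto v_t$, becomes a word in $\mathbb{F}_{S_1}$ that lies in $\lla E_1 \rra$ (because it maps to the identity in $G$), hence can be written as a product of $k_w$ conjugates of elements of $E_1^{\pm}$; therefore the set of points moved by $\Phi'(w) = \Phi(w[t \mapsto v_t])$ is contained in the union of the sets of points moved by those conjugates, whose sizes are controlled (using bi-invariance of $d_n$) by $\sum_{w' \in E_1} d_n(\Phi(w'), 1) \cdot (\text{number of occurrences})$. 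Summing over $w \in E_2$ gives the bound with $C$ absorbing all the combinatorial constants. For the second inequality: given an optimal $E_2$-solution $\Psi'$ with $d_n(\Phi', \Psi') = G_{E_2}(\Phi')$, I lift it back by setting $\Psi(s) = \Psi'(u_s)$ for $s \in S_1$; one checks $\Psi$ is an $E_1$-solution (same factoring argument), and then bounds $d_n(\Phi(s), \Psi(s))$. Here one must be slightly careful: $\Phi(s)$ is not literally $\Phi'(u_s)$, but the word $u_s$ was chosen so that $s$ and $u_s[t \mapsto v_t]$ represent the same element of $G$, i.e., $s \cdot (u_s[t\mapsto v_t])^{-1} \in \lla E_1 \rra$, so the discrepancy between $\Phi(s)$ and $\Phi(u_s[t \mapsto v_t])$ is again controlled by $L_{E_1}(\Phi)$; combining this with $d_n(\Phi(u_s[t\mapsto v_t]), \Psi'(u_s)) \le C \cdot d_n(\Phi', \Psi') $ (a telescoping/triangle-inequality estimate using that $u_s$ has length at most $C$) yields the claim.

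Granting the Distortion Lemma, the theorem follows immediately: suppose $E_2$ is stable, so $\SR_{E_2}(\delta) \to 0$ as $\delta \to 0$. Let $\Phi$ be any $S_1$-assignment with $L_{E_1}(\Phi) \le \delta$. Then $L_{E_2}(\Phi') \le C\delta$, so $G_{E_2}(\Phi') \le \SR_{E_2}(C\delta)$, and hence $G_{E_1}(\Phi) \le C \cdot \SR_{E_2}(C\delta)$. Taking the supremum over such $\Phi$ gives $\SR_{E_1}(\delta) \le C \cdot \SR_{E_2}(C\delta) \to 0$ as $\delta \to 0$, so $E_1$ is stable. The converse is symmetric. (In fact this argument shows more than stated — it shows the stability \emph{rates} are equivalent up to the constant $C$, which is exactly what the paper will want later; but for the present proposition only the qualitative conclusion is needed.)

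The main obstacle is bookkeeping the constants cleanly in the Distortion Lemma, in particular making sure the ``pullback of a correction'' step is done correctly: one is tempted to think $\Psi$ defined by $\Psi(s) = \Psi'(u_s)$ is automatically an $E_1$-solution and automatically close to $\Phi$, but both facts genuinely use the chosen isomorphism and the word-length bounds, and conflating $\Phi(s)$ with $\Phi'(u_s) = \Phi(u_s[t\mapsto v_t])$ (which differ by something in $\lla E_1\rra$, not by the identity unless $\Phi$ is already a solution) is the subtle point that must be handled with the extra $L_{E_1}(\Phi)$-term. Everything else — the factoring-through-$G$ characterization of solutions, the bi-invariance estimates on how many points a product of conjugates can move — is routine.
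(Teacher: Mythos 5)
Your proposal is correct and follows essentially the same route as the paper's proof of the quantitative Proposition \ref{prop:StabilityRateIsAGroupInvariant}, which subsumes the present proposition: your word-substitutions $t\mapsto v_t$ and $s\mapsto u_s$ are exactly the paper's lifted homomorphisms $\lambda_2:\FF_{S_2}\to\FF_{S_1}$ and $\lambda_1:\FF_{S_1}\to\FF_{S_2}$, and your three estimates (product of conjugates of relators, telescoping via bi-invariance, discrepancy controlled by $L_{E_1}(\Phi)$) correspond to Lemmas \ref{lem:d_n(Phi(w),1)}, \ref{lem:d_n(Phi(w),Psi(w))} and \ref{lem:d_n(Phi lambda, Phi)}. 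One small imprecision worth noting: the Distortion Lemma as you state it, $G_{E_1}(\Phi)\le C\cdot G_{E_2}(\Phi')$, cannot hold without an additive $C\cdot L_{E_1}(\Phi)$ term on the right --- your own detailed argument does produce this term, but the stated inequality as written fails, e.g.\ when $E_2=\emptyset$ (so $G_{E_2}\equiv 0$) and $E_1$ is a nontrivial equation-set presenting the same free group --- and this is precisely the phenomenon that forces the $+C\delta$ summand in the paper's Definition \ref{def:FunctionEquivalence} (see Remark \ref{rem:whats-this-Cdelta}).
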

Proposition \ref{prop:StabilityIsGroupInvariant} allows us to regard
stability as a group invariant:
\begin{defn}\cite{AP15}
Let $\Gamma$ be a finitely presented group. That is, $\Gamma\cong\mathbb{\mathbb{F}}_{S}/\lla E\rra$
for some finite sets $S$ and $E\subseteq\mathbb{\mathbb{F}}_{S}$.
Then, $\Gamma$ is called \emph{stable} if $E$ is a stable equation-set.
\end{defn}
This definition enables us to apply the properties of a group in order
to study the stability of its defining set of equations. See Section
\ref{subsec:PreviousWork} for previous results obtained by this method.
The following result is of particular interest to us in the context
of this work.
\begin{thm}
\cite{AP15} Every finitely generated abelian group is stable.\label{thm:AbelianGroupsAreStable}
\end{thm}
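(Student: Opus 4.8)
The plan is to deduce this from the group‑invariance of stability (Proposition~\ref{prop:StabilityIsGroupInvariant}) together with two basic building blocks: (a) the free abelian group $\ZZ^{d}$ is stable for every $d$, and (b) every finite cyclic group is stable, glued together by an argument handling the interaction of the free and torsion parts. Concretely, write a given finitely generated abelian group as $\Gamma\cong\ZZ^{d}\oplus\ZZ/m_{1}\oplus\dotsb\oplus\ZZ/m_{r}$, present it in the standard way by generators $x_{1},\dotsc,x_{d},y_{1},\dotsc,y_{r}$ with relators consisting of all commutators among the generators together with $y_{j}^{m_{j}}$ for $1\le j\le r$; by Proposition~\ref{prop:StabilityIsGroupInvariant} it suffices to bound $G_{E}(\Phi)$ in terms of $L_{E}(\Phi)$ for this one presentation.

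The heart of the matter is the case $\Gamma=\ZZ^{2}$, i.e. the assertion that almost commuting permutations are near commuting permutations. Given $A,B\in\Sym(n)$ with $d_{n}([A,B],1)=\delta$, one has $d_{n}(BAB^{-1},A)=\delta$, so conjugation by $B$ carries the cycle‑partition of $A$ to itself up to an error of size $O(\delta n)$; after altering $A$ and $B$ on a set of size $\epsilon(\delta)\cdot n$, where $\epsilon(\delta)\to 0$ as $\delta\to 0$, one may assume $B$ \emph{exactly} permutes the cycles of $A$ and preserves their lengths. Now decompose $[n]$ into the $B$‑orbits of $A$‑cycles. On such an orbit $Y$ — a union of $k$ cycles of $A$ of some common length $\ell$ — the group $\langle A,B\rangle$ acts transitively, and this action is a small perturbation of a genuine \emph{transitive} action of $\ZZ^{2}$; since every transitive action of $\ZZ^{2}$ on a finite set is the coset action on $\ZZ^{2}/\Lambda$ for a finite‑index subgroup $\Lambda$, one can correct $B|_{Y}$ so that $A|_{Y}$ and $B|_{Y}$ commute, at a cost controlled by the local defect restricted to $Y$. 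Summing over all orbits $Y$ produces a commuting pair $(A',B')$ with $d_{n}(A,A')+d_{n}(B,B')$ bounded by a function of $\delta$ that tends to $0$ with $\delta$. I expect this to be the main obstacle: the delicate point is to make all of these corrections uniform, so that the final bound depends only on $\delta$ and not on $n$, on $A,B$, or on the (unbounded) cycle lengths and orbit sizes involved.

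Granting the base case, the general case follows by two reductions. For $\ZZ^{d}$, induct on $d$: given pairwise almost‑commuting $A_{1},\dotsc,A_{d+1}$, apply the inductive hypothesis to correct $A_{1},\dotsc,A_{d}$ to honestly commuting permutations generating an abelian subgroup $H\le\Sym(n)$, and then rerun the $\ZZ^{2}$‑argument with the single permutation $A_{d+1}$ in the role of $B$ and the $H$‑orbits in the role of the cycles of $A$. For the torsion generators, note that if $\sigma^{m}$ is $\delta$‑close to $1$ then every point outside a set of size $\le\delta n$ lies on a $\sigma$‑cycle whose length divides $m$, and that this set is invariant under any permutation commuting with $\sigma$. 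Hence, after first making $A_{1},\dotsc,A_{d},\sigma_{1},\dotsc,\sigma_{r}$ pairwise commute — the $\ZZ^{d+r}$ problem, solved as above — one may redefine each $\sigma_{j}$ to be the identity on the corresponding small invariant set to obtain permutations that still commute and now satisfy $\sigma_{j}^{m_{j}}=1$; assembling this into an $E$‑solution and adding up the error terms shows $\SR_{E}(\delta)\to 0$ as $\delta\to 0$, which is the assertion of Theorem~\ref{thm:AbelianGroupsAreStable}. Apart from the base case, the technical points to watch are performing the successive corrections in a compatible order and arranging that the removed sets are invariant, so that repairing one relator does not reintroduce violations of another.
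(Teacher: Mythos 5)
The paper does not actually reprove this statement; it cites \cite{AP15} for it and goes on to establish the stronger Theorem \ref{thm:AbelianGroupsArePolynomiallyStable} by an entirely different method: an Ornstein--Weiss-style quasi-tiling of the action graph, with tiles built from Korkin--Zolotarev-reduced bases of sublattices of $\ZZ^m$ (Tools A, B, C of Sections \ref{subsec:ToolC}--\ref{subsec:ToolA} and the algorithm of Section \ref{subsec:TilingAlgorithm}). Your cycle-structure approach is closer in spirit to \cite{AP15} itself, and the paper explicitly remarks that that route is not algorithmic and would perform poorly quantitatively --- which is precisely why a different method is used here.

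Judged on its own terms, your sketch has a real gap at the place you yourself call ``the main obstacle.'' The step ``one may assume $B$ exactly permutes the cycles of $A$ and preserves their lengths,'' together with the subsequent orbit-by-orbit correction of $B$, is asserted rather than proved, and it is essentially the whole content of the $\ZZ^2$ case in \cite{AP15}. In particular, the local defect does not by itself control the cost of aligning $B$ with the cyclic structure of $A$. For instance, let $A$ be a single $\ell$-cycle and let $B(x)=x+c(x)$ on $\ZZ/\ell\ZZ$, where $c$ is a step function with $\Theta(\sqrt{\ell})$ level sets of length $\Theta(\sqrt{\ell})$, chosen so that $B$ is a bijection. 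Then $[A,B]$ fixes $x$ iff $c$ does not jump at $B^{-1}(x)$, so $[A,B]$ moves only $\Theta(\sqrt{\ell})$ of the $\ell$ points; yet replacing $B$ by an element of the centralizer of $A$ --- necessarily a power of $A$ --- costs roughly $\ell$ changes if $A$ is kept fixed. One is therefore forced to also modify $A$ (e.g.\ breaking long cycles at the jumps of $c$) and to show that these modifications can be carried out simultaneously across orbits, across all $d$ generators in your induction, and compatibly with the torsion relators, with total cost $o(n)$ uniformly in $n$ and in the cycle-length data. That argument is the missing core. The final torsion-killing reduction is fine in outline once the torsion-free case is in hand (compare Proposition \ref{prop:quotient-by-fg}), but it inherits the unproved base case.
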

The proof of Theorem \ref{thm:AbelianGroupsAreStable} in \cite{AP15}
is not algorithmic. That is, it does not describe an explicit transformation that maps an assignment with small local defect to a nearby $E$-solution.
With some work, it is possible in principle to extract such an algorithm
from the idea presented in \cite{AP15}, by looking further into the
proof a theorem of Elek and Szabo \cite{ES11} used there. However,
such an approach would perform poorly in the quantitative sense described
below.

\subsection{Quantitative stability\label{subsec:QuantitativeStabilityIntro}}

We turn to discuss the main new definitions introduced in this work,
which deals with quantitative stability. The basis for quantitative
stability is a stronger version of Proposition \ref{prop:StabilityIsGroupInvariant},
given in Proposition \ref{prop:StabilityRateIsAGroupInvariant} below.
Namely, we show that stability can be refined, yet remain a group
invariant, by considering the \emph{rate} at which $\SR_{E}$ converges to
$0$ as $\delta\to0$. We now make this claim precise.
\begin{defn}
Let $F_{1},F_{2}:(0,\left|E\right|]\to[0,\infty)$ be monotone nondecreasing
functions. Write $F_{1}\sim F_{2}$ if $F_{1}(\delta)\le F_{2}(C\delta)+C\delta$
and $F_{2}(\delta)\le F_{1}(C\delta)+C\delta$ for some $C>0$. Let
$[F_{1}]$ denote the class of $F_{1}$ with regard to this equivalence
relation. \label{def:FunctionEquivalence}
\end{defn}
The reason for the introduction of the $C\delta$ summand in the above
definition is elaborated upon in Remark \ref{rem:whats-this-Cdelta}.
\begin{prop}
For every equivalent pair of equation-sets $E_{1}$ and $E_{2}$ we
have $\SR_{E_{1}}\sim\SR_{E_{2}}$. \label{prop:StabilityRateIsAGroupInvariant}
\end{prop}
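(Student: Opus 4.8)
The plan is to transfer the isomorphism $\mathbb{F}_{S_1}/\lla E_1\rra \cong \mathbb{F}_{S_2}/\lla E_2\rra$ down to the level of free groups in a bounded way, and then track how this affects local and global defects. Concretely, let $\Gamma = \mathbb{F}_{S_1}/\lla E_1\rra \cong \mathbb{F}_{S_2}/\lla E_2\rra$. Since $\Gamma$ is finitely presented in two ways, a standard Tietze-transformation argument produces words: for each variable $s\in S_2$ a word $u_s\in\mathbb{F}_{S_1}$ whose image in $\Gamma$ corresponds to the image of $s$, and symmetrically words $v_t\in\mathbb{F}_{S_2}$ for $t\in S_1$. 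These words have some fixed maximal length $\ell$ depending only on $E_1,E_2$ and the chosen isomorphism, not on $n$ or on any assignment. Moreover, the compatibility relations — that substituting $v_t$ for $t$ into $u_s$ recovers $s$ modulo $\lla E_2\rra$, and each relator of $E_2$ becomes trivial after substituting the $u_s$'s into it modulo $\lla E_1\rra$ — all hold in the free groups up to multiplication by a bounded product of conjugates of relators. All of this bookkeeping depends only on $E_1$ and $E_2$.

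The main construction is then: given an $S_1$-assignment $\Phi_1\colon S_1\to\Sym(n)$, define the $S_2$-assignment $\Phi_2$ by $\Phi_2(s) = \Phi_1(u_s)$. First I would show $L_{E_2}(\Phi_2)\le C\cdot L_{E_1}(\Phi_1)$ for a constant $C=C(E_1,E_2)$: each relator $w\in E_2$, after the substitution, becomes a fixed word in the $u_s$'s, hence a fixed word $w'\in\mathbb{F}_{S_1}$ that equals a bounded product of conjugates of elements of $E_1^{\pm}$; since $d_n(\cdot,1)$ is a bi-invariant pseudo-length, $d_n(\Phi_1(w'),1)$ is at most the sum of the $d_n(\Phi_1(r),1)$ over the relators $r$ appearing, which is $\le C\cdot L_{E_1}(\Phi_1)$. (Here one uses that conjugation does not change $d_n(\cdot,1)$ and that $d_n(xy,1)\le d_n(x,1)+d_n(y,1)$.) Symmetrically, define a reverse substitution sending $S_2$-assignments to $S_1$-assignments via the $v_t$'s.

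Next I would handle the global defect. Given $\Phi_1$ with $L_{E_1}(\Phi_1)\le\delta$, let $\Psi_1$ be an $E_1$-solution achieving $G_{E_1}(\Phi_1)=d_n(\Phi_1,\Psi_1)$. Pushing $\Psi_1$ forward via the $u_s$'s gives an assignment $\Psi_2$; one checks $\Psi_2$ is an $E_2$-solution — because $\Psi_1$ factors through $\Gamma$, so does $\Psi_2$ — and that $d_n(\Phi_2,\Psi_2)=\sum_{s\in S_2} d_n(\Phi_1(u_s),\Psi_1(u_s))\le \ell\cdot d_n(\Phi_1,\Psi_1) \le \ell\cdot G_{E_1}(\Phi_1)$, using that for a word of length $\le\ell$, $d_n(\Phi_1(u),\Psi_1(u))\le \sum$ of the per-letter distances by bi-invariance and the triangle inequality, which telescopes to at most $\ell\cdot d_n(\Phi_1,\Psi_1)$ — actually more carefully to $\sum_{t} (\text{occurrences of }t\text{ in }u_s)\, d_n(\Phi_1(t),\Psi_1(t))$. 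Summing, $G_{E_2}(\Phi_2)\le \ell'\cdot G_{E_1}(\Phi_1)$. Combining with the previous paragraph: any $\Phi_2$ with $L_{E_2}(\Phi_2)\le\delta$ arises (up to nothing — we apply the construction directly to $\Phi_2$, producing an $S_1$-assignment $\Phi_1$ with $L_{E_1}(\Phi_1)\le C\delta$, hence $G_{E_1}(\Phi_1)\le \SR_{E_1}(C\delta)$), and then I must recover control of $G_{E_2}(\Phi_2)$ itself from $G_{E_1}(\Phi_1)$ — this requires one more step: the round-trip $S_2\to S_1\to S_2$ composition of the two substitutions is not the identity on $\Phi_2$, but the compatibility relations force $\Phi_2^{\mathrm{round}}(s)=\Phi_2(\text{a word equal to }s\bmod\lla E_2\rra)$, so $d_n(\Phi_2^{\mathrm{round}},\Phi_2)\le C'\cdot L_{E_2}(\Phi_2)\le C'\delta$. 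Hence $G_{E_2}(\Phi_2)\le d_n(\Phi_2,\Phi_2^{\mathrm{round}}) + G_{E_2}(\Phi_2^{\mathrm{round}}) \le C'\delta + \ell'\,\SR_{E_1}(C\delta)$, giving $\SR_{E_2}(\delta)\le \SR_{E_1}(C\delta)+C\delta$ after enlarging $C$. By symmetry the reverse inequality holds, so $\SR_{E_1}\sim\SR_{E_2}$.

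The step I expect to be the main obstacle is precisely this round-trip discrepancy — the reason the $C\delta$ summand appears in Definition \ref{def:FunctionEquivalence} in the first place. The composition of the two Tietze substitutions replaces each variable $s$ by a word that represents the same element of $\Gamma$ but is not literally $s$ in $\mathbb{F}_{S_2}$, and this introduces an error governed by how badly $\Phi_2$ fails to be an $E_2$-solution, i.e.\ by $L_{E_2}(\Phi_2)\le\delta$. Making this quantitative — writing the relevant element of $\lla E_2\rra$ explicitly as a bounded-length product of conjugates of relators, with the bound independent of $n$ and of the assignment — is where the care is needed; the rest is routine manipulation of the bi-invariant metric via the inequalities $d_n(xy,1)\le d_n(x,1)+d_n(y,1)$ and $d_n(gxg^{-1},1)=d_n(x,1)$.
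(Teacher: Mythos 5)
Your proposal is correct and takes essentially the same route as the paper: you fix homomorphisms between the free groups that intertwine the quotient maps to $\Gamma$ (the paper's $\lambda_1,\lambda_2$, your $u$- and $v$-substitutions), translate the assignment, use bounded-length expressions of relators to control the local defect, transfer an exact solution back, and absorb the round-trip discrepancy via a bound proportional to the local defect — precisely the role of the $C\delta$ summand, which you identify correctly. Your reorganization (starting from $\Phi_2$ and going out-and-back, versus the paper's one-way transfer of $\Phi_1$ followed by a pullback) is cosmetic, and your intermediate estimate $G_{E_2}(\Phi_1\circ\lambda_2)\le\ell'\cdot G_{E_1}(\Phi_1)$ is a repackaging of the paper's Lemma \ref{lem:d_n(Phi(w),Psi(w))} step.
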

We prove Proposition \ref{prop:StabilityRateIsAGroupInvariant} in
Section \ref{sec:StabilityRateIsAGroupInvariant}. This proposition allows us
to define the \emph{stability rate} of a group through the stability rate of a corresponding equation-set (Definition \ref{def:EquationSetStabilityRate}).
\begin{defn}
Let $E$ be an $S$-equation-set. The \emph{stability rate }$\SR_{\Gamma}$
of the group $\Gamma=\mathbb{\mathbb{F}}_{S}/\lla E\rra$ is the equivalence class
$[\SR_{E}]$. \label{def:GroupStabilityRate}
\end{defn}
Our goal in this paper is to show that for an abelian group $\Gamma$,
not only does the stability rate converge to $0$ as $\delta\to0$,
but this convergence is \emph{fast}. This claim can be made precise
as follows.
\begin{defn}
\label{def:degree}Let $F:(0,\left|E\right|]\to[0,\infty)$. We define
the \emph{degree} of $F$ by
\begin{equation}
\deg(F)=\inf\left\{ k\ge1\mid F(\delta)\le\underset{\delta\to0}{O}\left(\delta^{\frac{1}{k}}\right)\right\} .\label{eq:FunctionDegree}
\end{equation}
It is possible that $\deg(F)=\infty$. Also, let $\deg([F])=\deg(F)$.
\end{defn}
\begin{rem}
Note that $\deg([F])$ is well-defined. Indeed, if $F_{1}\sim F_{2}$
and $F_{1}(\delta)\le\underset{\delta\to0}{O}(\delta^{\frac{1}{m}})$
for $m\ge1$, then $F_{2}(\delta)\le F_{1}(C\delta)+C\delta\le\underset{\delta\to0}{O}(\delta^{\frac{1}{m}})$
as well.
\end{rem}
\begin{defn}
In the notation of Definition \ref{def:GroupStabilityRate}, the \emph{degree
of polynomial stability} of both $E$ and $\Gamma$ is defined to
be $D=\deg([\SR_{E}])=\deg\left(\SR_{\Gamma}\right)$. If $D<\infty$,
we say that $E$ and $\Gamma$ are \emph{polynomially stable}. 
\end{defn}
We can now state our main theorem.
\begin{thm}
Every finitely generated abelian group is polynomially stable. \label{thm:AbelianGroupsArePolynomiallyStable}
\end{thm}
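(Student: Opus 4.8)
By Proposition \ref{prop:StabilityRateIsAGroupInvariant} and the fact that degree is well-defined on equivalence classes, it suffices to exhibit, for each $d\in\NN$, a single presentation of $\ZZ^{d}$ whose equation-set $E_{\text{comm}}^{d}$ has finite degree, and then to reduce an arbitrary finitely generated abelian group $\Gamma\cong\ZZ^{d}\times(\text{finite})$ to this case (a finite group is presented by a finite equation-set, and one checks directly — or via known finite-group stability together with a product/extension lemma — that adjoining a finite factor changes the degree by a bounded amount; alternatively one handles $\ZZ^{d}\times\ZZ/m_{1}\times\cdots$ uniformly). So the heart of the matter is the single statement: there is $D=D(d)$ with $\SR_{E_{\text{comm}}^{d}}(\delta)\le O(\delta^{1/D})$. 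The plan is to prove this by a \emph{tiling} argument, following the Ornstein--Weiss philosophy but exploiting that the acting group is $\ZZ^{d}$ rather than a general amenable group.

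Here is the skeleton. Given an $S$-assignment $\Phi\colon S\to\Sym(n)$ with $S=\{s_{1},\dots,s_{d}\}$ and local defect $L_{E}(\Phi)\le\delta$, we regard $\Phi$ as an action of the free group $\FF_{S}$ on $[n]$ in which each commutator $[\Phi(s_{i}),\Phi(s_{j})]$ moves only a $\delta$-fraction of points. First I would pass to a large subset $X_{0}\subseteq[n]$, of measure $1-O(\delta^{c})$ for a suitable $c$, on which a large ``ball'' of the action is genuinely abelian: concretely, for a radius $R=R(\delta)$ growing like $\delta^{-c'}$, the set of $x$ such that all relations $[\prod s_{i}^{a_{i}},\prod s_{i}^{b_{i}}]$ with $\|a\|,\|b\|\le R$ fix $x$ has measure $\ge 1-O(R^{2d}\delta)$; choosing $R\approx\delta^{-1/(3d)}$ (say) keeps this $\to 1$. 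On $X_{0}$ the $\ZZ^{d}$-action is ``locally free and abelian up to radius $R$''. Now comes the tiling: I want to partition (most of) $X_{0}$ into pieces, each of which is an honest orbit-box for a finite-index sublattice $\Lambda\le\ZZ^{d}$ — i.e. a fundamental-domain-shaped chunk $\{\,\Phi(\prod s_{i}^{a_{i}})x : a\in \mathcal{F}\,\}$ where $\mathcal{F}$ is a fundamental domain for $\Lambda$ contained in the radius-$R$ ball, and where $\Phi(\prod s_{i}^{\lambda_{i}})$ returns $x$ to itself for the generators $\lambda$ of $\Lambda$. On each such box one can then \emph{redefine} $\Phi$ to a genuinely $\ZZ^{d}$-equivariant action (literally transport the model action of $\ZZ^{d}$ on $\ZZ^{d}/\Lambda$), at a cost bounded by the measure of the box boundary plus the leftover. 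Summing over boxes and leftover gives $G_{E}(\Phi)\le O(\delta^{1/D})$.

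The step I expect to be the main obstacle — and the reason reduction theory of lattices enters — is producing the tiling \emph{efficiently}, i.e. with polynomially small total error. The naive Ornstein--Weiss quasi-tiling gives tiles of wildly varying shapes and an error that decays only like $1/\log(1/\delta)$ or worse, which would force $D=\infty$. Instead, for each point $x\in X_{0}$ consider the ``stabilizer lattice'' $\Lambda_{x}=\{a\in\ZZ^{d}\cap B_{R} : \Phi(\prod s_{i}^{a_{i}})x=x\}$ (a priori just a symmetric set, but the local-abelianness on $X_{0}$ makes it behave like a truncated sublattice); a Korkin--Zolotarev / LLL-reduced basis of the lattice it generates has vectors of controlled length, and — this is the Lagarias--Lenstra--Schnorr input — a fundamental domain that is ``box-like'' with surface-to-volume ratio polynomially bounded in terms of the successive minima. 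This gives tiles whose boundary fraction is $O(1/\ell)$ where $\ell$ is the shortest KZ basis vector, and a Vitali-type greedy packing (possible precisely because $\ZZ^{d}$ is abelian, so translated boxes that are disjoint stay disjoint) covers all but a polynomially small fraction of $X_{0}$. Balancing the several error terms — the $R^{2d}\delta$ from local non-abelianness, the $1/\ell$ boundary loss, the packing leftover, and the contribution of points with ``small'' stabilizer lattice (where one simply splices onto a free $\ZZ^{d}$-orbit) — against the free parameters $R$ and the tile-size threshold yields the claimed power $\delta^{1/D}$ with an explicit $D=D(d)\ge d$. The matching lower bound $\Omega(\delta^{1/d})$, asserted separately in the introduction, is a construction and is not needed for Theorem \ref{thm:AbelianGroupsArePolynomiallyStable} itself.
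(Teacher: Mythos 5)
Your high-level plan --- localize to a subset where the action is approximately abelian, build tiles from stabilizer lattices using Korkin--Zolotarev reduction, and pack greedily --- matches the paper's template, but a central idea is missing. A single Vitali-type greedy packing with one tile-size threshold covers only a \emph{constant} fraction of the space (of order $2^{-d}$; cf.\ Lemma \ref{lem:maximal-disjoint-subcollection}), not all but a polynomially small fraction, and no choice of that threshold fixes this. The paper's algorithm (Proposition \ref{prop:TilingAlgorithm}) is instead \emph{iterative}: it runs rounds with geometrically decreasing thresholds $t_1 > t_2 > \cdots$, each round packing a constant fraction of the yet-uncovered remainder; the polynomial rate then emerges from balancing the geometric decay of the remainder against the isoperimetric loss that accumulates across rounds. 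Your sketch has only a single ``tile-size threshold'' to balance, and with that it cannot reach a polynomial bound.

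There is a second gap in the tile construction. You take a fundamental domain $\mathcal{F}$ for the lattice $\langle\Lambda_x\rangle$ generated by the truncated stabilizer and require $\mathcal{F}$ to fit inside the radius-$R$ ball, but $\langle\Lambda_x\rangle$ can have rank $<d$ or contain long basis vectors alongside a short one, so no bounded fundamental domain of the required kind need exist, and a KZ-reduced basis of the generated lattice does not by itself give a parallelotope with controlled boundary. The paper separates this into two devices: Tool B (Lemma \ref{lem:good-basis}) uses a pigeonhole over scales $t\cdot(7d^2)^i$ to extract a \emph{subset} $\mathcal{D}$ of short stabilizer vectors that agrees with the stabilizer in a large ball, and Tool C (Proposition \ref{prop:Pt-isoperimetric-properties}) \emph{truncates} the infinite quotient $\ZZ^m/\langle\mathcal{D}\rangle$ to a bounded parallelotope by completing $\mathcal{D}$ with axis-parallel vectors of length $2t$, chosen via a strong standard complement (Lemma \ref{lem:strong-bounds}) so that the resulting isoperimetric ratio is $O(d/t)$ irrespective of the stabilizer's shape. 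Finally, the reduction of a general finitely generated abelian group to $\ZZ^d$ is only gestured at; the paper works with $\Gamma=\ZZ^m/K$ throughout (or invokes the separate Proposition \ref{prop:quotient-by-fg} on quotients), and some form of this is needed to make the torsion part rigorous.
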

Notably, Theorem \ref{thm:AbelianGroupsArePolynomiallyStable} applies
to \emph{commutator equation-sets}, namely equation-sets of the form
\begin{equation}
E_{\text{comm}}^{d}=\left\{ s_{i}s_{j}s_{i}^{-1}s_{j}^{-1}\mid1\le i<j\le d\right\} \label{eq:E_comm}
\end{equation}
over the variables $\{s_{1},\ldots,s_{d}\}$ where $d\ge0$. Indeed,
note that $\mathbb{\mathbb{F}}_{d}/\lla E_{\text{comm}}^{d}\rra\cong\mathbb{Z}^{d}$,
so $E_{\text{comm}}^{d}$ is polynomially stable. In particular, $d=2$
yields the example of commuting \emph{pairs }of permutations from
the beginning of the introduction. 

We complement Theorem \ref{thm:AbelianGroupsArePolynomiallyStable}
with a lower bound on the degree of polynomial stability.
\begin{thm}
For all $d\in\NN$, the group $\mathbb{Z}^{d}$ has degree of polynomial
stability at least $d$. \label{thm:DegreeLowerBound}
\end{thm}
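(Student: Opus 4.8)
The plan is to exhibit, for every $k<d$, a sequence of $\{s_1,\dots,s_d\}$-assignments $\Phi_n$ with $L_{E_{\text{comm}}^d}(\Phi_n)\to 0$ but $G_{E_{\text{comm}}^d}(\Phi_n)\ge c\cdot L_{E_{\text{comm}}^d}(\Phi_n)^{1/k'}$ failing to be $O(\delta^{1/k})$, forcing $\deg(\SR_{E_{\text{comm}}^d})\ge d$. The natural building block is an assignment whose commutators have small support but which is ``topologically obstructed'' from being corrected cheaply. Concretely, I would take $\Phi_n$ to act on $[n]$ with $n\approx m^d$, identifying $[n]$ with the discrete torus $(\ZZ/m\ZZ)^d$, and let $\Phi_n(s_i)$ be the permutation that adds the $i$-th basis vector modulo $m$ in \emph{every} coordinate except that one ``seam'' is cut: e.g. $\Phi_n(s_i)$ acts as the cyclic shift $x\mapsto x+e_i$ on the torus, but we delete the wrap-around edges in coordinate $i$ so that $\Phi_n(s_i)$ is instead a product of $m^{d-1}$ disjoint ``paths'' (each a partial shift that fixes the last point of the path). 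Then $\Phi_n(s_i)$ and $\Phi_n(s_j)$ genuinely commute away from the union of the two seams, so each commutator $\Phi_n(s_is_js_i^{-1}s_j^{-1})$ is supported on $O(m^{d-1})$ points, giving $L_{E_{\text{comm}}^d}(\Phi_n)=O(\binom{d}{2} m^{d-1}/m^d)=O(1/m)=:\delta$.

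The heart of the argument is the lower bound on $G_{E_{\text{comm}}^d}(\Phi_n)$. Here I would use a homological/winding-number invariant: a genuine $E_{\text{comm}}^d$-solution is an action of $\ZZ^d$ on $[n]$, hence $[n]$ decomposes into orbits, each a quotient $\ZZ^d/\Lambda$ for a finite-index sublattice $\Lambda$. The point is that $\Phi_n$ looks, on a large fraction of $[n]$, like the ``standard'' $\ZZ^d$-action on $(\ZZ/m\ZZ)^d$ but with the monodromy broken; any nearby honest $\ZZ^d$-action must, on most of its orbits, ``close up'' the broken paths, and closing up a path of length $m$ in coordinate $i$ while staying compatible with the other $d-1$ directions costs on the order of the boundary of a $d$-dimensional box of side $m$, i.e. $\Omega(m^{d-1})$ corrections per seam component — but crucially one shows it cannot be done with fewer than $\Omega(m)$ \emph{more} corrections than $\delta n$ gives you, because of an isoperimetric rigidity: to change the monodromy of the action in one direction you must alter the permutation on a set whose image under the ``coordinate-$i$ position'' function is spread across $\Omega(m)$ values in \emph{each} of the other coordinates simultaneously. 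Quantitatively this should yield $G_{E_{\text{comm}}^d}(\Phi_n)\ge \Omega(m^{d-1}\cdot m / m^d)$... which is the wrong shape, so the correct bookkeeping is instead: $G_{E_{\text{comm}}^d}(\Phi_n)\ge \Omega(1/m^{1/1})$ is too weak; what we actually want is $G\ge\Omega(\delta^{1/d})=\Omega(m^{-1/d})$, i.e. $G_{E_{\text{comm}}^d}(\Phi_n)\ge \Omega(m^{-1/d})=\Omega(n^{-1/d^2})$ — so I would instead rescale the construction, taking the torus of side $m$ but embedding only \emph{one} copy, so $n=m^d$, $\delta\asymp m^{-1}$, and prove the correction requires touching $\Omega(m^{d-1})$ points, giving $G\asymp m^{d-1}/m^d = m^{-1}=\delta$ — still linear. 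The resolution is that the construction must be \emph{iterated/nested}: break the seam at $d$ different scales, or equivalently use a single seam but in a product where coordinate $i$ has period $m^{?}$ — the clean choice is $n=m^{d}$ with coordinate lengths all equal to $m$, seams cut in all $d$ directions, and the correction lower bound comes from a \emph{dimension-counting} argument showing that fixing the monodromy in all $d$ directions forces $\Omega(m^{d-1})$ changes but the orbit structure forces these to be concentrated so that the \emph{normalized} count is $\Omega(m^{-1})$ while $\delta=\Theta(m^{-1})$ — equal. I now see the genuinely correct mechanism: the extremal examples are the ones where $\delta\asymp 1/m^{d-1}$ (commutators supported on $O(1)$-codimension-$(d-1)$ skeleta scaled down by only breaking a single ``edge'' per direction, so $L\asymp m/m^d$... ) — I would therefore calibrate so that breaking a \emph{single} unit edge per coordinate gives $L=O(d/m^d)=O(\delta)$ while the global correction, by the homological obstruction, is $\Omega(m^{-1})$, hence $G\asymp \delta^{1/(d-1)}\cdot$(something) — and push $k\to d$ by the nested version.

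The main obstacle, and where I would concentrate the work, is precisely this last calibration together with the rigidity lemma: proving that \emph{any} $E_{\text{comm}}^d$-solution $\Psi$ with $d_n(\Phi_n,\Psi)$ small must, on a $\ge 1/2$ fraction of $[n]$, have orbits that are genuine $\ZZ^d$-quotients on which $\Psi$ restricts to something whose monodromy in each direction is $\pm m$ (not the ``broken'' non-value), and that switching from broken paths to closed cycles of length $m$ cannot be achieved by changing fewer than $\Omega(m^{d-1})$ entries — this is a discrete isoperimetric / covering-space statement about $\ZZ^d$-actions. I would prove it by considering the equivalence relation on $[n]$ generated by the \emph{unbroken} edges of $\Phi_n$ and $\Psi$ jointly, tracking how a ``defect current'' (the failure of a path around an elementary $2$-dimensional loop to return) must be supported and summing over the $m^{d-2}$ parallel $2$-dimensional loops in each of the $\binom{d}{2}$ coordinate planes, so that too few corrections leaves $\Omega(m^{d-2})$ loops still genuinely non-commuting, contradicting that $\Psi$ is an exact solution — giving the needed $G_{E_{\text{comm}}^d}(\Phi_n)\ge\Omega(\delta^{1/d})$ and hence $\deg\ge d$.
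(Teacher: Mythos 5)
Your proposal has a genuine gap, and the root cause is the cardinality of the ground set. All of the constructions you consider place the action on exactly $m^d$ points, and for every one of them you correctly keep finding that $G_E \asymp L_E$, which only proves $\deg \ge 1$. This is not a bookkeeping error you can calibrate away: as long as $|X| = m^d$, the honest torus $(\ZZ/m\ZZ)^d$ is an available $\ZZ^d$-set of exactly the right size, and your ``seams in all $d$ directions'' example is $O(1/m)$-close to it while your ``single broken edge per coordinate'' example is $O(1/m^d)$-close to it — in each case $G_E = O(L_E)$, so the example is useless. The paper's construction sidesteps this by taking $X_t = (\ZZ/t\ZZ)^d \setminus \{[0]\}$, a torus with \emph{one point removed}, so $|X_t| = t^d - 1$. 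That single missing point is the whole trick: the full torus is now unavailable as a comparison target, and the defect of $X_t$ is concentrated at $O(1)$ points (three per commutator), giving $L_E(X_t) = O(t^{-d})$ without making the cheap fix possible.

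The second missing ingredient is the lower-bound mechanism. Your ``rigidity lemma'' via defect currents and $2$-dimensional loops presumes that the competitor $\ZZ^d$-set looks like a slightly perturbed torus, but the competitor is an \emph{arbitrary} $\ZZ^d$-set of size $t^d-1$. The paper instead uses a pigeonhole argument on orbit sizes: since $|Y| = t^d - 1 < t^d$, every orbit of $Y$ is a quotient $\ZZ^d/\Lambda$ of index $< t^d$, so the box $\{0,\dots,t-1\}^d$ cannot inject into it and $\Lambda$ contains a nonzero vector $r$ with $\|r\|_\infty \le t-1$. Every $y$ in the orbit is fixed by the sorted word $\hat r$, whereas no off-axis point of $X_t$ has any such short stabilizer (a nontrivial sorted stabilizing word there must use some generator $\ge t$ times). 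Tracing the path given by $\hat r$ from each such $y$ therefore forces at least one mismatched edge, and a simple count (each edge of $Y$ lies on at most $t-1$ of these paths) gives $\|f\|_S \ge \Omega(1/t)$ for every bijection $f$, hence $G_E(X_t) \ge \Omega(t^{-1}) = \Omega(L_E(X_t)^{1/d})$. You gesture at the decomposition into orbits $\ZZ^d/\Lambda$ but never exploit the index bound, and without it the ``monodromy'' argument has nothing to bite on.
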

We note that, having shown that an abelian group $\Gamma$ is polynomially
stable, we are left with the more delicate question of its \emph{degree
of polynomial stability}. Restricting attention to a free abelian
group $\ZZ^{d}$, the proof of Theorem \ref{thm:AbelianGroupsArePolynomiallyStable}
yields an upper bound on this degree that grows exponentially in $d$ (see Equation (\ref{eq:DBound})).
A remaining open problem is to close the large gap between this upper
bound, and the lower bound of Theorem \ref{thm:DegreeLowerBound}.

We diverge from the practice of providing an outline of the proofs
in the introduction since such an outline requires a geometric formulation
of stability, which is developed in Section \ref{sec:stability-and-graphs}.
Theorem \ref{thm:AbelianGroupsArePolynomiallyStable} is proved in Section \ref{sec:AbelianGroupsArePolynomiallyStable}, and its proof is outlined in Section \ref{subsec:ProofPlan}. A reader who is interested
in the quickest route to understanding this outline may skip Section
\ref{sec:StabilityRateIsAGroupInvariant}. Theorem \ref{thm:DegreeLowerBound}
is the subject of Section \ref{sec:LowerBound}. 

\subsection{Application to property testing \label{subsec:PropertyTesting}}

The notions of stability and quantitative stability have a natural
interpretation in terms of \emph{property testing} (For more on this
connection see \cite{BLtesting}. For background on property testing see
\cite{Gol17,Ron10}). 
\begin{defn}
\label{def:TestingAlgorithm}Fix a nonempty equation-set $E$ over
the finite set of variables $S$. A \emph{tester }for $E$ is an algorithm
which takes an assignment $\Phi:S\to\Sym(n)$ as input, queries the
permutations $\left\{ \Phi\left(s\right)\right\} _{s\in S}$ at a
constant (in particular, independent of $n$) number of entries among
$1,\dotsc,n$, and decides whether $\Phi$ is an $E$-solution. The
algorithm must satisfy the following:
\begin{enumerate}[label=(\Roman*)]
\item If $\Phi$ is an $E$-solution, the algorithm always \emph{accepts}.
\item For some fixed function $\delta:\left(0,\infty\right)\times\NN\rightarrow(0,1]$,
if $\Phi$ is not an $E$-solution, then the algorithm \emph{rejects}
with probability at least $\delta\left(G_{E}\left(\Phi\right),n\right).$
This function $\delta$ is called the \emph{detection probability
}of the tester.\label{enu:TestingAlgorithmDetectionProb}
\end{enumerate}
The precise term in the literature for this notion is an ``adaptive
proximity-oblivious tester with one-sided error and constant query
complexity'' (see, \cite{Gol17} Definition 1.7).
\end{defn}
The \emph{canonical tester }${\mathcal{N}}_{E}$ for $E$ samples $x\in[n]$
and $w\in E$ uniformly at random from their respective sets. It accepts
if $\Phi(w)(x)=x$ and rejects otherwise. Clearly, if $\Phi$ is an
$E$-solution then ${\mathcal{N}}_{E}$ always accepts. If $\Phi$ is not
an $E$-solution, then ${\mathcal{N}}_{E}$ rejects with probability $\frac{1}{\left|E\right|}\cdot L_{E}(\Phi)$.
Generally, it is desirable for the detection probability function
$\delta\left(\epsilon,n\right)$ to be \emph{uniform}, i.e., depend
only on $\epsilon=G_{E}\left(\Phi\right)$ and not on $n$. In order to show that this is the case for $\mathcal{N}_E$, we must consider the function
\begin{align*}
\delta(\epsilon) & =\inf\left\{ \frac{1}{\left|E\right|}\cdot L_{E}(\Phi)\mid\text{\ensuremath{\Phi}}\text{ is an }S\text{-assignment and }G_{E}(\Phi)\ge\epsilon\right\} \\
 & =\inf\left\{ \delta\mid\SR_{E}(\delta)\ge\epsilon\right\} \,\,\text{.}
\end{align*}
The tester $\mathcal{N}_E$ satisfies Condition \ref{def:TestingAlgorithm}\ref{enu:TestingAlgorithmDetectionProb} if and only if $\delta\left(\epsilon\right)$ is positive for every $\epsilon>0$.
This is equivalent to the condition that $\lim_{\delta\to0}\SR_{E}(\delta)=0$.
Hence, ${\mathcal{N}}_{E}$ admits a uniform detection probability function
if and only if $E$ is stable.

Furthermore, the smaller the stability rate of $E$, the larger $\delta$
is. In particular, if $E$ is polynomially stable with degree of polynomial
stability $D$, then $\delta(\epsilon)$ is bounded from below by
$\Omega\left(\epsilon^{D}\right)$. Therefore, due to Theorem \ref{thm:AbelianGroupsArePolynomiallyStable},
the canonical tester ${\mathcal{N}}_{E}$ has detection probability polynomial
in $\epsilon$ and uniform in $n$, whenever $\mathbb{\mathbb{F}}_{S}/\lla E\rra$
is abelian. 

In a somewhat weaker formulation of property testing (see \cite{Gol17},
Definition 1.6), the tester is only required to distinguish between
the cases $G_{E}(\Phi)=0$ and $G_{E}(\Phi)>\epsilon$, where $\epsilon>0$
is given as input. In this formulation, the detection probability
is required to be larger than some constant, say $\frac{1}{2}$, and
one seeks to minimize the number of queries. Note that, for every
$E$, the canonical tester ${\mathcal{N}}_{E}$ can be used to build a tester
$\tilde{\mathcal{N}}_{E}$, satisfying this weaker formulation: Given
$\epsilon>0$, and an input $\Phi$, the tester $\tilde{\mathcal{N}}_{E}$
runs ${\mathcal{N}}_{E}$ on $\Phi$ repeatedly for $\log_{1-\delta(\epsilon,n)}\frac{1}{2}=\Theta(\frac{1}{\delta(\epsilon,n)})$
independent iterations, and accepts only if ${\mathcal{N}}_E$ accepts in all iterations. Hence, the resulting tester $\tilde{\mathcal{N}}_{E}$ performs
$\Theta\left(\frac{1}{\delta(\epsilon,n)}\right)$ queries. In particular,
Theorem \ref{thm:AbelianGroupsArePolynomiallyStable} shows that for
$E$ such that $\mathbb{\mathbb{F}}_{S}/\lla E\rra$ is abelian, the
tester $\tilde{\mathcal{N}}_{E}$ is \emph{efficient}. Namely, it has
constant detection probability, and its number of queries is polynomial
in $\frac{1}{\epsilon}$ and does not depend on $n$. No such tester
was previously known.

\subsection{Previous work\label{subsec:PreviousWork}}

The general question of whether almost-solutions are close to solutions,
in various contexts, was suggested by Ulam (see \cite{Ula60}, Chapter
VI). The most studied question of this sort is whether almost-commuting
\emph{matrices} are close to commuting matrices, and the answer depends
on the chosen matrix norm and on which kind of matrices is considered
(e.g. self-adjoint, unitary, etc.). See the introduction of \cite{AP15}
for a short survey, and \cite{dglt,HS1,HS2,ESS} for some newer works.
In this context, some quantitative results are already known \cite{hastings,FK}.
The question of (non-quantitative) stability in \emph{permutations},
under the normalized Hamming metric, was initiated in \cite{GR09}
and developed further in \cite{AP15}. The former paper proves that finite groups are stable
(see our Proposition \ref{prop:finite-groups} for a quantitative
version), and the latter proves that abelian groups are stable. Both
papers provide examples of non-stable groups as well, and relate stability
in permutations to the notion of sofic groups. These results are generalized
in \cite{BLT18}, which provides a characterization of stability in
permutations, among amenable groups, in terms of their invariant random
subgroups. On the other side of the spectrum (compared to amenable
groups), \cite{BLpropertyT} proves that infinite groups with Property
$\text{\ensuremath{\left(\text{T}\right)}}$ are never stable in permutations,
and consequently suggests some weaker forms of stability.

\section{The stability rate is a group invariant \label{sec:StabilityRateIsAGroupInvariant}}

In this section we prove Proposition \ref{prop:StabilityRateIsAGroupInvariant}.
We start with Lemma \ref{lem:d_n(Phi(w),Psi(w))}, which formalizes
the claim that if two $S$-assignments $\Phi$ and $\Psi$ are close
to each other, then so are the permutations $\Phi(w)$ and $\Psi(w)$,
provided that $w\in\mathbb{F}_{S}$ is a short word. It is helpful
to consider $w=w_{1}\cdots w_{t}$ ($w_{i}\in S^{\pm})$ as a sequence
of directions, namely, $\Phi(w)(x)$ is the endpoint of the path that
starts at $x$, moves in the direction $w_{t}$ to $\Phi(w_{t})(x)$,
continues to $\Phi(w_{t-1}w_{t})(x)$, and so on. The immediate observation
behind Lemma \ref{lem:d_n(Phi(w),Psi(w))}, is that as long as this
path moves only along ``nice'' edges, i.e., edges on which $\Phi$
and $\Psi$ agree, it is guaranteed that $\Phi(w)(x)=\Psi(w)(x)$.
A similar idea is then used to prove Lemma \ref{lem:d_n(Phi(w),1)}
as well.
\begin{lem}
\label{lem:d_n(Phi(w),Psi(w))}Let $E$ be an $S$-equation-set, let
$\Phi,\Psi:S\to\Sym(n)$ be $S$-assignments, and let $w=w_{1}\cdots w_{t}\in\mathbb{F}_{S}$
where $w_{i}\in S^{\pm}$. Then, $d_{n}(\Phi(w),\Psi(w))\le t\cdot d_{n}(\Phi,\Psi)$.
\end{lem}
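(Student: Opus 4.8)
The plan is to fix $\Phi,\Psi:S\to\Sym(n)$ and $w=w_1\cdots w_t$ with $w_i\in S^\pm$, and to bound directly the ``bad set'' $B=\{x\in[n]\mid\Phi(w)(x)\neq\Psi(w)(x)\}$, following the path interpretation described just before the lemma. (The equation-set $E$ plays no role and can be ignored.) The first ingredient is a letter-level estimate: for every $s\in S^\pm$ one has $d_n(\Phi(s),\Psi(s))\le d_n(\Phi,\Psi)$. For $s\in S$ this is immediate, since $d_n(\Phi(s),\Psi(s))$ is one of the nonnegative summands defining $d_n(\Phi,\Psi)$; for $s=(s')^{-1}$ with $s'\in S$, bi-invariance of $d_n$ gives $d_n(\Phi(s')^{-1},\Psi(s')^{-1})=d_n(\Phi(s'),\Psi(s'))$, which reduces to the previous case.

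The core step is combinatorial. For $x\in[n]$ trace the $\Phi$-trajectory $y_0=x$ and $y_k=\Phi(w_{t-k+1})(y_{k-1})$ for $1\le k\le t$, so that $y_k=\Phi(w_{t-k+1}\cdots w_t)(x)$ and $y_t=\Phi(w)(x)$; likewise set $z_0=x$, $z_k=\Psi(w_{t-k+1})(z_{k-1})$, $z_t=\Psi(w)(x)$. For $s\in S^\pm$ write $D_s=\{y\in[n]\mid\Phi(s)(y)\neq\Psi(s)(y)\}$, so that $|D_s|=n\cdot d_n(\Phi(s),\Psi(s))$. A one-line induction on $k$ shows that if $y_{k-1}\notin D_{w_{t-k+1}}$ for every $k=1,\dots,t$, then $z_k=y_k$ for all $k$: the base case is $z_0=x=y_0$, and if $z_{k-1}=y_{k-1}$ then $z_k=\Psi(w_{t-k+1})(y_{k-1})=\Phi(w_{t-k+1})(y_{k-1})=y_k$, using $y_{k-1}\notin D_{w_{t-k+1}}$. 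In particular such an $x$ lies outside $B$, so every $x\in B$ satisfies $y_{k-1}\in D_{w_{t-k+1}}$ for some $k\in\{1,\dots,t\}$.

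Now observe that the map $x\mapsto y_{k-1}$ is exactly the permutation $\Phi(w_{t-k+2}\cdots w_t)$ of $[n]$ (the identity when $k=1$), so the set of $x$ with $y_{k-1}\in D_{w_{t-k+1}}$ has exactly $|D_{w_{t-k+1}}|$ elements. Summing over $k$ gives $|B|\le\sum_{k=1}^{t}|D_{w_{t-k+1}}|=n\sum_{i=1}^{t}d_n(\Phi(w_i),\Psi(w_i))$, and combining with the letter-level bound yields $d_n(\Phi(w),\Psi(w))=|B|/n\le\sum_{i=1}^{t}d_n(\Phi(w_i),\Psi(w_i))\le t\cdot d_n(\Phi,\Psi)$, as desired. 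There is no genuine obstacle; the only point requiring a little care is fixing the composition order so that $x\mapsto y_{k-1}$ is honestly a bijection, which is what forces the preimage of $D_{w_{t-k+1}}$ to have the same cardinality as $D_{w_{t-k+1}}$.
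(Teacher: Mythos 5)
Your proof is correct. It establishes the same key intermediate inequality as the paper, namely $d_{n}(\Phi(w),\Psi(w))\le\sum_{i=1}^{t}d_{n}(\Phi(w_{i}),\Psi(w_{i}))$, but by a different route. The paper exploits that $d_n$ is a bi-invariant metric: after rewriting $d_n(\Phi(w),\Psi(w))=d_n((\Psi(w))^{-1}\Phi(w),1)$, it telescopes along suffixes $\bar w_i=w_i\cdots w_t$ and uses bi-invariance once more to collapse each telescoped term to $d_n(\Phi(w_i),\Psi(w_i))$. You instead work directly at the level of points: you trace the $\Phi$- and $\Psi$-trajectories of each $x$, observe that disagreement of $\Phi(w)(x)$ and $\Psi(w)(x)$ forces a first ``bad edge,'' and then count preimages of the disagreement sets $D_s$ under the suffix permutations. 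This is more elementary (the triangle inequality and bi-invariance are used only for the one-letter bound, not for the core estimate), and it makes the path intuition from the paper's preamble literal; the paper's version is shorter and cleaner because it lets the metric axioms do the counting implicitly. Both are fine; the point you flag at the end --- that $x\mapsto y_{k-1}$ is a bijection, so its preimage of $D_{w_{t-k+1}}$ has the right cardinality --- is indeed the only place where one could slip, and you handle it correctly.
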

\begin{proof}
Let $\bar{w}_{i}$ denote the suffix $w_{i}\cdots w_{t}$. By
the bi-invariance of $d_{n}$ and the triangle inequality,
\begin{align*}
d_{n}\left(\Phi(w),\Psi(w)\right) & =d_{n}\left((\Psi(w))^{-1}\cdot\Phi(w),1\right)\\
 & \le\sum_{i=1}^{t}d_{n}\left((\Psi(\bar{w}_{i}))^{-1}\cdot\Phi(\bar{w}_{i}),\Psi(\bar{w}_{i+1}))^{-1}\cdot\Phi(\bar{w}_{i+1})\right)\\
 & =\sum_{i=1}^{t}d_{n}\left((\Psi(w_{i}))^{-1}\cdot\Phi(w_{i}),1\right)=\sum_{i=1}^{t}d_{n}\left(\Phi(w_{i}),\Psi(w_{i})\right).
\end{align*}
Note that each term of the right hand side is at most $d_{n}(\Phi,\Psi)$.
Here, if $w_{i}$ is the inverse of a generator, we used the fact
that $d_{n}(\Phi(w_{i}),\Psi(w_{i}))=d_{n}\left(\Phi\left(w_{i}^{-1}\right),\Psi\left(w_{i}^{-1}\right)\right)$.
The lemma follows.
\end{proof}
\begin{lem}
\label{lem:d_n(Phi(w),1)}Fix an $S$-equation-set $E\subseteq \FF_S$, and a word
$w\in\lla E\rra$, written as $w=u_{1}q_{1}u_{1}^{-1}u_{2}q_{2}u_{2}^{-1}\cdots u_{t}q_{t}u_{t}^{-1}$,
where $u_{i}\in\mathbb{F}_{S}$ and each $q_{i}$ is an element
of $E$ or its inverse. Then, for every $S$-assignment $\Phi:S\rightarrow\Sym\left(n\right)$,
\[
d_{n}(\Phi(w),1)\le L_{E}(\Phi)\cdot t\,\,\text{.}
\]
\end{lem}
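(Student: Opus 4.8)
The plan is to telescope $\Phi(w)$ along the given factorization and then apply the triangle inequality together with the bi-invariance of $d_n$. Set $v_0=1$ and, for $1\le i\le t$, let $v_i=u_1q_1u_1^{-1}\cdots u_iq_iu_i^{-1}$ be the $i$-th prefix of the factorization, so that $v_t=w$ and hence $\Phi(v_t)=\Phi(w)$. Since $\Phi$ is a homomorphism, $\Phi(v_i)=\Phi(v_{i-1})\cdot\Phi\!\left(u_iq_iu_i^{-1}\right)$. By the triangle inequality,
\[
d_n\!\left(\Phi(w),1\right)=d_n\!\left(\Phi(v_t),\Phi(v_0)\right)\le\sum_{i=1}^{t}d_n\!\left(\Phi(v_i),\Phi(v_{i-1})\right).
\]

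For each $i$, I would first use the left-invariance of $d_n$ to rewrite $d_n\!\left(\Phi(v_i),\Phi(v_{i-1})\right)=d_n\!\left(\Phi\!\left(u_iq_iu_i^{-1}\right),1\right)$, and then conjugation-invariance to reduce this to $d_n\!\left(\Phi(q_i),1\right)$. Conjugation-invariance is immediate from bi-invariance: applying the bi-invariance twice, $d_n\!\left(aba^{-1},1\right)=d_n\!\left(ba^{-1},a^{-1}\right)=d_n\!\left(b,1\right)$. Finally, I would bound $d_n\!\left(\Phi(q_i),1\right)$ by $L_E(\Phi)$. Recall $L_E(\Phi)=\sum_{r\in E}d_n\!\left(\Phi(r),1\right)$. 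If $q_i\in E$, then $d_n\!\left(\Phi(q_i),1\right)$ is one of the nonnegative summands in this sum, hence at most $L_E(\Phi)$. If $q_i=r^{-1}$ for some $r\in E$, then $\Phi(q_i)=\Phi(r)^{-1}$, and by bi-invariance and the symmetry of the metric $d_n\!\left(\Phi(r)^{-1},1\right)=d_n\!\left(1,\Phi(r)\right)=d_n\!\left(\Phi(r),1\right)\le L_E(\Phi)$. Summing the $t$ terms gives $d_n\!\left(\Phi(w),1\right)\le t\cdot L_E(\Phi)$, as claimed.

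The argument is essentially bookkeeping, and I do not anticipate a genuine obstacle; it plays the same role for words in the normal closure $\lla E\rra$ that Lemma \ref{lem:d_n(Phi(w),Psi(w))} plays for arbitrary words, and follows the same template. The only points that warrant a moment of care are deriving conjugation-invariance from the stated bi-invariance, and handling the factors $q_i$ that lie in $E^{-1}$ rather than in $E$ — which is precisely why it is convenient that $d_n\!\left(\Phi(r),1\right)$ and $d_n\!\left(\Phi(r^{-1}),1\right)$ coincide, so that inverses of relators contribute no more to the local defect than the relators themselves.
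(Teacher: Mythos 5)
Your proof is correct and follows essentially the same telescoping-plus-bi-invariance argument as the paper (you telescope over prefixes rather than suffixes, which is immaterial). You are in fact a bit more explicit than the paper about why the case $q_i\in E^{-1}$ still gives $d_n(\Phi(q_i),1)\le L_E(\Phi)$.
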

\begin{proof}
We have
\begin{align*}
d_{n}(\Phi(w),1) & =d_{n}\left(\Phi\left(u_{1}q_{1}u_{1}^{-1}u_{2}q_{2}u_{2}^{-1}\cdots u_{t}q_{t}u_{t}^{-1}\right),1\right)\\
 & \leq\sum_{i=1}^{t}d_{n}\left(\Phi\left(u_{i}q_{i}u_{i}^{-1}\cdots u_{t}q_{t}u_{t}^{-1}\right),\Phi\left(u_{i+1}q_{i+1}u_{i+1}^{-1}\cdots u_{t}q_{t}u_{t}^{-1}\right)\right)\\
 & =\sum_{i=1}^{t}d_{n}\left(\Phi\left(u_{i}q_{i}u_{i}^{-1}\right),1\right)\\
 & =\sum_{i=1}^{t}d_{n}\left(\Phi\left(q_{i}\right),1\right)\\
 & \leq\sum_{i=1}^{t}\sum_{w\in E}d_{n}\left(\Phi\left(w\right),1\right)\\
 & =t\cdot L_{E}\left(\Phi\right)\,\,\text{.}
\end{align*}
\end{proof}
The following corollary of Lemma \ref{lem:d_n(Phi(w),1)} will also
be useful.
\begin{lem}
\label{lem:d_n(Phi lambda, Phi)}Fix an $S$-equation-set $E$ and
a homomorphism $\lambda:\mathbb{F}_{S}\to\mathbb{F}_{S}$ such that
$w$ and $\lambda(w)$ belong to the same coset in $\mathbb{F}_{S}/\lla E\rra$
for each $w\in\FF_{S}$. Then, there exists a positive $c=c(S,E,\lambda)$
such that for every $S$-assignment $\Phi:S\rightarrow\Sym\left(n\right)$,
we have $d_{n}(\Phi,\Phi\circ\lambda)\le c\cdot L_{E}(\Phi)$, where
we regard $\Phi\circ\lambda$ as an $S$-assignment by restricting
its domain from $\FF_{S}$ to $S$.
\end{lem}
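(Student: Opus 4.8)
The plan is to reduce the claim to a statement about single generators and then quote Lemma \ref{lem:d_n(Phi(w),1)}. First I would write, directly from the definition of $d_n$ on assignments,
\[
d_n(\Phi,\Phi\circ\lambda)=\sum_{s\in S}d_n\bigl(\Phi(s),\Phi(\lambda(s))\bigr),
\]
so it suffices to bound each summand (there are only $|S|$ of them). Since $\Phi$ is a homomorphism $\mathbb{F}_S\to\Sym(n)$ and $d_n$ is bi-invariant, the $s$-th summand equals $d_n\bigl(\Phi(s^{-1}\lambda(s)),1\bigr)$.

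Next I would use the hypothesis: saying that $s$ and $\lambda(s)$ lie in the same coset of $\lla E\rra$ is exactly saying that $w_s:=s^{-1}\lambda(s)\in\lla E\rra$. By definition of the normal closure, $w_s$ can be written as $w_s=u_1q_1u_1^{-1}\cdots u_{t_s}q_{t_s}u_{t_s}^{-1}$ with $u_i\in\mathbb{F}_S$ and each $q_i$ an element of $E$ or its inverse. Fixing one such expression for each $s\in S$ and letting $t_s$ be its length, Lemma \ref{lem:d_n(Phi(w),1)} gives $d_n(\Phi(w_s),1)\le t_s\cdot L_E(\Phi)$ for every assignment $\Phi$.

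Summing over $s\in S$ yields $d_n(\Phi,\Phi\circ\lambda)\le\bigl(\sum_{s\in S}t_s\bigr)\cdot L_E(\Phi)$, so one takes $c=c(S,E,\lambda)=\sum_{s\in S}t_s$; this depends only on $S$, $E$, and the fixed expressions for the $w_s$, and in particular not on $n$ or $\Phi$. I do not anticipate a real obstacle here: the only points needing (routine) care are that $\lambda$ being a homomorphism guarantees $\Phi\circ\lambda$ is again an $S$-assignment after restriction to $S$, and that the finiteness of $S$ is what makes $c$ a genuine constant rather than depending on the input.
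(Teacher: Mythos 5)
Your proof is correct and takes essentially the same route as the paper's: expand $d_n(\Phi,\Phi\circ\lambda)$ as a sum over $s\in S$, use bi-invariance of $d_n$ to rewrite each term as $d_n(\Phi(w_s),1)$ for a word $w_s$ in $\lla E\rra$, and apply Lemma~\ref{lem:d_n(Phi(w),1)}. The only (cosmetic) difference is that you write $w_s=s^{-1}\lambda(s)$ where the paper uses $\lambda(s)s^{-1}$, and you make the constant $c=\sum_{s\in S}t_s$ explicit rather than hiding it in $O(\cdot)$-notation.
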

\begin{proof}
Note that
\[
d_{n}(\Phi,\Phi\circ\lambda)=\sum_{s\in S}d_{n}(\Phi(s),\Phi(\lambda(s)))=\sum_{s\in S}d_{n}(1,\Phi(\lambda(s)\cdot s^{-1})\,\,\text{.}
\]
Since $\lambda(s)\cdot s^{-1}\in\lla E\rra$, it follows from Lemma
\ref{lem:d_n(Phi(w),1)} that the $s-$term of this sum is at most
$O\left(L_{E}(\Phi)\right)$, where the implied constant depends on
$s,$ $E$ and $\lambda$. Hence, $d_{n}(\Phi,\Phi\circ\lambda)\le O\left(L_{E}(\Phi)\right)$.
\end{proof}
We turn to prove Proposition \ref{prop:StabilityRateIsAGroupInvariant}.
\begin{proof}
[Proof of Proposition \ref{prop:StabilityRateIsAGroupInvariant}]In
the course of the proof, when a function whose domain is $\FF_{S_{1}}$or
$\FF_{S_{2}}$ appears where a function whose domain is $S_{1}$ or
$S_{2}$ is expected, the function should be regarded as its respective
restriction (for example, when measuring distances between assignments).

Let $E_{1}$ and $E_{2}$ be equivalent equation-sets over the respective
finite sets of variables $S_{1}$ and $S_{2}$. By symmetry, it is
enough to prove that $\SR_{E_{1}}(\delta)\le\SR_{E_{2}}(C\delta)+C\delta$
for some $C=C(E_{1},E_{2})$. Equivalently, we need to show that $G_{E_{1}}(\Phi_{1})\le\SR_{E_{2}}(C\delta)+C\delta$
for any given $\delta>0$ and $S_{1}$-assignment $\Phi_{1}:S_{1}\to\Sym(n)$
with $L_{E_{1}}(\Phi_{1})\le\delta$. 

Our strategy is to ``translate'' the $S_{1}$-assignment $\Phi_{1}$
into an $S_{2}$-assignment $\Phi_{2}$, find an $E_{2}$-solution $\Psi_{2}$
which is close to $\Phi_{2}$, and pull back $\Psi_{2}$ to an $E_{1}$-solution
$\Psi_{1}$. We apply Lemma \ref{lem:d_n(Phi(w),1)} to bound $L_{E_{2}}(\Phi_{2})$,
and then use Lemmas \ref{lem:d_n(Phi(w),Psi(w))} and \ref{lem:d_n(Phi lambda, Phi)}
to control the distance between $\Phi_{1}$ and $\Psi_{1}$, yielding
an upper bound on $G_{E_{1}}(\Phi_{1})$.

We define the machinery needed to map $S_{1}$-assignments to $S_{2}$-assignments
and vice versa. Since $E_{1}$ and $E_{2}$ are equivalent equation-sets,
there exists a group isomorphism $\theta:\mathbb{F}_{S_{1}}/\lla E_{1}\rra\to\mathbb{F}_{S_{2}}/\lla E_{2}\rra$.
Let $\pi_{1}$ denote the quotient map $\mathbb{F}_{S_{1}}\to\mathbb{F}_{S_{1}}/\lla E_{1}\rra$,
and likewise for $\pi_{2}$. Fix a homomorphism $\lambda_{2}:\mathbb{F}_{S_{2}}\to\mathbb{F}_{S_{1}}$
such that $\pi_{2}=\theta\circ\pi_{1}\circ\lambda_{2}$. In other
words, we choose $\lambda_{2}$ so that the composition of the following
chain of morphisms equals $\pi_{2}$:
\[
\mathbb{F}_{S_{2}}\xrightarrow{\lambda_{2}}\mathbb{F}_{S_{1}}\xrightarrow{\pi_{1}}\mathbb{F}_{S_{1}}/\lla E_{1}\rra\xrightarrow{\theta}\mathbb{F}_{S_{2}}/\lla E_{2}\rra\,\,\text{.}
\]
Note that such $\lambda_{2}$ exists since $\theta\circ\pi_{1}$ is
surjective. Similarly, fix a homomorphism $\lambda_{1}:\mathbb{F}_{S_{1}}\to\mathbb{F}_{S_{2}}$
satisfying $\pi_{1}=\theta^{-1}\circ\pi_{2}\circ\lambda_{1}$. From
now on, in our use of asymptotic $O\left(\cdot\right)$-notation,
we allow the implied constant to depend on $S_{1}$, $S_{2}$, $E_{1}$
and $E_{2}$. Since $\lambda_{2}$ and $\lambda_{1}$ have been fixed
solely in terms of these four objects, the implied
constant is allowed to depend on them as well.

Let $\delta>0$ and $n\in\NN$. Let $\Phi_{1}:S_{1}\rightarrow\Sym\left(n\right)$
be an $S_{1}$-assignment such that $L_{E_{1}}\left(\Phi_{1}\right)\leq\delta$.
Define the $S_{2}$-assignment $\Phi_{2}=\Phi_{1}\circ\lambda_{2}:S_{2}\to\Sym(n)$.
We seek to bound its local defect:
\[
L_{E_{2}}(\Phi_{2})=\sum_{w\in E_{2}}d_{n}(\Phi_{2}(w),1)=\sum_{w\in E_{2}}d_{n}(\Phi_{1}(\lambda_{2}(w)),1)\,\,\text{.}
\]
Since $\lambda_{2}(w)\in\lla E_{1}\rra$ for every $w\in E_{2}$,
it follows from Lemma \ref{lem:d_n(Phi(w),1)}, applied to $\Phi_{1}$
and the word $\lambda_{2}\left(w\right)$, that the $w$-term in the above
sum is at most $O(L_{E_{1}}(\Phi_{1}))$. Consequently, 
\[
L_{E_{2}}(\Phi_{2})\le C_{1}\cdot L_{E_{1}}(\Phi_{1})\le C_{1}\cdot\delta
\]
for some positive $C_{1}=C_{1}(E_{1},E_{2})$. Hence, there is an
$E_{2}$-solution $\Psi_{2}:S_2\to\Sym(n)$ with 
\[
d_{n}(\Phi_{2},\Psi_{2})\le\SR_{E_{2}}(L_{E_{2}}(\Phi_{2}))\le\SR_{E_{2}}(C_{1}\cdot\delta)\,\,\text{.}
\]

Let $\Psi_{1}$ be the $S_{1}$-assignment $\Psi_{2}\circ\lambda_{1}:S_{1}\to\Sym(n)$.
Note that $\Psi_{1}$ is an $E_{1}$-solution. Thus, $G_{E_{1}}(\Phi_{1})\le d_{n}(\Phi_{1},\Psi_{1})$.
We proceed to bound this distance. By the triangle inequality, 
\begin{align*}
d_{n}(\Phi_{1},\Psi_{1}) & \le d_{n}(\Phi_{1},\Phi_{2}\circ\lambda_{1})+d_{n}(\Phi_{2}\circ\lambda_{1},\Psi_{1})\\
 & =d_{n}(\Phi_{1},\Phi_{1}\circ\lambda_{2}\circ\lambda_{1})+d_{n}(\Phi_{2}\circ\lambda_{1},\Psi_{2}\circ\lambda_{1}).
\end{align*}
We turn to bound both terms of the right hand side. For the first
term, note that 
\[
\pi_{1}=\theta^{-1}\circ\pi_{2}\circ\lambda_{1}=\theta^{-1}\circ\theta\circ\pi_{1}\circ\lambda_{2}\circ\lambda_{1}=\pi_{1}\circ\lambda_{2}\circ\lambda_{1}\,\,,
\]
and so $\lambda_{2}\circ\lambda_{1}$ satisfies the requirements of
Lemma \ref{lem:d_n(Phi lambda, Phi)}. Hence, due to this lemma, 
\[
d_{n}(\Phi_{1},\Phi_{1}\circ\lambda_{2}\circ\lambda_{1})\le O\left(L_{E_{1}}(\Phi_{1})\right)\le O(\delta)\,\,\text{.}
\]
 Turning to the second term, 
\[
d_{n}(\Phi_{2}\circ\lambda_{1},\Psi_{2}\circ\lambda_{1})=\sum_{s_{2}\in S_{2}}d_{n}(\Phi_{2}(\lambda_{1}(s_{2})),\Psi_{2}(\lambda_{1}(s_{2}))).
\]
By Lemma \ref{lem:d_n(Phi(w),Psi(w))}, applied to $\Phi_{2}$, $\Psi_{2}$
and the word $\lambda_{1}\left(s_{2}\right)$, the $s_{2}$-term of this sum
is upper bounded by $O(d_{n}(\Phi_{2},\Psi_{2}))$, and so 
\[
d_{n}(\Phi_{2}\circ\lambda_{1},\Psi_{2}\circ\lambda_{1})\le O(d_{n}(\Phi_{2},\Psi_{2}))\le O\left(\SR_{E_{2}}(C_{1}\cdot\delta)\right)\,\,\text{.}
\]
 We conclude that 

\[
G_{E}(\Phi_{1})\le d_{n}(\Phi_{1},\Psi_{1})\le O\left(\SR_{E_{2}}(C_{1}\cdot\delta)+\delta\right)\,\,\text{.}
\]
\end{proof}
\begin{rem} 
\label{rem:whats-this-Cdelta}In Proposition \ref{prop:SR-at-least linear},
we show that $\SR_{E}(\delta)\ge\Omega(\delta)$ for every equation-set $E$ which is not empty and not $\left\{1\right\}$. When $E$ is $\emptyset$ or $\left\{1\right\},$ however, it is clear that $\SR_{E}\equiv0$.
This is worth noting, since the free group $\mathbb{\mathbb{F}}_{S}$
can be defined by either a trivial equation-set over $S$, or by a
certain nontrivial equation-set over some larger finite set of variables.
Two nuances in Definitions \ref{def:FunctionEquivalence} and \ref{def:degree}
ensure that the stability rate and degree of $\mathbb{\mathbb{F}}_{S}$
are well-defined despite this phenomenon. The first is the addition
of the term $C\delta$ to the inequalities in Definition \ref{def:FunctionEquivalence},
and the second is the restriction $k\ge1$ in Equation (\ref{eq:FunctionDegree})
in Definition \ref{def:degree}.
\end{rem}

\section{Stability and graphs of actions\label{sec:stability-and-graphs}}

This section reformulates the notions of stability and stability rate
in terms of group actions, and provides basic tools arising from this
point of view. Before we begin, a small clarification regarding terminology
is in order: When a group homomorphism $\theta:\Lambda_{2}\rightarrow\Lambda_{1}$
is fixed and understood from the context, we regard any given $\Lambda_{1}$-set
$X$ as a $\Lambda_{2}$-set as well via $\theta$, i.e., for $g_{2}\in\Lambda_{2}$
and $x\in X$, we let $g_{2}\cdot x=\theta\left(g_{2}\right)\cdot x$.
In most cases in the sequel, $\Lambda_{1}$ is a group generated by
a finite set $S$, $\Lambda_{2}=\FF_{S}$ is a free group on $S$,
and $\theta$ is the natural quotient map $\FF_{S}\rightarrow\Lambda_{1}$.
So, when a $\Lambda_{1}$-set $X$ appears where an $\FF_{S}$-set
is expected, we treat $X$ as an $\FF_{S}$-set in this manner. In some
other cases, the role of $\Lambda_{2}$ is taken by a free abelian
group, rather than a free group.

\subsection{\label{subsec:stability-in-terms-of-actions}Stability in terms of
group actions}

Throughout Section \ref{subsec:stability-in-terms-of-actions}, we
fix an equation-set $E$ over the finite set of variables $S$, and
denote $\Gamma=\mathbb{F}_{S}/\lla E\rra$. As mentioned in Section
\ref{subsec:ReviewOfStability}, an $S$-assignment $\Phi:S\to\Sym(n)$
can also be regarded as a group action of $\mathbb{F}_{S}$ on $[n]$.
We write $\mathbb{F}_{S}(\Phi)$ for the $\mathbb{F}_{S}$-set whose
set of points is $[n]$, with the group action given by $s\cdot x=\Phi(s)(x)$.
We now expand upon this view, rephrasing the definition of local and
global defect in terms of group actions, along the same lines as in
Section 3.2 of \cite{BLT18}. This will enable us to prove our main
theorems using a geometric approach, focusing on the geometry of an
edge-labeled graph representing $\mathbb{F}_{S}(\Phi)$.
\begin{defn}
\label{def:LocalDefectOfAction}Let $X$ be a finite $\FF_{S}$-set.
The \emph{local defect} of $X$ with respect to $E$ is
\[
L_{E}\left(X\right)=\frac{1}{\left|X\right|}\cdot|\{(x,w)\in X\times E\mid w\cdot x\ne x\}|\,\,\text{.}
\]
\end{defn}
It follows directly from the definitions that $L_{E}\left(\Phi\right)=L_{E}\left(\FF_{S}\left(\Phi\right)\right)$
for an assignment $\Phi:S\rightarrow\Sym\left(n\right)$.

We turn to the global defect. Let us first characterize $E$-solutions
through actions. It is not hard to see that $\Phi:S\rightarrow\Sym\left(n\right)$
is an $E$-solution if and only if the action $\FF_{S}\curvearrowright\FF_{S}\left(\Phi\right)$
factors through the group $\Gamma$, that is, if every two elements
$u,v\in\mathbb{F}_{S}$, belonging to the same coset in $\mathbb{F}_{S}/\lla E\rra$,
act on $[n]$ in the same manner. In this case, $\Gamma$ itself acts
on $\FF_{S}\left(\Phi\right)$.

We proceed to define a metric of similarity between $\mathbb{F}_{S}$-sets.
\begin{defn} \label{def:DistanceBetweenFSets}
Let $X$ and $Y$ be finite $\FF_{S}$-sets, where $\left|X\right|=\left|Y\right|=n$.
For a function $f:X\rightarrow Y$, define
\[
\|f\|_{S}=\sum_{s\in S}\frac{1}{n}\cdot\left|\left\{ \left(s,x\right)\in S\times X\mid f\left(s\cdot x\right)\neq s\cdot f\left(x\right)\right\} \right|\,\,\text{.}
\]
Furthermore, define
\[
d_{S}\left(X,Y\right)=\min\left\{ \|f\|_{S}\mid f:X\rightarrow Y\,\,\text{is a bijection}\right\} \,\,\text{.}
\]
\end{defn}
Note that $d_{S}(X,Y)=0$, if and only if $X$ and $Y$ are isomorphic
as $\mathbb{F}_{S}$-sets. Also, for $S$-assignments $\Phi,\Psi:S\rightarrow\Sym\left(n\right)$
we have $d_{n}\left(\Phi,\Psi\right)=\|\id_{\Phi,\Psi}\|_{S}$, where
$\id_{\Phi,\Psi}:\FF_{S}\left(\Phi\right)\rightarrow\FF_{S}\left(\Psi\right)$
is the identity map $\left[n\right]\rightarrow\left[n\right]$. 

We use this metric to express the notion of global defect.
\begin{defn}
\label{def:global-defect-of-action}Let $X$ be a finite $\FF_{S}$-set.
The \emph{global defect} of $X$ with respect to $E$ is
\[
G_{E}\left(X\right)=\min\{d_{S}(X,Y)\mid\text{\ensuremath{Y} is a \ensuremath{\Gamma}-set and \ensuremath{\left|Y\right|=\left|X\right|}}\}\,\,\text{.}
\]
\end{defn}
\begin{prop}
Let $\Phi:S\to\Sym(n)$ be an $S$-assignment. Then,
\[
G_{E}(\Phi)=G_{E}\left(\FF_{S}\left(\Phi\right)\right)\,\,\text{.}
\]
\end{prop}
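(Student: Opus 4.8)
The plan is to unwind the two definitions of global defect and set up an explicit, distance-preserving dictionary: $E$-solutions $\Psi:S\to\Sym(n)$ correspond to $\Gamma$-sets of size $n$, with $d_n(\Phi,\Psi)$ matching $d_S(\FF_S(\Phi),\,\cdot\,)$ under this correspondence. Both minima appearing in the two definitions are attained, since there are only finitely many $\FF_S$-actions on $[n]$, so it suffices to prove the two inequalities separately.

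For $G_E(\FF_S(\Phi))\le G_E(\Phi)$: choose an $E$-solution $\Psi$ with $d_n(\Phi,\Psi)=G_E(\Phi)$. Since $\Psi$ is an $E$-solution, the $\FF_S$-action on $\FF_S(\Psi)$ factors through $\Gamma$, so $Y:=\FF_S(\Psi)$ is a $\Gamma$-set with $|Y|=n$. Using the bijection $\id_{\Phi,\Psi}:\FF_S(\Phi)\to\FF_S(\Psi)=Y$ together with the identity $d_n(\Phi,\Psi)=\|\id_{\Phi,\Psi}\|_S$ recorded after Definition \ref{def:DistanceBetweenFSets}, we get $d_S(\FF_S(\Phi),Y)\le\|\id_{\Phi,\Psi}\|_S=G_E(\Phi)$, and minimizing over $\Gamma$-sets gives the claim.

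For $G_E(\Phi)\le G_E(\FF_S(\Phi))$: choose a $\Gamma$-set $Y$ with $|Y|=n$ and a bijection $f:\FF_S(\Phi)\to Y$ with $\|f\|_S=d_S(\FF_S(\Phi),Y)$. Transport the $\Gamma$-action from $Y$ to $[n]$ along $f$, defining an $S$-assignment $\Psi$ by $\Psi(s)(x)=f^{-1}\!\big(s\cdot f(x)\big)$. By an easy induction on word length, $w\cdot_\Psi x=f^{-1}(w\cdot_Y f(x))$ for every $w\in\FF_S$ and $x\in[n]$, so if $u,v\in\FF_S$ lie in the same coset of $\lla E\rra$ then $u\cdot_\Psi x=v\cdot_\Psi x$ for all $x$; thus the action of $\Psi$ factors through $\Gamma$, i.e.\ $\Psi$ is an $E$-solution. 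Finally, for $s\in S$ and $x\in[n]$, applying the bijection $f$ shows that $\Phi(s)(x)\ne\Psi(s)(x)$ if and only if $f(s\cdot x)\ne s\cdot f(x)$, where $s\cdot x$ denotes the action in $\FF_S(\Phi)$; summing over $s$ and $x$ gives $d_n(\Phi,\Psi)=\|f\|_S$. Hence $G_E(\Phi)\le d_n(\Phi,\Psi)=\|f\|_S=G_E(\FF_S(\Phi))$, and combining the two inequalities completes the proof.

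There is no substantial obstacle here; the only points meriting a line of care are the verification that conjugating a $\Gamma$-action by a bijection still factors through $\Gamma$ (so that the transported assignment $\Psi$ really is an $E$-solution), and the bookkeeping that $\Psi$ realizes \emph{exactly} the distance $\|f\|_S$ rather than merely bounding it. Both are routine once the conjugation picture is in place.
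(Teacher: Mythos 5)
Your proof is correct and follows essentially the same route as the paper's: one direction uses $Y=\FF_S(\Psi)$ with the identity bijection, the other transports the $\Gamma$-action along an optimal bijection $f$ to define an $E$-solution $\Theta$ with $d_n(\Phi,\Theta)=\|f\|_S$. Your added remarks (that the minima are attained over the finite set of actions on $[n]$, and the explicit induction showing the transported action factors through $\Gamma$) merely make explicit what the paper leaves implicit.
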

\begin{proof}
Let $\Psi:S\to\Sym(n)$ be an $E$-solution which minimizes $d_{n}\left(\Phi,\Psi\right)$.
Let $Y$ be a $\Gamma$-set, $\left|Y\right|=n$, which minimizes
$d_{S}\left(\FF_{S}\left(\Phi\right),Y\right)$. We need to show that
$d_{n}\left(\Phi,\Psi\right)=d_{S}\left(\FF\left(\Phi\right),Y\right)$.
Indeed, on one hand,
\[
d_{S}\left(\FF_{S}\left(\Phi\right),Y\right)\leq d_{S}\left(\FF_{S}\left(\Phi\right),\FF_{S}\left(\Psi\right)\right)\leq\|\id_{\Phi,\Psi}\|_{S}=d_{n}\left(\Phi,\Psi\right)\,\,,
\]
where the first inequality follows from the defining property of $Y$,
and the second from the definition of $d_{S}$. On the other hand,
take a bijection $f:\FF_{S}\left(\Phi\right)\rightarrow Y$ for which
$d_{S}\left(\FF_{S}\left(\Phi\right),Y\right)=\|f\|_{S}$, and define
an $S$-assignment $\Theta:S\rightarrow\Sym\left(n\right)$ by $\Theta\left(s\right)\left(x\right)=f^{-1}\left(s\cdot f\left(x\right)\right)$.
Then, $\Theta$ is an $E$-solution because $Y$ is a $\Gamma$-set,
and so 
\[
d_{n}\left(\Phi,\Psi\right)\leq d_{n}\left(\Phi,\Theta\right)=\|\id_{\Phi,\Theta}\|_{S}=\|f\|_{S}=d_{S}\left(\FF_{S}\left(\Phi\right),Y\right)\,\,\text{.}
\]
\end{proof}
The above discussion enables us to define the stability rate of $E$
in terms of group actions, as recorded below:
\begin{prop}
\label{prop:stability-rate-in-terms-of-actions}The stability rate
of $E$ is given by
\[
\SR_{E}\left(\delta\right)=\sup\{G_{E}(X)\mid X\text{ is a finite }\FF_{S}-\text{set and }L_{E}(X)\le\delta\}\,\,.
\]
\end{prop}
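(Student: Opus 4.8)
The plan is to show that this is simply a transcription of Definition \ref{def:EquationSetStabilityRate} through the dictionary between $S$-assignments and finite $\FF_{S}$-sets developed above. The crucial point is that the correspondence $\Phi \mapsto \FF_{S}(\Phi)$ is, up to $\FF_{S}$-set isomorphism, a bijection onto all finite $\FF_{S}$-sets, and that it preserves both defects.

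First I would record the two directions of the realization. In one direction, for every $S$-assignment $\Phi$ the object $\FF_{S}(\Phi)$ is a finite $\FF_{S}$-set, and we already noted $L_{E}(\Phi)=L_{E}(\FF_{S}(\Phi))$, while $G_{E}(\Phi)=G_{E}(\FF_{S}(\Phi))$ is exactly the proposition proved just above. In the other direction, given any finite $\FF_{S}$-set $X$ with $n=|X|$, fix a bijection between the underlying set of $X$ and $[n]$; transporting the action of each $s\in S$ across this bijection yields a permutation $\Phi(s)\in\Sym(n)$, and by construction $\FF_{S}(\Phi)$ is isomorphic to $X$ as an $\FF_{S}$-set.

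Next I would check that $L_{E}$ and $G_{E}$ are invariants of the $\FF_{S}$-set isomorphism class. For $L_{E}$ this is immediate from the formula in Definition \ref{def:LocalDefectOfAction}: an $\FF_{S}$-isomorphism carries the pairs $(x,w)\in X\times E$ with $w\cdot x\neq x$ bijectively to the corresponding pairs for the isomorphic set. For $G_{E}$ it suffices to observe that if $g\colon X'\to X$ is an $\FF_{S}$-isomorphism and $f\colon X\to Y$ is any bijection, then $\|f\circ g\|_{S}=\|f\|_{S}$ (and similarly for postcomposition with an isomorphism $Y\to Y'$), so the minimum defining $d_{S}(X,Y)$, and hence $G_{E}(X)=\min\{d_{S}(X,Y)\mid Y\text{ a }\Gamma\text{-set},\ |Y|=|X|\}$, depends only on the isomorphism class of $X$.

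Combining these observations, the set $\{(L_{E}(\Phi),G_{E}(\Phi))\mid \Phi\text{ an }S\text{-assignment}\}$ coincides with $\{(L_{E}(X),G_{E}(X))\mid X\text{ a finite }\FF_{S}\text{-set}\}$: the first set is contained in the second since $\FF_{S}(\Phi)$ witnesses membership, and the second is contained in the first since any $X$ is isomorphic to $\FF_{S}(\Phi)$ for a suitable $\Phi$, whose defects agree with those of $X$ by the invariance just established. Therefore, for every $\delta$, the sets $\{G_{E}(\Phi)\mid L_{E}(\Phi)\le\delta\}$ and $\{G_{E}(X)\mid L_{E}(X)\le\delta\}$ are equal, and hence so are their suprema, which is precisely the asserted identity. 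There is no real obstacle in this argument; the only step warranting a word of care is the isomorphism-invariance of $G_{E}$, which reduces to the one-line remark that $\|\cdot\|_{S}$ is unchanged by pre- or post-composition with an $\FF_{S}$-isomorphism.
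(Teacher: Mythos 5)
Your proof is correct and follows the same route the paper takes implicitly: the proposition is presented there as an immediate consequence of the recorded identity $L_{E}(\Phi)=L_{E}(\FF_{S}(\Phi))$ and the proposition $G_{E}(\Phi)=G_{E}(\FF_{S}(\Phi))$, together with the (tacit) observation that every finite $\FF_{S}$-set is isomorphic to some $\FF_{S}(\Phi)$ and that both defects are isomorphism invariants. You have simply spelled out these last two steps explicitly, which is a fair and careful reading of what the paper leaves as ``the above discussion.''
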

\begin{defn}
\label{def:E-abiding-points}For an $\FF_{S}$-set $X$, define the
set of \emph{$E$-abiding points} in $X$ as
\[
X_{E}=\left\{ x\in X\mid\forall w\in E\,\,\,\,w\cdot x=x\right\} \,\,\text{.}
\]
\end{defn}
Note that for an $\FF_{S}$-set $X$, we have 
\begin{equation}
\frac{\left|X\setminus X_{E}\right|}{\left|X\right|}\leq L_{E}\left(X\right)\leq\left|E\right|\cdot\frac{\left|X\setminus X_{E}\right|}{\left|X\right|}\,\,\text{.}\label{eq:local-defect-vs-X_E}
\end{equation}
.

\subsection{\label{subsec:graphs-of-actions}Graphs of actions}

As mentioned, it will be useful to represent a group action as a labeled
graph. Throughout Section \ref{subsec:graphs-of-actions}, we fix
a finite set $S$ and a group $\Lambda$ generated by $S$. We have
a natural surjection $\FF_{S}\to\Lambda$ which enables us to regard
a given action of $\Lambda$ as an action of $\FF_{S}$. The \emph{action
graph} of a finite $\Lambda$-set $X$ (with regard to the set of
generators $S$) is the edge-labeled directed graph over the vertex
set $X$, which has a directed edge labeled $s$ from $x$ to $s\cdot x$
for each $x\in X$ and $s\in S$. Note that the action graph of $X$
remains the same if we choose to treat $X$ as an $\FF_{S}$-set rather
a $\Lambda$-set.

In the context of graphs of actions, it is often useful to consider
pointed sets $\left(X,x_{0}\right)$, i.e., a set $X$ together with
a distinguished point $x_{0}\in X$. The role of $X$ will always
be taken by a $\Lambda$-set or a subset of a $\Lambda$-set. We use
the notation $f:\left(X,x_{0}\right)\rightarrow\left(Y,y_{0}\right)$
for a map $f:X\rightarrow Y$ which sends $x_{0}\mapsto y_{0}$.

It is helpful to consider the following definitions with the role
of $\Lambda$ taken by the free group $\mathbb{F}_{S}$ itself, or
with $\Lambda=\mathbb{Z}^{\left|S\right|}$. We proceed to define
isomorphisms of subgraphs of action graphs, and several related notions. 
\begin{defn}
Let $X$ and $Y$ be $\Lambda$-sets and
let $f:X_{0}\rightarrow Y_{0}$ be a map between subsets $X_{0}\subseteq X$
and $Y_{0}\subseteq Y$.
\begin{enumerate}
\item For $s\in S$ and $x\in X_{0}$, we say that $f$ \emph{preserves}
the edge $x\overset{s}{\longrightarrow}$ if either $s\cdot x\notin X_{0}$
and $s\cdot f\left(x\right)\notin Y_{0}$, or $s\cdot x\in X_{0}$,
$s\cdot f\left(x\right)\in Y_{0}$ and $f\left(s\cdot x\right)=s\cdot f\left(x\right)$.
\item If $f$ is bijective, and preserves $x\overset{s}{\longrightarrow}$
for every $s\in S$ and $x\in X_{0}$, we say that $f$ is a \emph{subgraph
isomorphism} from $X_{0}$ to \emph{$Y_{0}$. }
\end{enumerate}
\end{defn}
For a $\Lambda$-set $X$, a point $x\in X$ and a subset $P\subseteq\Lambda$,
we write $P\cdot x=\left\{ p\cdot x\mid p\in P\right\} $. For subsets
$P_{1}$ and $P_{2}$ of $\Lambda$, we write $P_{1}\cdot P_{2}=\left\{ p_{1}\cdot p_{2}\mid p_{1}\in P_{1},p_{2}\in P_{2}\right\} $.
\begin{defn}
\label{def:F-map}Let $\left(X,x\right)$ and $\left(Y,y\right)$
be pointed $\Lambda$-sets and $P\subseteq\Lambda$, a subset. Assume
that 
\begin{equation}
\Stab_{\Lambda}\left(x\right)\cap\left(P^{-1}P\right)\subseteq\Stab_{\Lambda}\left(y\right)\cap\left(P^{-1}P\right)\,\,\text{.}\label{eq:stab_x-in-stab_y}
\end{equation}
Define the function $F_{P,x,y}:P\cdot x\rightarrow P\cdot y$ by $F_{P,x,y}\left(p\cdot x\right)=p\cdot y$
for each $p\in P$. Note that this function is well-defined since
if $p_{1},p_{2}\in P$ and $p_{1}\cdot x=p_{2}\cdot x$, then $p_{2}^{-1}p_{1}\in\Stab_{\Lambda}\left(x\right)$,
and so $p_{2}^{-1}p_{1}\in\Stab_{\Lambda}\left(y\right)$, hence $p_{1}\cdot y=p_{2}\cdot y$.
\end{defn}
In the notation of the above definition, the function $F_{P,x,y}$
is injective if and only if the inclusion in (\ref{eq:stab_x-in-stab_y})
is in fact an equality. The special case of Definition \ref{def:F-map}
where $X=\Lambda$ gives rise to the following definition:
\begin{defn}
\label{def:injects-bijects}Let $Y$ be a $\Lambda$-set, $y\in Y$
and $P\subseteq\Lambda$, a subset. We say that $P$ \emph{injects
into} $Y$ at $y$ if the map $F_{P,1_{\Lambda},y}:P\rightarrow P\cdot y$
is injective, or, equivalently, if $\Stab_{\Lambda}\left(y\right)\cap\left(P^{-1}P\right)=\left\{ 1_{\Lambda}\right\} $.
We say that $P$ \emph{bijects} onto $Y$ at $y$ if this map is bijective.
\end{defn}
Note that Definition \ref{def:injects-bijects} merely requires the
map $F_{P,1_{\Lambda},y}$ to be injective, but not necessarily a
subgraph isomorphism. We seek sufficient conditions which guarantee
that a given map of the form $F_{P,x,y}$, for general $\FF_{S}$-sets
$X,Y$ and points $x\in X,y\in Y$, preserves a given edge, or even
that it is a subgraph isomorphism. The following two lemmas provide
such conditions by considering short elements of $\Lambda$ and whether
or not they belong to $\Stab_{\Lambda}(x)$ and $\Stab_{\Lambda}(y)$.
\begin{lem}
\label{lem:edge-preservation}Let $\left(X,x\right)$ and $\left(Y,y\right)$
be pointed $\Lambda$-sets and $P\subseteq\Lambda$, a subset, such
that Condition (\ref{eq:stab_x-in-stab_y}) of Definition \ref{def:F-map}
is satisfied. Let $s\in S$, $p\in P$ and assume that 
\[
\Stab_{\Lambda}\left(x\right)\cap\left(P^{-1}\cdot sp\right)=\Stab_{\Lambda}\left(y\right)\cap\left(P^{-1}\cdot sp\right)\,\,\text{.}
\]
Then, the map $F=F_{P,x,y}:P\cdot x\rightarrow P\cdot y$ preserves
$p\cdot x\overset{s}{\longrightarrow}$.
\end{lem}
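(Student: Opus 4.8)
The plan is to unwind the definitions and show that the hypothesis forces $F(p\cdot x)=p\cdot y$ and $F(sp\cdot x)=sp\cdot y$ to be consistent, in the sense required by the definition of edge preservation. First I would set $x'=p\cdot x\in P\cdot x$ and recall that $F(x')=p\cdot y$ by the definition of $F_{P,x,y}$. The edge $x'\xrightarrow{s}$ lands at $s\cdot x'=sp\cdot x$, and the question is whether this point lies in the domain $P\cdot x$, and if so whether $F(sp\cdot x)=s\cdot F(x')=sp\cdot y$. So there are two cases to handle, exactly matching the two clauses in the definition of ``preserves''.

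The first case is when $sp\cdot x\notin P\cdot x$. Here I would argue that the extra hypothesis $\Stab_\Lambda(x)\cap(P^{-1}\cdot sp)=\Stab_\Lambda(y)\cap(P^{-1}\cdot sp)$ forces $sp\cdot y\notin P\cdot y$ as well: indeed, $sp\cdot x\in P\cdot x$ iff there is $q\in P$ with $q\cdot x=sp\cdot x$, i.e. $q^{-1}sp\in\Stab_\Lambda(x)$, i.e. $\Stab_\Lambda(x)\cap(P^{-1}\cdot sp)\neq\emptyset$; and symmetrically for $y$. Since the two intersections are equal, one is empty iff the other is, so $sp\cdot x\notin P\cdot x$ implies $sp\cdot y\notin P\cdot y$, and the edge is preserved vacuously. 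The second case is when $sp\cdot x\in P\cdot x$. Then by the equality of intersections there is $q\in P$ with $q^{-1}sp\in\Stab_\Lambda(x)=\Stab_\Lambda(y)\cap(P^{-1}\cdot sp)$-part, hence $q\cdot x=sp\cdot x$ and $q\cdot y=sp\cdot y$. By definition of $F$, $F(sp\cdot x)=F(q\cdot x)=q\cdot y=sp\cdot y=s\cdot(p\cdot y)=s\cdot F(x')$, and also $sp\cdot y\in P\cdot y$, so again the edge $p\cdot x\xrightarrow{s}$ is preserved.

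I would be a little careful about one subtlety: the definition of ``preserves $p\cdot x\xrightarrow{s}$'' is stated for a map between the sets $X_0=P\cdot x$ and $Y_0=P\cdot y$, so I should note that $F$ is indeed such a map (its well-definedness is already guaranteed by Condition~(\ref{eq:stab_x-in-stab_y}), recorded in Definition~\ref{def:F-map}), and that the relevant point for the edge is $s\cdot(p\cdot x)$, computed in the ambient $\Lambda$-set $X$. The only genuine content is the elementary set-theoretic manipulation translating ``$sp\cdot x$ is hit by some $q\cdot x$ with $q\in P$'' into a nonempty-intersection statement about $\Stab_\Lambda(x)\cap(P^{-1}\cdot sp)$; everything else is bookkeeping. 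I expect no real obstacle here — the lemma is essentially a definitional lemma whose proof is a short case analysis — but the place to be attentive is making sure the hypothesis is used symmetrically in $x$ and $y$ in both cases, and that the two clauses of the definition of ``preserves'' are matched to the two cases correctly.
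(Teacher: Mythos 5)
Your proof is correct and takes essentially the same approach as the paper's: both hinge on the observation that $sp\cdot x=q\cdot x$ for some $q\in P$ iff $q^{-1}sp\in\Stab_\Lambda(x)\cap(P^{-1}\cdot sp)$, which by hypothesis equals $\Stab_\Lambda(y)\cap(P^{-1}\cdot sp)$, so the same $q$ works for $y$. The paper records this biconditional up front as a displayed equivalence and then derives both the ``both in or both out'' dichotomy and the equivariance $F(sp\cdot x)=sp\cdot y$ from it, whereas you organize the argument as an explicit two-case split matching the two clauses of the definition of edge preservation; the content is identical.
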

\begin{proof}
We first note that for $p_1\in P$,
\begin{equation} \label{eq:edgePreservationNote}
sp\cdot x = p_1 \cdot x \:\:\:\:\:\text{if and only if}\:\:\:\:\: sp\cdot y = p_1\cdot y\,\,\text{.}
\end{equation}
Indeed, $sp\cdot x=p_{1}\cdot x$ if and only if $p_{1}^{-1}sp\in\Stab_{\Lambda}\left(x\right)$
if and only if $p_{1}^{-1}sp\in\Stab_{\Lambda}\left(y\right)$ if
and only if $sp\cdot y=p_{1}\cdot y$. 

It follows from (\ref{eq:edgePreservationNote}) that $s\cdot\left(p\cdot x\right)$ belongs to $P\cdot x$
if and only if $s\cdot F\left(p\cdot x\right) = s\cdot \left(p\cdot y\right)$ belongs to $P\cdot y$. Assume
that these equivalent conditions hold. It remains to show that in
this case, $F\left(s\cdot\left(p\cdot x\right)\right)=s\cdot F\left(p\cdot x\right)$.
Write $sp\cdot x=p_{1}\cdot x$ for $p_{1}\in P$. By (\ref{eq:edgePreservationNote}),
$sp\cdot y=p_{1}\cdot y$. So, $F\left(s\cdot\left(p\cdot x\right)\right)=F\left(p_{1}\cdot x\right)=p_{1}\cdot y=sp\cdot y=s\cdot F\left(p\cdot x\right)$,
as required.
\end{proof}
\begin{lem}
\label{lem:subgraph-isomorphism} Let $\left(X,x\right)$ and $\left(Y,y\right)$
be pointed $\Lambda$-sets and $P\subseteq\Lambda$, a subset. Write
$S_{1}=S\cup\left\{ 1_{\Lambda}\right\} $ and assume that
\[
\Stab_{\Lambda}\left(x\right)\cap\left(P^{-1}\cdot S_{1}\cdot P\right)=\Stab_{\Lambda}\left(y\right)\cap\left(P^{-1}\cdot S_{1}\cdot P\right)\,\,.
\]
Then, $F_{P,x,y}:P\cdot x\rightarrow P\cdot y$ is well-defined and
is a subgraph isomorphism.
\end{lem}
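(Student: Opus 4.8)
The plan is to deduce the statement from Definition \ref{def:F-map}, Lemma \ref{lem:edge-preservation}, and the trivial observation that an equality of sets remains an equality after intersecting both sides with any fixed set. The point of the hypothesis is that it is stated over the ``large'' set $P^{-1}\cdot S_{1}\cdot P$, which contains all the ``small'' index sets appearing in the earlier results --- and this works precisely because $1_{\Lambda}\in S_{1}$.

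\emph{Step 1: $F=F_{P,x,y}$ is a well-defined bijection.} Since $1_{\Lambda}\in S_{1}$, we have $P^{-1}P=P^{-1}\cdot 1_{\Lambda}\cdot P\subseteq P^{-1}\cdot S_{1}\cdot P$, so intersecting the hypothesis with $P^{-1}P$ gives
\[
\Stab_{\Lambda}(x)\cap(P^{-1}P)=\Stab_{\Lambda}(y)\cap(P^{-1}P)\,\,.
\]
In particular the inclusion (\ref{eq:stab_x-in-stab_y}) holds, so $F$ is well-defined by Definition \ref{def:F-map}; and since that inclusion is actually an equality, $F$ is injective by the remark following Definition \ref{def:F-map}. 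As $F$ maps $P\cdot x$ onto $P\cdot y$ by construction, it is a bijection.

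\emph{Step 2: $F$ preserves every edge.} Fix $s\in S$ and $p\in P$. Then $P^{-1}\cdot sp=P^{-1}\cdot s\cdot p\subseteq P^{-1}\cdot S_{1}\cdot P$, so intersecting the hypothesis with $P^{-1}\cdot sp$ yields
\[
\Stab_{\Lambda}(x)\cap(P^{-1}\cdot sp)=\Stab_{\Lambda}(y)\cap(P^{-1}\cdot sp)\,\,.
\]
Combined with (\ref{eq:stab_x-in-stab_y}), this is exactly the hypothesis of Lemma \ref{lem:edge-preservation}, which therefore tells us that $F$ preserves $p\cdot x\overset{s}{\longrightarrow}$. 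As $s\in S$ and $p\in P$ were arbitrary, the bijection $F$ preserves every edge emanating from a vertex of $P\cdot x$, i.e.\ $F$ is a subgraph isomorphism from $P\cdot x$ to $P\cdot y$.

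There is no real obstacle: the lemma just packages Lemma \ref{lem:edge-preservation}, applied uniformly over all generators $s\in S$, together with the use of $1_{\Lambda}\in S_{1}$ to simultaneously secure the well-definedness and injectivity of $F$. The only thing requiring a little care is keeping track of the direction of the set operations (for instance, that $p_{2}^{-1}p_{1}\in P^{-1}P$ whenever $p_{1},p_{2}\in P$), which is routine.
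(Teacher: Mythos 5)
Your proposal is correct and follows essentially the same route as the paper's proof: deduce well-definedness and injectivity from the intersection with $P^{-1}P \subseteq P^{-1}S_1P$, then apply Lemma \ref{lem:edge-preservation} for each $s\in S$ and $p\in P$ using the intersection with $P^{-1}\cdot sp \subseteq P^{-1}S_1P$. The only difference is cosmetic — you spell out the ``intersect both sides'' step and the surjectivity of $F$ explicitly, which the paper leaves implicit.
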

\begin{proof}
Since $\Stab_{\Lambda}\left(x\right)\cap\left(P^{-1}\cdot P\right)=\Stab_{\Lambda}\left(y\right)\cap\left(P^{-1}\cdot P\right)$,
the map $F_{P,x,y}$ is well-defined and injective. Furthermore, for
each $s\in S$ and $x\in P\cdot x$, since $\Stab_{\Lambda}\left(x\right)\cap\left(P^{-1}\cdot sp\right)=\Stab_{\Lambda}\left(y\right)\cap\left(P^{-1}\cdot sp\right)$,
Lemma \ref{lem:edge-preservation} implies that $F_{P,x,y}$ preserves
$x\overset{s}{\longrightarrow}$.
\end{proof}
We proceed to define balls in $\FF_{S}$ and $\FF_{S}$-sets, and
give a useful corollary of Lemma \ref{lem:subgraph-isomorphism}.
The word norm $\left|\cdot\right|$ on $\FF_{S}$ in defined for a
word $w\in\FF_{S}$ as the length of $w$ when written as a reduced
word over $S^{\pm}$. Let $X$ be an $\FF_{S}$-set. The word-norm
induces a metric $d_{X}$ on $X$:
\[
\forall x_{1},x_{2}\in X\,\,\,\,\,\,\,\,d_{X}\left(x_{1},x_{2}\right)=\min\left\{ \left|w\right|\mid w\in\FF_S,\text{ }w\cdot x_{1}=x_{2}\right\} \,\,\text{.}
\]
Write $B_{X}(x,r)$ for the ball of radius $r\geq0$ centered at the
point $x\in X$ with respect to $d_{X}$. For $A\subseteq X$, let $B(A,r) = \bigcup_{x\in A} B(x,r)$. In the special case $X=\Lambda$,
we also write $B_{\Lambda}(r)$ for $B_{\Lambda}\left(1_{\Lambda},r\right)$.
This notation will be used often with either $\Lambda=\FF_{S}$ or
$\Lambda=\ZZ^{\left|S\right|}$. For $r\geq0$, plugging in $P=B_{\Lambda}\left(r\right)$
in Lemma \ref{lem:subgraph-isomorphism}, we deduce the following
corollary:
\begin{lem}
\label{lem:subgraph-isomorphism-balls} Let $\left(X,x\right)$ and
$\left(Y,y\right)$ be $\Lambda$-sets and $r\geq0$, an integer.
Assume that
\[
\Stab_{\Lambda}\left(x\right)\cap B_{\Lambda}\left(2r+1\right)=\Stab_{\Lambda}\left(y\right)\cap B_{\Lambda}\left(2r+1\right)\,\,.
\]
Then, the map $F_{B_{\Lambda}\left(r\right),x,y}:B_{X}\left(x,r\right)\rightarrow B_{Y}\left(y,r\right)$
is well-defined and is a subgraph isomorphism.
\end{lem}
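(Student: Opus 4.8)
The plan is to derive this lemma directly from Lemma \ref{lem:subgraph-isomorphism} by taking $P = B_\Lambda(r)$. The first thing I would record is that with this choice we have $P \cdot x = B_X(x, r)$ and $P \cdot y = B_Y(y, r)$. Indeed, by definition $B_X(x,r)$ consists of all points $w \cdot x$ with $w \in \FF_S$ of word-norm at most $r$; since $w \cdot x$ depends only on the image of $w$ in $\Lambda$, and the image of $B_{\FF_S}(r)$ under $\FF_S \to \Lambda$ is exactly $B_\Lambda(r)$, this set equals $B_\Lambda(r) \cdot x = P \cdot x$, and likewise for $Y$. So the only thing left is to verify the hypothesis of Lemma \ref{lem:subgraph-isomorphism}, namely that $\Stab_\Lambda(x)$ and $\Stab_\Lambda(y)$ agree on $P^{-1} \cdot S_1 \cdot P$, where $S_1 = S \cup \{1_\Lambda\}$.

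The crux is the inclusion $P^{-1} \cdot S_1 \cdot P \subseteq B_\Lambda(2r+1)$. This follows from two elementary properties of the word-norm on $\Lambda$ induced from $S$: it is symmetric, so $B_\Lambda(r)^{-1} = B_\Lambda(r)$; and it is subadditive, so a product of an element of norm $\le r$, an element of $S_1$ (norm $\le 1$), and another element of norm $\le r$ has norm $\le 2r+1$. Granting this inclusion, the assumed equality $\Stab_\Lambda(x) \cap B_\Lambda(2r+1) = \Stab_\Lambda(y) \cap B_\Lambda(2r+1)$ restricts to an equality $\Stab_\Lambda(x) \cap (P^{-1} S_1 P) = \Stab_\Lambda(y) \cap (P^{-1} S_1 P)$, which is precisely what Lemma \ref{lem:subgraph-isomorphism} requires. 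Invoking that lemma then gives that $F_{B_\Lambda(r), x, y}$ is well-defined and a subgraph isomorphism from $P \cdot x = B_X(x,r)$ onto $P \cdot y = B_Y(y,r)$, which is the assertion.

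I do not expect a genuine obstacle here, since the lemma is stated in the excerpt as a ``corollary'' of Lemma \ref{lem:subgraph-isomorphism}. The only point requiring a little care is the bookkeeping: one must remember that the metric $d_X$ is defined through the word-norm of $\FF_S$ and that this is compatible with the $\Lambda$-action because $\FF_S$ surjects onto $\Lambda$ (so that $B_\Lambda(r)\cdot x$ really is the metric ball $B_X(x,r)$), and one must track the factor of $S_1$ in the product set $B_\Lambda(r)^{-1} \cdot S_1 \cdot B_\Lambda(r)$, which is exactly what accounts for the ``$+1$'' in the radius $2r+1$ appearing in the hypothesis.
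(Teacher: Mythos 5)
Your proposal is correct and matches the paper's approach exactly: the paper derives Lemma \ref{lem:subgraph-isomorphism-balls} precisely by taking $P = B_\Lambda(r)$ in Lemma \ref{lem:subgraph-isomorphism}, which is what you do. You merely spell out the two bookkeeping points (that $B_\Lambda(r)\cdot x = B_X(x,r)$ and that $B_\Lambda(r)^{-1}\cdot S_1\cdot B_\Lambda(r)\subseteq B_\Lambda(2r+1)$) that the paper leaves implicit.
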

Now, assume that $E\subseteq\FF_{S}$ is a finite set generating the
kernel of the surjection $\FF_{S}\rightarrow\Lambda$ as a normal
subgroup (hence $\FF_{S}/\lla E\rra\cong\Lambda$). We end this section
with two definitions and a basic lemma that allows us to bound the
global defect $G_{E}\left(X\right)$ of an $\FF_{S}$-set $X$ with
respect to $E$ (see Definition \ref{def:global-defect-of-action})
in terms of properties of a map between graphs of actions.
\begin{defn}
Let $X$ be a $\Lambda$-set and $X_{0}\subseteq X$ a subset. A point
$x\in X_{0}$ is \emph{internal} in\emph{ $X_{0}$} if $S\cdot x\subseteq X_{0}$.
\end{defn}
\begin{defn}
Let $X$ and $Y$ be $\Lambda$-sets and $X_{0}\subseteq X$, a subset.
Take a function $f:X_{0}\rightarrow Y$. Define the set $\Eq\left(f\right)\subseteq X$
of \emph{equivariance points} of $f$ as
\[
\Eq\left(f\right)=\left\{ x\in X_{0}\mid\forall s\in S\,\,\,\,s\cdot x\in X_{0}\,\,\text{and}\,\,f\left(s\cdot x\right)=s\cdot f\left(x\right)\right\} \,\,\text{.}
\]
That is, $\Eq(f)$ is the set of internal points $x\in X_0$, for which $f$ preserves $x\overset{s}{\longrightarrow}$
for all $s\in S$.
\end{defn}
\begin{lem}
\label{lem:equivariance-points}Let $Y$ be a $\Lambda$-set, $X$
an $\FF_{S}$-set and $f:Y\rightarrow X$ an injective map. Then,
$G_{E}\left(X\right)\leq\left|S\right|\cdot\left(1-\frac{1}{\left|X\right|}\cdot\left|\Eq\left(f\right)\right|\right)$.
\end{lem}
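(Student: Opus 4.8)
The plan is to use $f$ to manufacture a concrete $\Gamma$-set $Y'$ with $|Y'| = |X|$ that is close to $X$ in the metric $d_S$, and then invoke Definition \ref{def:global-defect-of-action}. The natural candidate is $Y$ itself, once we observe that $Y$ already \emph{is} a $\Gamma$-set (being a $\Lambda$-set and $\Lambda \cong \FF_S/\lla E\rra$), so the only issue is cardinality: we are not told $|Y| = |X|$. However, since $f:Y \to X$ is injective, we have $|Y| \le |X|$, and we can enlarge $Y$ to a $\Gamma$-set $Y'$ of cardinality exactly $|X|$ by adjoining $|X| - |Y|$ new points, each fixed by all of $\Gamma$ (a disjoint union of trivial orbits); this is still a legitimate $\Gamma$-set. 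Extend $f$ to a bijection $\tilde f : Y' \to X$ by mapping the new points arbitrarily (bijectively) onto $X \setminus f(Y)$.

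Next I would estimate $\|\tilde f^{-1}\|_S$, or rather work directly with the bijection $g = \tilde f^{-1} : X \to Y'$ and bound $d_S(X, Y') \le \|g\|_S = \sum_{s\in S}\frac{1}{|X|}|\{x \in X \mid g(s\cdot x) \ne s\cdot g(x)\}|$. The key point is that if $x \in X$ lies in the image $f(\Eq(f))$, say $x = f(y)$ with $y \in \Eq(f)$, then for every $s \in S$ we have $s\cdot y \in Y_0 \subseteq Y \subseteq Y'$ and $f(s\cdot y) = s\cdot f(y) = s\cdot x$, so $g(s \cdot x) = g(f(s\cdot y)) = s\cdot y = s\cdot g(x)$; hence $x$ contributes nothing to the sum, for any $s$. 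Therefore the set of $x \in X$ that can contribute to the $s$-term is contained in $X \setminus f(\Eq(f))$, whose size is $|X| - |\Eq(f)|$ (using injectivity of $f$). Summing over the $|S|$ generators gives $\|g\|_S \le |S| \cdot \frac{|X| - |\Eq(f)|}{|X|} = |S|\left(1 - \frac{1}{|X|}|\Eq(f)|\right)$, and since $G_E(X) \le d_S(X, Y') \le \|g\|_S$, the lemma follows.

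I do not anticipate a serious obstacle here; the argument is essentially bookkeeping. The one point requiring a little care is the passage from $Y$ to $Y'$: one must check that padding with trivial orbits keeps $Y'$ a $\Gamma$-set (it does, trivially) and that the extension of $f$ to $\tilde f$ does not spoil the equivariance-point count — which it cannot, since the added points of $Y'$ are not in the image of $\Eq(f)$ under $\tilde f$ and we only ever use the forward implication "$x \in f(\Eq(f)) \Rightarrow x$ contributes $0$". A second minor subtlety is that $\Eq(f)$ is defined as a subset of $X_0 \subseteq Y$, i.e. it lives on the domain side; one should state clearly that we are using $f|_{\Eq(f)}$ is injective into $X$ so that $|f(\Eq(f))| = |\Eq(f)|$, which is immediate from injectivity of $f$.
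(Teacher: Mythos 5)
Your proof is correct and follows essentially the same route as the paper: pad $Y$ with $|X|-|Y|$ trivial orbits to form a $\Gamma$-set of the right cardinality, extend $f$ to a bijection, and observe that (images of) equivariance points contribute nothing to the $S$-norm, which bounds $d_S$ by $|S|\cdot\bigl(1-\frac{|\Eq(f)|}{|X|}\bigr)$. The only cosmetic difference is that you estimate the norm of the inverse bijection $g:X\to Y'$, which matches the definition of $d_S(X,\cdot)$ verbatim, whereas the paper estimates $\|f\coprod f_Z\|_S$ for the forward bijection and implicitly uses the symmetry $\|h\|_S=\|h^{-1}\|_S$; the substance is identical.
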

\begin{proof}
Let $Z$ be a trivial $\Lambda$-set of cardinality $\left|Y\right|-\left|X\right|$,
i.e., each $g\in\Lambda$ fixes each $z\in Z$. Fix a bijection
$f_{Z}:Z\rightarrow X\setminus\Ima\left(f\right)$. We consider the disjoint union $Y\coprod Z$ and the bijection $f\coprod f_Z:Y\coprod Z\to X$. Then,
\begin{align*}
G_{E}\left(X\right) & \leq d_{S}\left(Y\coprod Z,X\right)\\
 &\leq\|f\coprod f_{Z}\|_{S}\\ 
 &\le \frac{1}{\left|X\right|}\cdot \left(\left|X\right|\cdot \left|S\right| - \left|S\right|\cdot \left|\Eq\left(f\coprod f_Z\right)\right|\right)\\
 &\leq\frac{1}{\left|X\right|}\cdot\left(\left|X\right|\cdot\left|S\right|-\left|S\right|\cdot\left|\Eq\left(f\right)\right|-\left|S\right|\cdot\left|\Eq\left(f_{Z}\right)\right|\right)\\
 & \leq\left|S\right|\cdot\left(1-\frac{1}{\left|X\right|}\cdot\left|\Eq\left(f\right)\right|\right)\,\,\text{.}
\end{align*}
\end{proof}

\section{Abelian groups are polynomially stable\label{sec:AbelianGroupsArePolynomiallyStable}}

The aim of this section is to prove Theorem \ref{thm:AbelianGroupsArePolynomiallyStable},
our main theorem. We begin by defining several objects that shall
remain fixed throughout Section \ref{sec:AbelianGroupsArePolynomiallyStable}.

Let $\Gamma$ be a finitely-generated abelian group. Without loss
of generality we can realize $\Gamma$ as follows: Let $m\ge d\ge0$,
take a basis $\{e_{1},\ldots,e_{m}\}$ for $\mathbb{Z}^{m}$ and let
$2\le\beta_{m-d+1}\le\ldots\le\beta_{m}$ be integers. Define 
\begin{equation}
K=\left\langle \left\{ \beta_{i}\cdot e_{i}\right\} _{i=m-d+1}^{m}\right\rangle \le\mathbb{Z}^{m}\label{eq:K_Tor}
\end{equation}
 and write $\Gamma=\mathbb{Z}^{m}/K$. Let $\Tor(\Gamma)$ denote
the torsion subgroup of $\Gamma$. Let
\[
\beta_{E}=\begin{cases}
\beta_{m} & \text{if }m>d\\
1 & \text{if }m=d\,\,\text{.}
\end{cases}
\]

Theorem \ref{thm:AbelianGroupsArePolynomiallyStable} asserts that $\deg\left(\SR_\Gamma\right)$ is finite. We will, in fact, provide an explicit upper bound on $\deg\left(\SR_\Gamma\right)$. In the case $d=0$, i.e., if $\Gamma$ is finite, Proposition \ref{prop:finite-groups} says that $\deg\left(\SR_\Gamma\right)=1$. We proceed assuming that $d\ge 1$. We shall show that
\begin{equation}
\deg\left(\SR_{\Gamma}\right)\le C_{\text{bound}}(\Gamma) \label{eq:DBound}
\end{equation}
for 
\begin{equation} \label{eq:CBound}
C_{\text{bound}}(\Gamma)= O\left(2^{d}\cdot d\cdot\max\left\{ d\log d,\log\beta_{E},1\right\} \right)\,\,\text{,}
\end{equation}
where the implied constant of the $O\left(\cdot\right)$ notation
is an absolute constant. We note that, in order to minimize $C_{\text{bound}}(\Gamma)$, one may let $\beta_{m-d+1},\ldots, \beta_m$ correspond to the primary decomposition of $\Gamma$, thus making the constant $\beta_E$ the largest prime power in that decomposition.

Let $\FF_{m}$ be the free group on $S=\{\hat{e}_{1},\ldots,\hat{e}_{m}\}$,
and consider the surjection $\pi:\FF_{m}\rightarrow\ZZ^{m}$, sending
$\hat{e}_{i}\mapsto e_{i}$. We get a sequence of surjections
\[
\FF_{m}\overset{\pi}{\longrightarrow}\ZZ^{m}\longrightarrow\Gamma\,\,\text{.}
\]
We also fix a free group $\FF_{d}$, generated by $\left\{ \hat{e}_{1},\dotsc,\hat{e}_{d}\right\} $.
That is, we write $\FF_{d}$ for this fixed copy of a free group of
rank $d$ inside our fixed free group $\FF_{m}$.

By the definition, the stability rate $\SR_{\Gamma}$ of $\Gamma$
can be computed through any presentation of $\Gamma$ (see Proposition
\ref{prop:StabilityRateIsAGroupInvariant}). We proceed to choose
the equation-set $E$, defining $\Gamma$, with which we will work.
Let 
\[
E_{0}=\left\{ \left[\hat{e}_{i}^{\epsilon_{1}},\hat{e}_{j}^{\epsilon_{2}}\right]\in\FF_{m}\mid i,j\in\left[m\right],\,\,\,\,\epsilon_{1},\epsilon_{2}\in\left\{ +1,-1\right\} \right\} \subseteq\FF_{m}\,\,\text{,}
\]
where $[x,y]$ denotes the commutator $xyx^{-1}y^{-1}$, and define
\[
E=E_{0}\cup\left\{ \hat{e}_{i}^{\beta_{i}}\mid m-d+1\leq i\leq m\right\} \subseteq\mathbb{F}_{m}\,\,\text{.}
\]
Note that if $d=m$, then $E$ is equivalent to the equation-set $E_{\text{comm}}^{d}$
from the introduction. Since $\Gamma\cong\mathbb{F}_{m}/\lla E\rra$,
we have $\SR_{\Gamma}=\left[\SR_{E}\right]$, and so our goal is to
prove that 
$$
\deg\left(\SR_{E}\right)\le C_{\text{bound}}(\Gamma)\,\,\text{.}
$$

For future reference, we fix the following constants:
\begin{align*}
C_{d} & =\max\left\{ 3\cdot7^{d}\cdot d^{2d+2},\beta_{E}\right\} \\
t_{E} & =\max\left\{ d^{-1}\cdot\left(m-d\right)\cdot m\cdot\beta_{E},2\right\} \,\,\text{.}
\end{align*}
Additionally, we let 
\[
\hat{T}=\left\{ \prod_{i=m-d+1}^{m}\hat{e}_{i}^{\alpha_{i}}\in\FF_{m}\mid\forall i\,\,0\leq\alpha_{i}<\beta_{i}\right\} \,\,\text{,}
\]
and 
\[
T=\left\{ \sum_{i=m-d+1}^{m}\alpha_{i}\cdot e_{i}\in\ZZ^{m}\mid\forall i\,\,0\leq\alpha_{i}<\beta_{i}\right\} \,\,\text{.}
\]
In the rest of this section, the implied constants in the $O\left(\cdot\right)$
notation are allowed to depend on $m$, $d$ and $E$.

\subsection{Proof plan \label{subsec:ProofPlan}}

In this section we outline our proof of Theorem \ref{thm:AbelianGroupsArePolynomiallyStable},
as implemented in Sections \ref{subsec:GeometricDefinitions}--\ref{subsec:TilingAlgorithm}. 

Let $X$ be an $\FF_{m}$-set, and write $n=\left|X\right|$. By Proposition \ref{prop:stability-rate-in-terms-of-actions}, in order
to bound $\SR_{E}$, it suffices to bound $G_{E}(X)$ in terms of
$L_{E}(X)$. To this end, we algorithmically construct a
$\Gamma$-set $Y$ (Proposition \ref{prop:TilingAlgorithm}), together with a certain injection $f:Y\to X$
with many equivariance points, namely, $|\Eq(f)|\ge n\cdot\left(1-O\left(L_{E}(X)^{\frac{1}{C_{\text{bound}}(\Gamma)}}\right)\right)$.
Lemma \ref{lem:equivariance-points} then gives the bound $G_{E}(X)\le|S|\cdot\left(1-\frac{|\Eq(f)|}{n}\right)\le O\left(L_{E}(X)^{\frac{1}{C_{\text{bound}}(\Gamma)}}\right)$,
which yields the claim of Theorem \ref{thm:AbelianGroupsArePolynomiallyStable}.

We build $Y$ as the disjoint union of a collection of small $\Gamma$-sets
$\{Y_{x}\}_{x\in J}$, each equipped with an injection $f_{x}:Y_{x}\to X$.
The images of these injections are pairwise disjoint and $f:Y\to X$
is taken to be the disjoint union of the maps $f_{x}$. Clearly, due
to this construction, $|\Eq(f)|$ is at least $\sum_{x\in J}|\Eq(f_{x})|$,
so we wish to maximize the latter sum. Towards this end, it is desirable
that the images of the injections $f_{x}$ cover almost all of $X$,
and that each of the injections has a large fraction of equivariance
points. We manage to construct the injections $f_{x}$ so that the
equivariance points of $f_{x}$ are approximately those points in
$Y_{x}$ that are mapped to internal points of $\Ima(f_{x})\subseteq X$.
Hence, we think of the ratio $\frac{|Y_{x}\setminus\Eq(f_{x})|}{|Y_{x}|}$
as an \emph{isoperimetric ratio}, which we wish to minimize.

The reader may prefer to read both the proof and its outline under
the simplifying assumption that $\Gamma$ is torsion-free, i.e., $m=d$
and, accordingly, $\Gamma=\ZZ^{m}$. In fact, the torsion-free case of Theorem \ref{thm:AbelianGroupsArePolynomiallyStable} implies the general case. This follows from Proposition \ref{prop:quotient-by-fg}, since every finitely-generated abelian group is a quotient of a torsion-free finitely-generated abelian group by a finitely-generated subgroup. The simplified strategy, of starting with the torsion-free case and then using Proposition \ref{prop:quotient-by-fg}, comes at the price of a somewhat worse bound on the degree, compared to (\ref{eq:DBound}).

We turn to give an outline of our algorithm. 

\subsubsection{The algorithm constructing $Y$ and $f$\label{subsec:ProofPlanAlgorithmDescription}}

Our algorithm works iteratively as follows. We first initialize a
rather large number $t_{1}$ which depends on the ratio $\frac{|X_{E}|}{|X|}$,
which in turn is related to $L_{E}\left(X\right)$ (see Equation (\ref{eq:local-defect-vs-X_E})).
In the first iteration, we find, in a greedy manner, a collection
of $\Gamma$-sets $\{Y_{x}\}_{x\in J_{1}}$ and respective injections
$\{f_{x}:Y_{x}\to X\}{}_{x\in J_{1}}$ with pairwise disjoint
images, such that $\frac{|Y_{x}\setminus\Eq(f_{x})|}{|Y_{x}|}\le O(\frac{1}{t_{1}})$
for each $x\in J_{1}$. We think of the images of the injections
$\left\{ f_{x}\right\} _{x\in J_{1}}$ as ``tiles'' embedded in
$X$, and of $t_{1}$ as a parameter used in the construction of these
tiles. The set $J_{1}$ is maximal in the sense that we cannot add
more tiles with parameter $t_{1}$ without violating the constraint
that they be disjoint. We proceed to tile the remainder of $X$. We
define a new parameter $t_{2}<t_{1}$ which is equal to $t_{1}$ divided
by some constant, and repeat this process for another iteration, which
yields additional $\Gamma$-sets $\{Y_{x}\}_{x\in J_{2}}$ and corresponding
injections, perhaps with a worse isoperimetric ratio. We require that
the images of these injections be disjoint from each other, as well
as from the images obtained in the previous iteration. We proceed
in this manner, tiling a constant fraction of the remainder of $X$
in each iteration, until $t_{i}$ is below a certain threshold, at
which point the iterative algorithm halts. Finally, we set $Y$ to
be the disjoint union of the $\Gamma$-sets $\left\{ Y_{x}\right\} _{x\in J_{1}\cup\dots\cup J_{s}}$
constructed throughout the $s$ iterations, and define $f:Y\rightarrow X$
as the disjoint union of the maps $\left\{ f_{x}\right\} _{x\in J_{1}\cup\dots\cup J_{s}}$.

Note that, as the algorithm progresses, our injections become less
and less efficient, that is, their images have a larger isoperimetric
ratio. After developing the necessary machinery in Sections \ref{subsec:ToolC}--\ref{subsec:ToolA},
we conclude the proof of Theorem \ref{thm:AbelianGroupsArePolynomiallyStable}
in Section \ref{subsec:TilingAlgorithm} by defining the above
algorithm, and showing that it produces an injection $f$ with many
equivariance points. We turn to discuss the technique by which we
build the sets $Y_{x}$ and the corresponding injections $f_{x}$.

\subsubsection{Mapping a single $\Gamma$-set $Y_{x}$ into $X$ }

As mentioned, the $i$-th iteration of our algorithm injects a collection
of $\Gamma$-sets $\{Y_{x}\}_{x\in J_{i}}$ into $X$. The isoperimetric
ratio of each of these injections must be bounded by $O(\frac{1}{t_{i}}$),
where $t_{i}$ is the parameter introduced in Section \ref{subsec:ProofPlanAlgorithmDescription}.
We now focus on the main technical challenge, namely, building a single
finite $\Gamma$-set $Y_{x}$ and a corresponding injection $f_{x}:Y_{x}\rightarrow X$.
The efficiency of our construction is reflected in the fact that an
``accumulation'' of no more than $O\left(t_{i}^{d}\right)$ nearby
points of $X$ which belong to $X_{E}$ suffices to construct an injection
with isoperimetric constant bounded by $O\left(\frac{1}{t_{i}}\right)$.
We begin by describing two essential tools for the construction of
$Y_{x}$ and $f_{x}$:

\paragraph*{Tool A (Proposition \ref{prop:toolA}):}

This tool requires a point $x\in X_{E}$ with a large enough neighborhood
(called a ``box-neighborhood of side-length $t_{i}$'') which is
entirely contained in $X_{E}$. It provides a radius $r_A \ge \Omega\left(t_i\right)$ such that the ball $B_X\left(x,r_A\right)$ is contained in the aforementioned neighborhood. It also provides a new (usually infinite)
pointed $\Gamma$-set $\left(U_{A},u_{A}\right)$ and a subgraph isomorphism $f_A$:
\[
\xymatrix{B_{U_{A}}\left(u_{A},r_{A}\right)\myarr{r}{f_{A}}\ar@{}[d]|-*[@]{\subseteq} & B_{X}\left(x,r_{A}\right)\ar@{}[d]|-*[@]{\subseteq}\\
\left(U_{A},u_{A}\right) & X
}
\text{ }\text{ }
\]

\paragraph*{Tool C (Proposition \ref{prop:Pt-isoperimetric-properties}):}

Given a pointed $\Gamma$-set $\left(V,v\right)$ and $t_{C}\in\NN$,
this tool creates an injective map:
\[
\xymatrix{\left(Y,y\right)\myarr{r}{f_{C}} & \left(V,v\right)}
\]
where $\left(Y,y\right)$ is a new small \emph{finite} pointed $\Gamma$-set
and the isoperimetric ratio of $\Ima\left(f_{C}\right)$ is bounded
by $O\left(\frac{1}{t_{C}}\right)$.

\medskip{}

One may be tempted to try to construct $Y_{x}$ and the injective
map $f_{x}:Y_{x}\rightarrow X$ as follows: Locate a point $x\in X_{E}$
with the property required by Tool A, and use this tool to create:
\[
\xymatrix{B_{U_{A}}\left(u_{A},r_{A}\right)\myarr{r}{f_{A}} & X}
\]
as in the description of Tool A. Then, apply Tool C to $\left(V,v\right)=\left(U_{A},u_{A}\right)$
with some $t_{C}\geq\Omega\left(t_{i}\right)$, and create:
\[
\xymatrix{\left(Y_{x},y_{x}\right)\myarr{r}{f_{C}} & \left(U_{A},u_{A}\right)}
\]
as in the description of Tool C. Now, in the very fortunate case where
the image of $f_{C}$ is contained in the domain $B_{U_{A}}\left(u_{A},r_{A}\right)$
of $f_{A}$, we can define the map $f_{x}:Y_{x}\rightarrow X$
as the following composition:
\[
Y_{x}\overset{f_{C}}{\longrightarrow}B_{U_{A}}\left(u_{A},r_{A}\right)\overset{f_{A}}{\longrightarrow}X\,\,\text{.}
\]
When this works, the map $f_{x}$ is injective and its image has a
small isoperimetric ratio, as required, because these properties hold
for $f_{C}$ and since $f_{A}:B_{U_{A}}\left(u_{A},r_{A}\right)\rightarrow B_{X}\left(x,r_{A}\right)$
is a subgraph isomorphism. However, the image of the map $f_{C}$,
produced by Tool C, is usually too large to be contained in
the domain of $f_{A}$. We solve this issue by introducing yet another
$\Gamma$-set $U_{B}$ which sits between $Y_{x}$ and $B_{U_{A}}\left(u_{A},r_{A}\right)$
in the above diagram. As is the case for $U_{A}$, the set $U_{B}$
is usually infinite. It is generated by Tool B (see below), and has
a useful combination of properties: (I) locally, $U_{B}$ looks like
$U_{A}$ in a rather large radius, and (II) when Tool C is applied
to $U_{B}$, the image in $U_{B}$ of the resulting injection $f_{C}$
is relatively small.

\paragraph*{Tool B (Proposition \ref{prop:tile-construction}):}

Given a pointed $\Gamma$-set $\left(U_{A},u_{A}\right)$ and $t_{B}\in\NN$,
this tool creates a subgraph isomorphism:
\[
\xymatrix{\left(U_{B}^{0},u_{B}\right)\myarr{r}{f_{B}}\ar@{}[d]|-*[@]{\subseteq} & \left(U_{A}^{0},u_{A}\right)\ar@{}[d]|-*[@]{\subseteq}\\
\left(U_{B},u_{B}\right) & \left(U_{A},u_{A}\right)
}
\]
where $\left(U_{B},u_{B}\right)$ is a new pointed $\Gamma$-set and
$U_{A}^{0}$ and $U_{B}^{0}$ are finite sets. The tool guarantees
the following properties:
\begin{enumerate}[label=(\Roman*)]
\item $U_{A}^{0}\subseteq B_{U_{A}}\left(u_{A},O\left(t_{B}\right)\right)$,
and
\item The set $U_{B}^{0}$ is exactly the image of the map $f_{C}$ that
our implementation of Tool C provides when it is applied to $\left(V,v\right)=\left(U_{B},u_{B}\right)$
with $t_{C}=t_{B}$.
\end{enumerate}
\medskip{}

Using all three tools, we define $f_{x}$ as the composition of the
following chain of maps:

\[
\xymatrix{Y_{x}\myarr{r}{f_{C}} & \Ima\left(f_{C}\right)\myarr{r}{f_{B}}\ar@{}[d]|-*[@]{\subseteq} & B_{U_{A}}\left(u_{A},r_{A}\right)\myarr{r}{f_{A}}\ar@{}[d]|-*[@]{\subseteq} & X\\
 & U_{B} & U_{A}
}
\]
The objects and maps in the diagram above are created by first using
Tool A to create $U_{A}$ and $f_{A}$, then applying Tool B to $U_{A}$
to create $U_{B}$ and $f_{B}$, and finally applying Tool C to $U_{B}$
to create $Y_{x}$ and $f_{C}$. All three maps $f_{A}$, $f_{B}$
and $f_{C}$ are injective, and so the same is true for $f_{x}$.
Both $f_{A}$ and $f_{B}$ are subgraph isomorphisms onto their respective
images, and the image of $f_{C}$ has isoperimetric ratio at most
$O\left(\frac{1}{t_{i}}\right)$. Hence, the same isoperimetric property
is true for $f_{x}$, as required.

The proofs for Tools A and B use \emph{basis reduction theory} of
sublattices of $\ZZ^{m}$, and the proof for Tool C is also in a related
spirit. More specifically, a transitive $\Gamma$-set $V$ is isomorphic
to $\ZZ^{m}/H$ for some subgroup $H\leq\ZZ^{m}$, and we are able
to study $V$ by applying reduction theory to $H$, thought of as
a sublattice of $\ZZ^{m}$.

We note that the isoperimetric property possessed by each tile serves
two purposes in our proof. The ``local purpose'' is to ensure that
each injection $f_{x}$ has a large fraction of equivariance points
as described above. The ``global purpose'' is to ensure that we
can pack \emph{many} tiles into $X$.

\subsection{Geometric definitions\label{subsec:GeometricDefinitions}}

Section \ref{subsec:graphs-of-actions} introduced the word-norm on
a free group, the word-metric on sets which are acted on
by a free group, and defined balls with respect to the word-metric.
In addition to these, our proof will make use of various norms on
$\ZZ^{m}$ (specifically, $L^{1}$, $L^{2}$ and $L^{\infty}$), and
of ``boxes'' in free groups. These are described below:

\paragraph*{Geometry of $\mathbb{Z}^{m}$}

For $1\le p\le\infty$, let $\|\cdot\|_{p}$ denote the $L^{p}$ norm,
restricted to $\mathbb{Z}^{m}$. Let $B_{\mathbb{Z}^{m}}^{L^{p}}(x,r)\subseteq\mathbb{Z}^{m}$
be the closed ball, with respect to $\|\cdot\|_{p}$, of radius $r$,
centered at $x$. Again, we omit $x$ for a ball centered at $0_{\ZZ^{m}}$.
We note that in the case $p=1$, $\|\cdot\|_{1}$ coincides with the
word-metric on $\mathbb{Z}^{m}$ as an $\FF_{m}$-set, and so $B_{\mathbb{Z}^{m}}(x,r)=B_{\mathbb{Z}^{m}}^{L^{1}}(x,r)$. 

\paragraph*{\emph{Boxes} in $\mathbb{F}_{m}$ and $\protect\FF_{d}$}

Let $\FF_{k}$ be the free group on $\{\hat{e}_{1},\ldots,\hat{e}_{k}\}$
(we are interested in $k\in\left\{ m,d\right\} $). We say that a
word in $\mathbb{F}_{k}$ is \emph{sorted} if it is of the form $\prod_{i=1}^{k}\hat{e}_{i}^{a_{i}}$,
$a_{i}\in\ZZ$. Write $\pi_{k}:\FF_{k}\rightarrow\ZZ^{k}$ for the
surjection sending $\hat{e}_{i}\mapsto e_{i}$. For each $v\in\mathbb{Z}^{k}$
we define a canonical representative $\hat{v}\in\mathbb{F}_{k}$
of the set $\pi_{k}^{-1}(v)$, namely, $\hat{v}$ is the unique sorted
word such that $\pi_{k}(\hat{v})=v$. Reusing this notation, for $w\in\mathbb{F}_{k}$
let $\hat{w}$ denote $\widehat{\pi(w)}$, i.e, the sorted form of
$w$. The following definition introduces a key player in our proof:
\begin{defn}
For $t\in\NN$, let 
\[
\Box_{\FF_{k}}\left(t\right)=\left\{ \hat{v}\mid v\in B_{\mathbb{Z}^{k}}^{L^{\infty}}(t)\right\} =\left\{ \hat{v}\mid v\in\mathbb{Z}^{k},\,\,\|v\|_{\infty}\le t\right\} \subseteq\FF_{k}\,\,\text{.}
\]

Note that $w\in\Box_{\FF_{k}}\left(\left|w\right|\right)$ whenever
$w$ is sorted. Finally, for a subset $A$ of $\ZZ^{k}$ or $\FF_{k}$,
define $\hat{A}=\left\{ \hat{a}\mid a\in A\right\} $.
\end{defn}

\subsection{A review of reduction theory (of sublattices of $\protect\ZZ^{l}$)}

Let $l\ge1$. We use the term \emph{lattice} interchangeably with
\emph{a subgroup} $H\le\mathbb{Z}^{l}$. A \emph{basis }for $H$ is
a set of linearly independent vectors $\calD\subseteq\mathbb{Z}^{l}$
such that $H$ is the set of all integer combinations of elements
in $\calD$. It is well known that every lattice affords a basis,
and that different bases representing the same lattice all have the
same cardinality. Hence, we may define the \emph{rank }of $H$ by
$\rank H=|\calD|$. The goal of \emph{reduction theory} (See \cite{Lag95}
for a survey) is to represent $H$ via a \emph{reduced basis}, namely,
a basis consisting of relatively short vectors. In this section, we
adapt a certain result from reduction theory to our purposes.
\begin{defn}
Let $H\leq\mathbb{Z}^{l}$ be a lattice and $k=\rank H$. The \emph{successive
minima} sequence $\lambda_{1}\left(H\right),\dotsc,\lambda_{k}\left(H\right)$
is defined as follows: $\lambda_{i}\left(H\right)$ is the minimum
radius $r$ for which $\rank\left\langle \left\{ v\in H\mid\|v\|_{2}\leq r\right\} \right\rangle \ge i$.
\end{defn}
We note that for $l>4$, the vectors that yield the successive minima
are not necessarily a basis for $H$ (\cite{Mar02} p.\ 51). However,
as we now elaborate, $H$ does accommodate a basis consisting of vectors
within the same order of magnitude as the successive minima. It is
known that every lattice has a basis of a certain type called \emph{Korkin-Zolotarev
reduced \cite{Lag95}. }The following proposition about such bases
is a part of Theorem 2.1 in \cite{LLS90}.
\begin{prop}
\label{prop:Lenstra}If $H\le\mathbb{Z}^{l}$ is a lattice and $B=\{b_{1},\ldots,b_{k}\}$
is a Korkin-Zolotarev reduced basis for $H$, then $\|b_{i}\|_{2}\le\frac 12 \cdot \sqrt{i+3}\cdot\lambda_{i}(H)$
for each $1\le i\le k$. 
\end{prop}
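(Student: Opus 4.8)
The plan is to derive the bound directly from the defining properties of a Korkin--Zolotarev (KZ) reduced basis together with an elementary estimate on Gram--Schmidt lengths; this is essentially the argument behind Theorem 2.1 of \cite{LLS90}, which one could alternatively just quote. Write $b_{1}^{\ast},\dots,b_{k}^{\ast}$ for the Gram--Schmidt orthogonalization of $b_{1},\dots,b_{k}$, let $\mu_{ij}$ (for $j<i$) be the corresponding coefficients, so that $b_{i}=b_{i}^{\ast}+\sum_{j<i}\mu_{ij}b_{j}^{\ast}$, and let $\rho_{i}$ denote orthogonal projection (over $\RR$) onto the orthogonal complement of $\mathrm{span}_{\RR}(b_{1},\dots,b_{i-1})$, so that $\rho_{i}(b_{i})=b_{i}^{\ast}$ and $\rho_{i}(H)$ is a lattice. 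Recall that, by definition, a KZ-reduced basis satisfies (a) $b_{i}^{\ast}$ is a shortest nonzero vector of $\rho_{i}(H)$ for every $i$, and (b) size reduction: $|\mu_{ij}|\le\tfrac{1}{2}$ for all $j<i$.

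First I would establish the auxiliary bound
\[
\|b_{j}^{\ast}\|_{2}\le\lambda_{j}(H)\qquad\text{for every }j\,\,\text{.}
\]
To see this, note that by the definition of the successive minima there exist $j$ linearly independent lattice vectors $v_{1},\dots,v_{j}\in H$ with $\|v_{l}\|_{2}\le\lambda_{j}(H)$ for all $l$ (the relevant minimum is attained since $H$ is discrete). As $\mathrm{span}_{\RR}(b_{1},\dots,b_{j-1})$ has dimension $j-1$, at least one $v_{l}$ lies outside it, so $\rho_{j}(v_{l})$ is a nonzero vector of $\rho_{j}(H)$ with $\|\rho_{j}(v_{l})\|_{2}\le\|v_{l}\|_{2}\le\lambda_{j}(H)$; property (a) then yields $\|b_{j}^{\ast}\|_{2}\le\|\rho_{j}(v_{l})\|_{2}\le\lambda_{j}(H)$.

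Then I would assemble the estimate. Since the $b_{j}^{\ast}$ are pairwise orthogonal,
\begin{align*}
\|b_{i}\|_{2}^{2} &=\|b_{i}^{\ast}\|_{2}^{2}+\sum_{j=1}^{i-1}\mu_{ij}^{2}\,\|b_{j}^{\ast}\|_{2}^{2}\\
&\le\lambda_{i}(H)^{2}+\frac{1}{4}\sum_{j=1}^{i-1}\lambda_{j}(H)^{2}\\
&\le\lambda_{i}(H)^{2}+\frac{i-1}{4}\,\lambda_{i}(H)^{2}=\frac{i+3}{4}\,\lambda_{i}(H)^{2}\,\,\text{,}
\end{align*}
where the first inequality uses (b) together with the auxiliary bound applied with each index $\le i$, and the second uses the monotonicity $\lambda_{1}(H)\le\dots\le\lambda_{i}(H)$. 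Taking square roots gives $\|b_{i}\|_{2}\le\tfrac{1}{2}\sqrt{i+3}\,\lambda_{i}(H)$, as claimed.

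I do not anticipate a genuine obstacle here: the content is entirely elementary once the KZ definition is in hand, and the only points needing a modicum of care are checking that the successive minima are realized by actual lattice vectors (immediate from the discreteness of $H$) and keeping straight that property (a) is to be invoked for the projected lattice $\rho_{j}(H)$ rather than for $H$ itself. If one prefers, the statement is cited in the paper as part of Theorem 2.1 in \cite{LLS90}, so no argument is strictly necessary.
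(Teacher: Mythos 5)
Your proof is correct. Worth noting: the paper itself does not prove this proposition---it is stated as ``a part of Theorem 2.1 in \cite{LLS90}'' and used as a black box---so there is no in-paper argument to compare against. Your argument is the standard one for this bound (and essentially the one in \cite{LLS90}): the Gram--Schmidt/Pythagorean decomposition $\|b_i\|_2^2=\|b_i^*\|_2^2+\sum_{j<i}\mu_{ij}^2\|b_j^*\|_2^2$, size reduction $|\mu_{ij}|\le\frac12$, the auxiliary estimate $\|b_j^*\|_2\le\lambda_j(H)$ (obtained by projecting a set of $j$ linearly independent short lattice vectors and using the KZ condition on $\rho_j(H)$), and the monotonicity of the successive minima. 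The constant $\frac{i+3}{4}=1+\frac{i-1}{4}$ emerges exactly as you compute. One small point of care that you handled correctly: the KZ condition is invoked for the projected lattice $\rho_j(H)$, not for $H$, and you verified that $\rho_j(v_l)\neq 0$ before applying it.
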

Our use of Proposition \ref{prop:Lenstra} will always be mediated
through Proposition \ref{prop:KZ-reduced-basis-corollary}, below.
\begin{defn}
For a finite subset $A\subseteq\ZZ^{l}$, write $\|A\|_{1}=\sum_{v\in A}\|v\|_{1}$.
\end{defn}
\begin{prop}
\label{prop:KZ-reduced-basis-corollary}Let $H\le\mathbb{Z}^{l}$
be a lattice of rank $k$ and $t\in\NN$, such that $\rank\langle H\cap B_{\ZZ^{l}}^{L^{2}}\left(t\right)\rangle=k$.
Then, $H$ has a basis $\calD$ such that:
\begin{enumerate}
\item $\calD\subseteq B_{\ZZ^{l}}\left(l\cdot t\right)$.
\item $\|\calD\|_{1}\le l^{2}\cdot t$.
\end{enumerate}
\end{prop}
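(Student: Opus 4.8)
The plan is to deduce the statement from Proposition \ref{prop:Lenstra} by controlling the successive minima of $H$ and then converting an $L^2$ bound on a Korkin--Zolotarev reduced basis into an $L^1$ bound. First I would observe that the hypothesis $\rank\langle H\cap B_{\ZZ^l}^{L^2}(t)\rangle = k$ says precisely that $H$ contains $k$ linearly independent vectors of $L^2$-norm at most $t$; by the definition of the successive minima this is exactly the statement that $\lambda_i(H)\le t$ for every $1\le i\le k$. Now let $\calD = \{b_1,\dots,b_k\}$ be a Korkin--Zolotarev reduced basis for $H$, which exists since every lattice admits one. Proposition \ref{prop:Lenstra} then gives $\|b_i\|_2 \le \tfrac12\sqrt{i+3}\cdot\lambda_i(H)\le \tfrac12\sqrt{i+3}\cdot t \le \tfrac12\sqrt{k+3}\cdot t$ for each $i$.

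Next I would pass from the $L^2$ bound to the $L^1$ bound using the standard inequality $\|v\|_1 \le \sqrt{l}\cdot\|v\|_2$ for $v\in\ZZ^l$, together with $k\le l$ (the rank of a sublattice of $\ZZ^l$ is at most $l$). For part (i): for each $i$, $\|b_i\|_1 \le \sqrt{l}\cdot\|b_i\|_2 \le \sqrt{l}\cdot\tfrac12\sqrt{k+3}\cdot t \le \tfrac12\sqrt{l}\cdot\sqrt{l+3}\cdot t \le l\cdot t$, where the last inequality holds for all $l\ge1$ since $\tfrac12\sqrt{l+3}\le\sqrt{l}$ (this is $l+3\le 4l$, i.e. $l\ge1$). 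Hence $\calD\subseteq B_{\ZZ^l}(l\cdot t)$. For part (ii): summing over $i$, $\|\calD\|_1 = \sum_{i=1}^k \|b_i\|_1 \le k\cdot l\cdot t \le l^2\cdot t$, again using $k\le l$.

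I do not anticipate a serious obstacle here; the content is entirely in invoking Proposition \ref{prop:Lenstra} and the crude norm comparisons. The one point requiring a little care is matching the constants so that the clean bounds $l\cdot t$ and $l^2\cdot t$ come out, which is why I would route the per-vector estimate through the uniform bound $\lambda_i(H)\le t$ and the coarse inequality $\tfrac12\sqrt{i+3}\cdot\sqrt{l}\le l$; checking that $i+3\le 4l$ for all relevant $i\le l$ (hence $i+3\le l+3\le 4l$ when $l\ge1$) closes the gap. If one instead wanted tighter constants one could keep the $i$-dependence in $\sqrt{i+3}$, but the stated form only needs the uniform estimate.
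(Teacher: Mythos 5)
Your proposal is correct and follows essentially the same route as the paper's proof: identify $\lambda_i(H)\le t$ from the hypothesis, take a Korkin--Zolotarev reduced basis, apply Proposition \ref{prop:Lenstra}, convert $L^2$ to $L^1$ via $\|v\|_1\le\sqrt{l}\,\|v\|_2$, and use $k\le l$ together with $\frac{1}{2}\sqrt{l+3}\le\sqrt{l}$ to arrive at the stated bounds. The only difference is cosmetic in how the chain of inequalities is grouped.
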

\begin{proof}
Let $\calD=\{b_{1},\dotsc,b_{k}\}$ be a Korkin-Zolotarev reduced
basis for $H$. By Proposition \ref{prop:Lenstra}, 
\[
\|b_{i}\|_{2}\le\frac{1}{2}\sqrt{i+3}\cdot\lambda_{i}\left(H\right)\le\frac{1}{2}\sqrt{l+3}\cdot t\le\frac{1}{2}\sqrt{4l}\cdot t=\sqrt{l}\cdot t\,\,,
\]
and so
\[
\|b_{i}\|_{1}\leq l\cdot t\,\,\text{,}
\]
which yields the first claim. The second claim follows since 
\[
\|\calD\|_{1}\le|\calD|\cdot\max_{1\le i\le k}\|b_{i}\|_{1}\le k\cdot l\cdot t\le l^{2}\cdot r\,\,.
\]
\end{proof}

\subsection{Tool C: The standard-completion isoperimetric method \label{subsec:ToolC}}

In this section we develop \emph{Tool C}. Let $(V,v_{0})$ be a pointed
$\Gamma$-set and $t\in\NN$. Our goal is to build a small finite
pointed $\Gamma$-set $(Y,y_{0})$, and an injective map $f_{C}:(Y,y_{0})\to(V,v_{0})$
such that 
\begin{equation}
\left|\Eq(f_{C})\right|\ge|Y|\cdot\left(1-O\left(\frac{1}{t}\right)\right)\,\,\text{.}\label{eq:ToolCEquivariance}
\end{equation}

We may assume that $V$ is transitive, since otherwise we may consider
only the component containing $v_{0}$. Hence, $V$ is realizable
as $\ZZ^{m}/H$ for some $K\le H\le\ZZ^{m}$ (see Equation (\ref{eq:K_Tor}))
, with $v_{0}$ corresponding to the coset $0+H$. Let ${\mathcal{D}}_{0}$
be a lattice basis for $H$. We first consider the simple case where
${\mathcal{D}}_{0}$ consists of axis-parallel vectors, namely, ${\mathcal{D}}_{0}=\left\{ \alpha_{i}\cdot e_{i}\mid i\in[m]\setminus I\right\} $
for some $I\subseteq[m]$, where $\{\alpha_{i}\}_{i\in[m]\setminus I}$
are positive integers. We complete ${\mathcal{D}}_{0}$ to a full-rank
basis ${\mathcal{T}}={\mathcal{D}}_{0}\cup\left\{ 2t\cdot e_{i}\right\} _{i\in I}$.
Define $\left(Y,y_{0}\right)=\left(\mathbb{Z}^{m}/\left\langle {\mathcal{T}}\right\rangle ,0+\left\langle {\mathcal{T}}\right\rangle \right)$,
and note that $Y$ is a $\Gamma$-set since $K\subseteq H\subseteq\left\langle {\mathcal{T}}\right\rangle $. 

Each point $y\in Y$ has a unique representation as
\[
y=\sum_{f\in\calD_{0}}a_{f}\cdot f+\sum_{i\in I}b_{i}\cdot e_{i}+\left\langle {\mathcal{T}}\right\rangle \,\,\,\,0\leq a_{f}<1,\,\,-t\leq b_{i}<t\,\,\text{.}
\]
Let $f_{C}:(Y,y_{0})\to\left(V,v_{0}\right)$ map such a point $y$ to the point
\[
\sum_{f\in\calD_{0}}a_{f}\cdot f+\sum_{i\in I}b_{i}\cdot e_{i}+H
\]
of $V$. This map $f_{C}$ is clearly injective. Now, consider a point
$y\in Y$ as above. The map $f_{C}$ necessarily preserves the edge
$y\overset{e_{i}}{\longrightarrow}$ for each $i\in[m]\setminus I$.
If $i\in I$, then $f_{C}$ preserves this edge \textbf{unless} $b_{i}=t-1$.
It follows that 
\[
\Eq\left(f_{C}\right)=\left\{ \sum_{f\in\calD_{0}}a_{f}\cdot f+\sum_{i\in I}b_{i}\cdot e_{i}+\left\langle {\mathcal{T}}\right\rangle \mid0\leq a_{f}<1\,\,\text{and}\,\,-t\leq b_{i}<t-1\right\} ,
\]
 so Equation (\ref{eq:ToolCEquivariance}) is satisfied. As mentioned,
it is desirable that $Y$ be small, i.e, that ${\mathcal{T}}$ be a short
basis. While we cannot control ${\mathcal{D}}_{0}$, we have chosen the
additional vectors $2t\cdot e_{i}$ to be as short as possible, that
is, just long enough to guarantee Equation (\ref{eq:ToolCEquivariance}).

We turn to the general case, where the basis ${\mathcal{D}}_{0}$ of $H$
might not consist of axis-parallel vectors. Again, we augment ${\mathcal{D}}_{0}$
with axis-parallel vectors to form a full-rank basis, namely, ${\mathcal{T}}={\mathcal{D}}_{0}\cup\left\{ 2t\cdot e_{i}\mid i\in I\right\} $
for some carefully chosen $I\subseteq[m]$, and define $\left(Y,y_{0}\right)$
and $f_{C}$ as above. However, it is now possible that an edge labeled
$e_{j}$ ($j\in[m]\setminus I$) is not preserved by $f_{C}$. In
order to analyze the behavior of the generator $e_{j}$, we apply
a linear transformation that maps ${\mathcal{T}}$ to an axis-parallel
basis. The image of $e_{j}$ under this transformation may be large
in its $I$ coordinates, which means that many $e_{j}$-labeled edges
are not preserved by $f_{C}$. In order to control the effect of these
generators $e_{j}$, we need to choose $I$ so that the vectors $\left\{ e_{i}\right\} _{i\in I}$
are \emph{nearly orthogonal }to ${\mathcal{D}}_{0}$. Consequently, ${\mathcal{T}}$
is already close to being an orthogonal basis, thereby bounding the
distortion of the linear transformation that ``fixes'' it to being
orthogonal. We proceed to formally define these notions.

Given a finite ordered set of vectors $A\subseteq\mathbb{R}^{m}$,
we write ${\mathcal{M}}_{A}$ for the matrix whose columns are the elements
of $A$ in the standard basis. Note that if $A$ is a basis for $\mathbb{R}^{m}$
then ${\mathcal{M}}_{A}^{-1}$ is the change of basis matrix transforming
a standard-coefficients vector to an $A$-coefficients vector (by
multiplying column vectors of coefficients from the left). Given a
set $I\subseteq[m]$, let ${\mathcal{M}}_{A}^{I}$ denote the matrix ${\mathcal{M}}_{A}$
with the $i$-th row stricken out for every $i\in I$. Notice that
if $\left|I\right|=m-\left|\calD_{0}\right|$, then
\begin{equation}
\left|\det{\mathcal{M}}_{{\mathcal{D}}_{0}}^{I}\right|=\left|\det{\mathcal{M}}_{{\mathcal{D}}_{0}\cup{\mathcal{D}}_{1}}\right|\,\,\text{,}\label{eq:DeterminantM_B_0^I}
\end{equation}
where ${\mathcal{D}}_{1}=\left\{ e_{i}\mid i\in I\right\} $. 
\begin{defn}
Let ${\mathcal{D}}_{0}\subseteq\ZZ^{m}$ be an ordered set of linearly
independent vectors.
\begin{enumerate}
\item Write $k=\left|{\mathcal{D}}_{0}\right|$. For a set of coordinates $I\subseteq\left[m\right]$
with $\left|I\right|=m-k$, the \emph{strength} of $I$ with respect
to ${\mathcal{D}}_{0}$ is $\left|\det\left({\mathcal{M}}_{{\mathcal{D}}_{0}}^{I}\right)\right|$,
that is, the unsigned volume of the parallelotope  generated by ${\mathcal{D}}_{0}\cup\left\{ e_{i}\right\} _{i\in I}$
(see Equation (\ref{eq:DeterminantM_B_0^I})). 
\item We say that $I$, as above, is a set of \emph{strongest coordinates}
for ${\mathcal{D}}_{0}$ if $\left|\det\left({\mathcal{M}}_{{\mathcal{D}}_{0}}^{I}\right)\right|\ge\left|\det\left({\mathcal{M}}_{{\mathcal{D}}_{0}}^{J}\right)\right|$
for every $J\subseteq\left[m\right]$ satisfying $\left|J\right|=m-k$.
Note that in this case, $\left|\det\left({\mathcal{M}}_{{\mathcal{D}}_{0}}^{I}\right)\right|$
is strictly positive.
\item For a set ${\mathcal{D}}_{1}\subseteq\ZZ^{m}$, we say the ${\mathcal{D}}_{1}$
is a \emph{standard complement} for ${\mathcal{D}}_{0}$ if ${\mathcal{D}}_{0}\cup{\mathcal{D}}_{1}$
is a basis for $\RR^{m}$ and ${\mathcal{D}}_{1}=\left\{ e_{i}\right\} _{i\in I}$
for some $I\subseteq\left[m\right]$. A standard complement ${\mathcal{D}}_{1}$
for ${\mathcal{D}}_{0}$ as above is \emph{strong} if $I$ is a set of
strongest coordinates with respect to ${\mathcal{D}}_{0}$.
\end{enumerate}
\end{defn}
Note that for every set of strongest coordinates $I\subseteq[m]$
for $\calD_{0}$, the set ${\mathcal{D}}_{1}=\left\{ e_{i}\right\} _{i\in I}$
is a strong standard complement for $\calD_{0}$. The following lemma
formulates the near orthogonality condition mentioned above.
\begin{lem}
\label{lem:strong-bounds}Let ${\mathcal{D}}_{0}\subseteq\ZZ^{m}$ be a
linearly independent set. Take a strong standard complement ${\mathcal{D}}_{1}$
for ${\mathcal{D}}_{0}$. Write ${\mathcal{D}}_{0}=\left\{ h_{1},\dotsc,h_{k}\right\} $
and ${\mathcal{D}}_{1}=\left\{ e_{i}\right\} _{i\in I}$, $I=\left\{ i_{1},\dotsc,i_{m-k}\right\} $,
and let ${\mathcal{D}}=\left(h_{1},\dotsc,h_{k},e_{i_{1}},\dotsc,e_{i_{m-k}}\right)$.
Then, all entries of the lower $(m-k)\times m$ block of the matrix
${\mathcal{M}}_{{\mathcal{D}}}^{-1}$ are in the range $[-1,1]$
\end{lem}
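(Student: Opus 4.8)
The plan is to compute the entries of the lower block of $\calM_{\calD}^{-1}$ directly by Cramer's rule and to recognise each of them, up to sign, as a ratio of two ``strengths'' in the sense of the definition preceding the lemma; the maximality built into the notion of a set of \emph{strongest coordinates} then caps every such ratio by $1$.

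In detail, write $\calD=(g_{1},\dotsc,g_{m})$ with $g_{j}=h_{j}$ for $1\le j\le k$ and $g_{k+a}=e_{i_{a}}$ for $1\le a\le m-k$, and fix a lower-block row index $k+a$ together with an arbitrary column index $\ell\in[m]$. Cramer's rule gives
\[
\left(\calM_{\calD}^{-1}\right)_{k+a,\ell}=(-1)^{k+a+\ell}\cdot\frac{\det N}{\det\calM_{\calD}}\,\,,
\]
where $N$ is obtained from $\calM_{\calD}$ by deleting row $\ell$ and column $k+a$ (the column $e_{i_{a}}$). First I would apply Laplace expansion to $\det\calM_{\calD}$ along its $m-k$ axis-parallel columns $e_{i_{1}},\dotsc,e_{i_{m-k}}$: each contributes its unique nonzero entry, which lies in row $i_{b}$, and what remains is the $k\times k$ submatrix formed by the columns $h_{1},\dotsc,h_{k}$ restricted to the rows in $[m]\setminus I$. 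Hence $\bigl|\det\calM_{\calD}\bigr|=\bigl|\det\calM_{\calD_{0}}^{I}\bigr|$, the strength of $I$ with respect to $\calD_{0}$; this incidentally reproves Equation (\ref{eq:DeterminantM_B_0^I}).

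Next I would analyse $\det N$ in the same way, expanding along the surviving axis-parallel columns $\{e_{i_{b}}\}_{b\ne a}$. If $\ell=i_{b}$ for some $b\ne a$, then that column of $N$ is identically zero, so $\det N=0$ and the entry vanishes. Otherwise $\ell\in\bigl([m]\setminus I\bigr)\cup\{i_{a}\}$, the columns $\{e_{i_{b}}\}_{b\ne a}$ remain distinct unit vectors, and expanding along them leaves the $k\times k$ submatrix formed by the $\calD_{0}$-columns restricted to the rows in $[m]\setminus I'$, where $I'=\bigl(I\setminus\{i_{a}\}\bigr)\cup\{\ell\}$ is again an $(m-k)$-element set of coordinates. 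Thus $\bigl|\det N\bigr|=\bigl|\det\calM_{\calD_{0}}^{I'}\bigr|$, and combining the two computations,
\[
\left|\left(\calM_{\calD}^{-1}\right)_{k+a,\ell}\right|\in\{0\}\cup\Bigl\{\,\bigl|\det\calM_{\calD_{0}}^{I'}\bigr|\,/\,\bigl|\det\calM_{\calD_{0}}^{I}\bigr|\;\mid\;I'\subseteq[m],\;|I'|=m-k\,\Bigr\}\,\,.
\]
Since $I$ is a set of strongest coordinates for $\calD_{0}$, the numerator never exceeds the denominator, so every entry of the lower block of $\calM_{\calD}^{-1}$ lies in $[-1,1]$, proving the lemma.

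The step requiring the most care is verifying that the minor $\det N$ really does reduce, up to sign, to the strength of the specific coordinate set $I'=\bigl(I\setminus\{i_{a}\}\bigr)\cup\{\ell\}$ — that is, getting the index bookkeeping in the second Laplace expansion exactly right. This is the only subtle point: because the statement concerns only absolute values, every expansion sign may be discarded, so I do not expect a genuine obstacle beyond this routine accounting. I would additionally check the boundary cases $\ell=i_{a}$ (which gives the ratio $1$, consistent with $e_{i_{a}}$ being literally a column of $\calD$) and $k\in\{0,m\}$, all of which are immediate.
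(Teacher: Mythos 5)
Your proof is correct and follows essentially the same route as the paper's: both apply Cramer's rule, interpret the resulting minor and the full determinant as strengths of $(m-k)$-element coordinate sets via Laplace expansion along the axis-parallel columns, and then invoke the maximality built into the definition of strongest coordinates. The only cosmetic difference is that the paper first permutes rows to assume $I=\{k+1,\dotsc,m\}$ and then disposes of the columns indexed by $I$ via the block-triangular structure of $\calM_{\calD}$, whereas you keep $I$ general and treat those columns by observing the corresponding minor vanishes (or equals the denominator when $\ell=i_a$) — same content, slightly different bookkeeping.
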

\begin{proof}
Permuting the rows of $\calM_{\calD}$ maintains the property that
the right $m-k$ columns of $\calM_{\calD}$ are a strong standard
complement for the left $k$ columns, and affects $\calM_{\calD}^{-1}$
by a permutation of its columns (in particular, it permutes the lower
$\left(m-k\right)\times m$ block of $\calM_{\calD}^{-1}$). Therefore,
we assume that $I=\left\{ k+1,\dotsc,m\right\} $ without loss of
generality. Accordingly, ${\mathcal{D}}=\left(h_{1},\dotsc,h_{k},e_{k+1},\dotsc,e_{m}\right)$.
Let $k+1\leq i\leq m$ and $1\leq j\leq m$. Our task is to show that
$\left|\left(\calM_{\calD}^{-1}\right)_{i,j}\right|\leq1$. Note that
the lower-right block of ${\mathcal{M}}_{{\mathcal{D}}}^{-1}$ is an $(m-k)\times(m-k)$
identity matrix. Thus, the claim holds whenever $j>k$. Henceforth,
assume that $1\leq j\leq k$. Let ${\mathcal{M}}_{{\mathcal{D}}}^{j,i}$ denote
the matrix ${\mathcal{M}}_{{\mathcal{D}}}$ with the $j$-th row and $i$-th
column removed. By Cramer's rule, the $(i,j)$-entry of ${\mathcal{M}}_{{\mathcal{D}}}^{-1}$
is either
\[
\frac{\det{\mathcal{M}}_{{\mathcal{D}}}^{j,i}}{\det{\mathcal{M}}_{{\mathcal{D}}}}
\]
or its negation. Hence, it suffices to show that $\left|\det{\mathcal{M}}_{{\mathcal{D}}}^{j,i}\right|\le\left|\det{\mathcal{M}}_{{\mathcal{D}}}\right|$.
By Equation (\ref{eq:DeterminantM_B_0^I}), $\left|\det{\mathcal{M}}_{{\mathcal{D}}}\right|=\left|\det{\mathcal{M}}_{{\mathcal{D}}_{0}}^{I}\right|$.
Similarly, $\left|\det{\mathcal{M}}_{{\mathcal{D}}}^{j,i}\right|=\left|\det{\mathcal{M}}_{{\mathcal{D}}_{0}}^{(I\setminus\{i\})\cup\{j\}}\right|$.
The claim follows since $I$ is a set of strongest coordinates for
${\mathcal{D}}_{0}$, and so $\left|\det\calM_{\calD_{0}}^{I}\right|\geq\left|\det\calM_{\calD_{0}}^{\left(I\setminus\left\{ i\right\} \right)\cup\left\{ j\right\} }\right|$.
\end{proof}
We need to fix, once and for all, a strong standard complement ${\mathcal{C}}({\mathcal{D}}_{0})$
for each linearly independent ${\mathcal{D}}_{0}\subseteq\ZZ^{m}$. For
our purposes, it does not matter how this is done. However, for concreteness,
we do it as follows:
\begin{defn}
For an ordered linearly independent
subset ${\mathcal{D}}_{0}\subseteq\ZZ^{m}$, let $\calC\left({\mathcal{D}}_{0}\right)\subseteq\ZZ^{m}$
be the lexicographically least among all strong standard complements
for ${\mathcal{D}}_{0}$. Let $I\left({\mathcal{D}}_{0}\right)$ denote the
subset of $\left[m\right]$ for which $\calC\left({\mathcal{D}}_{0}\right)=\left\{ e_{i}\right\} _{i\in I\left({\mathcal{D}}_{0}\right)}$. 
\end{defn}
\begin{defn}
\label{def:Pt}Let ${\mathcal{D}}_{0}\subseteq\ZZ^{m}$ be a linearly independent
set. Write $k=\left|\calD_{0}\right|$, and order $I\left({\mathcal{D}}_{0}\right)$
by writing $I\left({\mathcal{D}}_{0}\right)=\left\{ i_{k+1},\dotsc,i_{m}\right\} $,
$i_{k+1}<\dots<i_{m}$. Define an ordered basis ${\mathcal{D}}={\mathcal{D}}_{0}\cup{\mathcal{C}}\left({\mathcal{D}}_{0}\right)=\left(h_{1},\dotsc,h_{k},e_{i_{k+1}},\dotsc,e_{i_{m}}\right)$
for $\mathbb{R}^{m}$. For $t\in\NN$, define a \emph{discrete parallelotope
}
\[
P_{t}^{{\mathcal{D}}_{0}}=\left\{ x\in\ZZ^{m}\mid{\mathcal{M}}_{\calD}^{-1}(x)\in[0,1)^{k}\times\left[-t,t\right)^{m-k}\right\} \,\,,
\]
and write $\hat{P}_{t}^{{\mathcal{D}}_{0}}$ for its canonical lift to
$\FF_{m}$ (see Section \ref{subsec:GeometricDefinitions}), to wit,
\[
\hat{P}_{t}^{{\mathcal{D}}_{0}}=\left\{ \hat{x}\mid x\in P_{t}^{{\mathcal{D}}_{0}}\right\} \,\,\text{.}
\]
\end{defn}
\begin{lem}
\label{lem:Pt-basic-properties}Let $\calD_{0}\subseteq\ZZ^{m}$ be
a linearly independent set and $t\in\NN$. Then,
\begin{enumerate}
\item \label{enu:Pt-basic-BallAroundParallelopiped} $P_{t}^{{\mathcal{D}}_{0}}\subseteq B_{\ZZ^{m}}\left(\|{\mathcal{D}}_{0}\|_{1}+\left(m-\left|{\mathcal{D}}_{0}\right|\right)\cdot t\right)$.
\item \label{enu:Pt-basic-GrowthInt} $\left|P_{t}^{\calD_{0}}\right|=\left|P_{1}^{\calD_{0}}\right|\cdot t^{m-\left|\calD_{0}\right|}$.
\item \label{enu:Pt-plusminusParallelopiped}
Let $U=\ZZ^{m}/\langle\calD_{0}\rangle$ and $u_{0}\in U$. Then,
$\left(-P_{t}^{\calD_{0}}+P_{t}^{\calD_{0}}\right)\cdot u_{0}\subseteq P_{2t}^{\calD_{0}}\cdot u_{0}$.
\end{enumerate}
\end{lem}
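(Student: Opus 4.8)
The plan is to unwind the membership condition for $P_t^{\calD_0}$ into a normal form and then read off all three claims. Write $\calD=\calD_0\cup\calC(\calD_0)=(h_1,\dots,h_k,e_{i_{k+1}},\dots,e_{i_m})$ as in Definition \ref{def:Pt}, where $k=|\calD_0|$ and $I(\calD_0)=\{i_{k+1},\dots,i_m\}$. By definition, a vector $x\in\ZZ^m$ lies in $P_t^{\calD_0}$ if and only if its coordinate vector in the basis $\calD$, say $\calM_\calD^{-1}(x)=(a_1,\dots,a_k,b_1,\dots,b_{m-k})$, satisfies $a_j\in[0,1)$ for all $j$ and $b_l\in[-t,t)$ for all $l$; equivalently, $x=\sum_{j=1}^{k}a_j h_j+\sum_{l=1}^{m-k}b_l\,e_{i_{k+l}}$ with these constraints on the real coefficients $a_j,b_l$. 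All three parts follow from this description.

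For Part \ref{enu:Pt-basic-BallAroundParallelopiped}, apply the triangle inequality for $\|\cdot\|_1$ to the expression for $x$ above: since $|a_j|<1$, $|b_l|\le t$ and $\|e_i\|_1=1$, one gets $\|x\|_1\le\sum_{j=1}^{k}\|h_j\|_1+(m-k)\,t=\|\calD_0\|_1+(m-k)\,t$, so $x$ lies in the closed $L^1$-ball $B_{\ZZ^m}(\|\calD_0\|_1+(m-k)\,t)$. For Part \ref{enu:Pt-plusminusParallelopiped}, note first that $U=\ZZ^m/\langle\calD_0\rangle$ carries the translation action on cosets, so $\Stab_{\ZZ^m}(u_0)=\langle\calD_0\rangle$ for every $u_0\in U$. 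Let $p,q\in P_t^{\calD_0}$ have $\calD$-coordinates $(a,b)$ and $(a',b')$; then $q-p$ has $\calD$-coordinates $(a'-a,\,b'-b)$ with $a'_j-a_j\in(-1,1)$ and $b'_l-b_l\in(-2t,2t)$. Pick $c_j\in\{0,1\}$ with $a'_j-a_j+c_j\in[0,1)$ and set $p'=(q-p)+\sum_{j=1}^{k}c_j h_j\in\ZZ^m$; its $\calD$-coordinates are $(a'-a+c,\,b'-b)\in[0,1)^k\times[-2t,2t)^{m-k}$, so $p'\in P_{2t}^{\calD_0}$, while $p'-(q-p)=\sum_j c_j h_j\in\langle\calD_0\rangle=\Stab_{\ZZ^m}(u_0)$. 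Hence $(-p+q)\cdot u_0=(q-p)\cdot u_0=p'\cdot u_0\in P_{2t}^{\calD_0}\cdot u_0$, which is the desired inclusion.

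Part \ref{enu:Pt-basic-GrowthInt} is the only part requiring a genuine count, and I expect the coordinate bookkeeping here to be the main (still elementary) hurdle. Arguing as in the proof of Lemma \ref{lem:strong-bounds}, we may permute the standard coordinates of $\ZZ^m$ so that $I(\calD_0)=\{k+1,\dots,m\}$; this does not change $|P_t^{\calD_0}|$, since it amounts to applying a coordinate-permutation automorphism of $\ZZ^m$. Then $\calM_\calD$ is block lower-triangular with top-left $k\times k$ block $H^{(1)}$ (invertible, because $I(\calD_0)$ is a set of strongest coordinates), lower-left block $H^{(2)}$, and bottom-right block $I_{m-k}$. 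Splitting $x=(x^{(1)},x^{(2)})\in\ZZ^k\times\ZZ^{m-k}$, one computes $\calM_\calD^{-1}(x)=(a,b)$ with $a=(H^{(1)})^{-1}x^{(1)}$ and $b=x^{(2)}-H^{(2)}a$. Thus the constraint $a\in[0,1)^k$ involves only $x^{(1)}$, not $t$; let $N$ be the number of $x^{(1)}\in\ZZ^k$ satisfying it (in fact $N=|\det H^{(1)}|$, but this is not needed). For each such $x^{(1)}$, the constraint $b\in[-t,t)^{m-k}$ forces each of the $m-k$ coordinates of $x^{(2)}$ into a half-open real interval of length $2t$, and any such interval contains exactly $2t$ integers; as the $m-k$ conditions are independent, there are exactly $(2t)^{m-k}$ admissible $x^{(2)}$. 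Therefore $|P_t^{\calD_0}|=N\,(2t)^{m-k}=(N\,2^{m-k})\,t^{m-k}=|P_1^{\calD_0}|\cdot t^{m-k}$.

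In short, the only idea beyond unwinding the definition is the fibering in Part \ref{enu:Pt-basic-GrowthInt}: $P_t^{\calD_0}$ fibers over a $t$-independent base (the lattice points of a fixed fundamental parallelotope of $\langle\calD_0\rangle$, read in the complementary coordinates), with each fiber being the set of integer points in a translated axis-parallel box of side $2t$, hence of size exactly $(2t)^{m-k}$; the stated scaling is then immediate. Parts \ref{enu:Pt-basic-BallAroundParallelopiped} and \ref{enu:Pt-plusminusParallelopiped} amount to the triangle inequality and a reduction of coordinates modulo $\langle\calD_0\rangle$, respectively.
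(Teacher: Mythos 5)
Your proof is correct. Parts \ref{enu:Pt-basic-BallAroundParallelopiped} and \ref{enu:Pt-plusminusParallelopiped} follow essentially the same route as the paper: the triangle inequality for Part \ref{enu:Pt-basic-BallAroundParallelopiped}, and, for Part \ref{enu:Pt-plusminusParallelopiped}, a reduction of the first $k$ $\calD$-coordinates into $[0,1)^k$ by subtracting an element of $\langle\calD_0\rangle$, which is invisible to the orbit $\cdot\,u_0$. For Part \ref{enu:Pt-basic-GrowthInt} you take a somewhat different, more computational route. The paper simply observes that $P_t^{\calD_0}$ is a disjoint union of $t^{m-|\calD_0|}$ translates of $P_1^{\calD_0}$ (by integer vectors in $\langle\calC(\calD_0)\rangle$, which shift the last $m-k$ $\calD$-coordinates by integers while leaving the first $k$ untouched), and the count follows immediately. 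You instead permute coordinates to make $\calM_\calD$ block lower-triangular, fiber $P_t^{\calD_0}$ over its projection onto the first $k$ coordinates, and count $(2t)^{m-k}$ integer points in each translated axis-parallel box fiber. Both are valid and elementary; yours is longer but makes explicit the role of the block structure and the fibering, while the paper's phrasing is slicker in that it sidesteps the coordinate permutation and the explicit inverse matrix. One small remark on Part \ref{enu:Pt-plusminusParallelopiped}: the paper writes the first step as an equality $\left(-P_t+P_t\right)\cdot u_0=\{x:\calM_\calD^{-1}(x)\in(-1,1)^k\times(-2t,2t)^{m-k}\}\cdot u_0$, whereas you (more cautiously) prove only the needed inclusion; both suffice for the claim.
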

\begin{proof}
The first statement follows from the triangle inequality of the $L^{1}$-norm in $\ZZ^{m}$. The second statement holds since $P_{t}^{\calD_{0}}$ is the disjoint union of $t^{m-\left|\calD_{0}\right|}$ translates of
$P_{1}^{\calD_{0}}$. We turn to prove the third statement.

Define $\calD=\calD_{0}\cup\calC\left(\calD_{0}\right)$
as in Definition \ref{def:Pt}. We have
\begin{align*}
\left(-P_{t}^{\calD_{0}}+P_{t}^{{\mathcal{D}}_{0}}\right)\cdot u_{0} & =\left\{ x\in\ZZ^{m}\mid{\mathcal{M}}_{{\mathcal{D}}}^{-1}(x)\in\left(-1,1\right)^{k}\times\left(-2t,2t\right)^{m-k}\right\} \cdot u_{0}\\
 & =\left\{ x\in\ZZ^{m}\mid{\mathcal{M}}_{{\mathcal{D}}}^{-1}(x)\in[0,1)^{k}\times\left(-2t,2t\right)^{m-k}\right\} \cdot u_{0}\\
 & \subseteq P_{2t}^{\calD}\cdot u_{0}\,\,\,\text{,}
\end{align*}
where the second equality above follows from the fact that $\calD_{0}\subseteq\Stab_{\ZZ^{m}}\left(u_{0}\right)$.
\end{proof}
The next proposition yields Tool C, namely, we show that for
a pointed $\Gamma$-set $(V,v_{0})$, there is an injection $f_{C}:(Y,y_{0})\to(V,v_{0})$,
where $Y$ is a finite $\Gamma$-set and $\Eq\left(f_{C}\right)$
is large. We also provide some control over word-metric neighborhoods
of $\Ima\left(f_{C}\right)$ in $V$.
\begin{prop}
\label{prop:Pt-isoperimetric-properties}Let $H\leq\ZZ^{m}$, $V=\ZZ^{m}/H$,
$v_{0}=0+H$ and $t\in\NN$. Take a basis ${\mathcal{D}}_{0}$ for $H$.
Then, there is a finite quotient $Y$ of $V$, and an injective map
$f_{C}:Y\rightarrow P_{t}^{\calD_{0}}\cdot v_{0}$, such that:
\begin{enumerate}
\item \label{enu:Pt-IsoperimetricEquivariance} $\left|\Eq\left(f_{C}\right)\right|\geq\left(1-\frac{m-\left|\calD_{0}\right|}{t}\right)\cdot\left|Y\right|$.
\item \label{enu:Pt-IsoperimetricBall}For every integer $\tilde{t}\geq0$, $\left|B_{V}\left(\Ima\left(f_{C}\right),\tilde{t}\right)\right|\leq\left(1+\frac{\tilde{t}}{t}\right)^{m-\left|\calD_{0}\right|}\cdot\left|\Ima\left(f_{C}\right)\right|$.
\end{enumerate}
\end{prop}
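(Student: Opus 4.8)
The plan is to carry out the ``discrete parallelotope'' construction sketched for the axis-parallel case, but now reading coordinates through the ordered basis $\calD=\calD_0\cup\calC(\calD_0)$ of Definition \ref{def:Pt}. Write $k=|\calD_0|$ and $H'=\bigl\langle\calD_0\cup\{2t\cdot e_i\mid i\in I(\calD_0)\}\bigr\rangle\le\ZZ^m$; throughout, ``the last $m-k$ coordinates'' of a vector refers to the $\calD$-coordinates indexed $k+1,\dots,m$. Since $\calD$ is an $\RR$-basis, $H'$ has finite index in $\ZZ^m$, so $Y:=\ZZ^m/H'$ is a finite quotient of $V$ (as $H\le H'$). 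In $\calD$-coordinates $H'$ becomes $\ZZ^k\times(2t\ZZ)^{m-k}$, and $[0,1)^k\times[-t,t)^{m-k}$ is a fundamental domain for $\RR^m$ modulo this lattice; hence $P_t^{\calD_0}$ is a set of coset representatives for $\ZZ^m/H'$, in particular $|P_t^{\calD_0}|=|Y|$. Each $y\in Y$ then has a unique representative $x_y\in P_t^{\calD_0}$, and I set $f_C(y)=x_y+H\in V$; since $H\le H'$, distinct elements of $P_t^{\calD_0}$ lie in distinct $H$-cosets, so $f_C$ is injective with image exactly $P_t^{\calD_0}\cdot v_0$.

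For (i), let $\rho(x)\in\RR^{m-k}$ denote the last $m-k$ coordinates of $\calM_\calD^{-1}(x)$; by Lemma \ref{lem:strong-bounds} we have $\|\rho(x)\|_\infty\le\|x\|_1$ for all $x\in\ZZ^m$, so $\rho(e_i)\in[-1,1]^{m-k}$ for each generator $e_i$. I claim $\rho(x_y)\in[-t+1,t-1)^{m-k}$ forces $y\in\Eq(f_C)$: for each $i$, the representative $q\in P_t^{\calD_0}$ of $e_i\cdot y=x_y+e_i+H'$ has $\rho(q)$ differing from $\rho(x_y)+\rho(e_i)$ by an element of $(2t\ZZ)^{m-k}$; as both lie in $[-t,t)^{m-k}$, they are equal, whence $q-(x_y+e_i)$ is an element of $H'$ with vanishing last $\calD$-coordinates, i.e.\ lies in $H$, and so $f_C(e_i\cdot y)=q+H=x_y+e_i+H=e_i\cdot f_C(y)$. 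Thus $|Y\setminus\Eq(f_C)|\le\bigl|\bigcup_{j=k+1}^m L_j\bigr|$ where $L_j=\{x\in P_t^{\calD_0}\mid(\calM_\calD^{-1}x)_j\in[-t,-t+1)\cup[t-1,t)\}$. Writing $P_t^{\calD_0}$ as the disjoint union of $t^{m-k}$ lattice-translates of $P_1^{\calD_0}$ (from the proof of Lemma \ref{lem:Pt-basic-properties}), within each translate the $j$-th coordinate of $\calM_\calD^{-1}$ ranges over a fixed half-open interval of length $2$ centered at an odd integer in $(-t,t)$, so $L_j$ meets only the two extreme translates, in complementary halves; a short count gives $|L_j|=|P_1^{\calD_0}|\cdot t^{m-k-1}=|P_t^{\calD_0}|/t$, and summing yields $|\Eq(f_C)|\ge(1-\tfrac{m-k}{t})|Y|$.

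For (ii), note $\rho:\ZZ^m\to\RR^{m-k}$ is a homomorphism with $\rho(H)=0$ whose image $L=\rho(\ZZ^m)$ is a lattice containing $\ZZ^{m-k}$ (discrete because $\calM_\calD^{-1}$ is rational; containing $\ZZ^{m-k}$ because $\rho(e_i)$ for $i\in I(\calD_0)$ are the standard basis vectors). Let $\overline H=\ZZ^m\cap\spn_\RR(\calD_0)$ be the saturation of $H$; then $\ker(\rho|_{\ZZ^m})=\overline H$, so over each $\ell\in L$ the set $V$ carries exactly $[\overline H:H]$ points, and $\rho(P_t^{\calD_0})=L\cap[-t,t)^{m-k}$ with $\Ima(f_C)$ containing all $[\overline H:H]$ points over each such $\ell$; hence $|\Ima(f_C)|=[\overline H:H]\cdot|L\cap[-t,t)^{m-k}|$. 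The preimage of $B_V(\Ima(f_C),\tilde t)$ in $\ZZ^m$ is $P_t^{\calD_0}+H+B_{\ZZ^m}(\tilde t)$, whose heights lie in $\rho(P_t^{\calD_0})+\rho(B_{\ZZ^m}(\tilde t))\subseteq[-t-\tilde t,t+\tilde t)^{m-k}$ by Lemma \ref{lem:strong-bounds}, so $|B_V(\Ima(f_C),\tilde t)|\le[\overline H:H]\cdot|L\cap[-t-\tilde t,t+\tilde t)^{m-k}|$. Finally, since $L\supseteq\ZZ^{m-k}$ and $r=t$, $r=t+\tilde t$ are positive integers, the box $[-r,r)^{m-k}$ splits into $(2r)^{m-k}$ unit cubes, each holding exactly $[L:\ZZ^{m-k}]$ points of $L$; hence $|L\cap[-r,r)^{m-k}|=(2r)^{m-k}[L:\ZZ^{m-k}]$, and dividing yields $|B_V(\Ima(f_C),\tilde t)|/|\Ima(f_C)|\le(1+\tilde t/t)^{m-k}$.

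The routine part is the lattice-point bookkeeping once $\calM_\calD^{-1}$ is fixed as a coordinate chart. The delicate point, and the reason for the detour through the lattice $L$ and the saturation $\overline H$, is obtaining the \emph{exact} constant $(1+\tilde t/t)^{m-k}$ in (ii): a crude covering of $B_V(\Ima(f_C),\tilde t)$ by translates of the tile $\Ima(f_C)$ only gives $\lceil 1+\tilde t/t\rceil^{m-k}$, and the sharp bound needs the fiberwise count over $L$ together with the fact that at integer radii a box contains an \emph{exact} (not merely approximate) number of points of $L\supseteq\ZZ^{m-k}$.
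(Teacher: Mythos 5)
Your proof is correct, and the construction of $Y$, $f_C$ is identical in substance to the paper's (the paper packages the same map as $F_{P_t^{\calD_0},y_0,v_0}$ in the notation of Definition \ref{def:F-map}). For part (i), your union bound over the ``boundary slabs'' $L_j$, each of size exactly $|P_t^{\calD_0}|/t$, arrives at the same estimate as the paper's more direct comparison: the paper simply observes $P_{t-1}^{\calD_0}\cdot y_0\subseteq\Eq(f_C)$ (via Lemma \ref{lem:edge-preservation}) and then invokes Lemma \ref{lem:Pt-basic-properties}\ref{enu:Pt-basic-GrowthInt} to get $|P_{t-1}^{\calD_0}|=|P_t^{\calD_0}|\,(1-1/t)^{m-k}\ge(1-\tfrac{m-k}{t})|Y|$, with no need to count the complement slab by slab.

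For part (ii), the paper's route is substantially lighter than yours and still gives the exact constant, contrary to your closing remark. The paper shows, using Lemma \ref{lem:strong-bounds} (your Equation (\ref{eq:strong-slow-walk})), the containment $B_V(\Ima(f_C),\tilde t)\subseteq P_{t+\tilde t}^{\calD_0}\cdot v_0$, and then applies $|P_r^{\calD_0}|=|P_1^{\calD_0}|\cdot r^{m-k}$ (Lemma \ref{lem:Pt-basic-properties}\ref{enu:Pt-basic-GrowthInt}) with $r=t$ and $r=t+\tilde t$ to obtain $|P_{t+\tilde t}^{\calD_0}\cdot v_0|\le|P_{t+\tilde t}^{\calD_0}|=|P_t^{\calD_0}|\,(1+\tilde t/t)^{m-k}=|\Ima(f_C)|\,(1+\tilde t/t)^{m-k}$. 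So the sharp $(1+\tilde t/t)^{m-k}$ falls out of the polynomial scaling of the parallelotope count; no fiberwise count over the lattice $L=\rho(\ZZ^m)$ or the saturation $\overline H$ is needed, and the assertion that a ``crude covering'' would only give $\lceil 1+\tilde t/t\rceil^{m-k}$ does not bear on the paper's argument, which covers $B_V(\Ima(f_C),\tilde t)$ by a single larger parallelotope, not by translates of the tile. Your derivation is still valid --- the observation $H'\cap\ker\rho=H$ (rather than merely $\subseteq\overline H$, where $\overline H$ may be strictly larger than $H$) is exactly the point that makes (i) go through, and the index argument $|L\cap[-r,r)^{m-k}|=(2r)^{m-k}[L:\ZZ^{m-k}]$ is correct because $\ZZ^{m-k}\subseteq L$ --- but it is heavier machinery than the proposition requires.
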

\begin{proof}
Let $k=\left|{\mathcal{D}}_{0}\right|$ and write $P_{r}=P_{r}^{{\mathcal{D}}_{0}}$ for every $r\in\NN$.  Let ${\mathcal{D}}={\mathcal{D}}_{0}\cup{\mathcal{C}}\left({\mathcal{D}}_{0}\right)$
and  ${\mathcal{T}}={\mathcal{D}}_{0}\cup2t\cdot{\mathcal{C}}({\mathcal{D}}_{0})$. Note that 
\begin{equation} \label{eq:ToolCH}
H=\left\langle {\mathcal{D}}_{0}\right\rangle =\left\{ x\in\mathbb{Z}^{m}\mid{\mathcal{M}}_{{\mathcal{D}}}^{-1}\left(x\right)\in\ZZ^{k}\times\left\{ 0\right\} ^{m-k}\right\} 
\end{equation}
 and 
\begin{equation} \label{eq:ToolCT}
\left\langle {\mathcal{T}}\right\rangle =\left\{ x\in\mathbb{Z}^{m}\mid{\mathcal{M}}_{{\mathcal{D}}}^{-1}\left(x\right)\in\ZZ^{k}\times\left(2t\ZZ\right)^{m-k}\right\} \,\,\text{.}
\end{equation}

Define $Y=\ZZ^{m}/\langle{\mathcal{T}}\rangle$ and $y_{0}=0+\langle{\mathcal{T}}\rangle$. Let $f_{C}=F_{P_{t},y_{0},v_{0}}:P_{t}\cdot y_{0}\rightarrow P_{t}\cdot v_{0}$.
First, we show that $f_{C}$ is well-defined and injective. By the
remark after Definition \ref{def:F-map}, it is enough to show that
$\langle{\mathcal{T}}\rangle\cap\left(-P_{t}+P_{t}\right)=\langle{\mathcal{D}}_{0}\rangle\cap\left(-P_{t}+P_{t}\right)$.
Clearly, the right-hand side is contained in the left-hand side, so
we only need to show the reverse inclusion. Let $x\in-P_{t}+P_{t}$.
Then, for $k+1\le i\le m$, we have $\left({\mathcal{M}}_{{\mathcal{D}}}^{-1}(x)\right)_{i}\in[-t,t)-[-t,t)=\left(-2t,2t\right)$.
If, further, $x\in\langle{\mathcal{T}}\rangle$, then $\left({\mathcal{M}}_{{\mathcal{D}}}^{-1}(x)\right)_{i}=0$,
and so $x\in\langle{\mathcal{D}}_{0}\rangle$, as required.

For $v\in V=\ZZ^m/H$ and $k+1\le i\le m$, define $\left({\mathcal{M}}_{{\mathcal{D}}}^{-1}(v)\right)_{i}=\left({\mathcal{M}}_{{\mathcal{D}}}^{-1}(x)\right)_{i}$
using any $x\in\ZZ^{m}$ such that $v=x+H$. This is well-defined due to Equation (\ref{eq:ToolCH}). By Lemma \ref{lem:strong-bounds},
for every $k+1\leq i\leq m$ and $1\leq j\leq m$,
\begin{equation}
\left|\left({\mathcal{M}}_{{\mathcal{D}}}^{-1}\left(e_{j}+x\right)-{\mathcal{M}}_{{\mathcal{D}}}^{-1}\left(x\right)\right)_{i}\right|\leq1\,\,\text{.}\label{eq:strong-slow-walk}
\end{equation}
We turn to proving \ref{enu:Pt-IsoperimetricEquivariance} and \ref{enu:Pt-IsoperimetricBall}.
\begin{enumerate}
\item First, we show that 
\begin{equation}
P_{t-1}\cdot y_{0}\subseteq\Eq\left(f_{C}\right)\,\,\text{.}\label{eq:P_(t-1)-in-Eq}
\end{equation}
Write $S=\left\{ e_{1},\ldots,e_{m}\right\} \subseteq \ZZ^m $. Since the domain $P_{t}\cdot y_{0}$
of $f_{C}$ is equal to the entire set $Y$, all of its points are
internal in $Y$. Therefore, a point $y\in P_{t-1}\cdot y_{0}$
belongs to $\Eq\left(f_{C}\right)$ if and only if $f_{C}$ preserves
$y\overset{s}{\longrightarrow}$ for every $s\in S$. Hence, by Lemma
\ref{lem:edge-preservation}, it is sufficient to prove that $\langle{\mathcal{T}}\rangle\cap\left(-P_{t}+S+P_{t-1}\right)=\langle{\mathcal{D}}_{0}\rangle\cap\left(-P_{t}+S+P_{t-1}\right)$.
Let $x\in-P_{t}+S+P_{t-1}$. By Equation (\ref{eq:strong-slow-walk}),
$\left({\mathcal{M}}_{{\mathcal{D}}}^{-1}(x)\right)_{i}\in(-2t,2t)$ for all $k+1\le i\le m$.
Using (\ref{eq:ToolCH}) and (\ref{eq:ToolCT}) as before, we see that $x\in\langle{\mathcal{D}}_{0}\rangle$
if and only if $x\in\langle\calT\rangle$, which proves Inclusion
(\ref{eq:P_(t-1)-in-Eq}). Finally, using Lemma \ref{lem:Pt-basic-properties}\ref{enu:Pt-basic-GrowthInt},
we get
\begin{align*}
\left|\Eq\left(f_{C}\right)\right| & \geq\left|P_{t-1}\cdot y_{0}\right|=\left|P_{t-1}\right|\\
&=\left|P_1\right| \cdot \left(t-1\right)^{m-\left|\calD_0\right|} \\
 & =\left|P_{t}\right|\cdot\left(1-\frac{1}{t}\right)^{m-\left|\calD_{0}\right|}\\
 & \geq\left|Y\right|\cdot\left(1-\frac{m-\left|\calD_{0}\right|}{t}\right)\,\,\text{.}
\end{align*}
\item Take an integer $\tilde{t}\geq0$. First, we show that $B_{V}\left(P_{t}\cdot v_{0},\tilde{t}\right)\subseteq P_{t+\tilde{t}}\cdot v_{0}$.
Note that $P_{r}\cdot v_{0}=\left\{ v\in V\mid\left({\mathcal{M}}_{{\mathcal{D}}}^{-1}(v)\right)_{i}\in[-r,r)\text{ for }k+1\le i\le m\right\} $
for every $r\in\NN$. Let $v\in B_{V}\left(P_{t}\cdot v_{0},\tilde{t}\right)$.
By Equation (\ref{eq:strong-slow-walk}), $\left({\mathcal{M}}_{{\mathcal{D}}}^{-1}(v)\right)_{i}\in\left[-(t+\tilde{t}),t+\tilde{t}\right)$,
so $v\in P_{t+\tilde{t}}\cdot v_{0}$. Finally, since $\Ima\left(f_{C}\right)=P_{t}\cdot v_{0}$,
using Lemma \ref{lem:Pt-basic-properties}\ref{enu:Pt-basic-GrowthInt} again, we get
\begin{align*}
\left|B_{V}\left(\Ima\left(f_{C}\right),\tilde{t}\right)\right| & \leq\left|P_{t+\tilde{t}}\cdot v_{0}\right|\\
 & =\left|P_{t}\right|\cdot\left(1+\frac{\tilde{t}}{t}\right)^{m-\left|\calD_{0}\right|}\\
 & =\left|\Ima\left(f_{C}\right)\right|\cdot\left(1+\frac{\tilde{t}}{t}\right)^{m-\left|\calD_{0}\right|}\,\,\text{.}
\end{align*}
\end{enumerate}
\end{proof}

\subsection{Tool B: The method of far-reaching short sets}

We turn to \emph{Tool B}, in which we are given a pointed, usually
infinite, $\Gamma$-set $\left(V,v_{0}\right)$ and a positive integer
$t$. Our goal is to create a pointed $\Gamma$-set $(U,u_{0})=\left(\ZZ^{m}/\left\langle {\mathcal{D}}\right\rangle ,0+\left\langle {\mathcal{D}}\right\rangle \right)$
and a subgraph isomorphism $f_{B}:\left(P_{t}^{\calD}\cdot u_{0},u_{0}\right)\to\left(P_{t}^{\calD}\cdot v_{0},v_{0}\right)$.
Here, ${\mathcal{D}}\subseteq\ZZ^{m}$ is a linearly independent set of
our choice, and $P_{t}^{\calD}\cdot v_{0}$ must be bounded within
a ball of radius $O(t)$. By Lemma \ref{lem:Pt-basic-properties}\ref{enu:Pt-basic-BallAroundParallelopiped},
a sufficient condition for this bound is that $\|{\mathcal{D}}\|_{1}\le O(t)$.
Throughout this section, the reader should keep the discussion in
the beginning of Section \ref{sec:AbelianGroupsArePolynomiallyStable}
in mind, and, in particular, the realization of $\Gamma$ as the quotient
$\ZZ^{m}/K$, and the constants $C_{d}$ and $t_{E}$. 

As before, we may assume that $V$ is transitive, so it can be regarded
as $\mathbb{Z}^{m}/H$ for some $K\le H\le\mathbb{Z}^{m}$, with $v_{0}=0+H$.
Our strategy is to find a linearly independent set ${\mathcal{D}}\subseteq H$
of\emph{ short vectors, }such that the restriction \emph{$f_{B}$}
of the quotient map $U=\mathbb{Z}^{m}/\left\langle {\mathcal{D}}\right\rangle \to\mathbb{Z}^{m}/H=V$
to $P_{t}^{{\mathcal{D}}}\cdot u_{0}$ is a subgraph-isomorphism. To show
that such a set ${\mathcal{D}}$ exists, we first prove Lemma \ref{lem:good-basis},
which asserts the existence of a subset ${\mathcal{D}}\subseteq H$, consisting
of short vectors, such that all vectors in $H$ up to a certain length
$R=R\left(\|{\mathcal{D}}\|_{1},t\right)$ are spanned by ${\mathcal{D}}$.
Proposition \ref{prop:tile-construction} then shows that this set
${\mathcal{D}}$ is suitable for our purposes. 
\begin{lem}
\label{lem:good-basis}Let $K\le H\le\ZZ^{m}$ and $t_{E}\le t\in\NN$.
Then, there is a linearly independent subset ${\mathcal{D}}\subseteq H$,
such that $K\le\left\langle {\mathcal{D}}\right\rangle $ and the following
holds:
\begin{enumerate}
\item \label{enu:good-basisNorm}$\|{\mathcal{D}}\|_{1}\le2\cdot7^{d}\cdot d^{2d+2}\cdot t$, 
\item \label{enu:good-basisSelfExplaining}$\langle{\mathcal{D}}\rangle\cap B_{\ZZ^{m}}\left(R\right)=H\cap B_{\ZZ^{m}}\left(R\right)$
for $R=2\cdot\left(\|{\mathcal{D}}\|_{1}+dt\right)+1$.
\end{enumerate}
\end{lem}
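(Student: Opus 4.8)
The plan is a bootstrapping (fixed-point) argument: we construct $\calD$ by iteratively enlarging a sublattice of $H$ with the short vectors of $H$ it currently misses, re-extracting a short basis via reduction theory after each enlargement, and stopping once the sublattice is saturated in $H$ up to the radius it itself determines.

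Since $t\ge t_E\ge\beta_E$ and $K\le H$, the defining generators $\beta_i e_i$ of $K$ have $L^1$-norm at most $\beta_E\le t$, so $L_0:=\langle H\cap B_{\ZZ^m}(t)\rangle$ already contains $K$. Inductively, given a lattice $L_j$ with $K\le L_j\le H$, apply Proposition \ref{prop:KZ-reduced-basis-corollary} to obtain a basis $\calD_j$ of $L_j$: this is legitimate because $L_j$ is by construction generated by vectors of $L^1$- (hence $L^2$-) norm at most its current ``radius'', so all of $L_j$'s successive minima up to $\rank L_j$ lie within that radius, and the proposition then bounds $\|\calD_j\|_1$ by (ambient dimension)$^2$ times that radius. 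Put $R_j=2(\|\calD_j\|_1+dt)+1$; then $\calD_j\subseteq B_{\ZZ^m}(R_j)$, so the lattice $L_{j+1}:=\langle H\cap B_{\ZZ^m}(R_j)\rangle$ satisfies $K\le L_j\subseteq L_{j+1}\le H$. If $L_j\cap B_{\ZZ^m}(R_j)=H\cap B_{\ZZ^m}(R_j)$ we stop and output $\calD:=\calD_j$, which is linearly independent, contains $K$ in its span, and meets condition (ii) for $R=2(\|\calD\|_1+dt)+1$ by construction; otherwise $L_{j+1}\supsetneq L_j$ and we iterate.

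Two points drive termination. First, as soon as $\rank L_j=\rank H$, one further step forces $L_{j+1}=H$ and hence (ii) holds trivially: expanding any $v\in H$ in the rational basis $\calD_j$ and subtracting off the integer parts of the coefficients leaves a vector of $H$ of $L^1$-norm $<\|\calD_j\|_1<R_j$, which therefore lies in $H\cap B_{\ZZ^m}(R_j)\subseteq L_{j+1}$. Second, the number of iterations needed to reach full rank is bounded purely in terms of $d$ — morally by $\rank H-\rank K$, which the realization $\Gamma=\ZZ^m/K$ constrains to at most $d$, together with the fact that the torsion-free directions can be captured in a single reduction-theory step, leaving only the $\le d$ directions cut out by the $\beta_i$'s to the bootstrap. (On the simplified torsion-free route $\Gamma=\ZZ^d$ one has $m=d$ and $\rank H\le d$ outright, so even the crude bound ``number of iterations $\le\rank H\le d$'' works.) The stray factors of $m$ produced by reduction theory at each step are absorbed into $t$ using $t\ge t_E=\max\{d^{-1}(m-d)m\beta_E,2\}$, which is engineered precisely so that $m$-multiples of $\beta_E$ (and of $\|\calD_0\|_1$) are dominated by $dt$.

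Unwinding the recursion $\|\calD_{j+1}\|_1\le O(d^2)\cdot R_j\le O(d^2)(\|\calD_j\|_1+dt)$ over the $O(d)$ iterations, starting from $\|\calD_0\|_1=O(dt)$, yields $\|\calD\|_1\le\bigl(O(d^2)\bigr)^{O(d)}\cdot t$; tracking the implied constants carefully — which is exactly what the definition $C_d=\max\{3\cdot7^d d^{2d+2},\beta_E\}$ anticipates — tightens this to $\|\calD\|_1\le2\cdot7^d d^{2d+2}\cdot t$, giving (i). The main obstacle is precisely this simultaneous bookkeeping: one must keep every application of reduction theory effectively confined to dimension $\Theta(d)$, so that the per-step multiplicative blow-up is $d^{O(1)}$ rather than $m^{O(1)}$ (an $m$-factor compounded $d$ times could not be absorbed), while the radii are still defined through the $m$-dependent threshold $t_E$, and then verify that the compounded constants never exceed $2\cdot7^d d^{2d+2}$.
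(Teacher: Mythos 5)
Your proposal takes a genuinely different route from the paper's proof, and it contains two substantive gaps.

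The paper does not iterate at all. It projects $H$ via $\tau:\ZZ^m\to\ZZ^d\oplus\{0\}^{m-d}$ to $\bar H$, a lattice of rank at most $d$, and then considers the \emph{a priori fixed} geometric sequence $t_i=t\cdot(7d^2)^i$ for $0\le i\le d+1$. Since the $d+2$ integers $\rank\langle\bar H\cap B_{\ZZ^m}(t_i)\rangle$ are nondecreasing and take values in $\{0,\dots,d\}$, two consecutive ones agree by pigeonhole. At that stable scale, a \emph{single} application of Proposition \ref{prop:KZ-reduced-basis-corollary}, in ambient dimension $d$, gives $\bar\calD_0$ with $\|\bar\calD_0\|_1\le d^2 t_i$; this is lifted to $H$ and combined with one more reduction step on the torsion sublattice $H_T$. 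The ratio $7d^2$ is calibrated precisely so that the resulting $R$ satisfies $R\le t_{i+1}$, which is what delivers condition (ii) directly.

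The first gap in your argument is termination. You bound the number of iterations by $\rank H-\rank K\le d$, implicitly assuming that whenever $L_{j+1}\supsetneq L_j$ the rank must increase. This is false: an iteration can enlarge $L_j$ by \emph{refining its torsion} inside its saturation $L_j^*$ (the largest sublattice of $H$ with the same rational span as $L_j$), with no rank increase. Concretely, a vector $v\in L_j^*\setminus L_j$ with $R_{j-1}<\|v\|_1<\|\calD_j\|_1\le m^2R_{j-1}$ falls inside $H\cap B_{\ZZ^m}(R_j)$ and strictly enlarges $L_{j+1}$ while keeping $\rank L_{j+1}=\rank L_j$. Nothing you say bounds the number of such refinement steps, and since $\|\calD\|_1$ grows multiplicatively with each step, the bound in (i) is not established. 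The paper's pigeonhole sidesteps this entirely: it does not need the rank to increase step by step, only that the nondecreasing sequence of $d+2$ ranks (each at most $d$) must repeat somewhere.

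The second gap is one you yourself flag but do not resolve: dimension confinement. Each invocation of Proposition \ref{prop:KZ-reduced-basis-corollary} on $L_j\le\ZZ^m$ costs a factor of $m^2$, not $d^2$. Compounded over the iterations this yields $m^{O(\text{number of steps})}$, which cannot be absorbed into $2\cdot 7^d d^{2d+2}$ when $m\gg d$. You note that one ``must keep every application of reduction theory effectively confined to dimension $\Theta(d)$,'' but the bootstrap as described never does so --- it works with sublattices of $\ZZ^m$ throughout. The paper achieves the confinement structurally, by projecting to $\ZZ^d$ \emph{before} any reduction, so that the only reduction step in the potentially-unbounded directions takes place in dimension $d$, while the $m$-dependence is isolated in the torsion sublattice $H_T$ and absorbed once into the threshold $t_E$.

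To salvage your approach you would need both a bound on the number of torsion-refinement steps and a way to run the per-step reduction in dimension $d$ rather than $m$; together these push you essentially to the paper's projection-plus-pigeonhole argument.
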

\begin{proof}
Let $\tau:\ZZ^{m}\to\ZZ^{d}\oplus\{0\}^{m-d}$ denote the projection
onto the first $d$ coordinates, and let Let $\bar{H}=\tau(H)$. For
each $0\leq i\leq d+1$, define $t_{i}=t\cdot\left(7d^{2}\right)^{i}$.
Consider the monotone nondecreasing sequence of integers
\[
\left\{ \rank\langle\bar{H}\cap B_{\ZZ^{m}}\left(t_{i}\right)\rangle\right\} _{i=0}^{d+1}\,\,\text{.}
\]
Since the last element is at most $d$, there must be an integer $0\leq i\leq d$
such that 
\[
\rank\langle\bar{H}\cap B_{\ZZ^{m}}\left(t_{i}\right)\rangle=\rank\langle\bar{H}\cap B_{\ZZ^{m}}\left(t_{i+1}\right)\rangle\,\,\text{.}
\]
Fix such an integer $i$, and define $G=\left\langle \bar{H}\cap B_{\ZZ^{m}}\left(t_{i+1}\right)\right\rangle $.
In particular, $G\cap B_{\ZZ^{m}}\left(t_{i}\right)=\bar{H}\cap B_{\ZZ^{m}}\left(t_{i}\right)$,
and so,
\[
\rank\left\langle G\cap B_{\ZZ^{m}}\left(t_{i}\right)\right\rangle =\rank\left\langle \bar{H}\cap B_{\ZZ^{m}}\left(t_{i}\right)\right\rangle =\rank\left\langle \bar{H}\cap B_{\ZZ^{m}}\left(t_{i+1}\right)\right\rangle =\rank G\,\,\text{.}
\]
Hence, $\rank\langle G\cap B^{L^2}_{\ZZ^{m}}\left(t_{i}\right)\rangle=\rank G$, and so Proposition \ref{prop:KZ-reduced-basis-corollary} yields a basis $\bar{{\mathcal{D}}}_{0}$ for $G$ satisfying 
\begin{equation}
\|{\bar{\calD}}_{0}\|_{1}\leq d^{2}\cdot t_{i}\,\,.\label{eq:GoodBasisShort}
\end{equation}
Recall the definition of the set $T\subseteq\ZZ^{m}$ from the beginning
of Section \ref{sec:AbelianGroupsArePolynomiallyStable}. Construct
a set ${\mathcal{D}}_{0}\subseteq H$ consisting of one preimage $h\in H$
for each $\bar{h}\in\bar{{\mathcal{D}}}_{0}$ (i.e., $\tau(h)=\bar{h}$), such that $h-\bar{h}\in T$ (this is possible since $K\leq H$). So, each
of the last $m-d$ coordinates of each $h\in\calD_{0}$ is in the
range $0,\dots,\beta_{m}-1$. Now, define $H_{T}=H\cap\left(\left\{ 0\right\} ^{d}\oplus\ZZ^{m-d}\right)$.
Then, $K\leq H_{T}$, and so $\rank H_{T}=m-d=\rank K$. Since $K$
has a basis $\beta_{d+1}\cdot e_{d+1},\dotsc,\beta_{m}\cdot e_{m}$,
which is contained in $B_{\left\{ 0\right\} ^{d}\oplus\ZZ^{m-d}}^{L^{2}}\left(\beta_{E}\right)$,
we conclude from Proposition \ref{prop:KZ-reduced-basis-corollary}
that $H_{T}$ has a basis ${\mathcal{D}}_{1}$, with $\|{\mathcal{D}}_{1}\|_{1}\le\left(m-d\right)^{2}\cdot\beta_{E}$.
Then, the set ${\mathcal{D}}={\mathcal{D}}_{0}\cup{\mathcal{D}}_{1}$ is linearly
independent, and 
\begin{align*}
\|{\mathcal{D}}\|_{1} & =\|{\mathcal{D}}_{0}\|_{1}+\|{\mathcal{D}}_{1}\|_{1}\\
 & \leq\sum_{x\in\bar{{\mathcal{D}}}_{0}}\left(\|x\|_{1}+\left(m-d\right)\cdot\beta_{E}\right)+\left(m-d\right)^{2}\cdot\beta_{E}\\
 & =\|\bar{{\mathcal{D}}}_{0}\|_{1}+\left(\left|{ \bar{\calD}}_{0}\right|\cdot(m-d)+(m-d)^{2}\right)\cdot\beta_{E}\\
 & \leq\|\bar{{\mathcal{D}}}_{0}\|_{1}+\left(d+(m-d)\right)\cdot\left(m-d\right)\cdot\beta_{E}\\
 & \leq d^{2}\cdot t_{i}+m\cdot(m-d)\cdot\beta_{E}\,\,\text{,}
\end{align*}
where the last inequality follows from Equation (\ref{eq:GoodBasisShort}).
Now, $t\geq t_{E}$ implies that $m\cdot\left(m-d\right)\cdot\beta_{E}\le d\cdot t$,
and so,
\[
\|{\mathcal{D}}\|_{1}\leq d^{2}\cdot t_{i}+d\cdot t\le d^{2}\cdot\left(7d^{2}\right)^{d}\cdot t+d\cdot t\le2\cdot7^{d}\cdot d^{2d+2}\cdot t\,\,\text{,}
\]
proving \ref{enu:good-basisNorm}. Furthermore, for $R=2\cdot\left(\|{\mathcal{D}}\|_{1}+dt\right)+1$,
\begin{align*}
R&\leq2\cdot\left(d^{2}\cdot t_{i}+m\cdot(m-d)\cdot\beta_{E}+dt\right)+1\\ &\leq 2\cdot\left(3\cdot d^2\cdot t_i\right)+1\leq7\cdot d^{2}\cdot t_{i}=t_{i+1}\,\,\text{.}
\end{align*}
Hence, to prove \ref{enu:good-basisSelfExplaining} it suffices to show that 
\[
\langle{\mathcal{D}}\rangle\cap B_{\ZZ^{m}}\left(t_{i+1}\right)=H\cap B_{\ZZ^{m}}\left(t_{i+1}\right)\,\,\text{.}
\]
Let $h\in H\cap B_{\ZZ^{m}}\left(t_{i+1}\right)$ and write $\bar{h}_{0}=\tau(h)$.
So,
\[
\bar{h}_{0}\in\bar{H}\cap B_{\ZZ^{m}}\left(t_{i+1}\right)\subseteq G=\langle\bar{\calD}_{0}\rangle\,\,\text{.}
\]
Hence, there exists $h_{0}\in\left\langle {\mathcal{D}}_{0}\right\rangle $
such that $\tau\left(h_{0}\right)=\bar{h}_{0}$, and so $h-h_{0}\in H_{T}=\langle\calD_{1}\rangle$.
Hence, $h=h_{0}+\left(h-h_{0}\right)\in\langle\calD\rangle$, proving
the claim.
\end{proof}
The following proposition yields Tool B.
\begin{prop}
\label{prop:tile-construction}Let $K\leq H\le\ZZ^{m}$ and $t_{E}\le t\in\NN$.
Write $V=\ZZ^{m}/H$ and $v_{0}=0+H$. Then, there is a linearly independent
set ${\mathcal{D}}\subseteq H$, $K\leq\langle{\mathcal{D}}\rangle$, satisfying
the following:
\begin{enumerate}
\item $P_{t}^{{\mathcal{D}}}\subseteq B_{\ZZ^{m}}\left(C_{d}\cdot t\right)$.
\item For $U=\ZZ^{m}/\langle{\mathcal{D}}\rangle$ and $u_{0}=0+\langle{\mathcal{D}}\rangle$,
there is a subgraph isomorphism $f_{B}:P_{t}^{{\mathcal{D}}}\cdot u_{0}\rightarrow P_{t}^{{\mathcal{D}}}\cdot v_{0}$
(given by $f_{B}=F_{P_{t}^{{\mathcal{D}}},u_{0},v_{0}}$).
\end{enumerate}
\end{prop}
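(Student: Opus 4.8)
The plan is to take $\calD$ to be exactly the linearly independent set furnished by Lemma \ref{lem:good-basis}, which applies since $t_E\le t\in\NN$. This $\calD$ is contained in $H$, satisfies $K\le\langle\calD\rangle$, and comes with the two estimates $\|\calD\|_1\le 2\cdot 7^d\cdot d^{2d+2}\cdot t$ and $\langle\calD\rangle\cap B_{\ZZ^m}(R)=H\cap B_{\ZZ^m}(R)$ for $R=2(\|\calD\|_1+dt)+1$. It then remains only to verify items (1) and (2), and this reduces to bookkeeping with these two estimates plus the geometric lemmas of Sections \ref{subsec:ToolC}--\ref{subsec:graphs-of-actions}.

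For item (1), the first thing to record is that $m-|\calD|\le d$. Indeed, $K\le\langle\calD\rangle$ forces $|\calD|=\rank\langle\calD\rangle\ge\rank K=m-d$ (equivalently, $\calD$ contains a basis of $H\cap(\{0\}^d\oplus\ZZ^{m-d})$, of rank $m-d$). Feeding this into Lemma \ref{lem:Pt-basic-properties}\ref{enu:Pt-basic-BallAroundParallelopiped} gives $P_t^{\calD}\subseteq B_{\ZZ^m}\!\left(\|\calD\|_1+(m-|\calD|)t\right)\subseteq B_{\ZZ^m}(\|\calD\|_1+dt)$. Using the norm bound of Lemma \ref{lem:good-basis} together with the crude inequality $d\le 7^d d^{2d+2}$, we get $\|\calD\|_1+dt\le 3\cdot 7^d\cdot d^{2d+2}\cdot t\le C_d\cdot t$, which is item (1).

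For item (2), the point is that $R$ is calibrated precisely so that $B_{\ZZ^m}(R)$ swallows $(P_t^{\calD})^{-1}\cdot S_1\cdot P_t^{\calD}$, where $S_1=\{e_1,\dots,e_m\}\cup\{0\}$ (here $\ZZ^m$ is the group acting, with generating set $\{e_1,\dots,e_m\}$): by the triangle inequality for $\|\cdot\|_1$ and the ball bound $P_t^{\calD}\subseteq B_{\ZZ^m}(\|\calD\|_1+dt)$ just used, every element of this set has $L^1$-norm at most $2(\|\calD\|_1+dt)+1=R$. Now set $U=\ZZ^m/\langle\calD\rangle$ and $u_0=0+\langle\calD\rangle$; then $\Stab_{\ZZ^m}(u_0)=\langle\calD\rangle\subseteq H=\Stab_{\ZZ^m}(v_0)$, so one inclusion of $\Stab_{\ZZ^m}(u_0)\cap\big((P_t^{\calD})^{-1}S_1P_t^{\calD}\big)=\Stab_{\ZZ^m}(v_0)\cap\big((P_t^{\calD})^{-1}S_1P_t^{\calD}\big)$ is automatic, while for the reverse inclusion any element of $H$ lying in $(P_t^{\calD})^{-1}S_1P_t^{\calD}\subseteq B_{\ZZ^m}(R)$ lies in $H\cap B_{\ZZ^m}(R)=\langle\calD\rangle\cap B_{\ZZ^m}(R)\subseteq\langle\calD\rangle$ by Lemma \ref{lem:good-basis}. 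Applying Lemma \ref{lem:subgraph-isomorphism} with $\Lambda=\ZZ^m$, $X=U$, $Y=V$, $x=u_0$, $y=v_0$ and $P=P_t^{\calD}$ then shows that $f_B:=F_{P_t^{\calD},u_0,v_0}:P_t^{\calD}\cdot u_0\to P_t^{\calD}\cdot v_0$ is well-defined and is a subgraph isomorphism; and $K\le\langle\calD\rangle$ (from Lemma \ref{lem:good-basis}) makes $U$ a $\Gamma$-set, as required.

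The genuine content has already been spent in Lemma \ref{lem:good-basis}, so I do not expect a real obstacle here; the two things to watch are purely arithmetic. First, one must use the bound $m-|\calD|\le d$ rather than the trivial $m-|\calD|\le m$, so that the radius in item (1) depends on $d$ and not on the ambient dimension $m$ (which is unbounded in $d$). Second, one must notice that the length $R=2(\|\calD\|_1+dt)+1$ built into Lemma \ref{lem:good-basis}\ref{enu:good-basisSelfExplaining} is exactly what is needed to absorb one application of a generator between two translates of $P_t^{\calD}$, which is what makes ``far-reaching short sets'' do their job.
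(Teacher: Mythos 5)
Your proof is correct and follows essentially the same route as the paper's: take $\mathcal{D}$ from Lemma \ref{lem:good-basis}, combine its $L^1$-norm bound with Lemma \ref{lem:Pt-basic-properties}\ref{enu:Pt-basic-BallAroundParallelopiped} (using $m-|\mathcal{D}|\le d$, which the paper invokes implicitly and you spell out) for item (1), and reduce item (2) to Lemma \ref{lem:subgraph-isomorphism} via the equality $\langle\mathcal{D}\rangle\cap B_{\ZZ^m}(R)=H\cap B_{\ZZ^m}(R)$ from Lemma \ref{lem:good-basis}\ref{enu:good-basisSelfExplaining}. The bookkeeping is accurate and the choice of $R$ is indeed the crux, exactly as you identify.
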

\begin{proof}
Apply Lemma \ref{lem:good-basis} to $H$ and $t$ to obtain a linearly
independent set $\calD$ with $K\le\left\langle {\calD}\right\rangle $
such that 
\[
\|{\mathcal{D}}\|_{1}\le2\cdot7^{d}\cdot d^{2d+2}\cdot t
\]
and
\begin{equation}
\langle{\mathcal{D}}\rangle\cap B_{\ZZ^{m}}\left(R\right)=H\cap B_{\ZZ^{m}}\left(R\right)\,\,\text{,}\label{eq:TileConstructionPropertyTwo}
\end{equation}
where $R=2\cdot\left(\|{\mathcal{D}}\|_{1}+dt\right)+1$. Write $P=P_{t}^{{\calD}}$.
The first claim follows from  Lemma \ref{lem:Pt-basic-properties}\ref{enu:Pt-basic-BallAroundParallelopiped}
since
\begin{align}
P & \subseteq B_{\mathbb{Z}^{m}}\left(\|{\mathcal{D}}\|_{1}+(m-|\calD|)\cdot t\right)\subseteq B_{\mathbb{Z}^{m}}\left(\|{\mathcal{D}}\|_{1}+d\cdot t\right)\label{eq:TileConstructionParallelotopeInBall}
\end{align}
and 
\[
\|{\mathcal{D}}\|_{1}+d\cdot t\le2\cdot7^{d}\cdot d^{2d+2}\cdot t+d\cdot t\le C_{d}\cdot t\,\,\text{.}
\]

We turn to proving that $F_{P_{t}^{{{\mathcal{D}}}},u_{0},v_{0}}$ is a subgraph
isomorphism. By Lemma \ref{lem:subgraph-isomorphism}, it suffices
to show that, for $S_{1}=\left\{ e_{1},\dotsc,e_{m}\right\} \cup\left\{ 0\right\} \subseteq\ZZ^{m}$,
\[
H\cap\left(-P+S_{1}+P\right)=\langle{\mathcal{D}}\rangle\cap\left(-P+S_{1}+P\right)\,\,.
\]
Equation (\ref{eq:TileConstructionParallelotopeInBall}) yields $-P+S_{1}+P\subseteq B_{\ZZ^{m}}\left(2\cdot\left(\|{\mathcal{D}}\|_{1}+dt\right)+1\right)= B_{\ZZ^{m}}(R)$,
hence the claim follows from Equation (\ref{eq:TileConstructionPropertyTwo}). 
\end{proof}

\subsection{Tool A: The bounded-addition method} \label{subsec:ToolA}

We turn to {\em Tool A}. Here, we are given a pointed $\FF_{m}$-set
$(X,x_{0})$ and $t\in\NN$ such that 
\begin{equation}
\Box_{\FF_{m}}(t)\cdot x_{0}\subseteq X_{E}\,\,\text{,}\label{eq:X_EContainsBox}
\end{equation}
i.e., $X$ abides by the set of equations $E$ within a certain neighborhood
of $x_{0}$. Our goal is to construct a pointed $\Gamma$-set $\left(U,u_{0}\right)$
and a subgraph isomorphism $f_{A}:B_{U}\left(u_{0},r\right)\to B_{X}\left(x_{0},r\right)$
for some $r\ge\Omega(t)$. A key notion in our proof is ``bounded
addition'', formalized in the following definition:
\begin{defn}
\label{def:bounded-addition}Let ${\mathcal{D}}\subseteq B_{\ZZ^{m}}^{L^{\infty}}(R)$,
$R\in\NN$. Define $\left[{\mathcal{D}}\right]_{R}\subseteq B_{\ZZ^{m}}^{L^{\infty}}(R)$
as the minimal subset of $\ZZ^{m}$ satisfying the following conditions:
\begin{enumerate}[label=(\Roman*)]
\item ${\mathcal{D}}\cup\left\{ 0\right\} \subset\left[{\mathcal{D}}\right]_{R}$.
\item If $x\in\left[{\mathcal{D}}\right]_{R}$, then $-x\in\left[{\mathcal{D}}\right]_{R}$.
\item If $x_{1},x_{2}\in\left[{\mathcal{D}}\right]_{R}$ and $\left\Vert x_{1}+x_{2}\right\Vert _{\infty}\le R$,
then $x_{1}+x_{2}\in\left[{\mathcal{D}}\right]_R$.
\end{enumerate}
\end{defn}
Informally, the relevance of Definition \ref{def:bounded-addition}
to the problem Tool A aims to solve is the following: Inclusion (\ref{eq:X_EContainsBox})
yields a guarantee on the behavior of $X$ \emph{in a neighborhood
of $x_{0}$} (see Lemma \ref{lem:good-box-canonical-action}). Therefore,
we would like to know which elements of $\ZZ^{m}$ can be generated
from a given finite subset $\calD\subseteq\ZZ^{m}$ \emph{without straying
far from the origin}.

We proceed to develop \emph{bounded addition} in order to prove Lemma \ref{lem:effective-lattice-generation}
below, and then use it to provide Tool A. We begin with two immediate
observations about $\left[{\mathcal{D}}\right]_R$ without proof :
\begin{lem}
\label{lem:BoundedGenerationFacts}Let ${\mathcal{D}}\subseteq B_{\ZZ^{m}}^{L^{\infty}}(R)$,
$R\in\NN$. Then:
\begin{enumerate}
\item \label{enu:BoundedGenerationSequence}A vector $y\in\ZZ^{m}$ belongs to $\left[{\mathcal{D}}\right]_{R}$ if
and only if there is a sequence $\left\{ y_{i}\right\} _{i=1}^{k}\subseteq B_{\ZZ^{m}}^{L^{\infty}}(R)$
with $y=y_{k}$, in which every element $y_{i}$ satisfies at least
one of the following:
\begin{enumerate}
\item $y_{i}\in{\mathcal{D}}$, or
\item $y_{i}$ is the negation of a previous element, or
\item $y_{i}$ is the sum of two previous elements.
\end{enumerate}
\item \label{enu:BoundedGenerationClosed}For $\mathcal{E}\subseteq \ZZ^m$, if ${\mathcal{E}}\subseteq\left[{\mathcal{D}}\right]_{R}$ then $\left[{\mathcal{E}}\right]_{R}\subseteq\left[{\mathcal{D}}\right]_{R}$.
\end{enumerate}
\end{lem}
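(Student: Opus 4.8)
The plan is to view $[\mathcal{D}]_R$ in the two standard ways a closure can be described: ``from above'' as the smallest subset of $B^{L^\infty}_{\ZZ^m}(R)$ with the closure properties (I)--(III), and ``from below'' as the set of elements produced by finite generating sequences. I would begin with the routine preliminaries that make both descriptions legitimate: an arbitrary intersection of subsets satisfying (I)--(III) again satisfies (I)--(III), so $[\mathcal{D}]_R$ is well-defined and equals the intersection of all such subsets; and conditions (I)--(III) automatically force $[\mathcal{D}]_R \subseteq B^{L^\infty}_{\ZZ^m}(R)$, since $\mathcal{D} \cup \{0\}$ lies in the ball, negation preserves the $L^\infty$-norm, and rule (III) admits a sum only when it already lies in the ball.

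For part \ref{enu:BoundedGenerationSequence}, let $G$ be the set of all $y \in \ZZ^m$ that admit a generating sequence of the stated form. I would show $G = [\mathcal{D}]_R$ by a double inclusion. For $[\mathcal{D}]_R \subseteq G$, it suffices by minimality to check that $G$ satisfies (I)--(III): a one-term sequence realizes each element of $\mathcal{D}$ (and, when $\mathcal{D} \neq \emptyset$, the sequence $(d,-d,0)$ realizes $0$ --- one should read option (a) as ``$y_i \in \mathcal{D} \cup \{0\}$'', which is harmless since in every application of this lemma $\mathcal{D}$ is nonempty); appending $-y_k$ to a sequence for $y$ realizes $-y$; and concatenating sequences for $x_1, x_2$ and appending $x_1 + x_2$ realizes $x_1 + x_2$ when $\|x_1+x_2\|_\infty \le R$, all appended terms lying in $B^{L^\infty}_{\ZZ^m}(R)$ as required. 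For $G \subseteq [\mathcal{D}]_R$, given a generating sequence $(y_1,\dots,y_k)$ I would prove by induction on $i$ that $y_i \in [\mathcal{D}]_R$: the three cases invoke, respectively, (I), then (II) with the inductive hypothesis, then (III) with the inductive hypothesis together with the fact that $\|y_i\|_\infty \le R$ (built into the notion of a generating sequence). Taking $i = k$ gives $y \in [\mathcal{D}]_R$.

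For part \ref{enu:BoundedGenerationClosed}, assuming $\mathcal{E} \subseteq [\mathcal{D}]_R$, the quickest argument is to observe that $[\mathcal{D}]_R$ itself is a subset of $B^{L^\infty}_{\ZZ^m}(R)$ satisfying the three defining conditions of $[\mathcal{E}]_R$: condition (I) holds because $\mathcal{E} \cup \{0\} \subseteq [\mathcal{D}]_R$ by the hypothesis together with (I) for $\mathcal{D}$, while (II) and (III) are precisely the closure properties $[\mathcal{D}]_R$ already possesses. Minimality of $[\mathcal{E}]_R$ then yields $[\mathcal{E}]_R \subseteq [\mathcal{D}]_R$. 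Alternatively, one could invoke part \ref{enu:BoundedGenerationSequence} and substitute, into a generating sequence for an element of $[\mathcal{E}]_R$, a $\mathcal{D}$-generating sequence for each base term coming from $\mathcal{E}$.

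I do not anticipate any real obstacle: the statement is genuinely a pair of ``immediate observations'', and the only points needing a moment of care are the closure of conditions (I)--(III) under intersection (so that ``minimal'' is meaningful) and the cosmetic treatment of the base element $0$ in the sequence characterization, neither of which affects anything downstream.
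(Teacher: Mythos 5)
The paper states this lemma as ``two immediate observations \dots without proof,'' so there is no authorial argument to compare against; your proof is correct and is the natural one. The double-inclusion for part (i) (showing the set of sequence-generated vectors satisfies (I)--(III), then inducting along a generating sequence for the converse), and the minimality argument for part (ii), are exactly what the authors are tacitly relying on, and you correctly note the routine preliminaries (closure of (I)--(III) under intersection; containment in $B_{\ZZ^m}^{L^\infty}(R)$) that make the notion of ``minimal subset'' well-posed. Your observation about the vector $0$ is a genuine, if cosmetic, imprecision in the statement as written: with a strictly literal reading of option (a), the equivalence in (i) fails for $\mathcal{D}=\emptyset$ (where $[\mathcal{D}]_R=\{0\}$ but no generating sequence exists), and even for nonempty $\mathcal{D}$ one must use the small trick $(d,-d,0)$ to realize $0$; reading option (a) as $y_i\in\mathcal{D}\cup\{0\}$ --- matching condition (I) of Definition \ref{def:bounded-addition} --- is the intended meaning, and, as you say, is harmless for every use of the lemma in the paper.
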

The sequence $\left\{ y_{i}\right\} _{i=1}^{k}$ in Lemma \ref{lem:BoundedGenerationFacts}\ref{enu:BoundedGenerationSequence}
is called a \emph{$\left({\mathcal{D}},R\right)$-generation-sequence }for
$y$. 

Lemma \ref{lem:bader-ofer} provides a technical result, which is
then used by Lemmas \ref{lem:bader-ofer-cor}, \ref{lem:bader-ofer-basis}
and \ref{lem:effective-lattice-generation} to give sufficient conditions
for membership in $\left[{\mathcal{D}}\right]_{R}$. The proof of Lemma
\ref{lem:bader-ofer} is inspired by \emph{Thomas Jefferson's method
}for proportional allocation of seats in the United States House of
Representatives \textendash{} a method used in many countries to this
day. Recall that ${\mathcal{D}}^{\pm}$ is the set of elements of ${\mathcal{D}}$
and their negations.
\begin{lem}
\label{lem:bader-ofer}Let ${\mathcal{D}}\subseteq B_{\ZZ^{m}}^{L^{\infty}}(R)$,
$R\in\NN$, and let $y\in\langle{\mathcal{D}}\rangle$. Then, there is
a finite sequence $x_{1},\dotsc,x_{N}$ of elements of ${\mathcal{D}}^{\pm}$,
such that $y=\sum_{j=1}^{N}x_{j}$, and for every $0\leq k\leq N$,
\begin{equation}
\left\Vert \sum_{j=1}^{k}x_{j}-\frac{k}{N}\cdot y\right\Vert _{\infty}\leq2\cdot\left|{\mathcal{D}}\right|\cdot R\,\,\text{.}\label{eq:Bader_oferLemCondition}
\end{equation}
\end{lem}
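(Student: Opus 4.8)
The plan is to build the sequence $x_{1},\dots,x_{N}$ by running \emph{Jefferson's apportionment method} (a.k.a.\ the D'Hondt method). First I fix a way of writing $y$ as a sum of elements of $\mathcal{D}^{\pm}$: since $y\in\langle\mathcal{D}\rangle$, write $y=\sum_{v\in\mathcal{D}}c_{v}v$ with $c_{v}\in\ZZ$, and replace each $v$ by $|c_{v}|$ copies of $v$ (if $c_{v}>0$) or of $-v$ (if $c_{v}<0$); this exhibits $y=\sum_{w\in\mathcal{D}^{\pm}}a_{w}w$ with each $a_{w}$ a nonnegative integer, where at most $P\le|\mathcal{D}|$ of the $a_{w}$ are nonzero. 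Put $N=\sum_{w}a_{w}$; if $N=0$ then $y=0$ and the empty sequence works, so assume $N\ge1$. Now regard each $w$ with $a_{w}>0$ as a ``party'' with $a_{w}$ ``votes'' and allocate $N$ ``seats'' one by one, each time awarding the next seat to a party maximizing $a_{w}/(d_{w}+1)$, where $d_{w}$ is the number of seats it already holds. Let $x_{k}\in\mathcal{D}^{\pm}$ be the party receiving seat $k$, let $d_{w}^{(k)}$ be the number of seats party $w$ holds after step $k$ (with $d_{w}^{(k)}=0$ when $a_{w}=0$), and let $\theta_{k}$ denote the value of the quotient that wins step $k$.

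Next I record two facts. \textbf{Fact 1:} $\sum_{k=1}^{N}x_{k}=y$. Indeed, since $a_{w}/j$ decreases in $j$, the process always selects, globally, the $N$ largest of the quotients $\{a_{w}/j:w\text{ a party},\,j\ge1\}$ (each step picks the current maximum, which then leaves the pool); there are exactly $N$ of these quotients with value $\ge1$, namely those with $j\le a_{w}$, the $(N{+}1)$-st largest has value $<1$, so the top $N$ are precisely the quotients with $j\le a_{w}$. Hence $d_{w}^{(N)}=a_{w}$ for each party, and $\sum_{k=1}^{N}x_{k}=\sum_{w}a_{w}w=y$. \textbf{Fact 2:} for every $k\ge1$ and every party $w$,
\[
\frac{a_{w}}{d_{w}^{(k)}+1}\ \le\ \theta_{k}\ \le\ \frac{a_{w}}{d_{w}^{(k)}}\,,
\]
the right-hand inequality being asserted only when $d_{w}^{(k)}\ge1$. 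The right inequality holds because $a_{w}/d_{w}^{(k)}$ is the quotient that awarded party $w$ its $d_{w}^{(k)}$-th seat, at some step $k_{0}\le k$, and the winning quotients form a non-increasing sequence, so $\theta_{k}\le\theta_{k_{0}}=a_{w}/d_{w}^{(k)}$. The left inequality holds because $a_{w}/(d_{w}^{(k)}+1)$ is the quotient party $w$ has available after step $k$: if $w$ did not win seat $k$, that quotient was already available at step $k$ and hence $\le\theta_{k}$; if $w$ did win seat $k$, then $a_{w}/(d_{w}^{(k)}+1)<a_{w}/d_{w}^{(k)}=\theta_{k}$. Rearranging Fact 2 gives $\bigl|d_{w}^{(k)}-a_{w}/\theta_{k}\bigr|\le1$ for all parties $w$ and all $k\ge1$.

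Finally I assemble the estimate. Fix $1\le k\le N$. Since $\sum_{w}d_{w}^{(k)}=k$, summing the bound from Fact 2 over the $P$ parties gives $\bigl|\,N/\theta_{k}-k\,\bigr|=\bigl|\sum_{w}a_{w}/\theta_{k}-\sum_{w}d_{w}^{(k)}\bigr|\le P$, i.e.\ $\bigl|1/\theta_{k}-k/N\bigr|\le P/N$. Now decompose
\[
\sum_{j=1}^{k}x_{j}-\frac{k}{N}\,y\ =\ \sum_{w}\Bigl(d_{w}^{(k)}-\tfrac{a_{w}}{\theta_{k}}\Bigr)w\ +\ \Bigl(\tfrac{1}{\theta_{k}}-\tfrac{k}{N}\Bigr)\sum_{w}a_{w}w\,,
\]
and bound both terms in $\|\cdot\|_{\infty}$ using $\|w\|_{\infty}\le R$ for $w\in\mathcal{D}^{\pm}$. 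The first term is at most $\sum_{w}\bigl|d_{w}^{(k)}-a_{w}/\theta_{k}\bigr|\,\|w\|_{\infty}\le P\cdot R$. The second term is at most $\bigl|1/\theta_{k}-k/N\bigr|\cdot\|y\|_{\infty}\le(P/N)\cdot\sum_{w}a_{w}\|w\|_{\infty}\le(P/N)\cdot NR=PR$. Adding, $\bigl\|\sum_{j=1}^{k}x_{j}-\tfrac{k}{N}y\bigr\|_{\infty}\le2PR\le2|\mathcal{D}|R$, which is \eqref{eq:Bader_oferLemCondition}; the case $k=0$ is trivial.

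I expect the main obstacle to be the second term in the decomposition above. The naive bound $\bigl|d_{w}^{(k)}-ka_{w}/N\bigr|\le a_{w}$, summed over parties, only yields $NR$, which is useless since $N$ may be astronomically larger than $|\mathcal{D}|$. The crucial point is that the discrepancy of $1/\theta_{k}$ from $k/N$ is only $O(P/N)$, and it is multiplied by $\|y\|_{\infty}\le NR$, so the factor $N$ cancels. Securing this cancellation via the greedy sandwich of Fact 2 is the heart of the argument; Fact 1 and the remaining steps are bookkeeping.
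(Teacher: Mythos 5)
Your proof is correct and implements exactly the apportionment argument (Jefferson/D'Hondt) that the paper itself uses --- the paper even credits Jefferson's method as the inspiration --- and it produces the same sequence $x_1,\dots,x_N$ (the paper's ``flag positions'' $N\cdot b/a_i$, sorted increasingly, are precisely the reciprocals $N/\theta$ of your sorted quotients). The only difference is bookkeeping: you route the estimate through the winning quotient $\theta_k$ and split the discrepancy into two terms each bounded by $|\mathcal{D}|\cdot R$, whereas the paper proves the one-sided bound $b_{i,k}\ge \frac{k}{N}a_i-1$ directly from the flag ordering and then converts it to a two-sided $L^1$ bound using the constraint $\sum_i b_{i,k}=\sum_i\frac{k}{N}a_i=k$; both arrive at $2|\mathcal{D}|\cdot R$.
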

\begin{proof}
Let $M$ be a multiset of elements of ${\mathcal{D}}^{\pm}$ whose sum
is $y$. Assume, without loss of generality, that $M$ does not contain both
$x$ and $-x$ for any $x\in\calD$. Write $\bar{x}_{1},\dotsc,\bar{x}_{p}$
($p\le\left|{\mathcal{D}}\right|)$ for the distinct elements of $M$,
and $a_{i}$ for the multiplicity of $\bar{x}_{i}$ in $M$. Thus,
$y=\sum_{i=1}^{p}a_{i}\cdot\bar{x}_{i}$. Let $N=|M|=\sum_{i=1}^{p}a_{i}$.
We seek to order the elements of $M$ in a sequence $x_{1},\ldots,x_{N}$
satisfying the claim.

Let 
\[
F=\left\{ \left(N\cdot\frac{b}{a_{i}},i\right)\in\left[0,N\right]\times[p]\mid i\in[p]\text{ and }1\leq b\leq a_{i}\right\} \,\,\text{.}
\]
It is helpful to think of an element $(z,i)\in F$ as a flag of color
$i$, located at $z\in\left[0,N\right]$. So, for every $i\in\left[p\right]$,
we have $a_{i}$ flags of color $i$, positioned evenly from $\frac{N}{a_{i}}$
to $N$. Let $\left(z_{1},i_{1}\right),\dotsc,\left(z_{N},i_{N}\right)$
be a sequence, consisting of all elements of $F$, ordered so that
$z_{1}\leq z_{2}\leq\dots\leq z_{N}$. For $1\leq j\leq N$, we set
$x_{j}=\bar{x}_{i_{j}}$. Note that $y=\sum_{j=1}^{N}x_{j}$,
so we only need to show that Equation (\ref{eq:Bader_oferLemCondition})
holds.

First, we would like to show that
\begin{equation}
\forall j\in\left[N\right]\,\,\,\,\,\,\,\,z_{j}\geq j\,\,\text{.}\label{eq:BaderOferj<z_j}
\end{equation}
For $z\in[0,N]$, let $w_{z}=\left|\left\{ j\in[N]\mid z_{j}\le z\right\} \right|$,
i.e., the number of flags up to location $z$. We claim that $w_{z}\le z$.
Indeed, $\left|\left\{ j\in[N]\mid z_{j}\le z\text{ and }i_{j}=i\right\} \right|=\left\lfloor z\cdot\frac{a_{i}}{N}\right\rfloor $
for $i\in[p]$. Summing over $i$ yields 
\[
w_{z}=\sum_{i=1}^{p}\left\lfloor z\cdot\frac{a_{i}}{N}\right\rfloor \le\sum_{i=1}^{p}z\cdot\frac{a_{i}}{N}=z.
\]
In particular, taking $z=z_{j}$, we have $w_{z_{j}}\leq z_{j}$,
but clearly $w_{z_{j}}\geq j$, and so Inequality (\ref{eq:BaderOferj<z_j})
follows.

For $i\in\left[p\right]$ and $l\in [N]$, define $b_{i,l}=\left|\left\{ 1\le j\le l\mid i_{j}=i\right\} \right|$,
that is, the number of $i$-colored flags among the first $l$ flags. Let $k\in[N]$. We seek to bound the $L^{1}$-distance $\Delta$ between the vectors
\begin{equation}
\left(b_{i,k}\right)_{i=1}^{p}\in\mathbb{R}^{p}\,\,\,\,\text{and}\,\,\,\,\left(\frac{k}{N}\cdot a_{i}\right)_{i=1}^{p}\in\mathbb{R}^{p}\,\,\text{.}\label{eq:ofer-bader-vectors}
\end{equation}
Let $i\in\left[p\right]$. We would like to show that 
\begin{equation}
b_{i,k}\geq\frac{k}{N}\cdot a_{i}-1\,\,\text{.}\label{eq:BaderOferb_i,k}
\end{equation}
This clearly holds if $k<\frac{N}{a_{i}}$. Assume that $k\ge\frac{N}{a_{i}}$
and define $k'=\left\lfloor \frac{k}{N}\cdot a_{i}\right\rfloor $.
Let $j$ be minimal such that $b_{i,j}=k'$. In other words, the $k'$-th
$i$-colored flag is $j$-th among all flags. Note that $z_{j}=\frac{N}{a_{i}}\cdot k'$.
By Inequality (\ref{eq:BaderOferj<z_j}), $j\le z_{j}=\frac{N}{a_{i}}\cdot k'\le k$.
Equation (\ref{eq:BaderOferb_i,k}) follows as 
\[
b_{i,k}\ge b_{i,j}=k'\ge\frac{k}{N}\cdot a_{i}-1\,\,\text{.}
\]

Since the two vectors in (\ref{eq:ofer-bader-vectors}) have the same
sum $k$, the distance between them is
\[
\Delta=\sum_{i=1}^{p}\left|\frac{k}{N}\cdot a_{i}-b_{i,k}\right|=2\cdot\sum_{i=1}^{p}\max\left\{ 0,\frac{k}{N}\cdot a_{i}-b_{i,k}\right\} \,\,\text{,}
\]
so $\Delta\le2p$ due to Equation (\ref{eq:BaderOferb_i,k}). Now,
\begin{align*}
\left\Vert \sum_{j=1}^{k}x_{j}-\frac{k}{N}\cdot y\right\Vert _{\infty} & =\left\Vert \sum_{i=1}^{p}b_{i,k}\cdot x_{i}-\frac{k}{N}\cdot\sum_{i=1}^{p}a_{i}\cdot x_{i}\right\Vert _{\infty}=\left\Vert \sum_{i=1}^{p}\left(b_{i,k}-\frac{k}{N}\cdot a_{i}\right)\cdot x_{i}\right\Vert _{\infty}\\
 & \le\sum_{i=1}^{p}\left|b_{i,k}-\frac{k}{N}\cdot a_{i}\right|\cdot R\\
 & =\Delta\cdot R\le2p\cdot R\le2\cdot|{\mathcal{D}}|\cdot R\,\,\text{.}
\end{align*}
\end{proof}
\begin{lem}
\label{lem:bader-ofer-cor} Let ${\mathcal{D}}\subseteq B_{\ZZ^{m}}^{L^{\infty}}(R)$,
$R\in\NN$, and $y\in\left\langle {\mathcal{D}}\right\rangle $.
Then, $y\in\left[{\mathcal{D}}\right]_{2\cdot\left|{\mathcal{D}}\right|\cdot R+\|y\|_{\infty}}$.
\end{lem}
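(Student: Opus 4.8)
The plan is to feed the given $y$ into Lemma~\ref{lem:bader-ofer} and read off a generation-sequence directly from the partial sums it produces, with $R' = 2\cdot|\calD|\cdot R + \|y\|_{\infty}$ playing the role of the radius.

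If $y = 0_{\ZZ^{m}}$, then $y \in [\calD]_{R'}$ immediately by condition (I) of Definition~\ref{def:bounded-addition}, so I may assume $y \neq 0$. Applying Lemma~\ref{lem:bader-ofer} yields a sequence $x_{1},\dotsc,x_{N}$ of elements of $\calD^{\pm}$, with $N \geq 1$, such that $y = \sum_{j=1}^{N} x_{j}$ and $\left\| \sum_{j=1}^{k} x_{j} - \tfrac{k}{N}\cdot y \right\|_{\infty} \leq 2\cdot|\calD|\cdot R$ for every $0 \leq k \leq N$. Writing $s_{k} = \sum_{j=1}^{k} x_{j}$, the triangle inequality together with $0 \le \tfrac{k}{N} \le 1$ gives
\[
\|s_{k}\|_{\infty} \;\leq\; \left\| \tfrac{k}{N}\cdot y \right\|_{\infty} + 2\cdot|\calD|\cdot R \;\leq\; \|y\|_{\infty} + 2\cdot|\calD|\cdot R \;=\; R'
\]
for all $0 \le k \le N$. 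In particular every $s_{k}$ lies in $B_{\ZZ^{m}}^{L^{\infty}}(R')$, and each $x_{j} \in \calD^{\pm} \subseteq B_{\ZZ^{m}}^{L^{\infty}}(R) \subseteq B_{\ZZ^{m}}^{L^{\infty}}(R')$, so in particular $[\calD]_{R'}$ is well-defined.

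With these bounds in place I would exhibit the required $(\calD, R')$-generation-sequence explicitly: first list the distinct elements of $\calD$ occurring among $x_{1},\dotsc,x_{N}$ (each of type (a) in Lemma~\ref{lem:BoundedGenerationFacts}\ref{enu:BoundedGenerationSequence}), then list the needed negations (type (b)), and finally list $s_{1} = x_{1}, s_{2} = s_{1}+x_{2}, \dotsc, s_{N} = s_{N-1}+x_{N}$, where for $k \geq 2$ the vector $s_{k}$ is the sum of the two earlier terms $s_{k-1}$ and $x_{k}$ (type (c)), and $s_{1} = x_{1}$ has already been listed. Every term of this sequence lies in $B_{\ZZ^{m}}^{L^{\infty}}(R')$ by the previous paragraph, so Lemma~\ref{lem:BoundedGenerationFacts}\ref{enu:BoundedGenerationSequence} applies and gives $y = s_{N} \in [\calD]_{R'}$, which is the claim.

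There is no real obstacle here beyond bookkeeping: the content lies entirely in Lemma~\ref{lem:bader-ofer}, and the only thing that needs checking is that no intermediate vector escapes the $L^{\infty}$-ball of radius $R'$ — which is exactly what the estimate $\tfrac{k}{N} \le 1$, combined with the bound furnished by Lemma~\ref{lem:bader-ofer}, guarantees.
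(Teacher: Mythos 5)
Your proof is correct and takes essentially the same approach as the paper: both apply Lemma \ref{lem:bader-ofer}, use the triangle inequality with $0\le\tfrac{k}{N}\le1$ to bound every partial sum by $R'=2|\calD|R+\|y\|_{\infty}$, and then observe that the partial sums themselves form (the essential part of) a $(\calD,R')$-generation-sequence. You spell out the generation sequence a bit more explicitly than the paper, which simply invokes induction on $k$, but the content is identical.
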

\begin{proof}
By Lemma \ref{lem:bader-ofer}, there is a sequence $x_{1},\dotsc,x_{m}$
of elements of ${\mathcal{D}}^{\pm}$, such that $y=\sum_{j=1}^{m}x_{j}$,
and for every $0\leq k\leq m$, we have $\|\sum_{j=1}^{k}x_{j}-\frac{k}{m}\cdot y\|_{\infty}\leq2\cdot\left|{\mathcal{D}}\right|\cdot R$.
Consequently, $\|\sum_{j=1}^{k}x_{j}\|_{\infty}\leq2\cdot\left|{\mathcal{D}}\right|\cdot R+\|y\|_{\infty}$,
so by induction on $k$, each of the partial sums $\sum_{j=1}^{k}x_{j}$
belongs to $\left[{\mathcal{D}}\right]_{2\cdot\left|{\mathcal{D}}\right|\cdot R+\|y\|_{\infty}}$.
In particular, $k=m$ yields the claim.
\end{proof}
Proposition \ref{prop:KZ-reduced-basis-corollary} says that a lattice
in $\ZZ^{m}$ which is generated by short elements has a short basis.
The following lemma is an analogue statement in the context of bounded addition.
\begin{lem}
\label{lem:bader-ofer-basis}Let ${\mathcal{D}}\subseteq B_{\ZZ^{m}}^{L^{\infty}}\left(R\right)$,
$R\in\NN$. The lattice $\langle{\mathcal{D}}\rangle$ admits a basis contained
in $\left[{\mathcal{D}}\right]_{5m^{2}\cdot R}\cap B_{\ZZ^{m}}^{L^{\infty}}\left(m\cdot R\right)$.
\end{lem}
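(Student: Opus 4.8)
The plan is to build the basis iteratively, processing the (finitely many) elements of $\calD\subseteq B_{\ZZ^{m}}^{L^{\infty}}(R)$ one at a time. Enumerate $\calD=\{u^{(1)},\dots,u^{(n)}\}$ and set $L_{i}=\langle u^{(1)},\dots,u^{(i)}\rangle$, so $L_{0}=\{0\}$ and $L_{n}=\langle\calD\rangle$. I will maintain, for each $0\le i\le n$, a basis $\calE_{i}$ of $L_{i}$ satisfying the two conditions $\calE_{i}\subseteq B_{\ZZ^{m}}^{L^{\infty}}(m\cdot R)$ and $\calE_{i}\subseteq[\calD]_{5m^{2}R}$; taking $i=n$ then gives exactly the statement. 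The reason for the iteration is to sidestep the factor $|\calD|$ produced by applying Lemma \ref{lem:bader-ofer-cor} directly to $\calD$ (which is useless, as $|\calD|$ is unbounded): at the $i$-th step I will invoke Lemma \ref{lem:bader-ofer-cor} only on the set $\calE_{i-1}\cup\{u^{(i)}\}$, which has at most $m+1$ elements.

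For the inductive step I would pass from $\calE_{i-1}$ to $\calE_{i}$ as follows. First, since $L_{i}$ is generated by $u^{(1)},\dots,u^{(i)}$, each of $L^{\infty}$-norm $\le R$ and hence of $L^{2}$-norm $\le\sqrt{m}\,R$, its successive minima obey $\lambda_{\ell}(L_{i})\le\sqrt{m}\,R$ for all $\ell\le k:=\rank L_{i}\le m$. Take $\calE_{i}=\{b_{1},\dots,b_{k}\}$ to be a Korkin-Zolotarev reduced basis of $L_{i}$; Proposition \ref{prop:Lenstra} gives $\|b_{\ell}\|_{2}\le\frac{1}{2}\sqrt{\ell+3}\,\lambda_{\ell}(L_{i})\le\frac{1}{2}\sqrt{k+3}\,\sqrt{m}\,R\le\frac{1}{2}\sqrt{4m}\,\sqrt{m}\,R=m\cdot R$, using $k+3\le 4m$ (valid for $m\ge1$), so $\calE_{i}\subseteq B_{\ZZ^{m}}^{L^{\infty}}(mR)$. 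For the second condition, each $b_{\ell}$ lies in $L_{i}=\langle\calE_{i-1}\cup\{u^{(i)}\}\rangle$, the set $\calE_{i-1}\cup\{u^{(i)}\}$ is contained in $B_{\ZZ^{m}}^{L^{\infty}}(mR)$ (by the invariant for $i-1$ and $\|u^{(i)}\|_{\infty}\le R\le mR$) and has at most $m+1$ elements, so Lemma \ref{lem:bader-ofer-cor}, with the role of $R$ played by $mR$, yields $b_{\ell}\in[\calE_{i-1}\cup\{u^{(i)}\}]_{2(m+1)mR+\|b_{\ell}\|_{\infty}}\subseteq[\calE_{i-1}\cup\{u^{(i)}\}]_{(2m^{2}+3m)R}\subseteq[\calE_{i-1}\cup\{u^{(i)}\}]_{5m^{2}R}$, where I used $\|b_{\ell}\|_{\infty}\le mR$, the evident monotonicity of $[\,\cdot\,]_{(\cdot)}$ in the radius (a $(\calD',a)$-generation-sequence is also a $(\calD',b)$-generation-sequence for $a\le b$, by Lemma \ref{lem:BoundedGenerationFacts}\ref{enu:BoundedGenerationSequence}), and $2m^{2}+3m\le 5m^{2}$ for $m\ge1$. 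Since $\calE_{i-1}\subseteq[\calD]_{5m^{2}R}$ and $u^{(i)}\in\calD\subseteq[\calD]_{5m^{2}R}$, we get $\calE_{i-1}\cup\{u^{(i)}\}\subseteq[\calD]_{5m^{2}R}$, so Lemma \ref{lem:BoundedGenerationFacts}\ref{enu:BoundedGenerationClosed} gives $[\calE_{i-1}\cup\{u^{(i)}\}]_{5m^{2}R}\subseteq[\calD]_{5m^{2}R}$, hence $b_{\ell}\in[\calD]_{5m^{2}R}$. The base case $\calE_{0}=\emptyset$ is trivial.

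The step I expect to be the crux is the stability of this feedback loop, namely that reinjecting the reduced basis $\calE_{i-1}$ into the next step does not let norms grow. This works precisely because the ``lattice so far'' $L_{i}$ is, throughout the whole process, generated by the \emph{original} vectors $u^{(j)}$ of $L^{\infty}$-norm $\le R$, so $\lambda_{\ell}(L_{i})\le\sqrt{m}\,R$ holds uniformly in $i$ and the reduced basis $\calE_{i}$ never exceeds norm $mR$; combined with the fact that each call to Lemma \ref{lem:bader-ofer-cor} sees only $\le m+1$ generators rather than all of $\calD$, this pins the bounded-addition radius at $5m^{2}R$ for every step. The numerics leave a little room: one only needs $\frac{1}{2}\sqrt{k+3}\cdot\sqrt{m}\le m$ for $k\le m$ and $2(m+1)m+m\le 5m^{2}$, both of which hold for all $m\ge1$.
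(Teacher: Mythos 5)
Your proof is correct and follows the same inductive strategy as the paper's: process the elements of $\calD$ one at a time, maintain a Korkin-Zolotarev reduced basis $\calE_i$ of $L^{\infty}$-norm at most $mR$, and apply Lemma \ref{lem:bader-ofer-cor} only to the small generating set $\calE_{i-1}\cup\{u^{(i)}\}$ of size $\le m+1$ followed by Lemma \ref{lem:BoundedGenerationFacts}\ref{enu:BoundedGenerationClosed}. The only (minor, and arguably cleaner) deviation is that you invoke Proposition \ref{prop:Lenstra} directly through the successive-minima bound $\lambda_{\ell}(L_i)\le\sqrt{m}R$, whereas the paper routes the same estimate through Proposition \ref{prop:KZ-reduced-basis-corollary}; both yield $\|b_{\ell}\|_{\infty}\le mR$ and the rest of the argument is identical.
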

\begin{proof}
Write ${\mathcal{D}}=\left\{ x_{1},\dotsc,x_{p}\right\} $. For $0\leq i\leq p$,
let ${\mathcal{D}}_{i}=\left\{ x_{1},\dotsc,x_{i}\right\} $. We prove
by induction on $i$ that $\left\langle {\mathcal{D}}_{i}\right\rangle $
has a basis ${\mathcal{E}}_{i}$ contained in $\left[{\mathcal{D}}\right]_{5m^{2}\cdot R}\cap B_{\ZZ^{m}}^{L^{\infty}}\left(m\cdot R\right)$.
Taking $ {\mathcal{E}}_{0}=\emptyset$, the base case
$i=0$ is immediate. Assume that $i\ge1$ and that $\left\langle {\mathcal{D}}_{i-1}\right\rangle $
has a basis ${\mathcal{E}}_{i-1}$ contained in $\left[{\mathcal{D}}\right]_{5m^{2}\cdot R}\cap B_{\ZZ^{m}}^{L^{\infty}}\left(m\cdot R\right)$.
We consider two generating sets for $\left\langle {\mathcal{D}}_{i}\right\rangle $:
\begin{enumerate}[label=(\Roman*)]
\item The set ${\mathcal{D}}_{i}$, which is contained in $B_{\ZZ^{m}}^{L^{\infty}}\left(R\right)$.
\item The set ${\mathcal{E}}_{i-1}\cup\left\{ x_{i}\right\} $, which is contained
in $B_{\ZZ^{m}}^{L^{\infty}}\left(m\cdot R\right)$, and has at most
$m+1$ elements.
\end{enumerate}
By virtue of the former generating set and by Proposition \ref{prop:KZ-reduced-basis-corollary},
$\left\langle {\mathcal{D}}_{i}\right\rangle $ has a basis ${\mathcal{E}}_{i}\subseteq B_{\ZZ^{m}}\left(m\cdot R\right)\subseteq B_{\ZZ^{m}}^{L^{\infty}}\left(m\cdot R\right)$.
It is now enough to show that ${\mathcal{E}}_{i}\subseteq\left[{\mathcal{D}}\right]_{5m^{2}\cdot R}$.
Let $y\in{\mathcal{E}}_{i}$. Then, $\|y\|_{\infty}\leq m\cdot R$, and
since $y\in\calE_{i}\subseteq\langle\calD_{i}\rangle$, we have $y\in\langle\calE_{i-1}\cup\left\{ x_{i}\right\} \rangle$.
As $\langle\calE_{i}\cup\left\{ x_{i}\right\} \rangle\subseteq B_{\ZZ^{m}}^{L^{\infty}}\left(m\cdot R\right)$
and $\left|\calE_{i-1}\cup\left\{ x_{i}\right\} \right|\leq m+1$,
Lemma \ref{lem:bader-ofer-cor} implies that
\[
y\in\left[\calE_{i-1}\cup\left\{ x_{i}\right\} \right]_{2\left(m+1\right)\cdot mR+mR}\subseteq\left[\calE_{i-1}\cup\left\{ x_{i}\right\} \right]_{5m^{2}\cdot R}\,\,\text{.}
\]
As ${\mathcal{E}}_{i-1}\subseteq\left[{\mathcal{D}}\right]_{5m^{2}\cdot R}$
by the induction hypothesis, and since $\left\{ x_{i}\right\} \subseteq\left[\calD\right]_{5m^{2}\cdot R}$,
it follows from Lemma \ref{lem:BoundedGenerationFacts}\ref{enu:BoundedGenerationClosed} that $y\in\left[{\mathcal{D}}\right]_{5m^{2}\cdot R}$. 
\end{proof}
\begin{lem}
\label{lem:effective-lattice-generation}Let ${\mathcal{D}}\subseteq B_{\ZZ^{m}}^{L^{\infty}}\left(R\right)$,
$R\in\NN$. Then, $\langle{\mathcal{D}}\rangle\cap B_{\ZZ^{m}}^{L^{\infty}}\left(R\right)=\left[{\mathcal{D}}\right]_{5m^{2}\cdot R}\cap B_{\ZZ^{m}}^{L^{\infty}}\left(R\right)$.
\end{lem}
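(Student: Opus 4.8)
The plan is to obtain this as a bookkeeping corollary of Lemmas \ref{lem:bader-ofer-cor} and \ref{lem:bader-ofer-basis}, after first recording the (immediate) monotonicity of $\left[\cdot\right]_{R}$ in its subscript. For the monotonicity: if $R_{1}\le R_{2}$, then $B_{\ZZ^{m}}^{L^{\infty}}(R_{1})\subseteq B_{\ZZ^{m}}^{L^{\infty}}(R_{2})$, so every $(\calD,R_{1})$-generation-sequence is also a $(\calD,R_{2})$-generation-sequence, and hence $\left[\calD\right]_{R_{1}}\subseteq\left[\calD\right]_{R_{2}}$ by Lemma \ref{lem:BoundedGenerationFacts}\ref{enu:BoundedGenerationSequence}.

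The inclusion $\left[\calD\right]_{5m^{2}\cdot R}\cap B_{\ZZ^{m}}^{L^{\infty}}(R)\subseteq\langle\calD\rangle\cap B_{\ZZ^{m}}^{L^{\infty}}(R)$ is the easy direction: every element of a $(\calD,R')$-generation-sequence lies in $\langle\calD\rangle$, since it is obtained from elements of $\calD$ by negations and additions, so $\left[\calD\right]_{5m^{2}\cdot R}\subseteq\langle\calD\rangle$, and one intersects with $B_{\ZZ^{m}}^{L^{\infty}}(R)$. For the reverse inclusion, I would take $y\in\langle\calD\rangle$ with $\|y\|_{\infty}\le R$ and argue as follows. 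By Lemma \ref{lem:bader-ofer-basis}, the lattice $\langle\calD\rangle$ has a basis $\calE$ contained in $\left[\calD\right]_{5m^{2}\cdot R}\cap B_{\ZZ^{m}}^{L^{\infty}}(m\cdot R)$; in particular $\left|\calE\right|\le m$. Since $y\in\langle\calD\rangle=\langle\calE\rangle$ and $\calE\subseteq B_{\ZZ^{m}}^{L^{\infty}}(mR)$, Lemma \ref{lem:bader-ofer-cor}, applied to $\calE$ with the radius parameter $mR$, gives $y\in\left[\calE\right]_{2\left|\calE\right|\cdot mR+\|y\|_{\infty}}$. Using $\left|\calE\right|\le m$, $\|y\|_{\infty}\le R$ and $m\ge1$, the subscript satisfies $2\left|\calE\right|\cdot mR+\|y\|_{\infty}\le 2m^{2}R+R\le3m^{2}R\le5m^{2}R$, so by monotonicity $y\in\left[\calE\right]_{5m^{2}\cdot R}$. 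Finally, since $\calE\subseteq\left[\calD\right]_{5m^{2}\cdot R}$, Lemma \ref{lem:BoundedGenerationFacts}\ref{enu:BoundedGenerationClosed} yields $\left[\calE\right]_{5m^{2}\cdot R}\subseteq\left[\calD\right]_{5m^{2}\cdot R}$, hence $y\in\left[\calD\right]_{5m^{2}\cdot R}\cap B_{\ZZ^{m}}^{L^{\infty}}(R)$, as desired.

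I do not expect a genuine obstacle here: the content is entirely in the two preceding lemmas. The only points requiring a moment's attention are the monotonicity of $\left[\cdot\right]_{R}$ in $R$ (immediate from the generation-sequence characterization of Lemma \ref{lem:BoundedGenerationFacts}\ref{enu:BoundedGenerationSequence}) and verifying that the bound $2\left|\calE\right|\cdot mR+\|y\|_{\infty}$ produced by Lemma \ref{lem:bader-ofer-cor} is indeed $\le5m^{2}R$, which is where the constant $5$ is calibrated.
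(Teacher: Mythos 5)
Your proof is correct and follows essentially the same route as the paper's: apply Lemma \ref{lem:bader-ofer-basis} to obtain a basis $\calE\subseteq\left[\calD\right]_{5m^{2}R}\cap B_{\ZZ^{m}}^{L^{\infty}}(mR)$ for $\langle\calD\rangle$, feed $\calE$ into Lemma \ref{lem:bader-ofer-cor}, and finish with Lemma \ref{lem:BoundedGenerationFacts}(ii). Your writeup is merely a bit more explicit than the paper's, which leaves the easy direction and the monotonicity of $\left[\cdot\right]_{R}$ in $R$ unstated.
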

\begin{proof}
Given $x\in\langle{\mathcal{D}}\rangle\cap B_{\ZZ^{m}}^{L^{\infty}}\left(R\right)$,
we need to show that $x\in\left[{\mathcal{D}}\right]_{5m^{2}\cdot R}$. Lemma
\ref{lem:bader-ofer-basis} yields a basis ${\mathcal{E}}\subseteq\left[{\mathcal{D}}\right]_{5m^{2}\cdot R}\cap B_{\ZZ^{m}}^{L^{\infty}}\left(m\cdot R\right)$
for $\left\langle {\mathcal{D}}\right\rangle $. Since $x\in \langle \mathcal{E} \rangle$, Lemma \ref{lem:bader-ofer-cor} implies that
$x\in\left[{\mathcal{E}}\right]_{2\left|{\mathcal{E}}\right|\cdot mR+R}\subseteq\left[{\mathcal{E}}\right]_{5m^{2}\cdot R}$.
Lemma \ref{lem:BoundedGenerationFacts}\ref{enu:BoundedGenerationClosed} yields the claim, since
${\mathcal{E}}\subseteq\left[{\mathcal{D}}\right]_{5m^{2}\cdot R}$.
\end{proof}
At this point, we have established the required groundwork concerning
bounded addition. Before proving our main proposition about Tool A,
we also need the following lemma. It states that, within $X_{E}$,
an $\FF_{m}$-set $X$ behaves in a certain sense like a $\ZZ^{m}$-set.
The reader should recall, from Section \ref{subsec:GeometricDefinitions},
the definition of a \emph{sorted word} and the notation $\hat{w}$
for a given $w\in\FF_{m}$.
\begin{lem}
\label{lem:good-box-canonical-action}Let $X$ be an $\FF_{m}$-set,
$x\in X$ and $R\in\NN$. Assume that $\Box_{\FF_{m}}\left(R\right)\cdot x\subseteq X_{E}$.
Then, $w\cdot x=\hat{w}\cdot x$ for every $w\in B_{\FF_{m}}\left(R\right)$.
\end{lem}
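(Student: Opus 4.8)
The plan is to reduce the statement to a one-step ``sorting'' assertion and then prove that assertion by an explicit induction. First I would note that $x\in X_E$: since $0\in B^{L^\infty}_{\ZZ^m}(R)$ we have $\widehat 0=1_{\FF_m}\in\Box_{\FF_m}(R)$, so $x=\widehat 0\cdot x\in X_E$. The core claim, call it $(\dagger)$, is: for every $v\in\ZZ^m$ with $\|v\|_\infty\le R$, every $k\in[m]$, and every $\epsilon\in\{+1,-1\}$, one has $\hat e_k^{\epsilon}\cdot(\hat v\cdot x)=\widehat{v+\epsilon e_k}\cdot x$. Granting $(\dagger)$, the lemma follows by induction on $|w|$: if $w$ is reduced of length $t\ge1$, write $w=\hat e_k^{\epsilon}\cdot w''$ with $w''$ reduced of length $t-1$; the inductive hypothesis gives $w''\cdot x=\widehat{\pi(w'')}\cdot x$, and since $\|\pi(w'')\|_\infty\le\|\pi(w'')\|_1\le t-1\le R$, applying $(\dagger)$ with $v=\pi(w'')$ yields $w\cdot x=\hat e_k^{\epsilon}\cdot(\widehat{\pi(w'')}\cdot x)=\widehat{\pi(w'')+\epsilon e_k}\cdot x=\widehat{\pi(w)}\cdot x=\hat w\cdot x$.

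To prove $(\dagger)$ I would write $\hat v=\hat e_1^{a_1}\cdots\hat e_m^{a_m}$ (so $v=\sum a_ie_i$) and induct on $N:=\sum_{i<k}|a_i|$, the ``distance'' the leading letter $\hat e_k^{\epsilon}$ must travel to reach slot $k$. If $N=0$ then coordinates $1,\dots,k-1$ of $v$ vanish, so $\hat e_k^{\epsilon}\hat v=\hat e_k^{a_k+\epsilon}\hat e_{k+1}^{a_{k+1}}\cdots\hat e_m^{a_m}$ is already the sorted form of $v+\epsilon e_k$, and there is nothing to prove. If $N\ge1$, let $j<k$ be the least index with $a_j\ne0$, let $\eta\in\{+1,-1\}$ be the sign of $a_j$, and set $v'=v-\eta e_j$; since coordinates $1,\dots,j-1$ of $v$ vanish we have $\hat v=\hat e_j^{\eta}\cdot\widehat{v'}$ in $\FF_m$, and $\|v'\|_\infty\le\|v\|_\infty\le R$, so $\widehat{v'}\cdot x\in\Box_{\FF_m}(R)\cdot x\subseteq X_E$. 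Now I would invoke the group identity $\hat e_k^{\epsilon}\hat e_j^{\eta}=\hat e_j^{\eta}\hat e_k^{\epsilon}\,[\hat e_k^{-\epsilon},\hat e_j^{-\eta}]$; because $[\hat e_k^{-\epsilon},\hat e_j^{-\eta}]\in E_0\subseteq E$ fixes the point $\widehat{v'}\cdot x\in X_E$, this gives $\hat e_k^{\epsilon}\cdot(\hat v\cdot x)=\hat e_j^{\eta}\hat e_k^{\epsilon}\cdot(\widehat{v'}\cdot x)$. Applying the inductive hypothesis to $v'$ (which has $\sum_{i<k}|v'_i|=N-1$) turns this into $\hat e_j^{\eta}\cdot(\widehat{v'+\epsilon e_k}\cdot x)$, and since coordinates $1,\dots,j-1$ of $v'+\epsilon e_k$ still vanish, $\hat e_j^{\eta}\cdot\widehat{v'+\epsilon e_k}=\widehat{v'+\epsilon e_k+\eta e_j}=\widehat{v+\epsilon e_k}$ in $\FF_m$, completing the induction.

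The only real content, and the step I expect to be delicate, is the bookkeeping that keeps the commutator relations available: the swap identity $\hat e_k^{\epsilon}\hat e_j^{\eta}=\hat e_j^{\eta}\hat e_k^{\epsilon}[\hat e_k^{-\epsilon},\hat e_j^{-\eta}]$ can only be used at points of $X_E$, and one must check that every point at which it is invoked is of the form $\widehat{v'}\cdot x$ with $\|v'\|_\infty\le\|v\|_\infty\le R$. This is exactly what the chosen inductive structure guarantees: we always peel off the leftmost generator of a \emph{sorted} word whose exponent vector is coordinate-wise bounded in absolute value by that of $v$, so no intermediate exponent vector ever leaves $B^{L^\infty}_{\ZZ^m}(R)$. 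This is precisely the place where using $\Box_{\FF_m}(R)$ (an $L^\infty$-ball of exponent vectors) rather than a word-metric ball is essential — a word-metric neighborhood is not closed under the partial sorts that arise here.
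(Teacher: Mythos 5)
Your proof is correct and is in essence the same bubble-sort argument as the paper's: both maintain the invariant that a commutator $[\hat e_k^{-\epsilon},\hat e_j^{-\eta}]\in E_0$ is only ever applied at a point of the form $\hat v'\cdot x$ with $\hat v'\in\Box_{\FF_m}(R)$, and both proceed by adjacent transpositions. The paper runs a single induction on the inversion count $\iota(w)$ (always swapping the last out-of-order adjacent pair, so the relevant suffix is automatically sorted), while you factor out the one-step claim $(\dagger)$ and run a double induction; this is a reorganization of the same argument rather than a different route. (One small quibble with your closing remark: for the lemma itself the $L^\infty$ form of the hypothesis is not essential, since every sorted word invoked in either proof has $L^1$-norm of its exponent vector at most $R$ and hence lies in $B_{\FF_m}(R)$; the $L^\infty$ condition is only forced by the over-general form in which you chose to state $(\dagger)$.)
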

\begin{proof}
First, we fix some notation: For a word $w\in\FF_{m}$, whose reduced
form is $w=\hat{e}_{i_{1}}^{\epsilon_{1}}\cdots\hat{e}_{i_{k}}^{\epsilon_{k}}$,
$i_{1},\dotsc,i_{k}\in\left[m\right]$ and $\epsilon_{1},\dotsc,\epsilon_{k}\in\left\{ +1,-1\right\} $,
write $\iota\left(w\right)$ for the number of \emph{inversions} in
$w$, namely, 
\[
\iota\left(w\right)=\left|\left\{ \left(j_{1},j_{2}\right)\mid1\leq j_{1}<j_{2}\leq k\text{ and }i_{j_{1}}>i_{j_{2}}\right\} \right|\,\,\text{.}
\]

Let $w\in B_{\FF_{m}}(R)$ and write $w=\hat{e}_{i_{1}}^{\epsilon_{1}}\cdots\hat{e}_{i_{k}}^{\epsilon_{k}}$
as above ($k\leq R$). If $\iota\left(w\right)=0$, then $w$ is sorted,
i.e., $w=\hat{w}$, and we are done. Otherwise, take the maximal $1\leq l\leq k-1$
for which $i_{l}>i_{l+1}$. Let $w_{l+1}$ denote the suffix $\hat{e}_{i_{l+1}}^{\epsilon_{l+1}}\cdots\hat{e}_{i_{k}}^{\epsilon_{k}}$
of $w$. Then $w_{l+1}$ is a sorted word by the definition of $l$,
and so $w_{l+1}\in\Box_{\FF_{m}}\left(R\right)$, implying that $w_{l+1}\cdot x\in X_{E}$.
Hence, $\left[\hat{e}_{i_{l}}^{-\epsilon_{l}},\hat{e}_{i_{l+1}}^{\epsilon_{l+1}}\right]\cdot w_{l+1}\cdot x=w_{l+1}\cdot x$.
Define $w'\in\FF_{m}$ by $w'=\hat{e}_{i_{1}}^{\epsilon_{1}}\cdots\hat{e}_{i_{l}}^{\epsilon_{l}}\cdot\left[\hat{e}_{i_{l}}^{-\epsilon_{l}},\hat{e}_{i_{l+1}}^{\epsilon_{l+1}}\right]\cdot w_{l+1}=\hat{e}_{i_{1}}^{\epsilon_{1}}\cdots\hat{e}_{i_{l-1}}^{\epsilon_{l-1}}\cdot\hat{e}_{i_{l+1}}^{\epsilon_{l+1}}\cdot\hat{e}_{i_{l}}^{\epsilon_{l}}\cdot\hat{e}_{i_{l+2}}^{\epsilon_{l+2}}\cdots\hat{e}_{k}^{\epsilon_{k}}$.
Then, $\widehat{w'}=\hat{w}$, $\iota\left(w'\right)<\iota\left(w\right)$
and 
\begin{align*}
w'\cdot x & =\hat{e}_{i_{1}}^{\epsilon_{1}}\cdots\hat{e}_{i_{l}}^{\epsilon_{l}}\cdot\left(\left[\hat{e}_{i_{l}}^{-\epsilon_{l}},\hat{e}_{i_{l+1}}^{\epsilon_{l+1}}\right]\cdot w_{l+1}\cdot x\right)\\
 & =\hat{e}_{i_{1}}^{\epsilon_{1}}\cdots\hat{e}_{i_{l}}^{\epsilon_{l}}\cdot\left(w_{l+1}\cdot x\right)\\
 & =w\cdot x\,\,\text{.}
\end{align*}
Therefore, the claim follows by induction.
\end{proof}
We turn to our main statement in this section, which yields Tool A.
\begin{prop}
\label{prop:toolA} Let $X$ be an $\FF_{m}$-set, $x_{0}\in X$ and
$r\in\NN$. Assume that $\Box_{\FF_{m}}\left(30m^{3}\cdot r\right)\cdot x_{0}\subseteq X_{E}$.
Let $H=\langle\pi\left(\Stab_{\FF_{m}}\left(x_{0}\right)\cap B_{\FF_{m}}\left(2r+1\right)\right)\rangle\leq\ZZ^{m}$
and define the pointed $\ZZ^{m}$-set $\left(U,u_{0}\right)=\left(\ZZ^{m}/H,0+H\right)$.
Then, there is a subgraph isomorphism $f_{A}:B_{U}\left(u_{0},r\right)\to B_{X}\left(x_{0},r\right)$
(given by $f_{A}=F_{B_{\FF_{m}}(r),u_{0},x_{0}}$).
\end{prop}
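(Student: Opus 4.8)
The plan is to deduce the statement from Lemma~\ref{lem:subgraph-isomorphism-balls} applied with $\Lambda=\FF_m$, regarding $U=\ZZ^m/H$ as an $\FF_m$-set via $\pi$, so that $\Stab_{\FF_m}(u_0)=\pi^{-1}(H)$. The whole task then reduces to verifying the hypothesis of that lemma, namely the stabilizer equality
\[
\Stab_{\FF_m}(x_0)\cap B_{\FF_m}(2r+1)=\Stab_{\FF_m}(u_0)\cap B_{\FF_m}(2r+1)\,\,.
\]
We may assume $r\ge1$. Write $\calD=\pi\bigl(\Stab_{\FF_m}(x_0)\cap B_{\FF_m}(2r+1)\bigr)$, so $H=\langle\calD\rangle$ and $\calD\subseteq B_{\ZZ^m}^{L^\infty}(2r+1)$ since $\|\pi(w)\|_\infty\le\|\pi(w)\|_1\le|w|$ for $w\in\FF_m$. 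One inclusion is immediate: if $w\in B_{\FF_m}(2r+1)$ fixes $x_0$ then $\pi(w)\in\calD\subseteq H$, so $w\in\Stab_{\FF_m}(u_0)$.

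The reverse inclusion is the heart of the matter. Fix $w\in B_{\FF_m}(2r+1)$ with $\pi(w)\in H$; we must show $w\cdot x_0=x_0$. Since $|w|\le2r+1\le30m^3\cdot r$, Lemma~\ref{lem:good-box-canonical-action} gives $w\cdot x_0=\hat{w}\cdot x_0=\widehat{\pi(w)}\cdot x_0$, so it suffices to prove $\widehat{\pi(w)}\cdot x_0=x_0$. Because $\|\pi(w)\|_\infty\le2r+1$ and $\pi(w)\in\langle\calD\rangle$, Lemma~\ref{lem:effective-lattice-generation} (with $R=2r+1$) yields $\pi(w)\in[\calD]_{5m^2(2r+1)}$. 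Now I would introduce the set
\[
\Sigma=\bigl\{v\in B_{\ZZ^m}^{L^\infty}\bigl(5m^2(2r+1)\bigr)\;\big|\;\hat{v}\cdot x_0=x_0\bigr\}\,\,,
\]
and show that $\Sigma$ satisfies conditions (I)--(III) of Definition~\ref{def:bounded-addition} (with $R=5m^2(2r+1)$) and contains $\calD\cup\{0\}$; by minimality of $[\calD]_{5m^2(2r+1)}$ this forces $[\calD]_{5m^2(2r+1)}\subseteq\Sigma$, whence $\widehat{\pi(w)}\cdot x_0=x_0$, completing the stabilizer equality. Lemma~\ref{lem:subgraph-isomorphism-balls} then delivers the subgraph isomorphism $f_A=F_{B_{\FF_m}(r),u_0,x_0}\colon B_U(u_0,r)\to B_X(x_0,r)$, as claimed.

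Each of (I)--(III) for $\Sigma$ follows from Lemma~\ref{lem:good-box-canonical-action}, the recurring point being that every auxiliary word that arises still has length at most $30m^3\cdot r$, hence lies in the box neighborhood $\Box_{\FF_m}(30m^3\cdot r)\cdot x_0\subseteq X_E$ on which the $\FF_m$-action agrees with its "sorted" version. Concretely: $0\in\Sigma$ since $\hat 0=1$; and $\calD\subseteq\Sigma$ because for $d=\pi(w')\in\calD$ one has $\hat d\cdot x_0=\widehat{\pi(w')}\cdot x_0=w'\cdot x_0=x_0$ via Lemma~\ref{lem:good-box-canonical-action} applied to $w'$ with $|w'|\le2r+1$. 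For (II), given $v\in\Sigma$ the word $\hat v^{-1}$ satisfies $\pi(\hat v^{-1})=-v$ and $|\hat v^{-1}|=\|v\|_1\le m\cdot5m^2(2r+1)\le30m^3\cdot r$, so $\hat v^{-1}\cdot x_0=\widehat{-v}\cdot x_0$; but $\hat v^{-1}\cdot x_0=\hat v^{-1}\cdot(\hat v\cdot x_0)=x_0$, so $-v\in\Sigma$. For (III), given $v_1,v_2\in\Sigma$ with $\|v_1+v_2\|_\infty\le5m^2(2r+1)$, the word $\hat{v_1}\hat{v_2}$ satisfies $\pi(\hat{v_1}\hat{v_2})=v_1+v_2$ and $|\hat{v_1}\hat{v_2}|\le\|v_1\|_1+\|v_2\|_1\le10m^3(2r+1)\le30m^3\cdot r$ (this is exactly where $r\ge1$ enters), so $\hat{v_1}\hat{v_2}\cdot x_0=\widehat{v_1+v_2}\cdot x_0$, while $\hat{v_1}\hat{v_2}\cdot x_0=\hat{v_1}\cdot(\hat{v_2}\cdot x_0)=\hat{v_1}\cdot x_0=x_0$; hence $v_1+v_2\in\Sigma$.

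The main obstacle — and essentially the only nontrivial point — is the quantitative bookkeeping of radii: the box side-length $30m^3\cdot r$ must be chosen large enough to absorb all intermediate sorted words, and these are controlled by the factor $5m^2$ coming out of Lemma~\ref{lem:effective-lattice-generation} (bounded lattice generation) together with a further factor $m$ from converting $\|\cdot\|_\infty$ to $\|\cdot\|_1$, and then by the need (in condition (III)) to add two such words; the constant $30m^3$ is exactly what makes this fit while also exceeding $2r+1$.
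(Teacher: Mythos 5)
Your proof is correct and follows the same essential strategy as the paper: reduce to the stabilizer equality via Lemma~\ref{lem:subgraph-isomorphism-balls}, use Lemma~\ref{lem:effective-lattice-generation} to place $\pi(w)$ in $[\calD]_{5m^2(2r+1)}$, and propagate the property ``$\hat v\cdot x_0=x_0$'' through the bounded-addition structure using Lemma~\ref{lem:good-box-canonical-action}, with the same $5m^2\cdot R$ and $m$-factor bookkeeping that makes the budget $30m^3 r$ tight. The only cosmetic difference is that the paper runs an explicit induction over a $(\calD,5m^2 R)$-generation-sequence, while you instead verify that your set $\Sigma$ satisfies conditions (I)--(III) of Definition~\ref{def:bounded-addition} and invoke minimality; these are two ways of writing the same induction.
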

\begin{proof}
Write $R=2r+1$, and note that 
\[
\Box_{\FF_{m}}(10m^{3}\cdot R)\cdot x_{0}\subseteq X_{E}\,\,\text{.}
\]
Recall the natural surjection $\pi:\FF_{m}\to\ZZ^{m}$, defined in
the beginning of Section \ref{sec:AbelianGroupsArePolynomiallyStable}.
Note that $\Stab_{\FF_{m}}(u_{0})=\pi^{-1}\left(H\right)$. Hence,
by Lemma \ref{lem:subgraph-isomorphism-balls}, it suffices to show
that
\begin{equation}
\Stab_{\FF_{m}}\left(x_{0}\right)\cap B_{\FF_{m}}\left(R\right)=\pi^{-1}\left(H\right)\cap B_{\FF_{m}}\left(R\right)\,\,\text{.}\label{eq:stabilizer-locally-factors-through-commutator}
\end{equation}
 The $\subseteq$ inclusion in Equation (\ref{eq:stabilizer-locally-factors-through-commutator})
is clear from the definition of $H$. We proceed to prove the $\supseteq$ inclusion. 

Define ${\mathcal{D}}=\pi\left(\Stab_{\FF_{m}}\left(x_{0}\right)\cap B_{\FF_{m}}\left(R\right)\right)$,
implying $H=\langle{\mathcal{D}}\rangle\leq\ZZ^{m}$. Note that 
\begin{equation}
{\mathcal{D}}\subseteq B_{\ZZ^{m}}^{L^{\infty}}\left(R\right)\,\text{.}\label{eq:D-in-what}
\end{equation}
Let $w\in\pi^{-1}\left(H\right)\cap B_{\FF_{m}}\left(R\right)$. It
suffices to prove that $w\cdot x_{0}=x_{0}$. Now, 
\begin{equation}
\pi\left(w\right)\in H\cap B_{\ZZ^{m}}^{L^{\infty}}\left(R\right)=\langle{\mathcal{D}}\rangle\cap B_{\ZZ^{m}}^{L^{\infty}}\left(R\right)\label{eq:pi(w)-in-what}\,\,\text{.}
\end{equation}
By Lemma \ref{lem:effective-lattice-generation}, Equations (\ref{eq:D-in-what})
and (\ref{eq:pi(w)-in-what}) imply that $\pi\left(w\right)\in\left[{\mathcal{D}}\right]_{5m^{2}\cdot R}$.
Let $v_{1},\dotsc,v_{k}\in B_{\ZZ^{m}}^{L^{\infty}}\left(5m^{2}\cdot R\right)$
be a $\left({\mathcal{D}},5m^{2}\cdot R\right)$-generation-sequence for
$\pi\left(w\right)$. In particular, $v_{k}=\pi\left(w\right)$. We
shall construct, inductively, a sequence of \emph{sorted} words $w_{1},\dotsc,w_{k}\in\FF_{m}$,
such that for each $1\leq i\leq k$, 
\begin{equation}
w_{i}\in B_{\FF_{m}}\left(5m^{3}\cdot R\right),\,\,\,\,\pi\left(w_{i}\right)=v_{i}\,\,\,\,\text{ and }\,\,\,w_{i}\cdot x_{0}=x_{0}\,\text{.}\label{eq:properties-of-w_i}
\end{equation}
Let $1\leq i\leq k$, and assume that a sequence of sorted words $w_{1},\dotsc,w_{i-1}\in\FF_{m}$,
satisfying (\ref{eq:properties-of-w_i}), has been constructed. We
define $w_{i}$ by considering three separate cases:\renewcommand{\labelenumi}{\Roman{enumi})}
\begin{enumerate}[label=(\Roman*)]
\item Assume that $v_{i}\in{\mathcal{D}}$. By the definition of ${\mathcal{D}}$,
there is $u\in\Stab_{\FF_{m}}\left(x_{0}\right)\cap B_{\FF_{m}}\left(R\right)$
satisfying $\pi\left(u\right)=v_{i}$. Since $\Box_{\FF_{m}}\left(R\right)\cdot x_{0}\subseteq X_{E}$,
Lemma \ref{lem:good-box-canonical-action} implies that $\hat{u}\cdot x_{0}=u\cdot x_{0}$.
Define $w_{i}=\hat{u}$. Then, $w_{i}\cdot x_{0}=x_{0}$ since $u\in\Stab_{\FF_{m}}\left(x_{0}\right)$,
and $w_{i}\in B_{\FF_{m}}\left(R\right)$ since $u\in B_{\FF_{m}}\left(R\right)$.
\item Otherwise, assume that there is $1\leq j<i$ for which $v_{i}=-v_{j}$.
By the induction hypothesis, $w_{j}\in B_{\FF_{m}}\left(5m^{3}\cdot R\right)$
and $w_{j}\cdot x_{0}=x_{0}$. So, the same holds for $w_{j}^{-1}$.
Since $\Box_{\FF_{m}}\left(5m^{3}\cdot R\right)\cdot x_{0}\subseteq X_{E}$,
Lemma \ref{lem:good-box-canonical-action} implies that $\widehat{w_{j}^{-1}}\cdot x_{0}=w_{j}^{-1}\cdot x_{0}=x_{0}$.
Define $w_{i}=\widehat{w_{j}^{-1}}$.
\item Otherwise, there must be integers $j_{1},j_{2}\in\left[i-1\right]$
such that $v_{i}=v_{j_{1}}+v_{j_{2}}$. Define $u=w_{j_{1}}\cdot w_{j_{2}}\in B_{\FF_{m}}\left(10m^{3}\cdot R\right)$.
Then, 
\[
\left\Vert \pi\left(u\right)\right\Vert _{1}=\left\Vert v_{j_{1}}+v_{j_{2}}\right\Vert _{1}=\left\Vert v_{i}\right\Vert _{1}\le m\cdot\left\Vert v_{i}\right\Vert _{\infty}\le5m^{3}\cdot R\,\,\text{.}
\]
This means that, although the length of $u\in\FF_{m}$ is merely bounded
by $10m^{3}\cdot R$, the length of the sorted word $\hat{u}\in\FF_{m}$
is bounded by $5m^{3}\cdot R$. Define $w_{i}=\hat{u}$. Since $\Box_{\FF_{m}}\left(10m^{3}\cdot R\right)\cdot x_{0}\subseteq X_{E}$,
Lemma \ref{lem:good-box-canonical-action} implies that, $w_{i}\cdot x_{0}=u\cdot x_{0}=w_{j_{1}}\cdot w_{j_{2}}\cdot x_{0}=x_{0}$.
\end{enumerate}
Finally, $\hat{w}=w_{k}$ since $w_{k}$ is sorted and $\pi\left(w_{k}\right)=v_{k}=\pi\left(w\right)$.
As $w\in B_{\FF_{m}}\left(R\right)$ and $\Box_{\FF_{m}}\left(R\right)\cdot x_{0}\subseteq X_{E}$,
Lemma \ref{lem:good-box-canonical-action} implies that $w\cdot x_{0}=\hat{w}\cdot x_{0}$.
Thus, $w\cdot x_{0}=w_{k}\cdot x_{0}=x_{0}$, as claimed.
\end{proof}

\subsection{The tiling algorithm - proof of the main theorem \label{subsec:TilingAlgorithm}}

We can now implement the algorithm outlined in Section \ref{subsec:ProofPlanAlgorithmDescription}.
The reader should recall the definitions and objects fixed in the
beginning of Section \ref{sec:AbelianGroupsArePolynomiallyStable}.
In particular we shall refer to the equation-set $E$, the constants
$C_{d}$ and $t_{E}$, the integers $\left\{ \beta_{i}\right\} _{i=d+1}^{m}$, the generators $\left\{ \hat{e}_{1},\dotsc,\hat{e}_{m}\right\} $
of $\FF_{m}$ and the set $\hat T\subseteq \FF_m$. Given an $\FF_{m}$-set $X$, we first discuss the
injection of a single tile into $X$ by means of Tools A, B and C. 
\begin{defn}
\label{def:AdmitsParallelotope}Let $X$ be an $\FF_{m}$-set, $x\in X$,
and $t\in\NN$.
\begin{enumerate}
\item We say that $x$ \emph{admits a $t$-parallelotope} if there is a
linearly independent set ${\mathcal{D}}\subseteq\ZZ^{m}$, where $K\le\left\langle {\mathcal{D}}\right\rangle$, such that the pointed subset $\left(P_{t}^{{\mathcal{D}}}\cdot u_{0},u_{0}\right)$
of $\left(U,u_{0}\right)=\left(\ZZ^{m}/\left\langle {\mathcal{D}}\right\rangle ,0+\left\langle {\mathcal{D}}\right\rangle \right)$
is subgraph isomorphic to $\left(\hat{P}_{t}^{{\mathcal{D}}}\cdot x,x\right)$,
and
\begin{equation}
\hat{P}_{t}^{\mathcal{D}} \subseteq B_{\FF_m}\left(C_{d}\cdot t\right)\,\,\text{.}\label{eq:tAdmitsParalelotopeInBall}
\end{equation}
More elaborately, we also say that\emph{ $x$ admits the $t$-parallelotope
$P_{t}^{\calD}$}.\emph{\label{enu:AdmitsParallelotope}}
\item Fix some arbitrary well-ordering $\prec$ on finite subsets of $\ZZ^{m}$,
e.g., let $\prec$ order $\ZZ^{m}$-subsets lexicographically with
regard to some well-ordering on $\ZZ^{m}$ itself. If $x$ admits
a $t$-parallelotope, we denote by ${\mathcal{D}}_{x,t}$ the $\prec$-minimal
linearly independent set ${\mathcal{D}}$ for which $x$ admits the $t$-parallelotope
$P_{t}^{\calD}$.
\end{enumerate}
\end{defn}
Due to Equation (\ref{eq:tAdmitsParalelotopeInBall}), the collection
of sets ${\mathcal{D}}$ satisfying condition $\ref{enu:AdmitsParallelotope}$ above depends only on the ball of radius $C_{d}\cdot t$
centered at $x$. Since we take $D_{x,t}$ to be the $\prec$-minimum
of this collection, the following simple fact follows.
\begin{lem}
\label{lem:AdmitSameParallelotope}Let $\left(X,x\right)$ and $\left(\tilde{X},\tilde{x}\right)$
be pointed $\FF_{m}$-sets. Let $t\in\NN$, and assume that $B_{X}\left(x,C_{d}\cdot t\right)$
and $B_{\tilde{X}}\left(\tilde{x},C_{d}\cdot t\right)$ are subgraph
isomorphic as pointed sets, and that $x$ admits a $t$-parallelotope.
Then, $\tilde{x}$ also admits a $t$-parallelotope and ${\mathcal{D}}_{x,t}={\mathcal{D}}_{\tilde{x},t}$.
\end{lem}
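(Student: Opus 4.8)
The plan is to deduce the lemma from a single claim: \emph{for every linearly independent ${\mathcal{D}}\subseteq\ZZ^{m}$ with $K\le\langle{\mathcal{D}}\rangle$, the point $x$ admits the $t$-parallelotope $P_{t}^{{\mathcal{D}}}$ if and only if $\tilde{x}$ does.} Since being ``subgraph isomorphic as pointed sets'' is a symmetric relation — the inverse of a subgraph isomorphism is again a subgraph isomorphism — it is enough to prove one direction of this biconditional, the reverse following by exchanging the roles of $(X,x)$ and $(\tilde{X},\tilde{x})$. Granting the claim, the collection of sets ${\mathcal{D}}$ for which $x$ admits the $t$-parallelotope $P_{t}^{{\mathcal{D}}}$ coincides with the corresponding collection for $\tilde{x}$; this collection is nonempty because $x$ admits a $t$-parallelotope, so $\tilde{x}$ admits a $t$-parallelotope, and, since ${\mathcal{D}}_{x,t}$ and ${\mathcal{D}}_{\tilde{x},t}$ are by definition the $\prec$-minimum of this common collection, they are equal.

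The first ingredient toward the claim is the elementary transport principle that a \emph{pointed} subgraph isomorphism $g\colon B_{X}\left(x,C_{d}\cdot t\right)\to B_{\tilde{X}}\left(\tilde{x},C_{d}\cdot t\right)$ satisfies $g\left(w\cdot x\right)=w\cdot\tilde{x}$ for every $w\in B_{\FF_{m}}\left(C_{d}\cdot t\right)$. To see this, write $w=w_{1}\cdots w_{k}$ in reduced form with $w_{i}\in S^{\pm}$ and $k\le C_{d}\cdot t$; every suffix $w_{i}\cdots w_{k}$ is a reduced word of length at most $k$, so the entire path $x,\ w_{k}\cdot x,\ w_{k-1}w_{k}\cdot x,\ \dotsc,\ w\cdot x$ stays inside $B_{X}\left(x,C_{d}\cdot t\right)$. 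Propagating the identity $g(x)=\tilde{x}$ along this path, one edge at a time, via edge-preservation of $g$, yields the principle by induction. The same statement holds for $g^{-1}$.

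Now suppose $x$ admits $P_{t}^{{\mathcal{D}}}$, witnessed by a pointed subgraph isomorphism $\phi\colon\left(P_{t}^{{\mathcal{D}}}\cdot u_{0},u_{0}\right)\to\left(\hat{P}_{t}^{{\mathcal{D}}}\cdot x,x\right)$ with $(U,u_{0})=\left(\ZZ^{m}/\langle{\mathcal{D}}\rangle,0+\langle{\mathcal{D}}\rangle\right)$ and with $\hat{P}_{t}^{{\mathcal{D}}}\subseteq B_{\FF_{m}}\left(C_{d}\cdot t\right)$ (Equation (\ref{eq:tAdmitsParalelotopeInBall})). The latter inclusion depends only on ${\mathcal{D}}$ and $t$, so it holds verbatim for $\tilde{x}$ as well, and it forces $\hat{P}_{t}^{{\mathcal{D}}}\cdot x\subseteq B_{X}\left(x,C_{d}\cdot t\right)$ and $\hat{P}_{t}^{{\mathcal{D}}}\cdot\tilde{x}\subseteq B_{\tilde{X}}\left(\tilde{x},C_{d}\cdot t\right)$. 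By the transport principle, $g$ restricts to a pointed bijection $g_{0}\colon\hat{P}_{t}^{{\mathcal{D}}}\cdot x\to\hat{P}_{t}^{{\mathcal{D}}}\cdot\tilde{x}$ sending $w\cdot x\mapsto w\cdot\tilde{x}$ for $w\in\hat{P}_{t}^{{\mathcal{D}}}$. Once $g_{0}$ is shown to be a subgraph isomorphism, the composition $g_{0}\circ\phi$ will be the pointed subgraph isomorphism $\left(P_{t}^{{\mathcal{D}}}\cdot u_{0},u_{0}\right)\to\left(\hat{P}_{t}^{{\mathcal{D}}}\cdot\tilde{x},\tilde{x}\right)$ that certifies that $\tilde{x}$ admits $P_{t}^{{\mathcal{D}}}$; here I will also quote the routine fact that a composition of subgraph isomorphisms is one.

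The step that needs genuine (if still routine) care — the main obstacle — is verifying that $g_{0}$ preserves every edge, including the ``boundary'' edges that leave $\hat{P}_{t}^{{\mathcal{D}}}\cdot x$. The delicate point is that for $z\in\hat{P}_{t}^{{\mathcal{D}}}\cdot x$ and $s\in S$, the vertex $s\cdot z$ could a priori sit at distance $C_{d}\cdot t+1$ from $x$, outside the domain of $g$. I will resolve this by a case split: if $s\cdot z\in\hat{P}_{t}^{{\mathcal{D}}}\cdot x$ then $s\cdot z$ automatically lies in $B_{X}\left(x,C_{d}\cdot t\right)$ (it equals $w'\cdot x$ for some $w'\in\hat{P}_{t}^{{\mathcal{D}}}$), so $g$ preserves $z\overset{s}{\longrightarrow}$ and $g_{0}$ inherits this; if instead $s\cdot z\notin\hat{P}_{t}^{{\mathcal{D}}}\cdot x$, then $s\cdot g_{0}(z)\notin\hat{P}_{t}^{{\mathcal{D}}}\cdot\tilde{x}$, for otherwise the transport principle for $g^{-1}$ — applicable since $g_{0}(z)$ and $s\cdot g_{0}(z)$ both lie in $B_{\tilde{X}}\left(\tilde{x},C_{d}\cdot t\right)$ — would give $s\cdot z=g^{-1}\left(s\cdot g_{0}(z)\right)\in\hat{P}_{t}^{{\mathcal{D}}}\cdot x$, a contradiction. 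In either case $g_{0}$ preserves $z\overset{s}{\longrightarrow}$, which completes the verification and hence the proof.
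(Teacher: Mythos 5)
Your proof is correct and is essentially a careful expansion of the same observation the paper makes: Equation (\ref{eq:tAdmitsParalelotopeInBall}) forces $\hat{P}_{t}^{{\mathcal{D}}}\cdot x\subseteq B_{X}(x,C_{d}t)$, so the existence of a subgraph isomorphism from $P_{t}^{{\mathcal{D}}}\cdot u_{0}$ onto $\hat{P}_{t}^{{\mathcal{D}}}\cdot x$ is determined by that ball, and the paper states the lemma as a ``simple fact'' without further argument. One small imprecision worth tightening: in the second case of your boundary analysis, the equality $g^{-1}\bigl(s\cdot g_{0}(z)\bigr)=s\cdot z$ does not come from the transport principle (which gives $g^{-1}\bigl(s\cdot g_{0}(z)\bigr)=w'\cdot x$ for the $w'\in\hat{P}_{t}^{{\mathcal{D}}}$ with $s\cdot g_{0}(z)=w'\cdot\tilde{x}$) but from the edge-preservation property of the subgraph isomorphism $g^{-1}$ applied to the edge $g_{0}(z)\overset{s}{\longrightarrow}$, both of whose endpoints lie in its domain; combining the two then gives $s\cdot z=w'\cdot x\in\hat{P}_{t}^{{\mathcal{D}}}\cdot x$, the desired contradiction.
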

In light of Definition \ref{def:AdmitsParallelotope}, Proposition
\ref{prop:tile-construction} can be rephrased as ``For every $t\ge t_{E}$,
every point in a $\Gamma$-set admits a $t$-parallelotope''.
\begin{defn}
Let $X$ be an $\FF_{m}$-set and $t\in\NN$. A \emph{t-tile} in $X$
is a pair $\left(x,f\right)$ such that:
\begin{enumerate}
\item $x\in X$ admits a $2t$-parallelotope, and
\item $f$ is a bijection from some finite $\Gamma$-set onto $\hat{P}_{t}^{{\mathcal{D}}}\cdot x\subseteq X$,
where ${\mathcal{D}}={\mathcal{D}}_{x,2t}$.
\end{enumerate}
\end{defn}
Note that, in the above definition we take ${\mathcal{D}}$ to be ${\mathcal{D}}_{x,2t}$,
rather than ${\mathcal{D}}_{x,t}$. This ``extra length'' will be useful
later in controlling the amount of interference between tiles.

As explained in Section \ref{subsec:ProofPlanAlgorithmDescription},
for a tile $\left(x,f\right)$, we want the set $\Eq(f)$ to be large.
Also, we are interested in choosing, from among all tiles, a large
collection of \emph{pairwise disjoint }tiles. To this end, we seek
to minimize the interference between tiles. The sets defined below
are used to measure this interference:
\begin{defn}
Let $A\subseteq X$ for some $\FF_{m}$-set $X$. For $t\in\NN$,
let
\[
\eta_{t}\left(A\right)=A\cup\left\{ x\in X\mid\text{There exists a \ensuremath{t}-tile }\left(x,f\right)\text{ such that }\Ima f\cap A\ne\emptyset\right\} \,\,\text{.}
\]
\end{defn}
We turn to prove the existence of tiles with good parameters (Proposition \ref{prop:SingleTile}). We require the following observation.
\begin{lem} \label{lem:mBoxindBoxT}
Let $X$ be an $\FF_m$-set, $x\in X$ and $t\in \NN$. Assume that 
\begin{equation} \label{eq:Box_DTinX_E}
\Box_{\FF_d}(t) \cdot \hat T \cdot x\subseteq X_E\,\,\text{.}
\end{equation}
Then, $$\Box_{\FF_m}(t) \subseteq X_E\,\,\text{.}$$
\end{lem}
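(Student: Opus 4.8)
The plan is to prove the intended conclusion $\Box_{\FF_m}(t)\cdot x\subseteq X_E$. I would begin with the crucial but elementary remark that $1\in\Box_{\FF_d}(t)$ and $1\in\hat T$ (take all exponents equal to $0$), so that $\hat T\cdot x\subseteq\Box_{\FF_d}(t)\cdot\hat T\cdot x\subseteq X_E$; in particular $x\in X_E$, and for every $d+1\le i\le m$ the relator $\hat e_i^{\beta_i}\in E$ fixes every point of $\hat T\cdot x$.

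The heart of the argument is the claim that, for arbitrary integers $c_{d+1},\dots,c_m$,
\[
\hat e_{d+1}^{c_{d+1}}\cdots\hat e_m^{c_m}\cdot x=\hat e_{d+1}^{\alpha_{d+1}}\cdots\hat e_m^{\alpha_m}\cdot x ,
\]
where $\alpha_i\in[0,\beta_i)$ denotes the residue of $c_i$ modulo $\beta_i$; note that the point on the right lies in $\hat T\cdot x\subseteq X_E$. I would prove this by peeling generators off the right, i.e.\ by downward induction on $i$ from $m$ down to $d+1$. Assuming the generators with indices larger than $i$ have already been replaced by their reduced powers, the point $z:=\hat e_{i+1}^{\alpha_{i+1}}\cdots\hat e_m^{\alpha_m}\cdot x$ (which is $x$ itself when $i=m$) has a word whose exponents lie in $[0,\beta_j)$ for $j>i$ and vanish for $d+1\le j\le i$, so $z\in\hat T\cdot x\subseteq X_E$ and hence $\hat e_i^{\beta_i}\cdot z=z$. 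This relation yields $\hat e_i^{c\pm\beta_i}\cdot z=\hat e_i^{c}\cdot z$ for every integer $c$, and iterating reduces $c_i$ into $[0,\beta_i)$, so $\hat e_i^{c_i}\cdot z=\hat e_i^{\alpha_i}\cdot z$. Since $\hat e_i^{\alpha_i}\cdot z=\hat e_i^{\alpha_i}\hat e_{i+1}^{\alpha_{i+1}}\cdots\hat e_m^{\alpha_m}\cdot x$ again lies in $\hat T\cdot x$, the induction proceeds, and at $i=d+1$ the claim is established.

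To conclude, I would write an arbitrary $w\in\Box_{\FF_m}(t)$ as the sorted word $\hat e_1^{b_1}\cdots\hat e_m^{b_m}$ with $|b_j|\le t$ and factor it as $w=uv$ with $u=\hat e_1^{b_1}\cdots\hat e_d^{b_d}\in\Box_{\FF_d}(t)$ and $v=\hat e_{d+1}^{b_{d+1}}\cdots\hat e_m^{b_m}$. By the claim, $v\cdot x=\hat v\cdot x$ for some $\hat v\in\hat T$, so $w\cdot x=u\cdot(\hat v\cdot x)=(u\hat v)\cdot x$, and $u\hat v$ is visibly an element of the set product $\Box_{\FF_d}(t)\cdot\hat T$; therefore $w\cdot x\in\Box_{\FF_d}(t)\cdot\hat T\cdot x\subseteq X_E$, as required. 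I do not expect any real obstacle here: the argument uses only the power relators $\hat e_i^{\beta_i}$, and neither the commutator relators $E_0$ nor the hypothesis $t\ge t_E$ are needed. The one point that demands care is the order of peeling --- reducing from the largest index inward is exactly what keeps every intermediate point inside $\hat T\cdot x$, where the required relation $\hat e_i^{\beta_i}(\cdot)=(\cdot)$ is available.
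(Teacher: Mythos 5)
Your proof is correct and takes essentially the same route as the paper's: both peel off the torsion generators $\hat e_{d+1},\dots,\hat e_m$ by downward induction on the index, using in each step that the already-reduced suffix lies in $\hat T\cdot x\subseteq X_E$ so the relator $\hat e_i^{\beta_i}$ applies, and then conclude by splitting a sorted word $w\in\Box_{\FF_m}(t)$ into its $\FF_d$-prefix and torsion-index suffix. (You also correctly read the statement's conclusion as $\Box_{\FF_m}(t)\cdot x\subseteq X_E$ and the torsion indices as $d+1,\dots,m$.)
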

\begin{proof}
For $d+1\le i\le m+1$, let 
\begin{align*}
\hat T_i &= \left\{\prod_{j=i}^m\hat e_j^{\alpha_j} \mid 0\le \alpha_j< \beta_j \right\} \\
\hat T_i^\infty &= \left\{\prod_{j=i}^m\hat e_j^{\alpha_j} \mid \alpha_j\in \ZZ\right\}
\end{align*}

Clearly, $\hat T_i\cdot x \subseteq \hat T_i^\infty \cdot x$ for every $d+1\le i\le m+1$. We show, by induction on $i=m+1,\ldots,d+1$, that this is in fact an equality. The lemma then follows since
$$\Box_{\FF_m}(t)\cdot x \subseteq \Box_{\FF_d}(t)\cdot \hat T_{d+1}^\infty\cdot x = \Box_{\FF_d}(t)\cdot \hat T_{d+1}\cdot x = \Box_{\FF_d}(t)\cdot \hat T\cdot x\,\,\text{.}$$ 

The base case $i=m+1$ is trivial, as $\hat T_{m+1} = \hat T_{m+1}^\infty = \left\{1_{\FF_m}\right\}$. Let $d+1\le i \le m$ and assume that $\hat T_{i+1}\cdot x = \hat T_{i+1}^\infty \cdot x$.  Let $w\in \hat T_i^\infty$. Note that we can write $w = \hat e_i^{\alpha_i} \cdot v$, where  $\alpha_i\in \ZZ$ and $v\in \hat T_{i+1}^\infty$. By the induction hypothesis $v\cdot x\in \hat T_{i+1}\cdot x$. Since $\hat T_{i+1}\subseteq \hat T$, it follows from Equation (\ref{eq:Box_DTinX_E}) that $v\cdot x\in X_E$. In particular, $\hat e_i^{\beta _i} \cdot (v\cdot x) = v\cdot x$. Hence, 
$$w\cdot x = \hat e_{i}^{\alpha_i}\cdot (v\cdot x) =\hat e_{i}^{\bar \alpha_i}\cdot (v\cdot x) \in \hat e_i^{\bar \alpha_i}\cdot \hat T_{i+1}\cdot x$$ 
where $0 \le \bar{\alpha}_i < \beta_i$. As $\hat e_i^{\bar{\alpha}_{i}}\cdot  \hat T_{i+1}\subseteq \hat T_i$, we have $w\cdot x\in \hat T_i\cdot x$. 
\end{proof}
Define the constant 
\[
C_{\Box}=180\cdot m^{3}\cdot C_{d}\,\,\text{.}
\]

\begin{prop}
\label{prop:SingleTile}Let $t\ge t_{E}$ be an integer. Let $X$
be an $\FF_{m}$-set, and $x\in X$ such that 
\begin{equation}
\Box_{\FF_{d}}(C_{\Box}\cdot t)\cdot \hat T\cdot x\subseteq X_{E}\,\,\text{.}\label{eq:SingleTileBoxAssumption}
\end{equation}
Then, there is a $t$-tile $(x,f)$ with 
\begin{equation}
\left|\Eq(f)\right|\ge\left(1-\frac{d}{t}\right)\cdot\left|\Ima f\right|\label{eq:SingleTileEqBound}
\end{equation}
 and for every $t_{E}\le\tilde{t}\le t$,
\begin{equation}
\left|\eta_{\tilde{t}}\left(\Ima f\right)\right|\le\left(1+2C_d\cdot \frac{\tilde{t}}{t}\right)^{d}\cdot\left|\Ima f\right|\,\,\text{.}\label{eq:etaBoundGeneralt}
\end{equation}
Furthermore, 
\begin{equation}
\left|\eta_{t}\left(\Ima f\right)\right|\le2^{d}\cdot\left|\Ima f\right|\,\,\text{.}\label{eq:etaBoundSamet}
\end{equation}
\end{prop}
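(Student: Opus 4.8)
The first step is to clean up the hypothesis: applying Lemma \ref{lem:mBoxindBoxT} to (\ref{eq:SingleTileBoxAssumption}) replaces it by $\Box_{\FF_m}(C_\Box t)\cdot x\subseteq X_E$, where $C_\Box t=180m^3C_dt$. Since $x=1_{\FF_d}\cdot 1_{\FF_m}\cdot x\in X_E$, the elements $\hat e_i^{\beta_i}$ ($m-d+1\le i\le m$) fix $x$ and have length $\beta_i\le\beta_E\le C_d\le 6C_dt$. Hence, setting $r_A=6C_dt$ (so that $30m^3r_A=C_\Box t$), Tool A (Proposition \ref{prop:toolA}) applies and yields a pointed $\ZZ^m$-set $(U_A,u_A)=(\ZZ^m/H_A,\,0+H_A)$ with $K\le H_A$ — so $U_A$ is a $\Gamma$-set — together with the subgraph isomorphism $f_A=F_{B_{\FF_m}(r_A),u_A,x}\colon B_{U_A}(u_A,r_A)\to B_X(x,r_A)$. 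All the geometry below takes place inside $B_X(x,r_A)$, which is exactly why $C_\Box$ is calibrated to leave room in $r_A$.

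\textbf{Building the tile.} Tool B (Proposition \ref{prop:tile-construction}), applied to $H=H_A$ with parameter $2t$, produces a short linearly independent $\calD_B\subseteq H_A$ with $K\le\langle\calD_B\rangle$, $\hat P_{2t}^{\calD_B}\subseteq B_{\FF_m}(2C_dt)$, and a subgraph isomorphism $f_B\colon P_{2t}^{\calD_B}\cdot u_B\to P_{2t}^{\calD_B}\cdot u_A$ into $U_A$. As $P_{2t}^{\calD_B}\cdot u_A\subseteq B_{U_A}(u_A,2C_dt)\subseteq B_{U_A}(u_A,r_A)$, the composite $f_A\circ f_B$ witnesses that $x$ admits the $2t$-parallelotope $P_{2t}^{\calD_B}$; in particular $\calD:=\calD_{x,2t}$ is defined, and via Lemma \ref{lem:AdmitSameParallelotope} (all radius-$\rho$ balls in the abelian $\Gamma$-set $U_A$ are pairwise subgraph isomorphic, and $f_A$ identifies $B_X(x,2C_dt)$ with $B_{U_A}(u_A,2C_dt)$) one gets $\calD=\calD_{u_A,2t}$. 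Moreover, a cardinality count on the subgraph isomorphism furnished by ``$x$ admits $P_{2t}^{\calD}$'' shows the canonical map $p\cdot v_0\mapsto\hat p\cdot x$ is a \emph{bijection} $\psi\colon P_{2t}^{\calD}\cdot v_0\to\hat P_{2t}^{\calD}\cdot x$, where $(U_0,v_0)=(\ZZ^m/\langle\calD\rangle,0+\langle\calD\rangle)$. Now apply Tool C (Proposition \ref{prop:Pt-isoperimetric-properties}) to $U_0$ with basis $\calD$ and parameter $t$: this gives a finite $\Gamma$-set $Y$ and an injection $f_C\colon Y\to P_t^{\calD}\cdot v_0$ with $\Ima f_C=P_t^{\calD}\cdot v_0$, with the ball estimate \ref{prop:Pt-isoperimetric-properties}\ref{enu:Pt-IsoperimetricBall}, and with $|\Eq(f_C)|\ge(1-\tfrac{m-|\calD|}{t})|Y|$; here $m-|\calD|\le d$ because $K\le\langle\calD\rangle$ forces $|\calD|\ge\rank K=m-d$. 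Setting $f=\psi|_{P_t^{\calD}\cdot v_0}\circ f_C$ gives a bijection $Y\to\hat P_t^{\calD}\cdot x$, so $(x,f)$ is a $t$-tile with $|\Ima f|=|P_t^{\calD}|=|Y|$.

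\textbf{Equivariance bound.} Composing with $f_A^{-1}$ turns $f$ into the $\ZZ^m$-set map $g=F_{P_t^{\calD},y_0,u_A}\colon Y\to P_t^{\calD}\cdot u_A$, where $Y=\ZZ^m/\langle\calT\rangle$ and $\calT=\calD\cup2t\cdot\calC(\calD)$; since $f_A$ is a subgraph isomorphism, $\Eq(f)=\Eq(g)$. To bound $\Eq(g)$ I would imitate the proof of \ref{prop:Pt-isoperimetric-properties}\ref{enu:Pt-IsoperimetricEquivariance}: for $r\in P_{t-1}^{\calD}$ and $s\in S$, the set $-P_t^{\calD}+s+r$ lies in $B_{\ZZ^m}\bigl(2(\|\calD\|_1+dt)+1\bigr)$, which is exactly the radius up to which $\langle\calD\rangle$ and $H_A$ agree, while $\langle\calT\rangle$ and $\langle\calD\rangle$ agree there as in Tool C's proof; Lemma \ref{lem:edge-preservation} then shows $g$ preserves $r\cdot y_0\overset{s}{\to}$. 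Hence $P_{t-1}^{\calD}\cdot y_0\subseteq\Eq(g)$, and $|\Eq(f)|=|\Eq(g)|\ge|P_{t-1}^{\calD}|=(1-\tfrac1t)^{m-|\calD|}|Y|\ge(1-\tfrac dt)|\Ima f|$, which is (\ref{eq:SingleTileEqBound}).

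\textbf{The $\eta$-estimates, and the main obstacle.} If $(x',f')$ is a $\tilde t$-tile with $\Ima f'\cap\Ima f\neq\emptyset$ then, since $\hat P_{2\tilde t}^{\calD_{x',2\tilde t}}\subseteq B_{\FF_m}(2C_d\tilde t)$, we have $x'\in\Ima f'\subseteq B_X(x',2C_d\tilde t)$, so $d_X(x',\Ima f)\le 2C_d\tilde t$ and $x'\in B_X(x,4C_dt)\subseteq B_X(x,r_A)$. Thus $f_A^{-1}$ carries $\eta_{\tilde t}(\Ima f)$ injectively into $B_{U_A}(P_t^{\calD}\cdot u_A,2C_d\tilde t)$, and transferring Tool C's ball estimate from $U_0$ to $U_A$ along the quotient $\ZZ^m/\langle\calD\rangle\twoheadrightarrow\ZZ^m/H_A$ (which only shrinks balls) bounds this set by $(1+\tfrac{2C_d\tilde t}{t})^{m-|\calD|}|P_t^{\calD}|\le(1+\tfrac{2C_d\tilde t}{t})^{d}|\Ima f|$, giving (\ref{eq:etaBoundGeneralt}). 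For (\ref{eq:etaBoundSamet}), when $\tilde t=t$ Lemma \ref{lem:AdmitSameParallelotope} gives $\calD_{x',2t}=\calD$, so $f_A^{-1}(\Ima f')=P_t^{\calD}\cdot f_A^{-1}(x')$; meeting $P_t^{\calD}\cdot u_A$ forces $f_A^{-1}(x')\in(-P_t^{\calD}+P_t^{\calD})\cdot u_A\subseteq P_{2t}^{\calD}\cdot u_A$ by Lemma \ref{lem:Pt-basic-properties}\ref{enu:Pt-plusminusParallelopiped}, whence $|\eta_t(\Ima f)|\le|P_{2t}^{\calD}\cdot u_A|=|P_{2t}^{\calD}|=2^{m-|\calD|}|P_t^{\calD}|\le 2^d|\Ima f|$ by Lemma \ref{lem:Pt-basic-properties}\ref{enu:Pt-basic-GrowthInt}. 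The delicate part — and the step I expect to be the main obstacle — is precisely this transfer bookkeeping: tracking exactly how far $f_A$ reaches, and, crucially, verifying that the \emph{canonical} lattice $\langle\calD_{x,2t}\rangle$ agrees with the stabilizer lattice $H_A$ out to the radius $\sim C_dt$ at which the parallelotope $P_t^{\calD}$ lives (a property enjoyed by Tool B's output by Lemma \ref{lem:good-basis}\ref{enu:good-basisSelfExplaining}, which must be pushed onto $\calD_{x,2t}$), so that the purely $\ZZ^m$-geometric facts about $P_t^{\calD}$ can be exported back into $X$.
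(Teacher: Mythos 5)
Your proposal follows the paper's own three-tool scheme exactly (Tool A to get $f_A$, Tool B to show $x$ admits a $2t$-parallelotope and to pin down $\calD=\calD_{x,2t}=\calD_{u_A,2t}$ via Lemma \ref{lem:AdmitSameParallelotope}, Tool C to build $Y$ and $f_C$, and then $f=\psi|_{P_t^{\calD}\cdot v_0}\circ f_C$), so the construction of the tile matches. The worry you flag at the end, however, is a genuine gap in the way you argue the estimates, and it is not just bookkeeping: your argument for (\ref{eq:SingleTileEqBound}) pulls $f$ back to $g=F_{P_t^{\calD},y_0,u_A}$ and then invokes Lemma \ref{lem:edge-preservation} in $U_A=\ZZ^m/H_A$, which forces you to verify $\langle\calD\rangle\cap B_{\ZZ^m}\bigl(2(\|\calD\|_1+dt)+1\bigr)=H_A\cap B_{\ZZ^m}\bigl(2(\|\calD\|_1+dt)+1\bigr)$; and both your $\eta$-estimates appeal to Lemma \ref{lem:Pt-basic-properties}\ref{enu:Pt-plusminusParallelopiped} at the point $u_A$ and to the quotient map $\ZZ^m/\langle\calD\rangle\twoheadrightarrow\ZZ^m/H_A$ ``only shrinking balls,'' both of which presuppose $\calD\subseteq H_A$ (equivalently $\langle\calD\rangle\le H_A$). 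None of this is guaranteed for the $\prec$-minimal $\calD_{u_A,2t}$: Definition \ref{def:AdmitsParallelotope} only asks for $K\le\langle\calD\rangle$, the containment of $\hat P_{2t}^{\calD}$ in a ball, and the existence of a subgraph isomorphism — it does not force $\calD$ into $\Stab_{\ZZ^m}(u_A)$ nor any agreement of lattices beyond what the parallelotope itself witnesses. Tool B's output does enjoy the agreement property (Lemma \ref{lem:good-basis}\ref{enu:good-basisSelfExplaining}), but you cannot push it onto the canonical $\calD_{u_A,2t}$, and nothing in the proof does.

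The paper sidesteps this entirely, and you already have all the ingredients to do the same. The equivariance count is done intrinsically in $U_B=\ZZ^m/\langle\calD\rangle$: since $f=\psi\circ f_C$ with $\psi$ a subgraph isomorphism on $P_{2t}^{\calD}\cdot u_B$ and $f_C$ mapping $Y$ into $P_t^{\calD}\cdot u_B\subseteq P_{2t}^{\calD}\cdot u_B$, any $y\in\Eq(f_C)$ has $f_C(y),\ s\cdot f_C(y)\in P_t^{\calD}\cdot u_B$ inside the domain of $\psi$, so $\psi$ preserves the relevant edge and $\Eq(f_C)\subseteq\Eq(f)$ — no comparison of $\langle\calD\rangle$ with $H_A$ is ever needed. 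Likewise for (\ref{eq:etaBoundSamet}), Lemma \ref{lem:Pt-basic-properties}\ref{enu:Pt-plusminusParallelopiped} should be applied at $u_B$ (where $\calD\subseteq\Stab_{\ZZ^m}(u_B)=\langle\calD\rangle$ is automatic), giving $W\cdot u_B\subseteq P_{2t}^{\calD}\cdot u_B$, and then one transports the count to $X$ via the restriction of $f_{AB}=f_A\circ f_B$ to $W\cdot u_B$; and for (\ref{eq:etaBoundGeneralt}) one applies Tool C's ball bound (Proposition \ref{prop:Pt-isoperimetric-properties}\ref{enu:Pt-IsoperimetricBall}) in $U_B$ and again transports via $f_{AB}$ rather than via the unavailable quotient $U_B\twoheadrightarrow U_A$. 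In short: the obstacle you identify is real in your presentation, but it disappears once you stop composing with $f_A^{-1}$ and instead carry the purely $\ZZ^m$-geometric facts from $U_B$ into $X$ along the one subgraph isomorphism $f_{AB}$ on $P_{2t}^{\calD}\cdot u_B$.
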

\begin{proof}
By Assumption (\ref{eq:SingleTileBoxAssumption}) and Lemma \ref{lem:mBoxindBoxT}, we have
\begin{equation} \label{eq:SingleTileBoxAssumptionSimplified}
\Box_{\FF_m}\left(C_{\Box}\cdot t\right)\cdot x\subseteq X_E\,\,\text{.}
\end{equation}
Define 
\begin{align*}
H & =\left\langle \pi\left(\Stab_{\FF_{m}}(x)\cap B_{\FF_{m}}(12\cdot C_{d}\cdot t+1)\right)\right\rangle \,\text{, and}\\
\left(U_{A},u_{A}\right) & =\left(\ZZ^{m}/H,0+H\right)\,\,\text{.}
\end{align*}
Note that $\left\{ \hat{e}_{d+1}^{\beta_{d+1}},\ldots,\hat{e}_{m}^{\beta_{m}}\right\} \subseteq\Stab_{\FF_{m}}(x)\cap B_{\FF_{m}}(12\cdot C_{d}\cdot t+1)$,
since $x\in X_{E}$ and $12\cdot C_{d}\cdot t+1\ge C_{d}\ge\beta_{E}$.
Thus, $K\le H$, so $U_{A}$ is a $\Gamma$-set. By virtue of Inclusion
(\ref{eq:SingleTileBoxAssumptionSimplified}), Proposition \ref{prop:toolA}
(Tool A) yields a subgraph isomorphism 
\[
f_{A}:B_{U_{A}}\left(u_{A},6C_{d}\cdot t\right)\to B_{X}\left(x,6C_{d}\cdot t\right)\,\,\text{.}
\]

By applying Proposition \ref{prop:tile-construction} (Tool B) to
$H$ and $2t$, it follows that $u_{A}$ admits the $2t$-parallelotope
$P_{2t}^{{\mathcal{D}}}$ where ${\mathcal{D}}={\mathcal{D}}_{u_{A},2t}$ and $K\le\left\langle {\mathcal{D}}\right\rangle $.
Hence, for
\[
\left(U_{B},u_{B}\right)=\left(\ZZ^{m}/\left\langle {\mathcal{D}}\right\rangle ,0+\left\langle {\mathcal{D}}\right\rangle \right)\,\,\text{,}
\]
we have a subgraph isomorphism $f_{B}:P_{2t}^{{\mathcal{D}}}\cdot u_{B}\to P_{2t}^{{\mathcal{D}}}\cdot u_{A}$. Since $P_{2t}^{{\mathcal{D}}}\cdot u_{A}\subseteq B_{U_{A}}\left(u_{A},C_{d}\cdot2t\right)$
(due to Equation (\ref{eq:tAdmitsParalelotopeInBall})), we can define $f_{AB}:P_{2t}^{{\mathcal{D}}}\cdot u_{B}\to\hat{P}_{2t}^{{\mathcal{D}}}\cdot x$ by $f_{AB}=f_{A}\circ f_{B}$, and $f_{AB}$ is a subgraph isomorphism since both $f_A$ and $f_B$ are. 
In particular, $x$ admits a $2t$-parallelotope. By virtue of the
subgraph isomorphism $f_{A}$, the balls $B_{U_{A}}\left(u_{A},C_{d}\cdot2t\right)$
and $B_{X}\left(x,C_{d}\cdot2t\right)$ are isomorphic, so Lemma \ref{lem:AdmitSameParallelotope}
implies that ${\mathcal{D}}_{u_{A},2t}={\mathcal{D}}={\mathcal{D}}_{x,2t}$. 

We now apply Proposition \ref{prop:Pt-isoperimetric-properties} (Tool
C) to $\left\langle {\mathcal{D}}\right\rangle $ with the basis ${\mathcal{D}}$,
and to the parameter $t$. This yields a finite $\Gamma$-set $Y$
and a bijection $f_{C}$ from $Y$ onto $P_{t}^{{\mathcal{D}}}\cdot u_{B}$,
with 
\[
\left|\Eq\left(f_{C}\right)\right|\ge\left(1-\frac{m-\left|{\mathcal{D}}\right|}{t}\right)\cdot\left|Y\right|\ge\left(1-\frac{d}{t}\right)\cdot\left|Y\right|\,\,\text{.}
\]
Note that the restriction of $f_{AB}$ to $\Ima f_{C}=P_{t}^{{\mathcal{D}}}\cdot u_{B}$
is a subgraph isomorphism onto $\hat{P}_{t}^{{\mathcal{D}}}\cdot x$. Thus,
$f=f_{AB}\circ f_{C}$ is a bijection from $Y$ onto $\hat{P}_{t}^{{\mathcal{D}}}\cdot x$,
so $\left(x,f\right)$ is a $t$-tile. Also, $\Eq\left(f\right)=\left|\Eq\left(f_{C}\right)\right|\ge\left(1-\frac{d}{t}\right)\cdot\left|\Ima f\right|$,
proving Equation (\ref{eq:SingleTileEqBound}). We turn to prove Equations
(\ref{eq:etaBoundGeneralt}) and (\ref{eq:etaBoundSamet}).

Let $t_{E}\le\tilde{t}\le t$, and consider a $\tilde{t}$-tile $\left(\tilde{x},\tilde{f}\right)$
in $X$. Denote $\tilde{{\mathcal{D}}}={\mathcal{D}}_{\tilde{x},2\tilde{t}}$.
Assume that $\Ima\tilde{f}\cap\Ima f\ne\emptyset$, i.e., $\hat{P}_{\tilde{t}}^{\tilde{{\mathcal{D}}}}\cdot x\cap \hat{P}_{t}^{{\mathcal{D}}}\cdot x\ne \emptyset$. Then, $\tilde{x}\in W\cdot x$,
where 
\[
W=\left(\hat{P}_{\tilde{t}}^{\tilde{{\mathcal{D}}}}\right)^{-1}\cdot\hat{P}_{t}^{{\mathcal{D}}}\,\,\text{.}
\]
In other words, $\eta_{\tilde{t}}\left(\Ima f\right)\subseteq W\cdot x$. Since $W\subseteq \left(\hat{P}_{2\tilde{t}}^{\tilde{{\mathcal{D}}}}\right)^{-1}\cdot\hat{P}_{2t}^{{\mathcal{D}}}$, Equation (\ref{eq:tAdmitsParalelotopeInBall}) implies that 
$$W\subseteq  B_{\FF_{m}}\left(2C_{d}\cdot {\tilde t}+2C_{d}\cdot t\right)\subseteq B_{\FF_{m}}\left(4C_{d}\cdot t\right)\,\,\text{.}$$
Hence, we may consider the restriction of $f_{A}:B_{U_{A}}\left(u_{A},6C_{d}\cdot t\right)\to B_{X}\left(x,6C_{d}\cdot t\right)$
to $W\cdot u_{A}$, which is a subgraph isomorphism $W\cdot u_A\to W\cdot x$. Since $\left|\eta_{\tilde{t}}\left(\Ima f\right)\right|\le\left|W\cdot x\right|$, it follows
that 
\[
\left|\eta_{\tilde{t}}\left(\Ima f\right)\right|\le \left|W\cdot u_{A}\right|\,\,\text{.}
\]
Now, 
\begin{align*}
\left|W\cdot u_A\right|&=\left|\left(P_{\tilde{t}}^{ \tilde{\calD}}\right)^{-1}\cdot P_{t}^{{\mathcal{D}}}\cdot u_A\right|\le\left|\left(P_{2\tilde{t}}^{ \tilde{\calD}}\right)^{-1}\cdot P_{t}^{{\mathcal{D}}}\cdot u_A\right|\\ &\le \left|B_{U_A}\left(P_{t}^{{\mathcal{D}}}\cdot u_A,2C_{d}\cdot\tilde{t}\right)\right| = 
\left|B_{U_B}\left(P_{t}^{{\mathcal{D}}}\cdot u_B,2C_{d}\cdot\tilde{t}\right)\right|\\ &\le\left(1+\frac{2C_{d}\cdot\tilde{t}}{t}\right)^{d}\cdot\left|\Ima\left(f_C\right)\right|\,\,\text{, }
\end{align*}
where the second inequality follows from Equation (\ref{eq:tAdmitsParalelotopeInBall}),
and the third from Proposition \ref{prop:Pt-isoperimetric-properties}\ref{enu:Pt-IsoperimetricBall}.
Equation (\ref{eq:etaBoundGeneralt}) follows since $\left|\Ima\left(f_C\right)\right|=\left|\Ima f\right|$. 

We turn to prove Equation (\ref{eq:etaBoundSamet}). Observe that Equation (\ref{eq:etaBoundSamet})
is a tighter version of (\ref{eq:etaBoundGeneralt}) in the special
case $\tilde{t}=t$. Hence, we continue with the existing notation,
and assume further that $\tilde{t}=t$. Note that, since
$\Gamma$ is abelian, all transitive $\Gamma$-sets are Cayley graphs
of quotients of $\Gamma$, and so balls of the same radius in the
$\Gamma$-set $U_{A}$ are isomorphic.

We have 
\[
B_{X}\left(\tilde{x},2C_{d}\cdot t\right)\subseteq B_{X}\left(W\cdot x,2C_{d}\cdot t\right)\subseteq B_{X}\left(x,6C_{d}\cdot t\right)=\Ima f_{A}\,\,\text{.}
\]
Thus, by virtue of the subgraph isomorphism $f_{A}$, the balls of
radius $2C_{d}\cdot t$ centered at $x$ and at $\tilde{x}$ are both
isomorphic, as subgraphs of $X$, to balls of the same radius in the $\Gamma$-set $U_{A}$.
Hence, these two balls are also subgraph isomorphic to each other. Consequently,
${\mathcal{D}}=\tilde{\mathcal{D}}$ due to Lemma \ref{lem:AdmitSameParallelotope},
and so $W=\left(\hat{P}_{t}^{{\mathcal{D}}}\right)^{-1}\cdot\hat{P}_{t}^{{\mathcal{D}}}$.

Now, Lemma \ref{lem:Pt-basic-properties}\ref{enu:Pt-plusminusParallelopiped} implies that
$W\cdot u_{B}=\left(\hat{P}_{t}^{{\mathcal{D}}}\right)^{-1}\cdot \hat{P}_{t}^{{\mathcal{D}}}\cdot u_{B}\subseteq \hat{P}_{2t}^{{\mathcal{D}}}\cdot u_{B}$.
Hence, the subgraph isomorphism $f_{AB}:P_{2t}^{{\mathcal{D}}}\cdot u_{B}\to\hat{P}_{2t}^{{\mathcal{D}}}\cdot x$ restricts to a subgraph isomorphism $W\cdot u_B \to W\cdot x$. Thus,
\[
\left|\eta_{t}\left(\Ima f\right)\right|\le\left|W\cdot x\right|=\left|W\cdot u_{B}\right|\le\left|\hat{P}_{2t}^{{\mathcal{D}}}\cdot u_{B}\right|\le\left|P_{2t}^{{\mathcal{D}}}\right|\le2^{m-\left|{\mathcal{D}}\right|}\cdot\left|P_{t}^{{\mathcal{D}}}\right|\,\,\text{,}
\]
where the last inequality follows from Lemma \ref{lem:Pt-basic-properties}\ref{enu:Pt-basic-GrowthInt}.
As $\left|{\mathcal{D}}\right|\ge m-d$ and $\left|\Ima f\right|=\left|P_{t}^{{\mathcal{D}}}\right|$,
Equation (\ref{eq:etaBoundSamet}) follows. 
\end{proof}
We turn to discuss an iteration of our algorithm. We require the following
observation.
\begin{lem}
\label{lem:maximal-disjoint-subcollection}Let $\calC=\left(A_{i}\right)_{i\in I}$
be a finite collection of finite sets. Let $c>0$, and assume that
for each $i\in I$, at most $c\cdot\left|A_{i}\right|$ sets $A_{j}$
($j\in I$) intersect $A_{i}$ (including $A_{i}$ itself). Say that
$J\subseteq I$ is \emph{intersection-free} if\emph{ }$A_{j_{1}}\cap A_{j_{2}}=\emptyset$
for all distinct $j_{1},j_{2}\in J$. Then, I has an intersection-free
subset $J$ such that $\left|\bigcup_{j\in J}A_{j}\right|\ge\frac{|I|}{c}$.
\end{lem}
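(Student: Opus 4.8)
The plan is to build the subset $J$ by a straightforward greedy procedure and then extract the cardinality bound from a counting argument. Start with all of $I$ marked ``active'', i.e.\ set $I_{0}=I$. Having defined a nonempty set $I_{l}$ of active indices, pick an arbitrary $i_{l+1}\in I_{l}$, put it into $J$, and then remove from the active set every index whose set meets $A_{i_{l+1}}$; that is, set $I_{l+1}=I_{l}\setminus\{j\in I\mid A_{j}\cap A_{i_{l+1}}\ne\emptyset\}$. Since $A_{i_{l+1}}$ is among the sets counted as meeting $A_{i_{l+1}}$ (in particular each $A_{i}$ is nonempty, as $A_{i}$ is one of the at most $c\cdot|A_{i}|$ sets meeting $A_{i}$, whence $c\cdot|A_{i}|\ge1$), the index $i_{l+1}$ itself is deleted, so $|I_{l+1}|<|I_{l}|$ and the procedure halts after finitely many steps, say $k$ of them, with $I_{k}=\emptyset$. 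Put $J=\{i_{1},\dotsc,i_{k}\}$.

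Next I would verify that $J$ is intersection-free. If $l<l'$, then $i_{l'}\in I_{l'-1}\subseteq I_{l}$, so $i_{l'}$ survived the deletion step that produced $I_{l}$ from $I_{l-1}$; by the definition of that step this forces $A_{i_{l'}}\cap A_{i_{l}}=\emptyset$. Hence the sets $\{A_{j}\}_{j\in J}$ are pairwise disjoint.

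It then remains to bound $\bigl|\bigcup_{j\in J}A_{j}\bigr|$ from below. The key point is that the index-sets $\{j\in I\mid A_{j}\cap A_{i_{l}}\ne\emptyset\}$, removed over the $k$ steps, together exhaust $I$ (because $I_{k}=\emptyset$), while the one removed at the $l$-th step has size at most $c\cdot|A_{i_{l}}|$ by hypothesis. Therefore $|I|\le\sum_{l=1}^{k}c\cdot|A_{i_{l}}|=c\sum_{j\in J}|A_{j}|$, and since the $A_{j}$ ($j\in J$) are disjoint, $\bigl|\bigcup_{j\in J}A_{j}\bigr|=\sum_{j\in J}|A_{j}|\ge|I|/c$, as required.

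There is essentially no serious obstacle here; the only points deserving a moment's care are ensuring the greedy step always shrinks the active set (handled by the ``including $A_{i}$ itself'' clause of the hypothesis, which simultaneously rules out empty $A_{i}$) and the bookkeeping observation that the deleted index-sets cover $I$ yet have total size at most $c\sum_{j\in J}|A_{j}|$.
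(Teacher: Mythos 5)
Your proof is correct and takes essentially the same approach as the paper: you construct a maximal intersection-free subset (the paper invokes one abstractly, you build it greedily), and then use the fact that every $A_i$ must meet $\bigcup_{j\in J}A_j$ together with the hypothesis to bound $|I|$ by $c\sum_{j\in J}|A_j| = c\,\bigl|\bigcup_{j\in J}A_j\bigr|$. Your explicit bookkeeping of the deleted index-sets at each greedy step is just a slightly more verbose rendering of the paper's one-line counting argument.
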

\begin{proof}
Let $J$ be a maximal intersection-free subset of $I$, and let $M=\bigcup_{j\in J}A_{j}$.
Note that $M$ intersects at most $c\cdot|M|$ sets out of $\left(A_{i}\right)_{i\in I}$.
By maximality, each of the $|I|$ elements of $\calC$ intersects $M$, and so $\left|I\right|\le c\cdot\left|M\right|$.
\end{proof}

In the following proposition, which describes a single iteration of the tiling algorithm,
we think of the set $A$ as the image of the tiles already injected
into $X$ in previous iterations.
\begin{prop}
\label{prop:SingleIteration}Let $X$ be a finite $\FF_{m}$-set,
$A\subseteq X$ and $t_{E}\le t\in\NN$. Then, there is a finite
$\Gamma$-set $Y$ and an injection $f:Y\to X\setminus A$, with the
following properties:
\begin{enumerate}
\item $\left|\Ima f\right|\geq\frac{1}{2^{d}}\cdot\left(|X|-\left(3C_{\Box}\cdot t\right)^{d}\cdot\left|\Tor(\Gamma)\right|\cdot\left|X\setminus X_{E}\right|-|\eta_{t}\left(A\right)|\right)$.\label{enu:SingleIterationImageSize}
\item $\left|\Eq f\right|\ge\left(1-\frac{d}{t}\right)\cdot\left|\Ima f\right|$.\label{enu:SingleIterationEquivariance}
\item $\left|\eta_{\tilde{t}}\left(\Ima f\right)\right|\le\left(1+2C_d\cdot \frac{\tilde{t}}{t}\right)^{d}\cdot\left|\Ima f\right|$
for every $t_{E}\le\tilde{t}\le t\cdot$\label{enu:SingleIterationEta}
\end{enumerate}
\end{prop}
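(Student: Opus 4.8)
The plan is to build $Y$ and $f$ by greedily packing into $X\setminus A$ a maximal pairwise-disjoint family of single tiles produced by Proposition~\ref{prop:SingleTile}. Call a point $x\in X$ \emph{eligible} if $\Box_{\FF_{d}}(C_{\Box}\cdot t)\cdot\hat T\cdot x\subseteq X_{E}$ and $x\notin\eta_{t}(A)$. For an eligible $x$, the first condition is exactly hypothesis~(\ref{eq:SingleTileBoxAssumption}), so Proposition~\ref{prop:SingleTile} furnishes a $t$-tile $(x,f_{x})$ satisfying~(\ref{eq:SingleTileEqBound}),~(\ref{eq:etaBoundGeneralt}) and~(\ref{eq:etaBoundSamet}), while the second condition forces $\Ima f_{x}\cap A=\emptyset$ straight from the definition of $\eta_{t}$. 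First I count the ineligible points. The points violating the box condition form $\bigcup_{w}w^{-1}\cdot(X\setminus X_{E})$, where $w$ ranges over $\Box_{\FF_{d}}(C_{\Box}\cdot t)\cdot\hat T$; since each such $w$ permutes $X$ and $\bigl|\Box_{\FF_{d}}(C_{\Box}\cdot t)\cdot\hat T\bigr|\le(2C_{\Box}t+1)^{d}\cdot|\hat T|\le(3C_{\Box}t)^{d}\cdot|\Tor(\Gamma)|$ (using $|\hat T|=|\Tor(\Gamma)|$ and $C_{\Box}t\ge1$), at most $(3C_{\Box}t)^{d}\cdot|\Tor(\Gamma)|\cdot|X\setminus X_{E}|$ points fail it. Hence the number of eligible points is at least $|X|-(3C_{\Box}t)^{d}\cdot|\Tor(\Gamma)|\cdot|X\setminus X_{E}|-|\eta_{t}(A)|$.

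Next I apply Lemma~\ref{lem:maximal-disjoint-subcollection} to the finite collection $\bigl(\Ima f_{x}\bigr)$ indexed by the eligible points. The crucial input is that if $\Ima f_{x'}\cap\Ima f_{x}\ne\emptyset$ for eligible $x,x'$, then, since $(x',f_{x'})$ is a $t$-tile whose image meets $\Ima f_{x}$, we have $x'\in\eta_{t}(\Ima f_{x})$, and $|\eta_{t}(\Ima f_{x})|\le2^{d}\cdot|\Ima f_{x}|$ by~(\ref{eq:etaBoundSamet}); so each $\Ima f_{x}$ meets at most $2^{d}\cdot|\Ima f_{x}|$ members of the collection, itself included. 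The lemma with $c=2^{d}$ then yields an intersection-free subfamily, indexed by a set $J$, with $\bigl|\bigcup_{j\in J}\Ima f_{j}\bigr|\ge2^{-d}\cdot(\text{number of eligible points})$. I take $Y=\coprod_{j\in J}Y_{j}$ (a finite $\Gamma$-set, where $Y_{j}$ is the domain of $f_{j}$) and $f=\coprod_{j\in J}f_{j}$. Because the images are pairwise disjoint and each avoids $A$, the map $f$ is an injection into $X\setminus A$, and the bound just obtained is precisely Property~\ref{enu:SingleIterationImageSize}.

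Finally, Properties~\ref{enu:SingleIterationEquivariance} and~\ref{enu:SingleIterationEta} follow by summing the per-tile estimates over $J$. Since the images $\Ima f_{j}$ are disjoint and each $Y_{j}$ is a $\Gamma$-set, one checks that $\Eq(f)\cap Y_{j}=\Eq(f_{j})$, so $|\Eq(f)|=\sum_{j}|\Eq(f_{j})|\ge(1-\tfrac{d}{t})\sum_{j}|\Ima f_{j}|=(1-\tfrac{d}{t})|\Ima f|$ by~(\ref{eq:SingleTileEqBound}). Likewise any $\tilde t$-tile whose image meets $\Ima f$ meets some $\Ima f_{j}$, hence $\eta_{\tilde t}(\Ima f)\subseteq\bigcup_{j}\eta_{\tilde t}(\Ima f_{j})$ and $|\eta_{\tilde t}(\Ima f)|\le\sum_{j}|\eta_{\tilde t}(\Ima f_{j})|\le\bigl(1+2C_{d}\cdot\tfrac{\tilde t}{t}\bigr)^{d}|\Ima f|$ by~(\ref{eq:etaBoundGeneralt}), for every $t_{E}\le\tilde t\le t$. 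I do not anticipate a genuine obstacle: the substantive work — building a single tile with small isoperimetric ratio and controlled $\eta$-neighborhoods — is already packaged in Proposition~\ref{prop:SingleTile}, and the greedy packing is exactly Lemma~\ref{lem:maximal-disjoint-subcollection}. The only points needing care are the two ``disjoint union'' identities/inclusions for $\Eq$ and $\eta_{\tilde t}$ above (which rely on the images being genuinely disjoint and on each $f_{j}$ being $\Gamma$-compatible on $Y_{j}$), and choosing the eligibility condition so that the ineligible points account exactly for the terms $(3C_{\Box}t)^{d}|\Tor(\Gamma)||X\setminus X_{E}|$ and $|\eta_{t}(A)|$ in Property~\ref{enu:SingleIterationImageSize}.
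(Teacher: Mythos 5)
Your proof is correct and follows the paper's argument essentially verbatim: your set of ``eligible'' points coincides with the paper's $\tilde{X}=X\setminus(M\cup\eta_t(A))$, you invoke Proposition~\ref{prop:SingleTile} per eligible point, pack greedily via Lemma~\ref{lem:maximal-disjoint-subcollection} with $c=2^d$ (justified exactly as in the paper using Equation~(\ref{eq:etaBoundSamet})), and obtain Properties~\ref{enu:SingleIterationEquivariance} and~\ref{enu:SingleIterationEta} by the same per-tile summations over the disjoint components.
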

\begin{proof}
Let $$M=\left(\Box_{\FF_{d}}\left(C_{\Box}\cdot t\right)\cdot \hat T\right)^{-1}\cdot\left(X\setminus X_{E}\right)$$
and $\tilde{X}=X\setminus\left(M\cup\eta_{t}(A)\right)$. Note that
$\Box_{\FF_{d}}\left(C_{\Box}\cdot t\right)\cdot \hat T\cdot\tilde{X}\subseteq X_{E}$, so Proposition \ref{prop:SingleTile}, applied to each $x\in \tilde X$ separately, yields a set of $t-$tiles
$\left(x,f_{x}\right)_{x\in\tilde{X}}$ . 

Let $x\in \tilde X$. By Equation (\ref{eq:etaBoundSamet}),
$\eta_{t}\left(\Ima f_{x}\right)\le2^{d}\cdot\left|\Ima f_{x}\right|$,
so $\Ima f_{x}$ intersects at most $2^{d}\cdot\left|\Ima f_{x}\right|$
of the sets $\Ima f_{x'}$ ($x'\in\tilde{X}$). In other words, the
collection $\left(\Ima f_{x}\right)_{x\in\tilde{X}}$ satisfies the
requirements of Lemma \ref{lem:maximal-disjoint-subcollection}, with
$c=2^{d}$. Hence, there is a subset $J\subseteq\tilde{X}$ such that
the sets $\left(\Ima f_{x}\right)_{x\in J}$ are pairwise disjoint, and
their union is of size at least $2^{-d}\cdot\left|\tilde{X}\right|$.
We create a $\Gamma$-set $Y=\coprod_{x\in J}\domain\left(f_{x}\right)$
and an injection $f=\coprod_{x\in J}f_{x}:Y\to X$. By the definition of $\tilde X$, we have $\tilde{X}\cap\eta_{t}(A)=\emptyset$.
This means that if $\left(x,f_{x}\right)$ is a $t$-tile and $x\in\tilde{X}$,
then $\Ima f_{x}\subseteq X\setminus A$. By the definition of $f$, it follows that $\Ima f\subseteq X\setminus A$. We proceed to prove
that $f$ has the stated properties. First,
\[
\left|\Ima f\right|\ge2^{-d}\cdot\left|\tilde{X}\right|\ge2^{-d}\left(|X|-\left|M\right|-\left|\eta_{t}\left(A\right)\right|\right)\,\,\text{,}
\]
and so Property \ref{enu:SingleIterationImageSize} follows since
\begin{align*}
\left|M\right| & \le\left|\Box_{\FF_{d}}\left(C_{\Box}\cdot t\right)\cdot \hat T\right|\cdot\left|X\setminus X_{E}\right|=\left(2C_{\Box}\cdot t+1\right)^{d}\cdot\left|\Tor(\Gamma)\right| \cdot\left|X\setminus X_{E}\right|\\
 & \le\left(3C_{\Box}\cdot t\right)^{d}\cdot\left|\Tor(\Gamma)\right|\cdot\left|X\setminus X_{E}\right|\,\,\text{.}
\end{align*}
Property \ref{enu:SingleIterationEquivariance} holds since 
\[
\left|\Eq f\right|\ge\sum_{x\in J}\left|\Eq f_{x}\right|\ge\sum_{x\in J}\left(1-\frac{d}{t}\right)\cdot\left|\Ima f_{x}\right|=\left(1-\frac{d}{t}\right)\cdot\left|\Ima f\right|\,\,\text{,}
\]
where the second inequality is guaranteed by Equation (\ref{eq:SingleTileEqBound}).

Property \ref{enu:SingleIterationEta} follows since, for $t_{E}\le\tilde{t}\le t$, Equation (\ref{eq:etaBoundGeneralt}) yields
\[
\left|\eta_{\tilde{t}}\left(\Ima f\right)\right|\le\sum_{x\in J}\left|\eta_{\tilde{t}}\left(\Ima f_{x}\right)\right|\le\sum_{x\in J}\left(1+2C_{d}\cdot\frac{\tilde{t}}{t}\right)^{d}\cdot\left|\Ima f_{x}\right|=\left(1+2C_{d}\cdot\frac{\tilde{t}}{t}\right)^{d}\cdot\left|\Ima f\right|\,\,\text{.}
\]
\end{proof}
We turn to describe and analyze the tiling algorithm itself.
\begin{prop}
\label{prop:TilingAlgorithm}Let $X$ be an $\FF_{m}$-set, and denote
$n=\left|X\right|$. Let $\delta>0$, and assume that $\frac{\left|X\setminus X_{E}\right|}{n}\le\delta$.
Then, there is a $\Gamma$-set $Y$ and an injection $f:Y\to X$,
with 
\begin{equation}
\left|\Eq\left(f\right)\right|\ge\left(1-C\cdot\delta^{\frac{1}{Q}}\right)\cdot n\,\,\text{,}\label{eq:TilingAlgorithmEquivariance}
\end{equation}
where $C>0$ is a constant which depends only on the equation-set $E$, and $Q\le O\left(2^{d}\cdot d\cdot\max\left\{ d\log d,\log\beta_{E},1\right\} \right)$
with an absolute implied constant.
\end{prop}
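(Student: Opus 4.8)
The plan is to implement the iterative tiling algorithm outlined in Section \ref{subsec:ProofPlanAlgorithmDescription}, with Proposition \ref{prop:SingleIteration} as the single-iteration step. Fix a constant $c$ of order $C_d$, say $c=2C_d$; this choice guarantees both that the radius $C_d t_{i+1}$ of a tile of size $t_{i+1}$ is small compared with the previous scale $t_i$, and that the geometric sums below converge. Choose integers $t_1>t_2>\dots>t_s\ge t_E$ with $t_{i+1}=\lceil t_i/c\rceil$ and $t_s$ of order $t_E$; the number $s$ of iterations, equivalently the starting value $t_1\approx t_E c^{s-1}$, will be fixed at the end so as to balance the two error terms that arise. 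Starting from $A_0=\emptyset$, in iteration $i$ apply Proposition \ref{prop:SingleIteration} to $X$, to the already-tiled set $A_{i-1}=\bigsqcup_{j<i}\Ima f_j$, and to the parameter $t_i$, obtaining a finite $\Gamma$-set $Y_i$ and an injection $f_i\colon Y_i\to X\setminus A_{i-1}$; set $A_i=A_{i-1}\sqcup\Ima f_i$. Finally put $Y=\coprod_{i=1}^s Y_i$ and $f=\coprod_{i=1}^s f_i\colon Y\to X$. Since the images $\Ima f_i$ are pairwise disjoint, $f$ is a well-defined injection from a $\Gamma$-set, and $\Eq(f)=\bigsqcup_i\Eq(f_i)$.

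The core of the argument is to control the uncovered fraction $\rho_i=|X\setminus A_i|/n$. Writing $\partial_i=\eta_{t_i}(A_{i-1})\setminus A_{i-1}$ for the part of the already-tiled region that must be avoided in iteration $i$, Proposition \ref{prop:SingleIteration}\ref{enu:SingleIterationImageSize}, together with $|X\setminus X_E|\le\delta n$, yields the recursion
\[
\rho_i\;\le\;(1-2^{-d})\,\rho_{i-1}\;+\;2^{-d}\Bigl(\tfrac{|\partial_i|}{n}+(3C_\Box t_i)^d\,|\Tor(\Gamma)|\,\delta\Bigr)\,\,\text{.}
\]
To estimate $|\partial_i|$, observe that any $t_i$-tile meeting $A_{i-1}=\bigcup_{j<i}\Ima f_j$ meets some $\Ima f_j$, so $\eta_{t_i}(A_{i-1})\subseteq\bigcup_{j<i}\eta_{t_i}(\Ima f_j)$; since $t_E\le t_i\le t_j$ for $j<i$, Proposition \ref{prop:SingleIteration}\ref{enu:SingleIterationEta} applied in the earlier iterations gives $|\eta_{t_i}(\Ima f_j)|\le(1+2C_d t_i/t_j)^d|\Ima f_j|$. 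Using $c\ge 2C_d$ (so that $2C_d t_i/t_j\le 1$) and $(1+x)^d-1\le 2^d x$ on $[0,1]$, it follows that $|\partial_i|\le 2^{d+1}C_d\,t_i\sum_{j<i}|\Ima f_j|/t_j$. Because the $t_j$ decrease geometrically with ratio $c$ and, by the recursion itself, $\rho_j$ decays roughly geometrically with ratio near $1-2^{-d}$, the weighted sum $\sum_{j<i}|\Ima f_j|/t_j$ is dominated by its last term and is $O\!\bigl(n(1-2^{-d})^{i-1}/t_i\bigr)$ (up to factors polynomial in $\log(1/\delta)$), so $|\partial_i|/n=O\!\bigl(C_d(1-2^{-d})^{i-1}\bigr)$. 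Here I establish the needed decay of $\rho_j$ by induction, feeding the bound on $\partial_j$ back into the recursion.

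Unrolling the recursion from $\rho_0=1$: the pure greedy loss contributes $(1-2^{-d})^s$, the boundary terms contribute $O\!\bigl(C_d\,s\,(1-2^{-d})^s\bigr)$, and the bad-point terms are dominated by the $i=1$ term and contribute $O\!\bigl((3C_\Box t_1)^d\,|\Tor(\Gamma)|\,\delta\bigr)$. Hence
\[
\rho_s\;\le\;O\!\bigl(s\,C_d\,(1-2^{-d})^s\bigr)\;+\;O\!\bigl((3C_\Box t_1)^d\,|\Tor(\Gamma)|\,\delta\bigr)\,\,\text{.}
\]
Now choose $s$ (equivalently $t_1\approx t_E c^{s-1}$) so that the two terms become comparable. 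Since $(1-2^{-d})^s=\delta^{\,s\log(1/(1-2^{-d}))/\log(1/\delta)}$ and $t_1^d\approx(t_E c^{s-1})^d$, the balance sits at $s\approx\dfrac{\log(1/\delta)}{d\log c+\log\frac{1}{1-2^{-d}}}$, which makes both terms of size $\delta^{1/Q}$, up to a factor polynomial in $\log(1/\delta)$ and a constant depending on $E$, where
\[
Q\;=\;\frac{d\log c+\log\frac{1}{1-2^{-d}}}{\log\frac{1}{1-2^{-d}}}\;\le\;2^d\,d\log c+1\,\,\text{,}
\]
using $\log\frac{1}{1-2^{-d}}\ge 2^{-d}$. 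Since $c=2C_d$ and $\log C_d=\log\max\{3\cdot7^d d^{2d+2},\beta_E\}=O(\max\{d\log d,\log\beta_E,1\})$, and since the polylogarithmic slack can be absorbed into a harmless enlargement of $Q$ (replacing $Q$ by, say, $2Q$), we obtain $\rho_s\le C\delta^{1/Q}$ with $Q\le O\!\bigl(2^d d\max\{d\log d,\log\beta_E,1\}\bigr)$ and $C$ depending only on $E$.

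Finally, to pass from $\rho_s$ to the equivariance count, Proposition \ref{prop:SingleIteration}\ref{enu:SingleIterationEquivariance} gives
\[
|\Eq(f)|\;=\;\sum_{i=1}^s|\Eq(f_i)|\;\ge\;\sum_{i=1}^s\Bigl(1-\tfrac{d}{t_i}\Bigr)|\Ima f_i|\;=\;|A_s|-d\sum_{i=1}^s\frac{|\Ima f_i|}{t_i}\,\,\text{.}
\]
Exactly as in the boundary estimate, $\sum_i|\Ima f_i|/t_i$ is dominated by the last, smallest tiles, whose total measure is $O(\rho_{s-1}n)=O(\delta^{1/Q}n)$, so $d\sum_i|\Ima f_i|/t_i=O_E(\delta^{1/Q}n)$; combined with $|A_s|=(1-\rho_s)n$ this yields $|\Eq(f)|\ge(1-C\delta^{1/Q})n$ after enlarging $C$. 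I expect the main obstacle to be controlling the cumulative boundary $\sum_i|\partial_i|$: one must show that re-tiling the boundary regions of the earlier, coarser tiles with finer tiles does not leak a constant fraction of $X$, which is precisely why $c$ has to be taken of order $C_d$ (so that finer tiles genuinely fit into the gaps) and why the geometric decay of the $\rho_i$ must be used inductively inside the estimate rather than replaced by the trivial bound $\rho_i\le 1$. A secondary nuisance is the bookkeeping needed to keep all $t_i$ integral and $\ge t_E$ while still hitting the balance point for $s$.
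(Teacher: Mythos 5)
Your outline follows the same route as the paper (iterative greedy application of Proposition \ref{prop:SingleIteration}, inductive control of the uncovered fraction, balance of parameters at the end), but your specific choice $c=2C_d$ for the geometric ratio of the $t_i$ makes the recursion blow up rather than contract, and this is a genuine gap. Concretely: you bound
$\frac{|\partial_i|}{n}\le 2^{d+1}C_d\sum_{j<i}\frac{t_i}{t_j}\cdot\frac{|\Ima f_j|}{n}\le 2^{d+1}C_d\sum_{j<i}c^{j-i}\rho_{j-1}\,.$
Feeding in the inductive hypothesis $\rho_{j}\le \gamma^{j}$ and summing the geometric series gives
$\frac{|\partial_i|}{n}\le \frac{2^{d+1}C_d}{c\gamma-1}\,\gamma^{i-1}\,,$
and the recursion becomes
$\rho_i\le\Bigl(1-2^{-d}+\frac{2C_d}{c\gamma-1}\Bigr)\rho_{i-1}+2^{-d}\cdot\text{bad}_i\,.$
With $c=2C_d$ one has $c\gamma-1\approx 2C_d$, so the added term is $\approx 1$, and the multiplier is $\approx 2-2^{-d}>1$: the uncovered fraction $\rho_i$ would grow, not shrink, and the induction cannot close. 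For the recursion to contract you need $\frac{2C_d}{c\gamma-1}<2^{-d}$, i.e.\ $c\gtrsim 2^{d+1}C_d$. Your sentence ``$c=2C_d$ guarantees\ldots that the geometric sums below converge'' is therefore incorrect; mere convergence of $\sum_k(c\gamma)^{-k}$ is not the relevant condition.

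Two further remarks on how the paper avoids this. First, you invoke $(1+x)^d-1\le 2^dx$ on $[0,1]$, which pays a factor $2^d$; the paper instead uses $(1+x)^d\le 1+2dx$ for $0\le x\le\frac{1}{2d}$ (valid because $h=16dC_d+1$ forces $2C_d\lfloor t_i\rfloor/\lfloor t_j\rfloor\le\frac{1}{4d}$), which is sharper and is consistent with a ratio of order only $dC_d$ rather than $2^dC_d$; either fix yields the stated $Q$ since $\log c$ is $O(\max\{d\log d,\log\beta_E,1\})$ in both cases, but your chosen $c$ satisfies neither. Second, the paper's parameterization $t_i=b_i/H_i$ with $H_i=H_1h^{i-1}$ sidesteps the bootstrap entirely: the identity $|\Ima f_j|/t_j\le b_j/t_j=H_j$ makes the boundary sum a clean geometric series in the $H_j$, whereas your fixed geometric $t_i$ forces you to use the inductive decay of $\rho_j$ inside the very estimate that establishes it. That bootstrap can be carried out, but it has to be written down with care, and the ``up to factors polynomial in $\log(1/\delta)$'' slack in your unrolling (the $s\gamma^{s-1}$ term) would need to be justified; the paper avoids it by reading off $|\Ima f|\ge n-2t_E H_{s+1}$ directly from the termination condition.
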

\begin{proof}
We inductively define a sequence $f_{1},\ldots,f_{s}$ of injections into $X$. The domain of each $f_i$ is a finite $\Gamma$-set. 

Let $i\ge1$. Assume that the injections $f_{1},\ldots,f_{i-1}$ have
already been defined. Let $A_{i}=\bigcup_{j=1}^{i-1}\Ima f_{j}$, and
define
\begin{align*}
h & =16d\cdot C_{d}+1\\
H_{i} & =24C_{\Box}\cdot\left|\Tor(\Gamma)\right|^{\frac 1d}\cdot\delta^{\frac{1}{d}} \cdot n\cdot h^{i-1}\\
b_{i} & =n-\left|A_{i}\right|\\
t_{i} & =\frac{b_{i}}{H_{i}}\,\,\text{.}
\end{align*}
It is helpful to remember that $H_1,H_2,\ldots $
is a geometric sequence. To generate the injection $f_{i}$, we apply
Proposition \ref{prop:SingleIteration} to the set $X$, with $t=\left\lfloor t_{i}\right\rfloor $
and $A=A_{i}$. We continue this
process as long as $t_{i}\ge 2t_{E}$ (hence, $\lfloor t_{i}\rfloor\ge t_{E}$), and denote the obtained injections by $f_1,\ldots,f_s$.

Proposition \ref{prop:SingleIteration} guarantees that $\Ima f_{i}\cap A_{i}=\emptyset$,
so the sets $\left(\Ima f_{i}\right)_{i=1}^{s}$ are pairwise disjoint.
Hence, we may take $Y=\coprod_{i=1}^{s} \domain\left(f_{i}\right)$
and $f=\coprod_{i=1}^{s}f_{i}$, yielding an injection from the finite
$\Gamma$-set $Y$ into $X$. We turn to prove that $f$ satisfies
Equation (\ref{eq:TilingAlgorithmEquivariance}).

Let $1\le j\le s$. We seek to control the interference of an injection $f_{j}$ with
subsequent iterations. Let $j<i\le s$. By Proposition \ref{prop:SingleIteration},
\[
\left|\eta_{\left\lfloor t_{i}\right\rfloor }\left(\Ima f_{j}\right)\setminus\Ima f_{j}\right|\le\left(\left(1+\frac{2C_{d}\cdot\left\lfloor t_{i}\right\rfloor }{\left\lfloor t_{j}\right\rfloor }\right)^{d}-1\right)\cdot\left|\Ima f_{j}\right|\,\,\text{.}
\]
Note that, as $t_{j}\ge t_{E}\ge2$, we have $\left\lfloor t_{j}\right\rfloor \ge\frac{t_{j}}{2}$.
Hence,
\[
2\cdot\frac{C_{d}\cdot\left\lfloor t_{i}\right\rfloor }{\left\lfloor t_{j}\right\rfloor }\le4\cdot\frac{C_{d}\cdot t_{i}}{t_{j}}=4C_{d}\cdot\frac{b_{i}H_{j}}{b_{j}H_{i}}\le4C_{d}\cdot\frac{H_{j}}{H_{i}}\le4 C_d\cdot \frac{1}{h}\le\frac{1}{4d}\,\,.
\]
In general, $\left(1+x\right)^{d}\le1+2d\cdot x$ for $0\le x\le\frac{1}{2d}$,
and so, 
\begin{align}
\left|\eta_{\left\lfloor t_{i}\right\rfloor}\left(\Ima f_{j}\right)\setminus\Ima f_{j}\right| & \le\frac{4d\cdot C_{d}\cdot\left\lfloor t_{i}\right\rfloor }{\left\lfloor t_{j}\right\rfloor }\cdot\left|\Ima f_{j}\right|\le\frac{8d\cdot C_{d}\cdot t_{i}}{t_{j}}\cdot\left|\Ima f_{j}\right| \nonumber
\\ &\le\frac{8d\cdot C_{d}\cdot t_{i}}{t_{j}}\cdot b_{j}=8d\cdot C_{d}\cdot t_{i}\cdot H_{j}\,\,\text{.} \label{eq:TilingAlgorithmEtaBetweenRoundsBound}
\end{align}

We next give a lower bound on the number of points covered in the
$i$-th iteration. For $1\le i\le s$, Equation (\ref{eq:TilingAlgorithmEtaBetweenRoundsBound}) yields
\begin{align}
\left|\eta_{\left\lfloor t_{i}\right\rfloor}\left(A_{i}\right)\setminus A_{i}\right| & \le\sum_{j=1}^{i-1}\left|\eta_{\left\lfloor t_{i}\right\rfloor}\left(\Ima f_{j}\right)\setminus\Ima f_{j}\right|\le8d\cdot C_{d}\cdot t_{i}\cdot\sum_{j=1}^{i-1}H_{j}\nonumber \\
 & = 8d\cdot C_{d}\cdot t_{i}\cdot\frac{H_{i}-H_1}{h-1}\le \frac{8d\cdot C_d}{h-1}\cdot t_i\cdot H_i \le\frac{t_{i}\cdot H_{i}}{2}=\frac{b_{i}}{2}\,\,.\label{eq:TilingAlgorithmEtaBound}
\end{align}
By Proposition \ref{prop:SingleIteration}\ref{enu:SingleIterationImageSize},
\[
\left|\Ima f_{i}\right|\ge2^{-d}\cdot\left(n-\left(3C_{\Box}\cdot\lfloor t_{i}\rfloor\right)^{d}\cdot\left|\Tor(\Gamma)\right|\cdot\delta n-\left|\eta_{\lfloor t_{i}\rfloor}\left(A_{i}\right)\right|\right)\,\,\text{.}
\]
Equation (\ref{eq:TilingAlgorithmEtaBound}) yields 
\begin{align*}
\left|\Ima f_{i}\right| & \ge2^{-d}\cdot\left(n-\left(3C_{\Box}\cdot\lfloor t_{i}\rfloor\right)^{d}\cdot\left|\Tor(\Gamma)\right|\cdot\delta n-\frac{b_{i}}{2}-\left|A_{i}\right|\right)\\
 & =2^{-d}\cdot\left(b_{i}-\left(3C_{\Box}\cdot\lfloor t_{i}\rfloor\right)^{d}\cdot\left|\Tor(\Gamma)\right|\cdot\delta n-\frac{b_{i}}{2}\right)\\
 & =2^{-d}\cdot\left(\frac{b_{i}}{2}-\left(3C_{\Box}\cdot\lfloor t_{i}\rfloor\right)^{d}\cdot\left|\Tor(\Gamma)\right|\cdot\delta n\right)\\ &\ge2^{-d}\cdot\left(\frac{b_{i}}{2}-\left(3C_{\Box}\cdot t_{i}\right)^{d}\cdot\left|\Tor(\Gamma)\right|\cdot\delta n\right)\,\,\text{.}
\end{align*}
Now, 
\begin{align*}
\left(3C_{\Box}\cdot t_{i}\right)^{d}\cdot\left|\Tor(\Gamma)\right|\cdot\delta n & =\left(\frac{3C_{\Box}\cdot b_{i}}{H_{i}}\right)^{d}\cdot\left|\Tor(\Gamma)\right|\cdot\delta n\\
&\le\left(\frac{3C_{\Box}\cdot b_{i}}{H_{1}}\right)^{d}\cdot\left|\Tor(\Gamma)\right|\cdot\delta n\\
 & =\left(\frac{b_{i}}{8n}\right)^{d}\cdot n\le\frac{b_{i}}{8n}\cdot n=\frac{b_{i}}{8}\le \frac{b_i}{4}\,\,\text{,}
\end{align*}
and consequently,
$$
\left|\Ima f_{i}\right|\ge2^{-\left(d+2\right)}\cdot b_{i}\,\,\text{.}\
$$
Let $\gamma=1-2^{-\left(d+2\right)}$. Then, $b_{i}=b_{i-1}-\left|\Ima f_{i-1}\right|\le b_{i-1}\cdot\gamma$
for every $2\le i\le s$, so $b_{i}\le n\cdot\gamma^{i-1}$.

Finally, we turn to bound $\left|\Eq f\right|$ from below. Proposition
\ref{prop:SingleIteration}\ref{enu:SingleIterationEquivariance}
gives
\begin{align}
\left|\Ima f\right|-\left|\Eq f\right| & =\left(\sum_{i=1}^{s}\left|\Ima f_{i}\right|\right)-\left|\Eq f\right|\le\sum_{i=1}^{s}\left(\left|\Ima f_{i}\right|-\left|\Eq f_{i}\right|\right)\nonumber \\
 & \le\sum_{i=1}^{s}\frac{d}{\left\lfloor t_{i}\right\rfloor }\cdot\left|\Ima f_{i}\right|\le\sum_{i=1}^{s}\frac{2d}{t_{i}}\cdot\left|\Ima f_{i}\right|=\sum_{i=1}^{s}2d\cdot\frac{\left|\Ima f_{i}\right|}{b_{i}}\cdot H_{i}\nonumber \\
 & \le\sum_{i=1}^{s}2d\cdot H_{i}\le2d\cdot H_{s+1}\,\,\text{.} \label{eq:TilingAlgorithmImfMinusEqf}
\end{align}
By the termination condition of the algorithm $t_{s+1}\le 2t_{E}$, but $t_{s+1}=\frac{b_{s+1}}{H_{s+1}}$ and $b_{s+1} = n-\left|\Ima f\right|$. Consequently, $\left|\Ima f\right|\ge n-2t_{E}\cdot H_{s+1}$. Together with Equation (\ref{eq:TilingAlgorithmImfMinusEqf}), this implies
\begin{equation}
\left|\Eq f\right|=\left|\Ima f\right|-\left|\Ima f\setminus\Eq f\right|\ge n-2\left(t_{E}+d\right)\cdot H_{s+1}\,\,\text{.}\label{eq:TilingAlgorithmEqfFirstBound}
\end{equation}
In order to bound $H_{s+1}$, we first need to bound $s$. Appealing
again to the termination condition, we have
\[
2t_{E}\le t_{s}=\frac{b_{s}}{H_{s}}\le\frac{n\cdot\gamma^{s-1}}{H_{1}\cdot h^{s-1}}\,\,\text{, }
\]
so 
\[
s\le \log_{\gamma/h}\left(\frac {2t_E\cdot H_1}{n}\right)+1 \le \frac{\log\left(48t_{E}\cdot C_{\Box}\cdot\left|\Tor(\Gamma)\right|^{\frac 1d}\cdot\delta^{\frac{1}{d}}\right)}{\log\gamma-\log h}+1\,\,\text{.}
\]
Hence, 
\[
h^{s}\le C'\cdot\delta^{\frac{\log h}{d\left(\log\gamma-\log h\right)}}
\]
for some $C'>0$ which depends only on the equation-set $E$. Thus,
\begin{equation}
H_{s+1}=24C_{\Box}\cdot\left|\Tor(\Gamma)\right|^{\frac 1d}\cdot n\cdot\delta^{\frac{1}{d}}\cdot h^{s}\le C''\cdot n\cdot\delta^{\frac{1}{d}+\frac{\log h}{d\left(\log\gamma-\log h\right)}}=C''\cdot n\cdot\delta^{\frac{1}{Q}}\,\,\text{.}\label{eq:TilingAlgorithmH_s Bound}
\end{equation}
where $C''=24C_{\Box}\cdot\left|\Tor(\Gamma)\right|^{\frac 1d}\cdot C'$ and $Q=d\cdot\left(1-\frac{\log h}{\log\gamma}\right)$.
Now, 
\begin{align*}
Q & =d\cdot\left(1-\frac{\log h}{\log\gamma}\right)\le d\cdot\left(1+\frac{\log h}{2^{-(d+2)}}\right)\le O\left(2^{d}\cdot d\cdot\log \left(d\cdot C_d\right)\right)\\
 & \le O\left(2^{d}\cdot d\cdot\max\left\{ d\log d,\log\beta_{E},1\right\} \right)\,\,,
\end{align*}
where the implied constant is absolute. Therefore, the proposition
follows from Equations (\ref{eq:TilingAlgorithmEqfFirstBound}) and
(\ref{eq:TilingAlgorithmH_s Bound}). 
\end{proof}
Finally, we turn to proving the main theorem.
\begin{proof}[Proof of Theorem \ref{thm:AbelianGroupsArePolynomiallyStable}] By Lemma \ref{lem:equivariance-points} and
Proposition \ref{prop:TilingAlgorithm}, for any finite $\Gamma$-set
$X$ we have
\[
G_{E}\left(X\right)\le\left|S\right|\cdot C\delta^{\frac{1}{Q}}\,\,\text{,}
\]
where $C$ and $Q$ are as in Proposition \ref{prop:TilingAlgorithm}.
This yields Equation (\ref{eq:DBound}) from the beginning of Section \ref{sec:AbelianGroupsArePolynomiallyStable}, which is just a more elaborate
form of Theorem \ref{thm:AbelianGroupsArePolynomiallyStable}.
\end{proof}

\section{A lower bound on the polynomial stability degree of $\protect\ZZ^{d}$} \label{sec:LowerBound}

We turn to prove Theorem \ref{thm:DegreeLowerBound}. We rely here on the formulation of stability in terms of group actions and labeled graphs, which was introduced in Section \ref{sec:stability-and-graphs}. The current section 
is independent of Section \ref{sec:AbelianGroupsArePolynomiallyStable},
except for the notion of a \emph{sorted word }and the corresponding
notation $\hat{r}$ for $r\in\ZZ^{d}$ (see Section \ref{subsec:GeometricDefinitions}). 

Fix a constant $d\in\NN$ and fix generator sets $S=\left\{ e_{1},\ldots,e_{d}\right\} $
and $\hat{S}=\left\{ \hat{e}_{1},\ldots,\hat{e}_{d}\right\} $ for
$\ZZ^{d}$ and $\FF_{d}$, respectively. Note that we have a natural
homomorphism $\text{\ensuremath{\pi}:}\FF_{d}\to\ZZ^{d}$ which maps
$\hat{e}_{i}$ to $e_{i}$. Let $E$ denote the \emph{commutator equation-set
}$E_{\text{comm}}^{d}=\left\{ \hat{e}_{i}\hat{e}_{j}\hat{e}_{i}^{-1}\hat{e}_{j}^{-1}\mid1\le i<j\le d\right\} $
from Equation (\ref{eq:E_comm}) and note that $\FF_{d}/\lla E_{\text{comm}}^{d}\rra\cong\ZZ^{d}$.
We seek to construct an infinite sequence of $\FF_{d}$-sets $\left\{ X_{t}\right\} _{t=1}^{\infty}$,
with $\lim_{t\to\infty}L_{E}\left(X_{t}\right)=0$ and 
\begin{equation}
G_{E}\left(X_{t}\right)\ge\Omega_{t\to\infty}\left(L_{E}\left(X_{t}\right)^{\frac{1}{d}}\right)\,\,\text{.}\label{eq:LowerBoundGlobalDefect}
\end{equation}
By virtue of Proposition \ref{prop:stability-rate-in-terms-of-actions}, Inequality (\ref{eq:LowerBoundGlobalDefect}) implies
that $\deg\left(\SR_{E}\right)\ge d$, yielding Theorem \ref{thm:DegreeLowerBound}.

Fix a positive integer $t$. Given $x\in\ZZ^{d}$, let $\left[x\right]$ denote its coset in the quotient group $\ZZ^{d}/\left(t\cdot\ZZ^{d}\right)$.
We build the set $X_{t}$ by taking the natural action of $\FF_{d}$
on $\ZZ^{d}/\left(t\cdot\ZZ^{d}\right)$, removing a single point,
and slightly fixing the result so that it remains an $\FF_{d}$-set.
Concretely, the ground-set for $X_{t}$ is $\left(\ZZ^{d}/\left(t\cdot\ZZ^{d}\right)\right)\setminus\left\{ \left[0\right]\right\} $.
Each generator $\hat{e}_{i}$ acts by taking $\left[(t-1)\cdot e_{i}\right]$
to $\left[e_{i}\right]$, and mapping any other $\left[\sum_{j=1}^{d}a_{j}\cdot e_{j}\right]$
to $\left[\left(\sum_{j=1}^{d}a_{j}\cdot e_{j}\right)+e_{i}\right]$,
as usual. 

We wish to compute $L_{E}\left(X_{t}\right)$. Let $1\le i<j\le d$,
and consider the set of points in which the relation $\left[\hat{e}_{i},\hat{e}_{j}\right]$
is violated, namely $\left\{ x\in X_{t}\mid\hat{e}_{i}\hat{e}_{j}\hat{e}_{i}^{-1}\hat{e}_{j}^{-1}\cdot x\ne x\right\} $.
It is not hard to verify that these are exactly the three points 
\[
\left\{ \left[e_{i}\right],\left[e_{j}\right],\left[e_{i}+e_{j}\right]\right\} \,\,\text{.}
\]
Summing over the $\binom d2$ elements of $E$, Definition \ref{def:LocalDefectOfAction} gives 
\[
L_{E}\left(X_{t}\right)=\frac{1}{\left|X_{t}\right|}\cdot\binom{d}{2}\cdot3\,\,\text{.}
\]
Since $\left|X_{t}\right|=t^{d}-1$, it follows that $L_{E}\left(X_{t}\right)\le O_{t\to\infty}\left(t^{-d}\right)$.
Hence, the following proposition implies Equation (\ref{eq:LowerBoundGlobalDefect}),
which yields Theorem \ref{thm:DegreeLowerBound}.
\begin{prop}
$G_{E}\left(X_{t}\right)\ge\Omega_{t\to\infty}\left(t^{-1}\right)$
\end{prop}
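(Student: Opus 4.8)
The plan is to show that if $\epsilon:=G_{E}(X_{t})$ is small, then the witnessing $\ZZ^{d}$-set must be assembled, for the most part, from quotients of $\ZZ^{d}/t\ZZ^{d}$ that are strictly smaller than $t^{d}$, and that such pieces are too ``small at the boundary'' to tile almost all of $X_{t}$ — contradicting the smallness of $\epsilon$ via the isoperimetric inequality on the torus. We may assume $d\ge2$ (for $d\le1$ the bound $\deg(\SR_{E})\ge d$ needed for Theorem~\ref{thm:DegreeLowerBound} holds trivially, since $\deg(\SR_{E})\ge1$ by Definition~\ref{def:degree}). Since the minimum defining $G_{E}$ (Definition~\ref{def:global-defect-of-action}) is attained, fix a $\ZZ^{d}$-set $Y$ with $|Y|=|X_{t}|=:n=t^{d}-1$ and a bijection $f\colon X_{t}\to Y$ with $\|f\|_{S}=\epsilon$, and write $Y=\bigsqcup_{k}Y_{k}$ with each $Y_{k}\cong\ZZ^{d}/H_{k}$ transitive. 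We work with $t\ge4d$; if $\epsilon\ge\frac{1}{4dt}$ we are already done, so assume $\epsilon<\frac{1}{4dt}$ and derive $\epsilon\ge\Omega(1/t)$.

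\emph{Step 1: most of $Y$ consists of small quotients of $\ZZ^{d}/t\ZZ^{d}$.} From the definition of the action, $\Phi(\hat{e}_{i})$ is the single $(t-1)$-cycle $([e_{i}],[2e_{i}],\dots,[(t-1)e_{i}])$ together with $t^{d-1}-1$ ordinary $t$-cycles, so $\Phi(\hat{e}_{i}^{\,t})=\Phi(\hat{e}_{i})^{t}$ fixes all but $t-1$ points of $X_{t}$. Let $\Psi\colon S\to\Sym(X_{t})$ be the pulled-back assignment $\Psi(\hat{e}_{i})(x)=f^{-1}(e_{i}\cdot f(x))$; since $Y$ is a $\ZZ^{d}$-set, $\Psi$ is an $E$-solution and $d_{n}(\Phi,\Psi)=\|f\|_{S}=\epsilon$. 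By Lemma~\ref{lem:d_n(Phi(w),Psi(w))} applied to the length-$t$ word $\hat{e}_{i}^{\,t}$, $d_{n}(\Phi(\hat{e}_{i}^{\,t}),\Psi(\hat{e}_{i}^{\,t}))\le t\epsilon$; and $\Psi(\hat{e}_{i}^{\,t})(x)=f^{-1}((te_{i})\cdot f(x))$, so the element $te_{i}\in\ZZ^{d}$ fixes at least $n-(t-1)-t\epsilon n\ge(1-\tfrac{1}{2d})n$ points of $Y$ (using $(t-1)/n\le1/t$, valid for $d\ge2$). On each transitive $Y_{k}$, $te_{i}$ fixes either all points (iff $te_{i}\in H_{k}$) or none; hence the orbits with $te_{i}\in H_{k}$ cover $\ge(1-\tfrac{1}{2d})n$ of $Y$, and intersecting over $i=1,\dots,d$, the orbits $Y_{k}$ with $t\ZZ^{d}\subseteq H_{k}$ cover at least $n/2$. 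Each such $Y_{k}$ is a quotient group of $\ZZ^{d}/t\ZZ^{d}$, so $|Y_{k}|$ divides $t^{d}$; as $|Y_{k}|\le n<t^{d}$, we conclude $|Y_{k}|\le t^{d}/2$.

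\emph{Step 2: an isoperimetric contradiction.} The sets $A_{k}:=f^{-1}(Y_{k})$ partition $X_{t}$, and any edge of $X_{t}$ joining two different parts is ``bad'' for $f$: if $x\in A_{k}$, $s\in S^{\pm}$ and $s\cdot x\in A_{j}$ with $j\ne k$, then $s\cdot f(x)$ lies in the $\ZZ^{d}$-orbit $Y_{k}$ whereas $f(s\cdot x)\in Y_{j}$, so $f(s\cdot x)\ne s\cdot f(x)$. Thus, writing $\partial A=\{(x,s)\in A\times S^{\pm}:s\cdot x\notin A\}$, we have $\sum_{k}|\partial A_{k}|\le2n\epsilon$. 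Let $K'$ collect the indices with $t\ZZ^{d}\subseteq H_{k}$, so $\sum_{k\in K'}|A_{k}|\ge n/2$ and $|A_{k}|\le t^{d}/2$ for $k\in K'$. The graph of $X_{t}$ is the discrete $d$-torus (the Cayley graph of $\ZZ^{d}/t\ZZ^{d}$) with the vertex $[0]$ deleted and $d$ edges added, so $|\partial A_{k}|\ge|\partial_{\mathrm{torus}}A_{k}|-2d$; by the edge-isoperimetric inequality on the discrete torus, $|\partial_{\mathrm{torus}}A|\ge c_{d}\,|A|^{(d-1)/d}$ for $|A|\le\tfrac12 t^{d}$ (some $c_{d}>0$), hence $|\partial A_{k}|\ge c_{d}|A_{k}|^{(d-1)/d}-2d$. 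Also $X_{t}$ is connected and each $A_{k}\ne\emptyset,X_{t}$, so $|\partial A_{k}|\ge1$ and therefore $|K'|\le\sum_{k\in K'}|\partial A_{k}|\le2n\epsilon$. Using $a^{(d-1)/d}\ge a/t$ for $0<a\le t^{d}/2$ and summing over $k\in K'$,
\[
\frac{n}{2t}\ \le\ \sum_{k\in K'}|A_{k}|^{(d-1)/d}\ \le\ \frac{1}{c_{d}}\Bigl(\sum_{k\in K'}|\partial A_{k}|+2d\,|K'|\Bigr)\ \le\ \frac{(2+4d)\,n\epsilon}{c_{d}},
\]
so $\epsilon\ge\frac{c_{d}}{(4+8d)\,t}=\Omega_{t\to\infty}(t^{-1})$, which together with the case $\epsilon\ge\frac{1}{4dt}$ proves the proposition.

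The delicate part is Step~1: transporting the near-triviality of $\Phi(\hat{e}_{i}^{\,t})$ on $X_{t}$, through the bijection $f$ and Lemma~\ref{lem:d_n(Phi(w),Psi(w))}, into the structural conclusion that almost all orbits of $Y$ are quotients of $\ZZ^{d}/t\ZZ^{d}$ of size at most $t^{d}/2$; this hinges on handling the pulled-back assignment correctly, on the all-or-nothing fixed-point behaviour of $te_{i}$ on transitive $\ZZ^{d}$-sets, and on the union bound over the $d$ coordinate directions. Step~2 is then routine once one is careful to invoke the torus isoperimetric inequality only for subsets of size at most half the torus and to transfer it from $\ZZ^{d}/t\ZZ^{d}$ to the punctured-and-patched torus $X_{t}$; the closing estimate is just the convexity bound $\sum_{k}a_{k}^{(d-1)/d}\ge(\sum_{k}a_{k})/\max_{k}a_{k}^{1/d}$.
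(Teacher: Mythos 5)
Your argument is correct, but it takes a genuinely different route from the paper. The paper's proof is elementary and self-contained: for each $\FF_d$-orbit $U$ of $Y$, it uses a pigeonhole argument ($|Y|<t^d$) to produce a short nonzero $r\in\Stab_{\ZZ^d}(u)$ with all coordinates in $[-(t-1),t-1]$, observes that the sorted word $\hat r$ fixes every $y\in U$ but moves every point of $X^0$ (since a nontrivial sorted stabilizing word in $X_t$ must use some generator at least $t$ times), deduces that the $\hat r$-path from each $y\in U\cap Y^0$ must traverse a non-preserved edge, and then counts: for each fixed position $j$ the $j$-th edge of the path is distinct across $y$, so an $\hat e_i$-edge lies on at most $|\alpha_i|\le t-1$ paths, giving $\|f\|_S\ge |Y^0|/((t-1)|Y|)=\Omega(1/t)$. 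Your proof instead pulls the assignment back through $f$, uses Lemma~\ref{lem:d_n(Phi(w),Psi(w))} on the word $\hat e_i^{\,t}$ together with the all-or-nothing behaviour of $te_i$ on a transitive $\ZZ^d$-set to show that orbits $Y_k$ with $t\ZZ^d\subseteq H_k$ cover at least half of $Y$ and each satisfies $|Y_k|\le t^d/2$, and then invokes the edge-isoperimetric inequality on the discrete torus $(\ZZ/t\ZZ)^d$ (after the $O(d)$-error transfer to the punctured-and-patched graph $X_t$) to show that a partition of $X_t$ into pieces of size at most $t^d/2$ with few crossing edges is impossible. Both yield the same $\Omega(1/t)$ bound; your version is conceptually cleaner (it names the isoperimetry of the torus as the underlying obstruction) but imports a nontrivial external theorem, whereas the paper's path-counting argument is entirely elementary. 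A minor point: as you note, the proposition as stated fails for $d=1$ (where $X_t$ is already a $\ZZ$-set and $G_E(X_t)=0$); the paper's proof also tacitly assumes $d\ge2$ (its final display is zero at $d=1$), so your explicit restriction and the remark that $d=1$ is vacuous for Theorem~\ref{thm:DegreeLowerBound} are appropriate.
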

\begin{proof}
Let $Y$ be a $\ZZ^{d}$-set of size $t^{d}-1$ and let $f:Y\to X_{t}$
be a bijection. We need to prove that $\left\Vert f\right\Vert _{S}\ge\Omega_{t\to\infty}\left(\frac{1}{t}\right)$. 

Define the set 
\[
X^{0}=X_{t}\setminus\left(\bigcup_{i=1}^{d}\left\{ \left[k\cdot e_{i}\right]\mid0\le k\le t-1\right\} \right)\,\,\text{,}
\]
and let $Y^{0}=f^{-1}\left(X^{0}\right)$. 

Let $U$ be a connected component of $Y$, i.e., an $\FF_{d}$-orbit,
 and let $u\in U$. Note that the subset $A=\left\{ \sum_{i=1}^{d}\alpha_{i}\cdot e_{i}\mid0\le\alpha_{i}\le t-1\text{ for every }1\le i\le d\right\} $
of $\ZZ^{d}$ does not inject into $U$ at $u$, since $|A|=t^{d}>|Y|\ge|U|$.
Hence, there are $p\ne q\in A$ such that $p\cdot u=q\cdot u$. Let
$0\ne r=p-q$, so $r\in\Stab_{\ZZ^{d}}(u)$. Hence, $r\in\Stab_{\ZZ^{d}}\left(y\right)$
for every $y\in U$. Write $r=\sum_{i=1}^{d}\alpha_{i}\cdot e_{i}$ where
$-(t-1)\le\alpha_{i}\le t-1$ and define $w\in\FF_{d}$ by $w=\hat{r}$
(see Section \ref{subsec:GeometricDefinitions}). We also write $w=w_{1}\cdots w_{k}$
as a reduced word, with $w_{i}\in\hat{S}^{\pm}$. Note that $w\cdot y=y$
for every $y\in U$. However, it is not hard to see that $w\cdot x \ne x$
for each $x\in X^{0}$. Indeed, a non-trivial sorted word in $\Stab_{\FF_{d}}(x)$
must contain some generator or its inverse at least $t$ times. 

For $y\in U\cap Y^0$, let $P_{y}$ denote the set of edges in the
path that starts at $y$ and proceeds as directed by $w$. Namely,
the first such edge is $y\overset{w_{k}}{\longrightarrow}w_{k}\cdot y$,
then $w_{k}\cdot y\overset{w_{k-1}}{\longrightarrow}w_{k-1}w_{k}\cdot y$,
and so on (if $w_{i}$ is the inverse of a generator, we walk along
an edge backwards). Assume that $f$ preserves all of the edges in
$P_{y}$. Then $f(w\cdot y)=w\cdot f(y)$, but this is absurd, since
$w\cdot y=y$, while $w\cdot f(y)\ne f(y)$, as $f(y)\in X^0$. Hence, $P_{y}$ must
contain a non-preserved edge. 

We claim that each edge of $Y$ is contained in at most $t-1$ of the paths $\left\{P_y\right\}_{y\in U\cap Y^0}$. Note that for $1\le j\le k$, the map that takes $y\in U\cap Y^{0}$ to the $j$-th edge in the path $P_{y}$ is an injection. Thus, an
edge labeled $\hat{e}_{i}$ ($i\in [d]$) is contained in at most $\left|\alpha_{i}\right|\le t-1$
of these paths, yielding the claim. Since every
such path contains an edge which is not preserved by $f$, it follows
that $U$ contains at least $\frac{\left|U\cap Y^{0}\right|}{t-1}$
non-preserved edges. Summing over all connected components $U$ yields
\[
\left\Vert f\right\Vert _{S}\ge\frac{1}{|Y|}\sum_{U}\frac{\left|U\cap Y^{0}\right|}{t-1}=\frac{1}{t^d-1}\cdot \frac{\left|Y^{0}\right|}{t-1}=\frac{t^{d}-1-d(t-1)}{\left(t^{d}-1\right)\cdot(t-1)}\ge\Omega_{t\to\infty}\left(\frac{1}{t}\right)\,\,\text{.}
\]
\end{proof}

\section{Discussion}
In this work we proved that finitely generated abelian groups are polynomially stable, and bounded their degree of polynomial stability. For the free abelian group $\ZZ^d$, our lower and upper bounds on the degree are, respectively, $d$ and an exponential expression in $d$. It would be interesting to close this gap. We note that the exponential term in our upper bound comes from the $2^d$-factor in the right-hand side of Equation (\ref{eq:etaBoundSamet}). More precisely, replacing this factor in Equation (\ref{eq:etaBoundSamet}) by some smaller term, would yield the same replacement in Equation (\ref{eq:CBound}).

In particular, when $d$ is small, some of our lemmas can be simplified. For instance, in a lattice of degree $\le 4$, the vectors yielding the local minima form a basis (see p.\ 51 of \cite{Mar02}, cf.\ Proposition \ref{prop:Lenstra}). This fact may be helpful in computing the exact degree of polynomial stability for certain constant values of $d$, starting with $d=2$. 

Another open question that suggests itself is whether polynomial stability holds for a larger class of groups, for example, groups of polynomial growth.

One may also consider a more flexible notion of stability (see Section 4 of \cite{BLpropertyT}): In our definition of the global defect of an $\FF_m$-set $X$, we do not allow adding points to $X$. It is also natural to consider a model where adding points is allowed. More precisely, given two finite $\FF_m$-sets $X$ and $Y$, $|X|\le |Y|$, we allow making $X$ isomorphic to $Y$ by first adding $|Y|-|X|$ points, and then adding and modifying edges. We set the cost of edge addition and edge modification to $\frac 1{|X|}$ per edge. This generalizes Definition \ref{def:DistanceBetweenFSets}. 

Note that our proof of Theorem \ref{thm:DegreeLowerBound} does not hold under the above model, since one can transform $X_t$ to a $\ZZ^d$-set by augmenting it with a single point, and then changing a constant number of edges. We do have a proof (not included in the present paper) applicable for this model, that the degree of polynomial stability of $\ZZ^d$ ($d\ge 2$) is at least $2$. We do not know of a better bound.

In regard to the applications to property testing (Section \ref{subsec:PropertyTesting}), our observation that an equation-set is polynomially stable if and only if its \emph{canonical} tester is efficient, raises the question of which sets of equations admit \emph{any} efficient tester. The subject of \cite{BLtesting} is a similar question in the non-quantitative setting.

Finally, we note that the proof of Proposition \ref{prop:TilingAlgorithm} gives, in fact, a stronger statement: Each orbit of the constructed set $Y$ is of size at most $O(\frac{1}{\delta})$. So, for every finite $\FF_m$-set $X$ and
$\delta>\frac{\left|X\setminus X_E\right|}{\left|X\right|}$,
the set $X$ is $O(\delta^{1/Q})$-close to a $\Gamma$-set $Y$ whose orbits are of size at most $O(\frac{1}{\delta})$.

\section*{Acknowledgments}
We, the authors, would like express our gratitude to Alex Lubotzky for his patient and devoted support, for his encouragement, and for many important suggestions regarding the presentation of the ideas in this work. We are also indebted to Nati Linial for his support and help. 

\appendix
\section{Stability of certain quotients and finite groups}

The first goal of this appendix is to prove Proposition \ref{prop:quotient-by-fg},
which relates the stability rate of a group $\Gamma$ and some of
its quotients. This proposition is of independent interest, and also
enables a somewhat simplified approach to the proof of our main theorem
(Theorem \ref{thm:AbelianGroupsArePolynomiallyStable}), as described
in the beginning of Section \ref{sec:AbelianGroupsArePolynomiallyStable}.
\begin{lem}
\label{lem:local-defect-of-close-actions}Let $\FF$ be a free group
on a finite set $S$, and $E\subseteq\FF$ a finite subset. Let $X$
and $Y$ be finite $\FF$-sets, $\left|X\right|=\left|Y\right|$.
Then,
\begin{equation}
L_{E}\left(Y\right)\leq L_{E}\left(X\right)+\left(\sum_{w\in E}\left|w\right|\right)\cdot d_{S}\left(X,Y\right)\,\,\text{.}\label{eq:appendix-local-defect}
\end{equation}
Consequently,
\begin{equation}
L_{E}\left(Y\right)\leq\left(\sum_{w\in E}\left|w\right|\right)\cdot G_{E}\left(Y\right)\,\,\text{.}\label{eq:local-and-global-general-bound}
\end{equation}
\end{lem}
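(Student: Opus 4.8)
The plan is to establish Inequality (\ref{eq:appendix-local-defect}) first, since Inequality (\ref{eq:local-and-global-general-bound}) follows from it essentially for free: given a $\Gamma$-set $Y$ (or rather, given that $Y$ achieves the minimum in the definition of $G_E(Y)$), one takes an $\FF$-set $X$ on which $E$ acts trivially — for instance a trivial $\FF$-set, or more to the point the $\Gamma$-set $X$ realizing $G_E(Y)=d_S(X,Y)$, which has $L_E(X)=0$ — and plugs it into (\ref{eq:appendix-local-defect}). Since $L_E(X)=0$ and $d_S(X,Y)=G_E(Y)$, we get $L_E(Y)\le\left(\sum_{w\in E}|w|\right)\cdot G_E(Y)$ immediately. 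So the whole content is in (\ref{eq:appendix-local-defect}).

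For (\ref{eq:appendix-local-defect}), first I would fix a bijection $f\colon X\to Y$ achieving $d_S(X,Y)=\|f\|_S$. The idea is the ``path of directions'' heuristic already used for Lemma \ref{lem:d_n(Phi(w),Psi(w))}: for a word $w=w_1\cdots w_t$ with $w_i\in S^{\pm}$, if $f$ preserves all the edges along the path traced out from a point $x$ by the letters of $w$ (read in the appropriate order), then $f(w\cdot x)=w\cdot f(x)$, hence $w\cdot x=x \iff w\cdot f(x)=f(x)$ (using that $f$ is a bijection). Concretely, I would show that for each $w\in E$,
\[
|\{y\in Y\mid w\cdot y\ne y\}| \;\le\; |\{x\in X\mid w\cdot x\ne x\}| \;+\; |w|\cdot n\cdot \|f\|_S,
\]
where $n=|X|=|Y|$: a point $y=f(x)$ on which the behavior of $w$ differs between $X$ and $Y$ must have at least one non-preserved edge along its $w$-path, and for a fixed letter position $j\in\{1,\dots,|w|\}$ and a fixed generator $s\in S$, the number of $x\in X$ whose $j$-th step is a non-preserved $s$-edge is at most the number of $s$-non-preserved points, because $x\mapsto(\text{the }j\text{-th vertex on the }w\text{-path})$ is a bijection of $X$. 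Summing over the $|w|$ positions gives the $|w|\cdot n\cdot\|f\|_S$ term (here I am folding the per-generator counts into $\|f\|_S$, which already sums over $s\in S$). Dividing by $n$ and summing over $w\in E$ yields (\ref{eq:appendix-local-defect}), since $L_E(Y)=\frac1n\sum_{w\in E}|\{y\mid w\cdot y\ne y\}|$ and likewise for $X$, and $\sum_{w\in E}|w|\,\|f\|_S = \left(\sum_{w\in E}|w|\right)d_S(X,Y)$.

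The main obstacle — really the only place requiring care — is the bookkeeping in the counting step: making precise that ``having a non-preserved edge on the $w$-path of $x$'' can be charged to a non-preserved $(s,x')$ pair without overcounting more than a factor of $|w|$, and correctly handling the case where $w_i$ is an inverse generator (the edge is then traversed backwards, and one uses $\|f\|_S$'s symmetry under $s\leftrightarrow s^{-1}$, exactly as in the proof of Lemma \ref{lem:d_n(Phi(w),Psi(w))}). Everything else is routine summation. I would present the inverse-generator case by the same device used in Lemma \ref{lem:d_n(Phi(w),Psi(w))}: an $s^{-1}$-labeled step from $p$ to $q$ is preserved iff the $s$-labeled step from $q$ to $p$ is, so the count of non-preserved steps of either orientation at a given position is bounded by the relevant term of $\|f\|_S$.
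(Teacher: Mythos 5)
Your proposal is correct. Its content is essentially the same as the paper's: both hinge on the observation that if every edge along the $w$-path starting at a point is preserved, then the action of $w$ is preserved, and both charge the failure set to the non-preserved edges with a per-letter overcounting factor of at most $|w|$. The only difference is one of presentation: the paper translates $X,Y$ into $S$-assignments $\Phi_X,\Phi_Y$ with $d_n(\Phi_X,\Phi_Y)=d_S(X,Y)$, applies the triangle inequality, and cites the already-established Lemma \ref{lem:d_n(Phi(w),Psi(w))}, whereas you remain in the $\FF$-set language and reprove the content of that lemma inline via a direct bijection-counting argument (your ``$j$-th vertex of the $w$-path'' map). Your treatment of inverse-generator steps matches the paper's device in Lemma \ref{lem:d_n(Phi(w),Psi(w))}. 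For the second inequality, the paper uses a contradiction; your direct substitution of the minimizing $\Gamma$-set into (\ref{eq:appendix-local-defect}) is a cleaner way to say the same thing (and the minimum is indeed attained, as there are only finitely many $\Gamma$-actions on a set of a given finite size).
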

\begin{proof}
Write $n=\left|X\right|=\left|Y\right|$. Recalling the notation of Section \ref{subsec:stability-in-terms-of-actions}, let $\Phi_{X},\Phi_{Y}:S\rightarrow\Sym\left(n\right)$
be $S$-assignments such that $\FF\left(\Phi_{X}\right)$ is isomorphic
to $X$, $\FF\left(\Phi_{Y}\right)$ is isomorphic to $Y$, and $d_{S}\left(X,Y\right)=d_{n}\left(\Phi_{X},\Phi_{Y}\right)$.
Then, using the triangle inequality and Lemma \ref{lem:d_n(Phi(w),Psi(w))},
we see that
\begin{align*}
L_{E}\left(Y\right) & =L_{E}\left(\Phi_{Y}\right)\\
 & =\sum_{w\in E}d_{n}\left(\Phi_{Y}\left(w\right),1\right)\\
 & \leq\sum_{w\in E}d_{n}\left(\Phi_{Y}\left(w\right),\Phi_{X}\left(w\right)\right)+\sum_{w\in E}d_{n}\left(\Phi_{X}\left(w\right),1\right)\\
 & \leq\sum_{w\in E}\left|w\right|\cdot d_{n}\left(\Phi_{Y},\Phi_{X}\right)+L_{E}\left(\Phi_{X}\right)\\
 & =\left(\sum_{w\in E}\left|w\right|\right)\cdot d_{S}\left(X,Y\right)+L_{E}\left(X\right)\,\,\text{.}
\end{align*}

We turn to the last assertion (\ref{eq:local-and-global-general-bound}).
Assume, for the sake of contradiction, that $L_{E}\left(Y\right)>\left(\sum_{w\in E}\left|w\right|\right)\cdot G_{E}\left(Y\right)$.
Write $\Gamma=\FF/\lla E\rra$. Then, there is a $\Gamma$-set $Z$
such that $L_{E}\left(Y\right)>\left(\sum_{w\in E}\left|w\right|\right)\cdot d_{S}\left(Z,Y\right)$.
But $L_{E}\left(Z\right)=0$, and so, applying Inequality (\ref{eq:appendix-local-defect})
with $X=Z$, we get a contradiction.
\end{proof}
\begin{defn}
Let $F_{1},F_{2}:(0,\left|E\right|]\to[0,\infty)$ be monotone nondecreasing
functions. Write $F_{1}\precsim F_{2}$ if $F_{1}(\delta)\le F_{2}(C\delta)+C\delta$
for some $C>0$. Recalling the equivalence relation $\sim$ from Definition
\ref{def:FunctionEquivalence}, we have $F_{1}\sim F_{2}$ if and
only if $F_{1}\precsim F_{2}$ and $F_{2}\precsim F_{1}$. The partial
order $\precsim$ on functions enables us to define a partial order
on $\sim$-equivalence classes, namely, $\left[F_{1}\right]\precsim\left[F_{2}\right]$
if and only if $F_{1}\precsim F_{2}$.
\end{defn}
The following proposition relates the stability rate (see Definition
\ref{def:GroupStabilityRate}) of a group $\Gamma$ to the stability
rate of certain quotients $\Gamma/N$.
\begin{prop}
\label{prop:quotient-by-fg}Let $\Gamma$ be a finitely-presented
group. Let $N$ be a normal subgroup of $\Gamma$. Assume that $N$
is a finitely-generated group. Then, $\SR_{\Gamma/N}\precsim C\cdot\SR_{\Gamma}$
for some $C=C\left(\Gamma,N\right)>0$. In particular: (i) if $\Gamma$
is stable, then so is $\Gamma/N$, and (ii) $\deg(\SR_{\Gamma/N})\leq\deg\left(\SR_{\Gamma}\right)$.
\end{prop}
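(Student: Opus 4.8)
The plan is to reduce to a concrete pair of equation-sets. Write $\Gamma=\FF_S/\lla E\rra$ with $S,E$ finite, pick elements $n_1,\dots,n_k\in\Gamma$ generating $N$, lift them to $\tilde n_1,\dots,\tilde n_k\in\FF_S$, and set $E'=E\cup\{\tilde n_1,\dots,\tilde n_k\}$. Since $N$ is already normal, $\FF_S/\lla E'\rra\cong\Gamma/N$, and this group is finitely presented, so by Definition \ref{def:GroupStabilityRate} we have $\SR_{\Gamma/N}=[\SR_{E'}]$; hence it suffices to prove $\SR_{E'}(\delta)\le C\cdot\SR_E(C\delta)+C\delta$ for some constant $C$ depending on $\Gamma$, $N$ and the fixed lifts. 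I would fix an $S$-assignment $\Phi:S\to\Sym(n)$ with $L_{E'}(\Phi)\le\delta$ and construct a nearby $E'$-solution.

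Because $E\subseteq E'$ we have $L_E(\Phi)\le\delta$, so there is an $E$-solution $\Psi:S\to\Sym(n)$ with $d_n(\Phi,\Psi)=G_E(\Phi)\le\SR_E(\delta)$; regard $\Psi$ as a $\Gamma$-set $Z$ on $[n]$. Applying Lemma \ref{lem:local-defect-of-close-actions} to $\FF_S(\Phi)$ and $\FF_S(\Psi)$ and writing $\ell=\sum_{w\in E'}|w|$, we get $L_{E'}(\Psi)\le L_{E'}(\Phi)+\ell\cdot d_S(\FF_S(\Phi),\FF_S(\Psi))\le\ell\cdot\SR_E(\delta)+\delta$. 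Since the words of $E'$ lying in $E$ act trivially under $\Psi$, this says exactly that $\frac1n\sum_{i=1}^k|\{x\in Z:n_i\cdot x\ne x\}|\le\ell\,\SR_E(\delta)+\delta$, so the set $B$ of points of $Z$ moved by at least one of $n_1,\dots,n_k$ satisfies $|B|\le n(\ell\,\SR_E(\delta)+\delta)$.

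The key observation is that $B$ is precisely the union of the non-singleton $N$-orbits of $Z$: a point fixed by every $n_i$ is fixed by all of $N$, since the $n_i$ generate $N$, hence forms a singleton orbit, while conversely a singleton orbit is fixed by $N$. Because $N$ is normal in $\Gamma$, the group $\Gamma$ permutes the $N$-orbits preserving their sizes, so $B$ is $\Gamma$-invariant and $Z\setminus B$ is a $\Gamma/N$-set. I would then let $Z'$ be the $\Gamma/N$-set of cardinality $n$ obtained from $Z$ by keeping the action on $Z\setminus B$ and replacing the action on $B$ by the trivial one (this is a $\Gamma$-set on which $N$ acts trivially, hence a $\Gamma/N$-set). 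The identity map $Z\to Z'$ preserves every edge except possibly those issuing from points of $B$, so $d_S(Z,Z')\le|S|\cdot|B|/n\le|S|(\ell\,\SR_E(\delta)+\delta)$. Hence $G_{E'}(\Psi)=G_{E'}(Z)\le d_S(Z,Z')$, and by the triangle inequality $G_{E'}(\Phi)\le d_n(\Phi,\Psi)+G_{E'}(\Psi)\le(1+|S|\ell)\SR_E(\delta)+|S|\delta$. Taking the supremum over all admissible $\Phi$ and setting $C=\max\{1+|S|\ell,\,|S|,\,1\}$ gives $\SR_{E'}(\delta)\le C\,\SR_E(C\delta)+C\delta$ by monotonicity of $\SR_E$; the mismatch between the domains $(0,|E|]$ and $(0,|E'|]$ is absorbed by replacing $\delta$ with $\min\{\delta,|E|\}$ throughout and enlarging $C$. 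This is $\SR_{\Gamma/N}\precsim C\cdot\SR_\Gamma$. Assertion (i) follows at once, and for (ii): if $\SR_\Gamma(\delta)\le O(\delta^{1/k})$ with $k\ge1$, then $\SR_{\Gamma/N}(\delta)\le C\cdot O((C\delta)^{1/k})+C\delta\le O(\delta^{1/k})$, so $\deg(\SR_{\Gamma/N})\le\deg(\SR_\Gamma)$.

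The only genuinely new point, and thus the main obstacle, is the orbit observation together with the resulting $\Gamma$-invariance of $B$; finite generation of $N$ is exactly what makes it work (and what makes $E'$ finite, hence $\SR_{\Gamma/N}$ defined at all). Everything else is bookkeeping with lemmas already established.
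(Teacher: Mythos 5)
Your argument is correct and follows essentially the same route as the paper's: augment the defining relations of $\Gamma$ by a finite set of generators of $N$, pass to a nearby $\Gamma$-set $Z$, observe via Lemma \ref{lem:local-defect-of-close-actions} that the subset of $Z$ fixed by all the chosen generators (hence by all of $N$) is large, use normality of $N$ to see that this subset is $\Gamma$-invariant and carries a $\Gamma/N$-action, and conclude that $Z$ is close to a $\Gamma/N$-set. The only cosmetic difference is that the paper applies Lemma \ref{lem:equivariance-points} to the inclusion $Z_{E_2}\hookrightarrow Z$, whereas you explicitly build the $\Gamma/N$-set $Z'$ on the same ground set and bound $d_S(Z,Z')$ by hand; these are the same computation.
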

\begin{proof}
Let $\pi:\FF\rightarrow\Gamma$ be a presentation of $\Gamma$ as
a quotient of a finitely-generated free group $\FF=\FF_{S}$, $\left|S\right|<\infty$.
Then, $\pi^{-1}\left(N\right)$ is a normal subgroup of $\FF$, $\pi^{-1}\left(N\right)/\Ker\pi\cong N$
is finitely-generated and $\FF/\pi^{-1}\left(N\right)\cong\Gamma/N$.
Let $E_{1}\subseteq\Ker\pi$ be a finite set which generates $\Ker\pi$
as a normal subgroup of $\FF$. Let $E_{2}\subseteq\pi^{-1}\left(N\right)$
be a finite set whose image in $\pi^{-1}\left(N\right)/\Ker\pi$ generates
the group $\pi^{-1}\left(N\right)/\Ker\pi$. Then, $E=E_{1}\cup E_{2}$
generates $\pi^{-1}\left(N\right)$ as a normal subgroup of $\FF$.
So, $\FF/\lla E_{1}\rra\cong\Gamma$ and $\FF/\lla E\rra\cong\Gamma/N$.
Thus, it suffices to show that $\SR_{E}\left(\delta\right)\leq O\left(\SR_{E_{1}}\left(\delta\right)+\delta\right)$.

Let $X$ be a finite $\FF$-set. Write $n=\left|X\right|$ and $\delta=L_{E}\left(X\right)$.
We need to show that $G_{E}\left(X\right)\leq O\left(\SR_{E_{1}}\left(\delta\right)+\delta\right)$.
Let $Z$ be a $\Gamma$-set, $\left|Z\right|=\left|X\right|=n$, for
which $d_{S}\left(X,Z\right)\leq\SR_{E_{1}}\left(L_{E_{1}}\left(X\right)\right)$.
Since $E_{1}\subseteq E$, we have $L_{E_{1}}\left(X\right)\leq L_{E}\left(X\right)$,
and so 
\[
d_{S}\left(X,Z\right)\leq\SR_{E_{1}}\left(L_{E}\left(X\right)\right)=\SR_{E_{1}}\left(\delta\right)\,\,\text{.}
\]
Using Lemma \ref{lem:local-defect-of-close-actions}, we deduce
\begin{align*}
L_{E_{2}}\left(Z\right)\leq L_{E}\left(Z\right) & \leq L_{E}\left(X\right)+\left(\sum_{w\in E}\left|w\right|\right)\cdot d_{S}\left(X,Z\right)\\
 & \le\delta+\left(\sum_{w\in E}\left|w\right|\right)\cdot\SR_{E_{1}}\left(\delta\right)\,\,\text{.}
\end{align*}
This allows use to bound the size of $Z_{E_{2}}$ (see Definition
\ref{def:E-abiding-points}) from below. Indeed, there are $n\cdot L_{E_{2}}\left(Z\right)$
pairs $\left(z,w\right)\in Z\times E$ for which $w\cdot z\neq z$.
Clearly, the number of distinct values of $z$ in those pairs is at
most $n\cdot L_{E_{2}}\left(Z\right)$, and so
\[
\left|Z_{E_{2}}\right|\geq n-n\cdot L_{E_{2}}\left(Z\right)\geq n-n\cdot\left(\delta+\left(\sum_{w\in E}\left|w\right|\right)\cdot\SR_{E_{1}}\left(\delta\right)\right)\,\,\text{.}
\]
Now, as $N=\langle \pi\left(E_2\right)\rangle$, each $z\in Z_{E_{2}}$ is fixed
by $N$. In fact, since $N$ is normal in $\Gamma$, every
$z\in Z_{E_{2}}$ belongs to an orbit $\Gamma\cdot z\subseteq Z$ where
all points are fixed by $N$. Therefore, $Z_{E_{2}}$ is a union of
$\Gamma$-orbits of $Z$, and the action of $\Gamma$ on $Z_{E_{2}}$
factors through $\Gamma/N$. Consider the inclusion map $\iota:Z_{E_{2}}\rightarrow Z$.
All points of $Z_{E_{2}}$ are equivariance points of $\iota$. So,
applying Lemma \ref{lem:equivariance-points} to $\iota$, we get
\begin{align*}
G_{E}\left(Z\right) & \leq\left|S\right|\cdot\left(1-\frac{1}{n}\cdot\left|Z_{E_{2}}\right|\right)\\
 & \leq\left|S\right|\cdot\left(\delta+\left(\sum_{w\in E}\left|w\right|\right)\cdot\SR_{E_{1}}\left(\delta\right)\right)\\
 & \leq O\left(\SR_{E_{1}}\left(\delta\right)+\delta\right)\,\,\text{.}
\end{align*}
Hence,
\begin{align*}
G_{E}\left(X\right) & \leq G_{E}\left(Z\right)+d_{S}\left(X,Z\right)\\
 & \leq O\left(\SR_{E_{1}}\left(\delta\right)+\delta\right)+\SR_{E_{1}}\left(\delta\right)\\
 & \le O\left(\SR_{E_{1}}\left(\delta\right)+\delta\right)\,\,,
\end{align*}
as required.
\end{proof}
Theorem 2 of \cite{GR09} states that all finite groups are stable.
The following is the quantitative version:
\begin{prop}
\label{prop:finite-groups}Let $\Gamma$ be a finite group. Then,
the degree of polynomial stability $\deg\left(\SR_{\Gamma}\right)$
of $\Gamma$ is $1$. In particular, $\Gamma$ is stable.
\end{prop}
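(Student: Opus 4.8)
The plan is to deduce the finite case from the trivial structure of the stability rate of a free group, by realizing $\Gamma$ as a quotient of a finitely generated free group by a \emph{finitely generated} normal subgroup and invoking Proposition~\ref{prop:quotient-by-fg}. The point is that finiteness of $\Gamma$ forces the relation subgroup to have finite index, and a finite-index subgroup of a finitely generated free group is again finitely generated.

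First I would fix a presentation. Since $\Gamma$ is finite it is finitely presented, so $\Gamma\cong\FF_S/N$ where $S$ is finite and $N=\lla E\rra$ is the normal closure of a finite set $E\subseteq\FF_S$. As $\Gamma$ is finite, $N$ has finite index $\left|\Gamma\right|$ in $\FF_S$, and by the Nielsen--Schreier theorem a finite-index subgroup of a finitely generated free group is itself a finitely generated (free) group. Hence $N$ is finitely generated as an abstract group.

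Next I would apply Proposition~\ref{prop:quotient-by-fg} with the roles of ``$\Gamma$'' and ``$N$'' played by $\FF_S$ and $N$: the ambient group $\FF_S$ is finitely presented (by the empty relator set), and $N\trianglelefteq\FF_S$ is finitely generated, so part (ii) of the proposition gives $\deg\left(\SR_{\FF_S/N}\right)\le\deg\left(\SR_{\FF_S}\right)$. Now $\SR_{\FF_S}=[0]$: indeed $\FF_S$ is presented by the equation-set $\left\{1\right\}$ (or $\emptyset$), for which $\SR\equiv 0$, as recorded in Remark~\ref{rem:whats-this-Cdelta}. Therefore $\deg\left(\SR_{\FF_S}\right)=\deg([0])=\inf\left\{k\ge 1\mid 0\le O\left(\delta^{1/k}\right)\right\}=1$, whence $\deg\left(\SR_\Gamma\right)=\deg\left(\SR_{\FF_S/N}\right)\le 1$. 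Since by Definition~\ref{def:degree} the degree of any function is an infimum over $k\ge 1$, we also have $\deg\left(\SR_\Gamma\right)\ge 1$, so $\deg\left(\SR_\Gamma\right)=1$; in particular $\Gamma$ is stable.

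I do not expect a serious obstacle here; the only delicate points are the bookkeeping around the degenerate equation-sets $\emptyset$ and $\left\{1\right\}$ presenting a free group (already handled in Remark~\ref{rem:whats-this-Cdelta}) and checking that the hypotheses of Proposition~\ref{prop:quotient-by-fg} really apply when the ambient group is free --- which they do, finite presentability of $\FF_S$ being immediate and finite generation of $N$ being exactly Nielsen--Schreier. As an alternative, one could give a self-contained argument in the spirit of Section~\ref{sec:AbelianGroupsArePolynomiallyStable}: since $N$ is generated as a normal subgroup by short words in $E$, any point of an $\FF_S$-set whose action-graph ball of bounded radius avoids the $O(L_E)$-fraction of equation-violating vertices already lies in a $\Gamma$-subset, and one packs such subsets as in Tool~A to obtain an injection with $1-O(L_E)$ equivariance points; routing through Proposition~\ref{prop:quotient-by-fg} is simply shorter.
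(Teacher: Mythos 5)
Your proof is correct and follows the same route as the paper: both present $\Gamma$ as $\FF_S/N$ with $N$ of finite index (hence finitely generated, by Nielsen--Schreier), invoke Proposition~\ref{prop:quotient-by-fg} to transfer the degree bound from $\FF_S$ to $\Gamma$, and then observe that $\deg(\SR_{\FF_S})=1$ (the paper phrases this as $\SR_{\FF_S}\sim[\delta\mapsto\delta]$, which is the same equivalence class as your $[0]$ in light of Remark~\ref{rem:whats-this-Cdelta}). The only cosmetic difference is that the paper works directly with the preorder $\precsim$ on stability rates rather than first passing to degrees.
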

\begin{proof}
Write $\delta\mapsto\delta$ for the inclusion map $(0,\left|E\right|]\rightarrow(0,\infty]$,
and $\left[\delta\mapsto\delta\right]$ for its $\sim$-class (Definition
\ref{def:FunctionEquivalence}). Our task is to show that $\SR_{\Gamma}=\left[\delta\mapsto\delta\right]$.
By the definition of the relation $\precsim$, we have $\left[\delta\mapsto\delta\right]\precsim\SR_{\Gamma}$,
and we need to prove the reverse inequality. Let $\pi:\FF\rightarrow\Gamma$
be a presentation of $\Gamma$ as a quotient of a finitely-generated
free group $\FF$. Then, $\left[\FF:\Ker\pi\right]<\infty$, and so
$\Ker\pi$ itself is a finitely-generated group. Therefore, by Proposition
\ref{prop:quotient-by-fg}, for some $C>0$,
\[
\SR_{\Gamma}\precsim C\cdot\SR_{\FF}\sim\left[\delta\mapsto\delta\right]\,\,\text{.}
\]
\end{proof}
 The following proposition shows that for every equation-set, except for the trivial equation-sets $\emptyset$ and $\left\{1\right\}$, the stability rate is at least linear. This motivates the requirement $k\ge 1$ in Definition \ref{def:degree}. 
\begin{prop}
\label{prop:SR-at-least linear}Let $\FF$ be a free group on a finite
set $S$, and $E\subseteq\FF$ a finite subset. Assume that $E\neq\emptyset$
and $E\neq\left\{ 1_{\FF}\right\} $. Then, 
\begin{equation}
\SR_{E}\left(\delta\right)\geq C\cdot\delta\label{eq:SR-at-least-linear}
\end{equation}
 for some $C>0$.
\end{prop}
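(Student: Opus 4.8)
The plan is to combine the elementary inequality $L_{E}(Y)\leq\bigl(\sum_{w\in E}\left|w\right|\bigr)\cdot G_{E}(Y)$ from Lemma~\ref{lem:local-defect-of-close-actions} with a supply of $\FF$-sets whose local defect is positive but arbitrarily small. Write $M=\sum_{w\in E}\left|w\right|$. Since $E\neq\emptyset$ and $E\neq\left\{1_{\FF}\right\}$, there is some $w_{0}\in E$ with $w_{0}\neq1_{\FF}$, and hence $M\geq\left|w_{0}\right|\geq1$. The point is that inequality (\ref{eq:local-and-global-general-bound}) turns any positive local defect into a proportional lower bound on the global defect, so it suffices to realize local defects comparable to $\delta$.

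The construction I would use is: first produce a single finite $\FF$-set $X^{*}$ with $L_{E}(X^{*})>0$. Since free groups are residually finite, there is a finite group $Q$ and a homomorphism $\rho\colon\FF\to Q$ with $\rho(w_{0})\neq1_{Q}$; letting $Q$ act on itself by left translation and pulling back along $\rho$ makes $Q$ an $\FF$-set in which $w_{0}$ acts without fixed points, so $L_{E}(Q)>0$. Then dilute: for each integer $n\geq1$ put $X_{n}=Q\sqcup T_{n}$, where $T_{n}$ is the trivial $\FF$-set on $n$ points. Since $T_{n}$ contributes nothing to the local defect, $\left|X_{n}\right|\cdot L_{E}(X_{n})=\left|\{(x,v)\in Q\times E\mid v\cdot x\neq x\}\right|=:c$ is a fixed positive integer independent of $n$ (in fact $c=\left|Q\right|\cdot\#\{v\in E\mid\rho(v)\neq1_{Q}\}$). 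Hence $\delta_{n}:=L_{E}(X_{n})=\frac{c}{\left|Q\right|+n}$ is a strictly decreasing sequence in $(0,\left|E\right|]$ with $\delta_{n}\to0$ and $\delta_{n}/\delta_{n+1}=\frac{\left|Q\right|+n+1}{\left|Q\right|+n}\leq2$. By Proposition~\ref{prop:stability-rate-in-terms-of-actions} and inequality (\ref{eq:local-and-global-general-bound}),
\[
\SR_{E}(\delta_{n})\;\geq\;G_{E}(X_{n})\;\geq\;\tfrac{1}{M}\,L_{E}(X_{n})\;=\;\tfrac{\delta_{n}}{M}\,.
\]

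Finally I would upgrade this to all $\delta\in(0,\left|E\right|]$ using that $\SR_{E}$ is monotone nondecreasing (as noted after Definition~\ref{def:EquationSetStabilityRate}). If $\delta\geq\delta_{1}$, then $\SR_{E}(\delta)\geq\SR_{E}(\delta_{1})\geq\delta_{1}/M\geq\frac{\delta_{1}}{M\left|E\right|}\,\delta$, using $\delta\leq\left|E\right|$. If $\delta<\delta_{1}$, choose $n$ with $\delta_{n+1}\leq\delta<\delta_{n}\leq2\delta_{n+1}$; then $\SR_{E}(\delta)\geq\SR_{E}(\delta_{n+1})\geq\delta_{n+1}/M\geq\delta/(2M)$. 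Taking $C=\min\{\delta_{1}/(M\left|E\right|),\,1/(2M)\}>0$ yields $\SR_{E}(\delta)\geq C\delta$, as required. I do not expect a genuine obstacle here: the only substantive input beyond bookkeeping is the existence of a finite $\FF$-set with positive local defect, which residual finiteness of free groups provides at once; the conversion to a lower bound on $\SR_{E}$ is then purely formal, via Lemma~\ref{lem:local-defect-of-close-actions} and monotonicity of the stability rate.
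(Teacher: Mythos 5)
Your proposal is correct and takes essentially the same route as the paper's proof: both proceed by exhibiting a single finite $\FF$-set with positive local defect, diluting it with fixed points to realize small positive local defects, invoking Inequality (\ref{eq:local-and-global-general-bound}) from Lemma \ref{lem:local-defect-of-close-actions} to convert local defect into a lower bound on global defect, and then using monotonicity of $\SR_E$ to cover all $\delta$. The only cosmetic differences are that you explicitly invoke residual finiteness of the free group to manufacture the seed $\FF$-set, whereas the paper simply asserts its existence, and that you dilute a single copy with $n$ fixed points and then interpolate over the resulting geometric-in-ratio sequence $\delta_n$, whereas the paper takes $p$ copies of $X$ plus $(q-p)|X|$ fixed points to hit every rational $\delta\in(0,\delta_0]\cap\QQ$ exactly; both of these are minor bookkeeping variants of the same idea.
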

\begin{proof}
Since $\SR_{E}$ is monotone non-decreasing, it suffices to prove
Inequality (\ref{eq:SR-at-least-linear}) for $\delta\in(0,\delta_{0}]\cap\QQ$
for some $\delta_{0}>0$. As $E$ contains a non-trivial element of
$\FF$, there is a finite $\FF$-set $X$ such that $\delta_{0}\overset{\defname}{=}L_{E}\left(X\right)$
is positive (and rational). Write $\delta_{0}=\frac{m_{0}}{n_{0}}$
for integers $0< m_{0}\le n_{0}$. Take $\delta\in(0,\delta_{0}]\cap\QQ$,
and write $\delta=\frac{p}{q}\cdot\delta_{0}$, where $0< p\leq q$
are integers. Define an $\FF$-set $Y$ as the disjoint union
of $p$ copies of $X$ and $\left(q-p\right)\cdot\left|X\right|$
additional fixed points. Then,
\[
L_{E}\left(Y\right)=\frac{p\cdot\left|X\right|\cdot L_{E}\left(X\right)}{p\cdot\left|X\right|+\left(q-p\right)\cdot\left|X\right|}=\frac{p}{q}\cdot L_{E}\left(X\right)=\delta\,\,\text{.}
\]
Finally, by Inequality (\ref{eq:local-and-global-general-bound})
in Lemma \ref{lem:local-defect-of-close-actions}, $G_{E}\left(Y\right)\geq\left(\sum_{w\in E}\left|w\right|\right)^{-1}\cdot L_{E}\left(Y\right)$,
and so $\SR_{E}\left(\delta\right)\geq\left(\sum_{w\in E}\left|w\right|\right)^{-1}\cdot\delta$,
as required.
\end{proof}

\begin{bibdiv}
\begin{biblist}

\bib{AP15}{article}{
      author={Arzhantseva, Goulnara},
      author={P\u{a}unescu, Liviu},
       title={Almost commuting permutations are near commuting permutations},
        date={2015},
        ISSN={0022-1236},
     journal={J. Funct. Anal.},
      volume={269},
      number={3},
       pages={745\ndash 757},
         url={https://doi.org/10.1016/j.jfa.2015.02.013},
}

\bib{BLpropertyT}{article}{
      author={Becker, Oren},
      author={Lubotzky, Alexander},
       title={{Group stability and Property (T)}},
      eprint={https://arxiv.org/abs/1809.00632},
}

\bib{BLtesting}{article}{
      author={Becker, Oren},
      author={Lubotzky, Alexander},
       title={Testability of permutation equations and group stability},
        date={In preparation},
}

\bib{BLT18}{article}{
      author={Becker, Oren},
      author={Lubotzky, Alexander},
      author={Thom, Andreas},
      title={{Stability and Invariant Random Subgroups}},
      eprint={https://arxiv.org/abs/1801.08381},
}

\bib{dglt}{article}{
      author={De~Chiffre, Marcus},
      author={Glebsky, Lev},
      author={Lubotzky, Alexander},
      author={Thom, Andreas},
      title={Stability, cohomology vanishing, and non-approximable groups},
      eprint={https://arxiv.org/abs/1711.10238},
}

\bib{ESS}{article}{
      author={{Eilers}, S.},
      author={{Shulman}, T.},
      author={{S{\o}rensen}, A.~P.~W.},
      title={{C*-stability of discrete groups}},
      eprint={https://arxiv.org/abs/1808.06793},
}

\bib{ES11}{article}{
      author={Elek, G\'{a}bor},
      author={Szab\'{o}, Endre},
      title={Sofic representations of amenable groups},
      date={2011},
      ISSN={0002-9939},
      journal={Proc. Amer. Math. Soc.},
      volume={139},
      number={12},
      pages={4285\ndash 4291},
      url={https://doi.org/10.1090/S0002-9939-2011-11222-X},
}

\bib{FK}{article}{
      author={Filonov, Nikolay},
      author={Kachkovskiy, Ilya},
      title={{A Hilbert-Schmidt analog of Huaxin Lin's Theorem}},
      eprint={https://arxiv.org/abs/1008.4002},
}

\bib{GR09}{article}{
      author={Glebsky, Lev},
      author={Rivera, Luis~Manuel},
       title={Almost solutions of equations in permutations},
        date={2009},
        ISSN={1027-5487},
     journal={Taiwanese J. Math.},
      volume={13},
      number={2A},
       pages={493\ndash 500},
         url={https://doi.org/10.11650/twjm/1500405351},
}

\bib{Gol17}{book}{
      author={Goldreich, Oded},
       title={Introduction to property testing},
   publisher={Cambridge University Press},
        date={2017},
}

\bib{HS2}{article}{
      author={Hadwin, Don},
      author={Shulman, Tatiana},
       title={Stability of group relations under small {H}ilbert-{S}chmidt
  perturbations},
        date={2018},
        ISSN={0022-1236},
     journal={J. Funct. Anal.},
      volume={275},
      number={4},
       pages={761\ndash 792},
         url={https://doi.org/10.1016/j.jfa.2018.05.006},
}

\bib{HS1}{article}{
      author={Hadwin, Don},
      author={Shulman, Tatiana},
       title={Tracial stability for {$C^*$}-algebras},
        date={2018},
        ISSN={0378-620X},
     journal={Integral Equations Operator Theory},
      volume={90},
      number={1},
       pages={Art. 1, 35},
         url={https://doi.org/10.1007/s00020-018-2430-1},
}

\bib{hastings}{article}{
      author={Hastings, M.~B.},
       title={Making almost commuting matrices commute},
        date={2009},
        ISSN={0010-3616},
     journal={Comm. Math. Phys.},
      volume={291},
      number={2},
       pages={321\ndash 345},
         url={https://doi.org/10.1007/s00220-009-0877-2},
}

\bib{LLS90}{article}{
      author={Lagarias, J.~C.},
      author={Lenstra, H.~W., Jr.},
      author={Schnorr, C.-P.},
       title={Korkin-{Z}olotarev bases and successive minima of a lattice and
  its reciprocal lattice},
        date={1990},
        ISSN={0209-9683},
     journal={Combinatorica},
      volume={10},
      number={4},
       pages={333\ndash 348},
         url={https://doi.org/10.1007/BF02128669},
}

\bib{Lag95}{incollection}{
      author={Lagarias, Jeffrey~C.},
       title={Point lattices},
        date={1995},
   booktitle={Handbook of combinatorics, {V}ol. 1, 2},
   publisher={Elsevier Sci. B. V., Amsterdam},
       pages={919\ndash 966},
}

\bib{Mar02}{book}{
      author={Martinet, Jacques},
       title={Perfect lattices in {E}uclidean spaces},
      series={Grundlehren der Mathematischen Wissenschaften [Fundamental
  Principles of Mathematical Sciences]},
   publisher={Springer-Verlag, Berlin},
        date={2003},
      volume={327},
        ISBN={3-540-44236-7},
         url={https://doi.org/10.1007/978-3-662-05167-2},
}

\bib{ow}{article}{
      author={Ornstein, Donald~S.},
      author={Weiss, Benjamin},
       title={Entropy and isomorphism theorems for actions of amenable groups},
        date={1987},
        ISSN={0021-7670},
     journal={J. Analyse Math.},
      volume={48},
       pages={1\ndash 141},
         url={https://doi.org/10.1007/BF02790325},
}

\bib{Ron10}{article}{
      author={Ron, Dana},
       title={Algorithmic and analysis techniques in property testing},
        date={2009},
        ISSN={1551-305X},
     journal={Found. Trends Theor. Comput. Sci.},
      volume={5},
      number={2},
       pages={front matter, 73\ndash 205},
         url={https://doi.org/10.1561/0400000029},
}

\bib{Ula60}{book}{
      author={Ulam, S.~M.},
       title={A collection of mathematical problems},
      series={Interscience Tracts in Pure and Applied Mathematics, no. 8},
   publisher={Interscience Publishers, New York-London},
        date={1960},
}

\end{biblist}
\end{bibdiv}

\end{document}